\documentclass[11pt, reqno, twoside, makeidx]{amsart}
\usepackage[margin=2.8cm, marginparsep=0.2cm, marginparwidth=2.4cm]{geometry}
\usepackage{graphicx}
\usepackage{amsmath}
\usepackage{amssymb}
\usepackage{amsthm}
\usepackage{mathtools}
\usepackage{mathrsfs}
\usepackage{bm}
\usepackage{slashed}
\usepackage{enumitem}
\usepackage{hyperref}
\usepackage{amscd}
\usepackage{cases}
\usepackage{tikz-cd}

\newtheorem{theorem}{Theorem}[section]
\newtheorem{lemma}[theorem]{Lemma}

\newtheorem{proposition}[theorem]{Proposition}
\newtheorem{question}[theorem]{Question}

\newtheorem{corollary}[theorem]{Corollary}
\newtheorem{claim}[theorem]{Claim}

\theoremstyle{definition}
\newtheorem{remark}[theorem]{Remark}
\newtheorem{definition}[theorem]{Definition}
\newtheorem{example}[theorem]{Example}
\newtheorem{construction}[theorem]{Construction}

\newtheorem{fact}[theorem]{Fact}

\newcommand{\C}{\mathbb{C}}
\newcommand{\R}{\mathbb{R}}
\newcommand{\Q}{\mathbb{Q}}
\newcommand{\Z}{\mathbb{Z}}
\newcommand{\SO}{\operatorname{SO}}
\newcommand{\Spin}{\operatorname{Spin}}

\newcommand{\Pin}{\operatorname{Pin}}
\newcommand{\KO}{\widetilde{KO}}
\newcommand{\K}{\widetilde{K}}
\newcommand{\pt}{\mathrm{pt}}
\newcommand{\s}{\mathfrak{s}}
\newcommand{\SWF}{\mathit{SWF}}
\newcommand{\Hty}{\mathcal{H}}
\newcommand{\ko}{\mathit{\kappa o}}

\title{ON INTERSECTION FORMS OF SPIN $4$-MANIFOLDS WITH AR HOMOLOGY SPHERE BOUNDARY}
\author{Eiichiro Hakamada}

\begin{document}

\begin{abstract}
In this paper, building on the work of Dai--Sasahira--Stoffregen, we derive constraints on the intersection forms of smooth, compact, indefinite, spin $4$-manifolds bounded by AR homology spheres using two methods. First, we use the constraints on the existence of $\Pin(2)$-equivariant maps between representation spheres, established by Hopkins--Lin--Shi--Xu in their proof of the $10/8 + 4$-inequality. Second, we compute Lin's $\ko$-invariant for AR homology spheres. Combined with Lin's version of the relative $10/8$-inequality, this computation yields constraints different from those based on Hopkins--Lin--Shi--Xu. As applications, we obtain new lower bounds on the minimal genus of surfaces bounded by knots whose double branched covers are AR homology spheres in various $4$-manifolds, and provide an obstruction to smooth $4$-manifold decompositions related to the $11/8$-conjecture.
\end{abstract}

\maketitle

\section{Introduction}
Let $X$ be a smooth, oriented, spin $4$-manifold. The question of which intersection forms can be realized by such manifolds has been studied for many years. If $X$ is closed and definite, then its intersection form must be trivial by Donaldson's diagonalizability theorem \cite{Don83, Don87}. In contrast to Freedman's theorem \cite{Fre82} in the topological category, this theorem implies that the smooth structure imposes constraints on the intersection form. Suppose that $X$ is closed and indefinite. By reversing the orientation of $X$ if necessary, we may assume that $\sigma(X) \le 0$. Since the intersection form of $X$ is even and unimodular, the algebraic classification of quadratic forms \cite{Ser73} implies that it can be expressed as
\[
p(-E_8) \oplus q \left( \begin{smallmatrix} 0 & 1 \\ 1 & 0 \end{smallmatrix} \right)
\]
for some integers $p \ge 0$ and $q > 0$. Note that $p$ is even by Rokhlin's theorem. In this setting, Furuta's $10/8$-theorem \cite{Fur01} provides the following inequality:
\[
q \ge p + 1.
\]
This inequality was obtained by extracting, via equivariant $K$-theory, a necessary condition for the existence of a suitable $\Pin(2)$-equivariant stable map between representation spheres
\[
S^{p\mathbb{H}} \longrightarrow S^{q\tilde{\R}}
\]
arising from a finite-dimensional approximation of the Seiberg--Witten equations on $X$. For $p \ge 2$, Hopkins--Lin--Shi--Xu \cite{HLSX22} obtained a stronger constraint by giving a necessary and sufficient condition for the existence of such a map, which is stated as follows:
\[
q \ge  
\left\{
\begin{array}{l@{\quad}ll}
    p + 2 & \text{if } p \equiv 2, 4, 10, 12 & \pmod{16} \\
    p + 3 & \text{if } p \equiv 6, 8, 14     & \pmod{16} \\
    p + 4 & \text{if } p \equiv 16           & \pmod{16}.
\end{array}
\right.
\]

On the other hand, Manolescu \cite{Man14} generalized Furuta's $10/8$-theorem to manifolds with boundary by defining the $\kappa$-invariant. Suppose that $X$ is a smooth, spin cobordism from an integral homology $3$-sphere $Y_0$ to an integral homology $3$-sphere $Y_1$, with intersection form $p(-E_8) \oplus q \left( \begin{smallmatrix} 0 & 1 \\ 1 & 0 \end{smallmatrix} \right)$. Then the constraint derived from the $\kappa$-invariant is given as follows:
\[
\kappa(Y_1) + q \ge \kappa(Y_0) + p - 1.
\]
Furthermore, if $X$ is a smooth, compact, indefinite, spin $4$-manifold whose boundary $\partial X = Y$ is an integral homology $3$-sphere, Manolescu derived the following constraint:
\[
q \ge p + 1 - \kappa(Y).
\]
Subsequently, Lin \cite{Lin15} improved these constraints using equivariant $KO$-theory, introducing the $\ko$-invariant.

To derive such constraints, it is effective to analyze the Seiberg--Witten Floer stable homotopy types of the boundary manifolds. In a recent breakthrough, Dai--Sasahira--Stoffregen \cite{DSS26} showed that the Seiberg--Witten Floer stable homotopy type of an AR homology sphere can be calculated combinatorially from its lattice homology. Here, an AR homology sphere stands for an ``almost-rational plumbed homology sphere,'' a class of manifolds which includes all Seifert fibered rational homology spheres with base orbifold $S^2$.

By combining this calculation with the method of Hopkins--Lin--Shi--Xu \cite{HLSX22}, we derive new constraints on the intersection forms of smooth, compact, indefinite, spin $4$-manifolds bounded by AR homology spheres. In what follows, we denote the Neumann--Siebenmann invariant by $\bar{\mu}$. Our first main result is stated as follows:

\begin{theorem}\label{thm:main_inequality}
    Let $Y$ be an AR homology sphere. Suppose that $Y$ is an integral homology sphere. Then, for any smooth, compact, indefinite, spin $4$-manifold $X$ with boundary $\partial X = Y$ and intersection form $p (-E_8) \oplus q \left( \begin{smallmatrix} 0 & 1 \\ 1 & 0 \end{smallmatrix} \right)$ $(p \ge 0, q > 0)$ such that $p + \bar{\mu}(Y) \ge 4$, the following inequalities hold:
    \begin{enumerate}[label=(\roman*)]
        \item If $-\bar{\mu}(Y) = \delta(Y)$,
        \[
        \renewcommand{\arraystretch}{1.1}
        q \ge  
        \left\{
        \begin{array}{l@{\quad}ll}
        p + \bar{\mu}(Y) + 2 & \text{if } p + \bar{\mu}(Y) \equiv 2, 4, 10, 12 &\pmod{16} \\
        p + \bar{\mu}(Y) + 3 & \text{if } p + \bar{\mu}(Y) \equiv 6, 8, 14 &\pmod{16} \\
        p + \bar{\mu}(Y) + 4 & \text{if } p + \bar{\mu}(Y) \equiv 16 &\pmod{16}.
        \end{array}
        \right.
        \]
        \item If $-\bar{\mu}(Y) < \delta(Y)$,
        \[
        \renewcommand{\arraystretch}{1.1}
        q \ge  
        \left\{
        \begin{array}{l@{\quad}ll}
        p + \bar{\mu}(Y) + 1 & \text{if } p + \bar{\mu}(Y) \equiv 2, 4, 10, 12 &\pmod{16} \\
        p + \bar{\mu}(Y) + 2 & \text{if } p + \bar{\mu}(Y) \equiv 6, 8, 14 &\pmod{16} \\
        p + \bar{\mu}(Y) + 3 & \text{if } p + \bar{\mu}(Y) \equiv 16 &\pmod{16}.
        \end{array}
        \right.
        \]
    \end{enumerate}
\end{theorem}

This result can be generalized to connected sums as follows:

\begin{theorem}\label{thm:conn_sum}
    Let $Y_1, \dots, Y_n$ be AR homology spheres. Suppose that each $Y_i$ is an integral homology sphere. Let $Y = \mathop{\#}_{i=1}^n Y_i$. Then for any smooth, compact, indefinite, spin $4$-manifold $X$ with boundary $\partial X = Y$ and intersection form $p(-E_8) \oplus q \left(\begin{smallmatrix} 0 & 1 \\ 1 & 0 \end{smallmatrix}\right)$ $(p \ge 0, q > 0)$ such that $p + \sum_{i=1}^n \bar{\mu}(Y_i) \ge 4$, the following inequality holds:
    \[
    \renewcommand{\arraystretch}{1.2}
    q \ge 
    \left\{
    \begin{array}{l@{\quad}ll}
    p + \sum_{i=1}^n \bar{\mu}(Y_i) + 2 - m & \text{if } p + \sum_{i=1}^n \bar{\mu}(Y_i) \equiv 2, 4, 10, 12 &\pmod{16} \\
    p + \sum_{i=1}^n \bar{\mu}(Y_i) + 3 - m & \text{if } p + \sum_{i=1}^n \bar{\mu}(Y_i) \equiv 6, 8, 14 &\pmod{16} \\
    p + \sum_{i=1}^n \bar{\mu}(Y_i) + 4 - m & \text{if } p + \sum_{i=1}^n \bar{\mu}(Y_i) \equiv 16 &\pmod{16}
    \end{array}
    \right.
    \]
    where $m$ is the number of components $Y_i$ satisfying $-\bar{\mu}(Y_i) < \delta(Y_i)$.
\end{theorem}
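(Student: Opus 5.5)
The plan is to run the same argument as for Theorem~\ref{thm:main_inequality}, using the Seiberg--Witten Floer spectrum of the connected sum. Manolescu's gluing results give
\[
\SWF(Y) \simeq \bigwedge_{i=1}^n \SWF(Y_i)
\]
as $\Pin(2)$-equivariant spectra. By Dai--Sasahira--Stoffregen, each $\SWF(Y_i)$ is computed from the lattice homology of $Y_i$. In the form used in the proof of Theorem~\ref{thm:main_inequality}, this gives the following description, after desuspending so that the degree normalization is fixed by $\bar{\mu}(Y_i)$:
\begin{itemize}
\item If $-\bar{\mu}(Y_i)=\delta(Y_i)$, there is a $\Pin(2)$-map from a representation sphere into $\SWF(Y_i)$ that is a $G$-equivalence on fixed points, with no loss of $\tilde{\R}$-summands.
\item If $-\bar{\mu}(Y_i)<\delta(Y_i)$, the lattice-homology picture contains a free $\Pin(2)$-cell. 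This cell still gives such a map, but only after adding one copy of $\tilde{\R}$. Equivalently, it costs one in $q$, which is exactly the difference between cases (i) and (ii) of Theorem~\ref{thm:main_inequality}.
\end{itemize}

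Next, smash these maps over $i$. This produces a $\Pin(2)$-map
\[
S^{(-\sum_i \bar{\mu}(Y_i)/2)\mathbb{H}}\wedge S^{m\tilde{\R}}\to \SWF(Y),
\]
up to suspension, that is a homotopy equivalence on $S^1$-fixed points. Here the $\mathbb{H}$-exponent collects the Neumann--Siebenmann shifts, and $m$ counts the components with $-\bar{\mu}(Y_i)<\delta(Y_i)$. The key point is that smash products of maps which are fixed-point equivalences remain fixed-point equivalences. Also, the $\mathbb{H}$-degree shifts add, since both $\bar{\mu}$ and the grading normalization of $\SWF$ are additive under connected sum.

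Now take the relative Bauer--Furuta map of $X$,
\[
S^{a\mathbb{H}}\to S^{b\tilde{\R}}\wedge \SWF(Y),
\]
with $a-b/\dots$ determined by $p$, $q$, and the normalization. Compose with the $S$-dual of the map above, or equivalently use duality for $-Y$ and glue. Arguing as in the proof of Theorem~\ref{thm:main_inequality}, this yields a stable $\Pin(2)$-map
\[
S^{(p+\sum_i\bar{\mu}(Y_i))\mathbb{H}/2}\to S^{(q+m)\tilde{\R}}
\]
that induces a homotopy equivalence on fixed points. Here $p+\sum_i\bar{\mu}(Y_i)$ is even by Rokhlin and the definition of $\bar{\mu}$, and the normalization should match the convention used in the single-component case. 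Under the hypothesis $p+\sum_i\bar{\mu}(Y_i)\ge 4$, the Hopkins--Lin--Shi--Xu classification of such maps gives the table with $q+m$ in place of $q$. Rearranging gives the stated bound.

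The main obstacle is the first step: making precise that the per-component maps built from the Dai--Sasahira--Stoffregen description can be chosen so that they survive smashing with the correct degree bookkeeping. In particular, one must check that each component with $-\bar{\mu}(Y_i)<\delta(Y_i)$ costs exactly one $\tilde{\R}$ and no more, and that the combined map is still a fixed-point equivalence. I would first try to extract, from the proof of Theorem~\ref{thm:main_inequality}, a lemma of the form: ``for each AR integral homology sphere $Y$ there is a $\Pin(2)$-map $S^{(-\bar{\mu}(Y)/2)\mathbb{H}+\epsilon\tilde{\R}}\to\SWF(Y)$ that is an equivalence on $S^1$-fixed points, with $\epsilon\in\{0,1\}$ and $\epsilon=0$ iff $-\bar{\mu}=\delta$.'' The connected-sum statement then follows formally by smashing these maps.
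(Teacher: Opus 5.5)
You correctly identify the weak point, but the lemma you propose to extract is the wrong one, and the argument does not close as written. Since $X$ is a cobordism from $S^3$ to $Y$, its relative Bauer--Furuta invariant is $\Phi_X\colon S^{\frac{p}{2}\mathbb{H}}\to\Sigma^{q\tilde{\R}}\SWF(Y)$. What must be composed with it is therefore a morphism \emph{out of} $\SWF(Y)$, i.e.\ out of each $\SWF(Y_i)$. Your lemma instead asks for maps $S^{-\frac{1}{2}\bar{\mu}(Y_i)\mathbb{H}+\epsilon_i\tilde{\R}}\to\SWF(Y_i)$, and it fails in both cases:
\begin{itemize}
\item For $\epsilon_i=0$, such a map (the stem inclusion of Construction~\ref{con:lattice_spectrum}) exists whether or not $-\bar{\mu}(Y_i)=\delta(Y_i)$. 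So it cannot detect the dichotomy that produces $m$.
\item For $\epsilon_i=1$, no such map can be an equivalence on $S^1$-fixed points. After stabilization the source has $S^1$-fixed set $S^{(b+1)\tilde{\R}}$, while $\SWF(Y_i)$, being of level $0$, has $S^{b\tilde{\R}}$.
\end{itemize}
Passing to $S$-duals does not repair this. The dual of a map into $\SWF(Y)$ is a map out of $D\SWF(Y)\simeq\SWF(-Y)$, and the dual of $\Phi_X$ is also a map out of $\SWF(-Y)$, so the two cannot be composed. Moreover, the extra $\tilde{\R}$ would reappear on the target as $-\tilde{\R}$, which is the wrong sign for reaching level $q+m$.

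What is needed, and what the paper proves in Theorem~\ref{thm:admissible_map}, is an \emph{admissible} morphism $\SWF(Y_i)\to\Sigma^{-\frac{1}{2}\bar{\mu}(Y_i)\mathbb{H}}S^{\epsilon_i\tilde{\R}}$. It is built in three steps:
\begin{itemize}
\item Start from the local map of graded roots $R(Y_i)\to X_{\beta(Y_i),\delta(Y_i)}$ of Lemma~\ref{lem:local_graded_map}.
\item Lift it, by Dai--Sasahira--Stoffregen, to a local map $\SWF(Y_i)\to A_{\beta(Y_i),\delta(Y_i)}$.
\item When $\delta(Y_i)>\beta(Y_i)$, follow this with the collapse $A_n\to S^{\tilde{\R}}$. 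This is where your ``free cell'' intuition belongs.
\end{itemize}
The collapse is an equivalence only on $G$-fixed points, not on $S^1$-fixed points. So the right notion throughout is admissibility in the first sense of the definition, not locality, and the cost appears as one extra $\tilde{\R}$ on the target.

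With these maps, your remaining steps go through as in the paper. Smash them, and precompose with the local map $\SWF(Y)\to\bigwedge_i\SWF(Y_i)$; Stoffregen's local equivalence suffices, so no genuine gluing equivalence is needed. Composing with $\Phi_X$ gives an admissible $S^{\frac{1}{2}(p+\sum_i\bar{\mu}(Y_i))\mathbb{H}}\to S^{(q+m)\tilde{\R}}$. Convert it to a Furuta--Mahowald class by Theorem~\ref{thm:equivalence_of_morphism}, and apply Hopkins--Lin--Shi--Xu.
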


\begin{remark}
    In general, for any AR homology sphere $Y$ with spin structure $\s$, the Neumann--Siebenmann invariant $\bar{\mu}(Y, \s) \in \frac{1}{8}\Z$ is congruent to the Rokhlin invariant $\mu(Y, \s) \pmod{2\Z}$. Since the Rokhlin invariant is additive under connected sums, if a connected sum of AR homology spheres with spin structures $(Y, \s) = \mathop{\#}_{i=1}^n (Y_i, \s_i)$ bounds a spin $4$-manifold $X$, then $-\frac{1}{8}\sigma(X) + \sum_{i=1}^n \bar{\mu}(Y_i, \s_i)$ is an even integer.
\end{remark}

Another method for obtaining constraints on intersection forms involves using $\kappa$-invariants or $\ko$-invariants. In \cite{DSS26}, the $\kappa$-invariants of all AR homology spheres were calculated. Let $Y$ be an AR homology sphere equipped with a spin structure $\s$. Then we have:
\[
    \kappa(Y, \s) = 
    \begin{cases}
        -\bar{\mu}(Y, \s) & \text{if } -\bar{\mu}(Y, \s) = \delta(Y, \s) \\
        -\bar{\mu}(Y, \s) + 2 & \text{if } -\bar{\mu}(Y, \s) < \delta(Y, \s).
    \end{cases}
\]
Inspired by this result, we compute the $\ko$-invariants for all AR homology spheres, generalizing Lin's calculations for the Brieskorn spheres $\Sigma(2, 3, k)$ \cite[Theorem 1.9]{Lin15}.

\begin{theorem}\label{thm:ko_calculation}
    Let $Y$ be an AR homology sphere with a spin structure $\s$. Then the values of $\ko_i(Y, \s) + \frac{1}{2}\bar{\mu}(Y, \s)$ for $i \in \Z/8$ are given as the following bold numbers:
    \[
    \setlength{\arraycolsep}{8pt}
    \begin{array}{l|l|cccccccc}
        \multicolumn{2}{c|}{i \in \Z/8} & 0 & 1 & 2 & 3 & 4 & 5 & 6 & 7 \\ \hline
        \lfloor \frac{\bar{\mu}(Y, \s)}{2} \rfloor \text{ is even} & -\bar{\mu}(Y, \s) = \delta(Y, \s)     & \bm{0} & \bm{0} & \bm{0} &  \bm{0} &  \bm{0} &  \bm{0} & \bm{0} & \bm{0} \\
                                                                   & -\bar{\mu}(Y, \s) = \delta(Y, \s) - 1 & \bm{1} & \bm{1} & \bm{1} &  \bm{0} &  \bm{0} &  \bm{0} & \bm{0} & \bm{0} \\
                                                                   & -\bar{\mu}(Y, \s) \le \delta(Y, \s) - 2 & \bm{1} & \bm{1} & \bm{1} &  \bm{0} &  \bm{1} &  \bm{0} & \bm{0} & \bm{0} \\ \hline
        \lfloor \frac{\bar{\mu}(Y, \s)}{2} \rfloor \text{ is odd}  & -\bar{\mu}(Y, \s) = \delta(Y, \s)     & \bm{1} & \bm{1} & \bm{0} & \bm{-1} & \bm{-1} & \bm{-1} & \bm{0} & \bm{1} \\
                                                                   & -\bar{\mu}(Y, \s) = \delta(Y, \s) - 1 & \bm{1} & \bm{1} & \bm{0} & \bm{-1} &  \bm{0} &  \bm{0} & \bm{1} & \bm{1} \\
                                                                   & -\bar{\mu}(Y, \s) \le \delta(Y, \s) - 2 & \bm{2} & \bm{1} & \bm{0} & \bm{-1} &  \bm{0} &  \bm{0} & \bm{1} & \bm{1}
    \end{array}
    \]
\end{theorem}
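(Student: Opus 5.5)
We will compute $\ko_i(Y,\s)$ directly from the explicit model of the Seiberg--Witten Floer stable homotopy type supplied by Dai--Sasahira--Stoffregen. From \cite{DSS26} we take that $\SWF(Y,\s)$ is $\Pin(2)$-equivariantly stably equivalent, up to a suspension by $-\frac{1}{2}\bar{\mu}(Y,\s)\,\mathbb{H}$ (equivalently, a degree shift by $\bar{\mu}$), to a space built combinatorially from the lattice homology. Only a small piece of this model affects $\ko$. First, the $S^1$-fixed point set is a sphere whose dimension is fixed by $-\bar{\mu}(Y,\s)$. Second, the $\Pin(2)$-free part is governed by the "top tower". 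If $-\bar{\mu}=\delta$, the model is (stably) a representation sphere. Otherwise it is obtained from the sphere by attaching a free $\Pin(2)$-cell, or a cone $\Pin(2)_+\wedge$-piece, in the dimension determined by $\delta+\bar{\mu}$. We will first show that, for the purpose of equivariant $KO$, only the three cases $\delta+\bar{\mu}\in\{0,1,\ge 2\}$ are distinguished. The intuition is that higher cells attached far above the fixed-point sphere do not change the image of the restriction map to fixed points, once the gap is at least $2$. This reduces the problem to three model spaces, each suspended by $-\frac{1}{2}\bar{\mu}\,\mathbb{H}$.

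Next we recall Lin's definition. $\ko_i(Y,\s)$ is defined via the image of the restriction map
\[
\widetilde{KO}_{\Pin(2)}^{i}(\SWF)\to \widetilde{KO}_{\Pin(2)}^{i}(\SWF^{S^1})\cong KO_{\Pin(2)}^{i}(\pt),
\]
regarded as an ideal-type submodule that is generated by a Bott-type class times an element of $\widetilde{KO}_{\Pin(2)}^{*}$ of $\mathbb{H}$-spheres. We normalize by subtracting $\frac{1}{2}\bar{\mu}$ from the degree shift. Suspension by $\mathbb{H}$ shifts the $\ko$ invariants through Lin's periodicity. This periodicity is periodic with period $8$ in the number of $\mathbb{H}$ factors, but it has a $\Z/8$-dependent twist when an odd number of $\mathbb{H}$-pairs is added. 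This is why the table splits according to the parity of $\lfloor\bar{\mu}/2\rfloor$. Concretely, we will take Lin's computation of the images $\widetilde{KO}_{\Pin(2)}^{i}(S^{k\mathbb{H}})\to KO^{i}_{\Pin(2)}(\pt)$ for $k$ modulo $2$ and $i\in\Z/8$ from \cite{Lin15}. For the sphere case this gives the first row of each block: all zeros when $\lfloor\bar{\mu}/2\rfloor$ is even, and $(1,1,0,-1,-1,-1,0,1)$ when it is odd. The odd case is the same computation Lin did for $\Sigma(2,3,11)$-type examples versus $\Sigma(2,3,5)$.

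For the non-sphere cases we use the cofiber sequence $S^{V}\to \SWF\to \Pin(2)_+\wedge S^{W}$, where $V$ and $W$ are read off from $\bar{\mu}$ and $\delta$. This gives a long exact sequence in $\widetilde{KO}_{\Pin(2)}^{*}$. The group $\widetilde{KO}^{*}_{\Pin(2)}(\Pin(2)_+\wedge S^{W})$ is simply $\widetilde{KO}^{*}(S^{W})$, since we are computing nonequivariantly. We must determine which elements of $\widetilde{KO}^{i}_{\Pin(2)}(S^V)$ lift to $\SWF$. Those that lift are exactly the kernel of the connecting map to $\widetilde{KO}^{i+1}(S^{W})$, and the connecting map is computed by the forgetful (restriction to $\{1\}$) map $KO_{\Pin(2)}\to KO$. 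Its effect depends on $i$ modulo $8$ through the nonequivariant groups $KO^{*}(\pt)=\Z,\Z/2,\Z/2,0,\Z,0,0,0$. When $\dim W-\dim V$ is small, the relevant generator in some degrees $i$ fails to lift, and one must pass to a multiple or to a higher power of the Euler-type class. That multiplies by one step and raises $\ko_i$ by $1$. When the gap is $1$ this happens in the degrees $0,1,2$ (or their twisted analogues). When the gap is at least $2$ it also happens in degree $4$, where $KO^{4}=\Z$ catches the generator. Reading off the degrees in which the obstruction is nonzero produces the remaining rows.

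The main obstacle is this lifting analysis. We must identify precisely the connecting homomorphism on the relevant generators of $KO^{*}_{\Pin(2)}(\pt)$ in each degree modulo $8$, and we must check that for gap at least $2$ the answer stabilizes. We would first try to do this using Lin's explicit description of $KO_{\Pin(2)}^{*}(\pt)$ and of the $\mathbb{H}$- and $\tilde{\R}$-Euler classes. As a check, we would verify that the result reproduces Lin's values for $\Sigma(2,3,k)$ and the $\kappa$-formula of \cite{DSS26} recalled above.
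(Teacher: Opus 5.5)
Your proposal has two genuine gaps. In addition, one case of the statement you are aiming at appears to be false.

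\textbf{The model and the lifting mechanism.} For $n=\delta+\bar{\mu}$ the relevant building block is $A_n$. This is the unreduced suspension of $Z_n=\Pin(2)\times_{S^1}S(\C^n)$. So the cofiber of the fixed-point inclusion $S^0\to A_n$ is $\Sigma(Z_n)_+$, which is induced up from $S^1$ and has free cells in dimensions $1,3,\dots,2n-1$. It is not a single cell $\Pin(2)_+\wedge S^W$; the two descriptions agree only for $n=1$, where $A_1=\tilde G$.

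In your model the connecting map factors through the forgetful map $r^G$. But $\lambda(D)-c\in\ker r^G$ and $\varphi_4(\lambda(D)-c)=1$, so that class would lift for every gap. Your model therefore cannot produce any jump in degree $4$, and ``$KO^4=\Z$ catches the generator'' is not an actual mechanism. Running the sequence correctly (induction from $S^1$, then passing to the free quotient) gives
\[
\operatorname{Im}\bigl(\iota^*\colon\KO_G(\Sigma^{iD}A_n)\to KO_G(iD)\bigr)=\ker\bigl(KO_G(iD)\to KO^{-i}_{S^1}(S(\C^n))\cong KO^{-i}(\C P^{n-1})\bigr).
\]
So the information beyond $r^G$ sits in $\KO^{-i}(\C P^{n-1})$ and is genuinely $S^1$-equivariant. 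The paper captures it by complexifying into $K_{S^1}$ and using divisibility by the Euler class $(1-\theta)^n$.

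\textbf{The reduction to three models.} You never justify passing from an arbitrary $\Hty(R(Y,\s))$ to three model spaces. When $\beta=\delta$, $\Hty(R)$ is not stably a sphere; it is only locally equivalent to one. When $\delta>\beta$, a general root is not even locally equivalent to $X_{\beta,\delta}$. The paper handles this with a two-sided argument:
\begin{itemize}
\item for the upper bound, monotonicity of $\ko_i$ under local maps (Proposition~\ref{prop:ko_local}) applied to $R\to X_{\beta,\delta}$;
\item for the lower bound, an $S^1$-equivariant restriction to the sphere $S^{(4h+\delta)\C}$ of a top leaf, compared with the stem via Lemma~\ref{lem:two_inclusion}.
\end{itemize}
Two smaller errors: the $S^1$-fixed set is $S^0$, independent of $\bar\mu$, which only fixes the quaternionic stem. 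And $\Sigma^{2\mathbb{H}}$ raises $\ko$ by exactly $2$, so the period in $\mathbb{H}$ is $2$, not $8$.

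\textbf{The degree-$4$ jump at gap exactly $2$.} Do not try to prove this; the formula above shows it fails. For $n=2$ we have $\KO^{-4}(\C P^1)=\KO(S^6)=0$, so $KO^{-4}(\C P^1)\to KO^{-4}(\pt)$ is injective. Hence the image is again all of $\ker r^G$, $\lambda(D)-c$ lifts, and $\ko_4(A_2)=0=\ko_4(A_1)$.

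The paper's $n=2$ step breaks because it identifies the complexified real Bott class of $\R^4\oplus\C^2$ with a complex Bott class restricting to $(1-\theta)^2b_{S^4}$. Complexified real classes in $K_{S^1}(S^4)$ are self-conjugate, so the correct restriction is $\pm\theta^{-1}(1-\theta)^2b_{S^4}$. Then $c_\C r^G_{S^1}(\lambda(D)-c)=-\theta^{-1}(1-\theta)^2b_{S^4}$ satisfies the constraint.

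For $n\ge3$ one has $\KO^{-4}(\C P^{n-1})\ne0$, and the $(1-\theta)^3$-divisibility argument is valid. So the jump (at $i=4$ in the even block and $i=0$ in the odd block) starts only at $\delta+\bar\mu\ge3$. The case $-\bar\mu=\delta-2$ behaves like $-\bar\mu=\delta-1$.
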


Applying Lin's version of the $10/8$-type inequality \cite[Corollary 1.12]{Lin15}, we obtain the following constraint from Theorem~\ref{thm:ko_calculation}:

\begin{theorem}\label{thm:main_inequality_ko}
    Let $Y$ be an AR homology sphere. Suppose that $Y$ is an integral homology sphere. Then for any smooth, compact, indefinite, spin $4$-manifold $X$ with boundary $\partial X = Y$ and intersection form $p (-E_8) \oplus q \left( \begin{smallmatrix} 0 & 1 \\ 1 & 0 \end{smallmatrix} \right)$ $(p \ge 0, q > 0)$, the following inequalities derived from the $\ko$-invariants hold:
    \begin{enumerate}[label=(\roman*)]
        \item If $-\bar{\mu}(Y) = \delta(Y)$,
        \[
        \renewcommand{\arraystretch}{1.1}
        q \ge  
        \left\{
        \begin{array}{l@{\quad}ll}
        p + \bar{\mu}(Y) + 1 & \text{if } p + \bar{\mu}(Y) \equiv 0, 2 &\pmod{8} \\
        p + \bar{\mu}(Y) + 2 & \text{if } p + \bar{\mu}(Y) \equiv 4 &\pmod{8} \\
        p + \bar{\mu}(Y) + 3 & \text{if } p + \bar{\mu}(Y) \equiv 6 &\pmod{8}.
        \end{array}
        \right.
        \]
        \item If $-\bar{\mu}(Y) = \delta(Y) - 1$,
        \[
        \renewcommand{\arraystretch}{1.1}
        q \ge  
        \left\{
        \begin{array}{l@{\quad}ll}
        p + \bar{\mu}(Y) + 0 & \text{if } p + \bar{\mu}(Y) \equiv 2 &\pmod{8} \\
        p + \bar{\mu}(Y) + 1 & \text{if } p + \bar{\mu}(Y) \equiv 0, 4 &\pmod{8} \\
        p + \bar{\mu}(Y) + 2 & \text{if } p + \bar{\mu}(Y) \equiv 6 &\pmod{8}.
        \end{array}
        \right.
        \]
        \item If $-\bar{\mu}(Y) \le \delta(Y) - 2$,
        \[
        \renewcommand{\arraystretch}{1.1}
        q \ge  
        \left\{
        \begin{array}{l@{\quad}ll}
        p + \bar{\mu}(Y) + 0 & \text{if } p + \bar{\mu}(Y) \equiv 0, 2 &\pmod{8} \\
        p + \bar{\mu}(Y) + 1 & \text{if } p + \bar{\mu}(Y) \equiv 4 &\pmod{8} \\
        p + \bar{\mu}(Y) + 2 & \text{if } p + \bar{\mu}(Y) \equiv 6 &\pmod{8}.
        \end{array}
        \right.
        \]
    \end{enumerate}
\end{theorem}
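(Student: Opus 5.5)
The plan is to combine Lin's relative $10/8$-type inequality \cite[Corollary 1.12]{Lin15} with the values of $\ko_i(Y)$ from Theorem~\ref{thm:ko_calculation}. Lin's corollary says the following. Suppose $X$ is a smooth, compact, spin $4$-manifold with integral homology sphere boundary $Y$ and intersection form $p(-E_8)\oplus q\left(\begin{smallmatrix} 0&1\\1&0\end{smallmatrix}\right)$. Then $q$ is bounded below by $p$ plus a correction term. That correction depends only on the residue of $p$ modulo $8$ (via the $8$-periodicity of $KO_{\Pin(2)}$) and on $\ko_i(Y)$ for a specific index $i\in\Z/8$ determined by $p$. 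So the first step is to restate Lin's corollary in exactly this normalized form: for each residue of $p \bmod 8$, record the index $i(p)$ and the additive constant $c(p)$ with $q \ge p + c(p) + 2\ko_{i(p)}(Y)$ (or the analogous form with Lin's normalization of $\ko$).

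As a sanity check on this restated version, I will take $Y=S^3$, where $\ko_i=0$. This must recover Lin's closed-manifold $KO$ bounds, namely $q\ge p+1,\,p+1,\,p+2,\,p+3$ for $p\equiv 0,2,4,6 \pmod 8$. This is precisely the pattern appearing in case (i) with $\bar\mu=0$, which confirms the expected shape.

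The second step is substitution. Since $Y$ is an integral homology sphere, $\bar\mu(Y)\in\Z$. By the Remark after Theorem~\ref{thm:conn_sum}, $p+\bar\mu(Y)$ is even, so $\bar\mu(Y)$ is even exactly when $p$ is. I then write $\ko_i(Y) = -\tfrac12\bar\mu(Y) + t_i$, where $t_i$ is the bold entry in the relevant row of Theorem~\ref{thm:ko_calculation}. The term $-\tfrac12\bar\mu$ converts the bound into one of the form $q\ge p+\bar\mu(Y)+(\text{bounded correction})$.

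The remaining task is to check that the correction depends only on $p+\bar\mu(Y) \bmod 8$, and not on $p$ and $\bar\mu$ separately. The parity of $\lfloor \bar\mu/2\rfloor$ encodes $\bar\mu \bmod 4$. The two blocks of the table should be exactly what makes this work: shifting $\bar\mu$ by $2$ (equivalently $p$ by $\mp 2$) moves the index $i(p)$ and the constant $c(p)$. The odd block, in which the entries are nonconstant in $i$ even in the case $-\bar\mu=\delta$, should compensate precisely for that shift. I will verify this residue by residue, for $p+\bar\mu\equiv 0,2,4,6 \pmod 8$, in each of the three $\delta$-cases. The cases with $p$ odd are handled the same way, via the $\Z/8$ index shift by the odd part.

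The main obstacle is bookkeeping rather than conceptual: pinning down Lin's conventions. These include whether the index is $i=-p$, $i=2p$, or similar, how $\ko$ is normalized (factor $\tfrac12$ relative to $q$), and the sign and orientation conventions for $Y$ versus $-Y$. Only with the correct conventions do the two parity blocks collapse to a single table depending on $p+\bar\mu \bmod 8$. I would first fix these using the $S^3$ case and Lin's computation for $\Sigma(2,3,11)$ (where $\bar\mu=-2$ and $-\bar\mu=\delta$ is known), checking that both reproduce case (i). Then I would run the residue-by-residue verification for cases (ii) and (iii), where the only changes are the entries $t_0,t_1,t_2,t_4,t_6,t_7$ that lower the bound by one or two.
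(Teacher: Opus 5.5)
Your route is the paper's: its proof of Theorem~\ref{thm:main_inequality_ko} is exactly the substitution of Theorem~\ref{thm:ko_calculation} into Lin's Corollary~1.12 (Theorem~\ref{thm:cor_lin_ko}). But that bookkeeping is the entire proof, so the normalized form you plan to put Lin's corollary into is a genuine problem: it is not the shape of Lin's inequality, and it cannot yield the stated bounds.

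In Lin's corollary the $\ko$-index is tied to $q$, not to $p$:
\begin{itemize}
\item for $p\equiv 0,3 \pmod 4$ it reads $\frac{p}{2} < \ko_{q+4}(Y)+\beta^{q}_{q+4}$;
\item for $p\equiv 1,2 \pmod 4$ it reads $\frac{p}{2}+2 < \ko_{q}(Y)+\beta^{q+4}_{q}$.
\end{itemize}
Here the constants $2l,\,2l+\frac52,\,2l+3,\,2l+\frac32$ are exactly $\frac p2$ or $\frac p2+2$. So $q$ enters both through the periodic term $\ko_q$ or $\ko_{q+4}$ and through $\beta$, which has plateaus. You therefore cannot isolate $q$ as $q\ge p+c(p)\pm 2\ko_{i(p)}(Y)$.

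A quick test shows this. Such a form moves the bound by an even amount when a table entry changes by $1$. But going from case (i) to case (ii) lowers the bound by $1$ when $p+\bar\mu\equiv 2,4,6$ and by $0$ when $p+\bar\mu\equiv 0 \pmod 8$. The whole row of the table matters, not a single entry. Your sign is also reversed: $+2\ko$ with $\ko_i=-\frac12\bar\mu+t_i$ would produce $p-\bar\mu$, not $p+\bar\mu$.

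With the correct form the check is short. Substituting $\ko_i(Y)=-\frac12\bar\mu(Y)+t_i$ gives $\frac{p+\bar\mu}{2}<t_{q+4}+\beta^q_{q+4}$, respectively $\frac{p+\bar\mu}{2}+2<t_q+\beta^{q+4}_q$. Both right-hand sides are nondecreasing in $q$ and increase by $4$ under $q\mapsto q+8$. Hence the bound is the least $q$ satisfying the inequality, and it has the form $q\ge p+\bar\mu+c$ with $c$ depending only on $(p+\bar\mu)\bmod 8$ and the $\delta$-case.

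Independence from $p$ and $\bar\mu$ separately comes from the following mechanism:
\begin{itemize}
\item For fixed $(p+\bar\mu)\bmod 4$, the parity of $\lfloor\bar\mu/2\rfloor$ flips exactly when $p$ moves between $\{0,3\}$ and $\{1,2\}\bmod 4$. That is exactly when Lin's inequality switches form.
\item The relation $t^{\mathrm{odd}}_i=t^{\mathrm{even}}_{i+4}+2-\beta^4_i$, implicit in the derivation of Theorem~\ref{thm:ko_calculation}, combines with the identities $\beta^{q+4}_q-\beta^q_{q+4}=\beta^4_q=4-\beta^4_{q+4}$. Together these make the two forms agree identically.
\end{itemize}
Odd $p$ needs no index shift: the half-integral constants $\frac52,\frac32$ simply absorb the half-integral $-\frac12\bar\mu$.

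Finally, your second calibration point is wrong. $\Sigma(2,3,11)$ has $\bar\mu=0$ and $\delta=1$ (as used in Example~\ref{ex:CP^2_genus}), so it lies in case (ii), not in case (i) with $\bar\mu=-2$. Matching it against case (i) would mislead you. The $S^3$ check is fine.
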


\begin{remark}\label{rem:correction_term}
    We define the correction term $c(p, \bar{\mu}(Y), \delta(Y))$ to be the bold values given in the tables below.
    
    \smallskip\noindent
    (i) Values for Theorem~\ref{thm:main_inequality}:
    \[
    \setlength{\arraycolsep}{12pt}
    \renewcommand{\arraystretch}{1.3}
    \begin{array}{c||c|c|c|c|c}
        p + \bar{\mu}(Y) \pmod 8 & \multicolumn{2}{c|}{0} & 2 & 4 & 6 \\ \cline{2-3}
         & 0 \ (\mathrm{mod}\ 16) & 8 \ (\mathrm{mod}\ 16) & & & \\ \hline\hline
        -\bar{\mu}(Y) = \delta(Y) & \bm{4} & \bm{3} & \bm{2} & \bm{2} & \bm{3} \\ \hline
        -\bar{\mu}(Y) < \delta(Y) & \bm{3} & \bm{2} & \bm{1} & \bm{1} & \bm{2}
    \end{array}
    \]
    
    \smallskip\noindent
    (ii) Values for Theorem~\ref{thm:main_inequality_ko}:
    \[
    \setlength{\arraycolsep}{16pt}
    \renewcommand{\arraystretch}{1.3}
    \begin{array}{c||c|c|c|c}
        p + \bar{\mu}(Y) \pmod 8 & 0 & 2 & 4 & 6 \\ \hline\hline
        -\bar{\mu}(Y) = \delta(Y) & \bm{1} & \bm{1} & \bm{2} & \bm{3} \\ \hline
        -\bar{\mu}(Y) = \delta(Y) - 1 & \bm{1} & \bm{0} & \bm{1} & \bm{2} \\ \hline
        -\bar{\mu}(Y) \le \delta(Y) - 2 & \bm{0} & \bm{0} & \bm{1} & \bm{2}
    \end{array}
    \]

    Using this notation, the constraints on the intersection forms obtained from Theorems~\ref{thm:main_inequality} and \ref{thm:main_inequality_ko} can be expressed in a unified manner as
    \[
        q \ge p + \bar{\mu}(Y) + c(p, \bar{\mu}(Y), \delta(Y)),
    \]
    where $c$ is determined by Table (i) for Theorem~\ref{thm:main_inequality} (which requires $p + \bar{\mu}(Y) \ge 4$) and by Table (ii) for Theorem~\ref{thm:main_inequality_ko}. 
    As shown in the tables, under the condition $p + \bar{\mu}(Y) \ge 4$, the constraint derived from Theorem~\ref{thm:main_inequality} is stronger than or equal to that from Theorem~\ref{thm:main_inequality_ko}.
\end{remark}

Similarly, the constraints derived from Manolescu's $\kappa$-invariant can be expressed using the same unified notation as in Remark~\ref{rem:correction_term}. The corresponding values of the correction term $c(p, \bar{\mu}(Y), \delta(Y))$ are given in the following table:
\[
\setlength{\arraycolsep}{12pt}
\renewcommand{\arraystretch}{1.3}
\begin{array}{c||c|c|c|c}
    p + \bar{\mu}(Y) \pmod 8 & 0 & 2 & 4 & 6 \\ \hline\hline
    -\bar{\mu}(Y) = \delta(Y) & \bm{1} & \bm{1} & \bm{1} & \bm{1} \\ \hline
    -\bar{\mu}(Y) < \delta(Y) & \bm{-1} & \bm{-1} & \bm{-1} & \bm{-1}
\end{array}
\]

Additionally, it was shown by Lin \cite{Lin15} and Ue \cite{Ue22} via orbifold techniques that for any Seifert integral homology sphere $Y$ bounding a spin $4$-manifold $X$ with intersection form $p(-E_8) \oplus q \left(\begin{smallmatrix} 0 & 1 \\ 1 & 0 \end{smallmatrix}\right)$ ($p \ge 0$, $q > 0$), if $p + \bar{\mu}(Y) > 0$ and $p + \bar{\mu}(Y) \equiv 0 \pmod 8$, then $q \ge p + \bar{\mu}(Y) + 2$. As seen from the table above, the constraint provided by Theorem~\ref{thm:main_inequality} is sharper.

Moreover, the constraints from Lin \cite[Theorem 7]{Lin17} can be specialized to our setting by applying them to a cobordism from $Y_0 = S^3$ to $Y_1 = Y$, an AR homology sphere. Substituting $b^+(X) = q$ and $b^-(X) = 8p + q$, and using the relation $\beta(Y, \s) = -\bar{\mu}(Y, \s)$ (see \cite[Section 7.1]{DSS26}), Lin's theorem translates into the following constraints:
\begin{enumerate}[label=(\roman*)]
    \item If $q = 1$, then $p + \bar{\mu}(Y, \s) \le 0$.
    \item If $q = 2$, then $p - \alpha(Y, \s) \le 0$.
\end{enumerate}

Setting $q = 1$ in Theorem~\ref{thm:main_inequality_ko}, we obtain $p + \bar{\mu}(Y) \le 1$. Combining this with the fact that $p + \bar{\mu}(Y)$ is an even integer, we deduce that $p + \bar{\mu}(Y) \le 0$. 
Similarly, setting $q = 2$ in Theorem~\ref{thm:main_inequality_ko} yields $p + \bar{\mu}(Y) \le 1$ when $-\bar{\mu}(Y) = \delta(Y)$, and $p + \bar{\mu}(Y) \le 2$ when $-\bar{\mu}(Y) < \delta(Y)$. 
In the latter case, by \cite[Theorem 1.2]{Sto17} and the fact that the modulo $2$ reduction of $\alpha(Y)$ coincides with the Rokhlin invariant, we have $\alpha(Y) \ge -\bar{\mu}(Y) + 2$. This implies that our constraint $p + \bar{\mu}(Y) \le 2$ is sharper than the condition $p - \alpha(Y) \le 0$. Therefore, under our setting, the constraints derived from Theorem~\ref{thm:main_inequality_ko} are stronger than these previous results.

\begin{remark}\label{rem:rational}
    In all the above constraints, the assumption that $Y$ is an integral homology sphere is used only to ensure that the intersection form of the spin $4$-manifold $X$ bounded by $Y$ is unimodular. When generalizing to an arbitrary AR homology sphere $Y$, one must fix a spin structure $\mathfrak{s}$ on $Y$ and require that the spin structure on $X$ restricts to $\mathfrak{s}$ on the boundary. Under this setup, these constraints remain valid if $p$ and $q$ are replaced by $-\frac{1}{8}\sigma(X)$ and $b^+(X)$, respectively.
\end{remark}

By applying the double branched cover construction used in \cite{KMT25} to our constraints, we obtain the following genus bound:

\begin{corollary}\label{cor:knot_inequality}
    Let $K$ be a knot in $S^3$ whose double branched cover $\Sigma(K)$ is an AR homology sphere. Let $X$ be a smooth, compact $4$-manifold with boundary $\partial X = S^3$ such that $H_1(X; \Z) = 0$, and let $S \subset X$ be a connected, oriented, compact, properly embedded surface bounded by $K$. Suppose that $[S]$ is divisible by $2$ in $H_2(X; \Z)$ and $\operatorname{PD}(w_2(X)) \equiv [S]/2 \pmod 2$, and that the double branched cover $\Sigma(S)$ along $S$ satisfies $\sigma(\Sigma(S)) \le 0$ and $b^+(\Sigma(S)) > 0$. Then the following inequality holds:
    \[
    g(S) \ge -2b^+(X) - \frac{1}{4}\sigma(X) + \frac{5}{16}[S]^2 - \frac{5}{8}\sigma(K) + \bar{\mu}(\Sigma(K)) + c\big(p, \bar{\mu}(\Sigma(K)), \delta(\Sigma(K))\big)
    \]
    where $p = -\frac{1}{8}\sigma(\Sigma(S))$ and $c\big(p, \bar{\mu}(\Sigma(K)), \delta(\Sigma(K))\big)$ is the correction term given in Remark~\ref{rem:correction_term}.
\end{corollary}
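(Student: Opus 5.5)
Let $W=\Sigma(S)$ be the double cover of $X$ branched along $S$, which exists because $[S]$ is divisible by $2$. Its boundary is $\Sigma(K)$, an AR homology sphere. The condition $\operatorname{PD}(w_2(X))\equiv [S]/2 \pmod 2$ is exactly the standard criterion (as used in \cite{KMT25}) for $W$ to be spin. The plan is to apply the unified constraint of Remark~\ref{rem:correction_term}, in its rational form from Remark~\ref{rem:rational}, to $W$, and then to rewrite $b^+(W)$ and $\sigma(W)$ in terms of data on $X$, $S$ and $K$.

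\textbf{Step 1: spin structure and hypotheses.} Since $H_1(X;\Z)=0$ and $[S]$ is divisible by $2$, the branched cover is well defined. Because $\Sigma(K)$ is a $\Z/2$-homology sphere, it carries a unique spin structure, so the restriction condition in Remark~\ref{rem:rational} is automatic. The hypotheses $\sigma(W)\le 0$ and $b^+(W)>0$ ensure we are in the indefinite, negative-signature setting. We then get
\[ b^+(W) \ge p+\bar\mu(\Sigma(K))+c\bigl(p,\bar\mu(\Sigma(K)),\delta(\Sigma(K))\bigr),\qquad p=-\tfrac18\sigma(W). \]
The requirement $p+\bar\mu\ge 4$ for Table (i) only matters when that table is used; otherwise Table (ii) applies unconditionally. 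I will state the corollary with whichever correction term is valid.

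\textbf{Step 2: topological formulas.} Next I compute the invariants of $W$ using the standard branched-cover formulas:
\[ \chi(W)=2\chi(X)-\chi(S),\qquad \sigma(W)=2\sigma(X)-\tfrac12[S]^2+\sigma(K). \]
The signature formula is the relative version with the knot signature correction, with sign conventions taken as in \cite{KMT25}. For Betti numbers, $b_1(W)=0$ holds rationally, since $H_1$ of the double branched cover is controlled by $H_1(X)=0$ together with the fact that the boundary is a $\Q$HS. Hence
\[ b_2(W)=\chi(W)-1=2b_2(X)+2g(S). \]
Since the boundary is a rational homology sphere, the intersection form of $W$ is nondegenerate over $\Q$, so
\[ b^+(W)=\tfrac12\bigl(b_2(W)+\sigma(W)\bigr)=b_2(X)+g(S)+\sigma(X)-\tfrac14[S]^2+\tfrac12\sigma(K). \]

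\textbf{Step 3: combine.} I substitute into the inequality from Step 1, using $p=-\tfrac14\sigma(X)+\tfrac1{16}[S]^2-\tfrac18\sigma(K)$ and $b_2(X)=2b^+(X)-\sigma(X)$. Solving for $g(S)$ gives
\[ g(S)\ge -2b^+(X)-\tfrac14\sigma(X)+\tfrac5{16}[S]^2-\tfrac58\sigma(K)+\bar\mu+c. \]
Checking the coefficients: $\sigma(X)$ contributes $-\tfrac14$, $[S]^2$ contributes $\tfrac1{16}+\tfrac14$, and $\sigma(K)$ contributes $-\tfrac18-\tfrac12$. These match the statement.

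\textbf{Main obstacle.} The delicate part is Step 2. One must confirm $b_1(W)=0$ (or at least that it does not spoil the count) and fix the sign convention in the signature formula so that the $\sigma(K)$ term comes out as stated. I would import both, together with the spin criterion, directly from the lemmas in \cite{KMT25}.
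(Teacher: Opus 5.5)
Your proposal is correct and follows the paper's proof. You apply the unified constraint of Remark~\ref{rem:correction_term} to $\Sigma(S)$ via Remark~\ref{rem:rational}, using uniqueness of the spin structure on $\Sigma(K)$, with $p=-\frac18\sigma(\Sigma(S))$ and $q=b^+(\Sigma(S))$, and then substitute the branched-cover formulas of Lemma~\ref{lem:KMT_double_branch} for the cobordism from $(S^3,U)$ to $(S^3,K)$. Your Euler-characteristic derivation of $b^+(\Sigma(S))$ just reproduces that lemma's $b^+$ formula, and the lemma's assertion $b_1(\Sigma(S))=0$ covers exactly the point you flagged as delicate.
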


As a special case of this corollary, we obtain the following result, which provides examples where equality is achieved in the constraints of Theorem~\ref{thm:main_inequality} and Theorem~\ref{thm:main_inequality_ko}.

\begin{example}\label{ex:CP^2_genus}
    Let $K \in \{T(3, 5), T(3, 7), T(3, 11)\}$ be a knot in $S^3$. Let $X$ be a smooth, compact $4$-manifold with boundary $\partial X = S^3$. Then the minimal genus among all connected, oriented, compact, properly embedded surfaces $S \subset X$ bounded by $K$ and representing the specified homology class is determined as follows:
    \begin{enumerate}
        \item If $X = \C P^2 \setminus \operatorname{int} D^4$ and $[S] = 6 \in H_2(X; \Z) \cong \Z$, then the minimal genus is equal to $g_4(K) + 10$.
        \item If $X = (\C P^2 \# \C P^2) \setminus \operatorname{int} D^4$ and $[S] = (6, 6) \in H_2(X; \Z) \cong \Z \oplus \Z$, then the minimal genus is equal to $g_4(K) + 20$.
    \end{enumerate}
\end{example}

\begin{remark}
    We note that the calculation of the $\ko$-invariants required for Example~\ref{ex:CP^2_genus} is covered by Lin's results on the Brieskorn spheres $\Sigma(2, 3, k)$ \cite[Theorem 1.9]{Lin15}.
\end{remark}

\begin{remark}\label{rem:closed_surface}
    The sharp lower bound necessary for Example~\ref{ex:CP^2_genus} can also be obtained by capping the knot using \cite{EG22} to reduce the problem to a closed surface, and then applying the bounds from \cite{HLSX22} and \cite{KMT25}. The details of this proof are provided in Section~~\ref{subsec:apps}.
    Additionally, as stated in Proposition~\ref{prop:count_ex}, there exist torus knots that do not satisfy conditions (1) and (2) of Example~\ref{ex:CP^2_genus}.
\end{remark}

As a further application, we consider Bauer's strategy for proving the $11/8$-conjecture (see Manolescu \cite{Man14}). Suppose there exists a counterexample to the conjecture, namely a closed, smooth spin $4$-manifold $X$ with intersection form $2r (-E_8) \oplus q \left( \begin{smallmatrix} 0 & 1 \\ 1 & 0 \end{smallmatrix} \right)$ such that $q < 3r$. By taking connected sums with $S^2 \times S^2$, we may assume that $q = 3r - 1$. If $\pi_1(X) = 1$, then by a theorem of Freedman and Taylor \cite{FT77} and the framework in \cite{Man14}, $X$ admits a decomposition
\begin{equation}\label{eq:X_decomp}
    X = X_1 \cup_{Y_1} X_2 \cup_{Y_2} \cdots \cup_{Y_{r - 1}} X_r
\end{equation}
such that:
\begin{itemize}
    \item $Y_i$ is an integral homology $3$-sphere for all $i$.
    \item For $1 \le i \le r - 1$, the manifold $X_i$ has intersection form $2 (-E_8) \oplus 3 \left( \begin{smallmatrix} 0 & 1 \\ 1 & 0 \end{smallmatrix} \right)$.
    \item The manifold $X_r$ has intersection form $2 (-E_8) \oplus 2 \left( \begin{smallmatrix} 0 & 1 \\ 1 & 0 \end{smallmatrix} \right)$.
\end{itemize}

Manolescu proved that such a decomposition cannot exist if all the homology spheres $Y_i$ are Floer $K_G$-split \cite[Theorem 1.6]{Man14}. Using our constraints on intersection forms, we establish an analogous non-existence result for AR homology spheres.

\begin{theorem}\label{thm:4-manifold_split}
    There exists no closed spin $4$-manifold $X$ admitting a decomposition of the form~\eqref{eq:X_decomp} in which all the homology spheres $Y_i$ are AR homology spheres.
\end{theorem}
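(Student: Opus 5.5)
The plan is to argue by contradiction. Suppose such an $X$ exists, and for $1 \le k \le r-1$ write
\[
X_{\le k} = X_1 \cup_{Y_1} \cdots \cup_{Y_{k-1}} X_k, \qquad X_{>k} = X_{k+1} \cup \cdots \cup X_r .
\]
Then $X_{\le k}$ is a smooth, compact, indefinite spin $4$-manifold with $\partial X_{\le k} = Y_k$ and intersection form $2k(-E_8) \oplus 3k \left(\begin{smallmatrix} 0 & 1 \\ 1 & 0 \end{smallmatrix}\right)$. Deleting a ball from $X_{>k}$ gives a spin cobordism from $Y_k$ to $S^3$ with intersection form $2(r-k)(-E_8) \oplus (3(r-k)-1)\left(\begin{smallmatrix} 0 & 1 \\ 1 & 0 \end{smallmatrix}\right)$. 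The idea is to squeeze $\bar\mu(Y_k)$ between two bounds, one from each side of $Y_k$, and show they are incompatible for a suitable $k$.

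\emph{Near side.} Apply Theorem~\ref{thm:main_inequality} to $X_{\le k}$ (when $2k + \bar\mu(Y_k) \ge 4$), and Theorem~\ref{thm:main_inequality_ko} otherwise. In the unified notation of Remark~\ref{rem:correction_term} this gives
\[
\bar\mu(Y_k) \le k - c\bigl(2k, \bar\mu(Y_k), \delta(Y_k)\bigr).
\]
Here $2k + \bar\mu(Y_k)$ is even, by the remark on Rokhlin invariants.

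\emph{Far side.} Apply Manolescu's cobordism inequality to $X_{>k}$ minus a ball, with $Y_0 = Y_k$ and $Y_1 = S^3$:
\[
\kappa(Y_k) + 3(r-k) - 1 \ge 2(r-k) - 1, \qquad\text{i.e.}\qquad \kappa(Y_k) \le r-k .
\]
Insert the Dai--Sasahira--Stoffregen formula for $\kappa$ quoted before Theorem~\ref{thm:ko_calculation}. This gives $\bar\mu(Y_k) \ge k-r$ when $-\bar\mu = \delta$, and $\bar\mu(Y_k) \ge k-r+2$ when $-\bar\mu < \delta$. Since these bounds are far too weak on their own, I would sharpen the far side by replacing $\kappa$ with Lin's relative $\ko$-inequality for cobordisms, evaluated with the values of Theorem~\ref{thm:ko_calculation}. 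These values depend on $i \in \Z/8$, so the bound gets better by a few units in specific residues of $2(r-k)$ modulo $8$.

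\emph{Combining.} I would start with $k = r-1$. There the far side is just $X_r$, with $p = 2$ and $q = 2$, and the $\kappa$/$\ko$ inequality forces $\bar\mu(Y_{r-1}) \ge 0$ in case (i) and $\bar\mu(Y_{r-1}) \ge 2$ in case (ii), using parity. The near side for $X_{\le r-1}$ then has $p + \bar\mu \ge 2r - 2$. Running through the residue classes of $2(r-1) + \bar\mu$ modulo $16$ in Theorem~\ref{thm:main_inequality}, I would try to show that the inequality $3(r-1) \ge 2(r-1) + \bar\mu + c$ fails.

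For large $r$ this single step will not suffice. The natural fix is a descending induction on $k$ that propagates a lower bound of the form $\bar\mu(Y_k) \ge \beta_k$ from $Y_k$ to $Y_{k-1}$, using the single piece $X_k$ as a cobordism. The goal is to push the lower bound up so that it eventually exceeds the upper bound $k - c$ coming from $X_{\le k}$, with $k = 1$ as the final target, where $X_1$ has $p = 2$ and $q = 3$.

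\emph{Main obstacle.} The hard step is this propagation. Theorems~\ref{thm:main_inequality} and \ref{thm:main_inequality_ko} only control manifolds \emph{bounded by} an AR sphere, whereas $X_{>k}$ is bounded by $-Y_k$, which need not be AR. So all far-side information must go through cobordism inequalities ($\kappa$, or Lin's $\ko$ version) whose inputs are the invariants of $Y_k$ itself. The delicate point is to check that the correction terms, namely the $+2$ in $\kappa$ when $-\bar\mu < \delta$ and the $\mathbb{Z}/8$-periodic values of $\ko$, combine with parity so that the two bounds actually cross. I expect this to require a case analysis on $\bar\mu(Y_k) \bmod 16$ and on whether $-\bar\mu(Y_k) = \delta(Y_k)$, $\delta(Y_k) - 1$, or $\le \delta(Y_k) - 2$. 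If the pure cobordism bound still leaves a gap, I would instead apply Theorem~\ref{thm:main_inequality} to the unions $X_{\le k}$ for all $k$ simultaneously and look for a contradiction in the resulting chain of constraints.
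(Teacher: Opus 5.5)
There is a genuine gap. It sits exactly at the step you call the main obstacle: with the tools you list, the upper and lower bounds on $\bar{\mu}(Y_k)$ never cross.

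\textbf{Why your inequalities cannot give a contradiction.} Every inequality you propose depends on $Y_k$ only through $\bar{\mu}(Y_k)$ and $\delta(Y_k)$, via Theorems~\ref{thm:main_inequality} and \ref{thm:main_inequality_ko} and the $\kappa$/$\ko$ tables. Now take $\bar{\mu}(Y_k)=\delta(Y_k)=0$ for all $k$; for instance $Y_k=S^3$, which is AR.
\begin{itemize}
\item The near-side bound for $X_{\le k}$ reads $3k\ge 2k+c$. Here $c=1$ for $k=1$ (Theorem~\ref{thm:main_inequality_ko}), $c=2$ for $k=2$, $c=3$ for $k=3$, and $c\le 4$ for $k\ge 4$. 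So it always holds.
\item The far-side bound is $\kappa(Y_k)=0\le r-k$, which also holds.
\item All $\ko_i(Y_k)$ vanish, so every $\ko$-cobordism bound reduces to the one for a cobordism $S^3\to S^3$.
\end{itemize}
Those cobordism bounds miss the gain from $b^+\ge 1$ that Furuta's theorem and Theorem~\ref{thm:cor_lin_ko} exploit when the incoming end is a sphere. For $X_r$ ($p=q=2$), Manolescu's inequality gives only $2\ge 1$. Proposition~\ref{prop:ko_admissible} applied to $S^{\mathbb{H}}\to S^{2\tilde{\R}}$ gives only $2\le 2$.

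So neither your descending induction nor the ``all $k$ simultaneously'' fallback can reach a contradiction, even though here $X_r\cup D^4$ visibly violates the $10/8$-theorem. More generally, the cumulative bound $\bar{\mu}(Y_k)\le k-c$ is vacuous for large $k$; for example $\bar{\mu}(Y_4)=2$ passes. Also, the cobordism inequalities lose a constant per piece ($\kappa(Y_{k-1})\le\kappa(Y_k)+2$), so lower bounds pushed backwards deteriorate.

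A minor slip: your displayed $\kappa$-inequality has $\kappa(Y_k)$ on the wrong side. It should read $3(r-k)-1\ge\kappa(Y_k)+2(r-k)-1$, though your conclusion $\kappa(Y_k)\le r-k$ is correct.

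\textbf{The missing idea: the stem inclusion.} By Construction~\ref{con:lattice_spectrum}, every AR sphere $Y$ has a local, hence admissible, map $S^{-\frac{1}{2}\bar{\mu}(Y)\mathbb{H}}\to\SWF(Y)$. This lets the paper treat the pieces one at a time and run the induction \emph{forwards} on the upper bound $\bar{\mu}(Y_k)\le 0$.
\begin{itemize}
\item \emph{Base case.} Your own near-side bound at $k=1$ already gives $\bar{\mu}(Y_1)\le 0$. If $\bar{\mu}(Y_1)\ge 2$, Lemma~\ref{lem:existence_of_admissible} gives a Furuta--Mahowald class of level $(1+\frac12\bar{\mu}(Y_1),3)$ or $(1+\frac12\bar{\mu}(Y_1),4)$. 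Restricting to $S^{2\mathbb{H}}$ and composing with $S^{3\tilde{\R}}\hookrightarrow S^{4\tilde{\R}}$ gives level $(2,4)$, contradicting Theorem~\ref{thm:inequality}.
\item \emph{Inductive step.} Suppose $\bar{\mu}(Y_{k-1})\le 0$. Take the relative Bauer--Furuta map $\Sigma^{\mathbb{H}}\SWF(Y_{k-1})\to\Sigma^{3\tilde{\R}}\SWF(Y_k)$ of the \emph{single} piece $X_k$. Precompose it with the stem inclusion and restrict to $S^{\mathbb{H}}$. This gives an admissible $S^{\mathbb{H}}\to\Sigma^{3\tilde{\R}}\SWF(Y_k)$, so $X_k$ behaves like a manifold with form $2(-E_8)\oplus 3\left(\begin{smallmatrix} 0 & 1 \\ 1 & 0 \end{smallmatrix}\right)$ bounded by $Y_k$. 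Composing with Theorem~\ref{thm:admissible_map}, the same argument gives $\bar{\mu}(Y_k)\le 0$.
\item \emph{Final piece.} The same restriction applied to $X_r$ yields an admissible $S^{\mathbb{H}}\to S^{2\tilde{\R}}$, contradicting Furuta.
\end{itemize}
The stem inclusion turns the incoming end of each piece into a sphere, where the sharp Furuta and Hopkins--Lin--Shi--Xu constraints apply. Neither $\kappa$/$\ko$ nor the unions $X_{\le k}$ are needed. The concern that $-Y_k$ need not be AR never arises, because information flows forward through $\SWF(Y_k)$ itself.
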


\subsection*{Acknowledgements}
The author would like to express his deepest gratitude to his advisor, Hokuto Konno, for his detailed feedback and valuable suggestions on the manuscript, for many meaningful discussions, and for his continuous support and encouragement throughout this work. 
The author is also deeply grateful to Hirofumi Sasahira for his helpful comments and suggestions, especially regarding the application to splitting $4$-manifolds as in Theorem~\ref{thm:4-manifold_split} and the construction of the $\Pin(2)$-lattice spectrum as in Construction~\ref{con:lattice_spectrum}. Furthermore, the author wishes to thank Jianfeng Lin for fruitful conversations about $\ko$-invariants, particularly concerning Corollary~\ref{cor:sequence}; Motoo Tange for insightful discussions on knot applications; Yoshihiro Fukumoto for explanations concerning orbifolds; and Masaki Taniguchi for perceptive comments on knot applications, which led to Remark~\ref{rem:closed_surface} and Proposition~\ref{prop:count_ex}, as well as for suggesting the geometric perspective and question formulated in Remark~\ref{rem:philosophy} and Question~\ref{q:knot}.

\subsection*{AI use}
The author used generative AI tools (Gemini, ChatGPT, and Claude) to assist with literature search, checking computations, refining arguments, improving the writing, and creating figures. All mathematical ideas and the final content remain the author's own, and the author assumes full responsibility for this work.

\section{Seiberg--Witten Floer Spectrum and Relative Bauer--Furuta Invariant}

In this section, we will review the Seiberg--Witten Floer spectrum and the relative Bauer--Furuta invariant in the case of a rational homology 3-sphere with a spin structure. See \cite{Man03} for the details.

\subsection{The $\Pin(2)$-equivariant stable homotopy category}

We introduce the $\Pin(2)$-equivariant stable homotopy category $\mathfrak{C}_{\Pin(2)}$ to define the $\Pin(2)$-equivariant Seiberg--Witten Floer spectrum. Let $G = \Pin(2)$.
\begin{definition}[{\cite[Section 3.1]{DSS26}}]\label{def:Pin(2)-category}
    Let $\mathfrak{C}_G$ be the category defined as follows.
\begin{itemize}
    \item The objects of $\mathfrak{C}_G$ are triples $(W, m, n)$. Here, $W$ is a pointed $G$-CW complex, $m \in \Z$, and $n \in \Q$.
    \item For objects $(W_0, m_0, n_0)$ and $(W_1, m_1, n_1)$ in $\mathfrak{C}_G$, the set of morphisms is defined as follows: when $n_0 - n_1 \in \Z$,
    \[
    \begin{split}
    &\mathrm{Mor}_{\mathfrak{C}_G}((W_0, m_0, n_0), (W_1, m_1, n_1)) \\
    &\quad = \lim_{p,q \to \infty} [\Sigma^{\tilde{\R}^p \oplus \mathbb{H}^q} W_0, \Sigma^{\tilde{\R}^{p+m_0-m_1} \oplus \mathbb{H}^{q+n_0-n_1}} W_1]^0_G
    \end{split}
    \]
    and when $n_0 - n_1 \notin \Z$, it is defined as
    \[
    \mathrm{Mor}_{\mathfrak{C}_G}((W_0, m_0, n_0), (W_1, m_1, n_1)) = \emptyset.
    \]
\end{itemize}
We call this category the \emph{$\Pin(2)$-equivariant stable homotopy category}.
\end{definition}
%\medskip

For a $G$-representation $F$ which is isomorphic to $\tilde{\R}^{a} \oplus \mathbb{H}^{b}$, we define the formal suspension:
\[
    \begin{split}
    &\Sigma^{F} (W, m, n) := (\Sigma^F W, m, n)\\
    &\Sigma^{-F} (W, m, n) := (\Sigma^{F^{S^1}} W, m + 2a, n+b)
    \end{split}
\]
where $F^{S^1}$ is the $S^1$-fixed subspace of $F$. For $q \in \Q$, we denote $(S^0, 0, q)$ as $S^{-q\mathbb{H}}$. Also, $(W, 0, 0)$ may sometimes be written simply as $W$.

\begin{remark}
    For such $G$-representation $F$, $\Sigma^F$ and $\Sigma^{-F}$ define functors on $\mathfrak{C}_G$ that are mutually inverse up to natural isomorphism.
\end{remark}

\subsection{Seiberg--Witten Floer spectrum}

\begin{definition}
    Let $Z$ be a pointed $G$-CW complex and $l \in \Z_{\ge 0}$. We say that $Z$ is of type SWF at level $l$ if the following conditions are satisfied:
    \begin{itemize}
        \item The action of $G$ is free on the complement $Z - Z^{S^1}$.
        \item The $S^1$-fixed point set $Z^{S^1}$ is $G$-homotopy equivalent to the sphere $S^{l\tilde{\R}}$.
    \end{itemize}
\end{definition}

Let $Y$ be a rational homology 3-sphere with a spin structure $\s$ and a metric $g$. Write $S$ for the associated spinor bundle on $Y$ and fix a flat spin connection $A_0$. We will define the Seiberg--Witten Floer spectrum for $(Y, \s)$ as an object of the category $\mathfrak{C}_G$. 

Now we consider the global Coulomb slice:
\[
V \coloneqq \ker (d^* :i\Omega^1(Y) \to i\Omega^0(Y)) \oplus \Gamma(S).
\]
$G$ acts on $\ker d^*$ as scalar multiplication through $G = S^1 \sqcup jS^1 \twoheadrightarrow \Z / 2 = \{\pm1\} \text{ (where } S^1 \mapsto 1 \text{ and } jS^1 \mapsto -1)$ and on $S$ as quaternionic scalar multiplication. Let $l : V \to V$ be the self-adjoint first order elliptic operator defined by $l(a, \phi) = (*da, \slashed{D}_{A_0}(\phi))$ where $\slashed{D}_{A_0}$ is the Dirac operator associated to the connection $A_0$. For any $\lambda, \mu \in \R (\lambda < \mu)$, we denote $V^\mu_\lambda$  as the subspace of $V$ spanned by all eigenvectors whose eigenvalues are in $[\lambda, \mu)$. Since $l$ is an elliptic operator, $V^\mu_\lambda$ is a finite dimensional subspace and there exist $p, q \in \Z_{\ge 0}$ such that $V^\mu _\lambda \cong p \tilde{\R} \oplus q \mathbb{H}$, where $\tilde{\R}$ corresponds to the differential form component and $\mathbb{H}$ to the spinor component.

Following Khandhawit, Lin, and Sasahira \cite[Section 2]{KLS18}, we consider the gradient flow of the Chern-Simons-Dirac functional on the Coulomb slice $V$ equipped with an appropriate metric. By taking a finite-dimensional approximation of this vector field on $V^\mu_\lambda$ and multiplying it by a suitable bump function, we obtain a well-defined flow on a sufficiently large bounded region. Choosing $\lambda \ll 0$ and $\mu \gg 0$, we take the $G$-equivariant Conley index of this approximated flow on $V^\mu_\lambda$ to obtain a $G$-space $I^\mu_\lambda$ of type SWF at level $\dim_\R V^0_\lambda(\tilde{\R})$, where $V^0_\lambda(\tilde{\R}) = V^0_\lambda \cap \ker d^*$. Let $n(Y, \s, g)$ be the rational number defined by:
\[
n(Y, \s, g) \coloneqq \operatorname{ind}_{\C} \slashed{D}_{\hat{A}_0} + \frac{\sigma(X)}{8}.
\]
Here $X$ is a smooth, compact, spin Riemannian 4-manifold with boundary $Y$. We assume that $X$ is equipped with a metric $\hat{g}$, a spin structure $\mathfrak{t}$, and a spin connection $\hat{A}_0$ such that they are cylindrical on a collar neighborhood of the boundary, and their restrictions to $Y$ coincide with $g$, $\s$, and $A_0$, respectively. Furthermore, $\slashed{D}_{\hat{A}_0}$ is the Dirac operator associated with the connection $\hat{A}_0$.

We define the $G$-equivariant Seiberg--Witten Floer spectrum $\SWF(Y, \s, g)$ to be an object $\Sigma^{-V^0_\lambda}(I^\mu_\lambda, 0, \frac{1}{2} n(Y, \s, g))$ of the category $\mathfrak{C}_G$, for fixed real numbers $\lambda, \mu$ with $\lambda \ll 0, \mu \gg 0$.

\begin{remark}
    The number $n(Y, \s, g)$ is independent of the choice of $X$ (including the metric and the spin structure on X) and the choice of the connection $\hat{A}_0$. Moreover, $\SWF(Y, \s, g)$ is independent of the metric $g$, the parameters $\lambda, \mu$, and the choice of $I^\mu_\lambda$ up to a canonical isomorphism in the category $\mathfrak{C}_G$. Therefore, $\SWF(Y, \s, g)$ may sometimes be written simply as $\SWF(Y, \s)$.
\end{remark}

\subsection{Relative Bauer--Furuta invariant}

Now we will define the relative Bauer--Furuta invariant as a morphism in the category $\mathfrak{C}_G$. Suppose $X$ is a spin cobordism between rational homology 3-spheres $Y_0$ and $Y_1$ with $b_1(X) = 0$. Further, we assume $X$ is equipped with a metric $\hat{g}$ and a spin structure $\mathfrak{t}$, such that they are cylindrical on a collar neighborhood of the boundary, and their restrictions to $Y_i$ coincide with $g_i$ and $\s_i$, where $g_i$ and $\s_i$ are the metric and spin structure on $Y_i$, respectively.

\begin{definition}[{\cite[Definition 4.2]{Lin15}}]
    Let $(W_0, m_0, n_0)$ and $(W_1, m_1, n_1)$ be two objects in $\mathfrak{C}_G$, where $W_0$ and $W_1$ are spaces of type SWF at levels $l_0$ and $l_1$, respectively. A morphism $[\phi] \in \mathrm{Mor}_{\mathfrak{C}_G}((W_0, m_0, n_0), (W_1, m_1, n_1))$ is called \emph{admissible} if there exist sufficiently large integers $p, q$ and a representative $G$-map
    \[
    \phi_{p,q} : \Sigma^{\tilde{\R}^p \oplus \mathbb{H}^q} W_0 \to \Sigma^{\tilde{\R}^{p+m_0-m_1} \oplus \mathbb{H}^{q+n_0-n_1}} W_1
    \]
    such that one of the following two conditions is satisfied:
    \begin{itemize}
        \item $l_0 - m_0 < l_1 - m_1$ and the induced map on the $G$-fixed point set 
        \[
        (\phi_{p,q})^G : W_0^G \to W_1^G
        \] 
        is a homotopy equivalence.
        
        \item $l_0 - m_0 = l_1 - m_1$ and the induced map on the $S^1$-fixed point set 
        \[
        (\phi_{p,q})^{S^1} : \Sigma^{\tilde{\R}^p} W_0^{S^1} \to \Sigma^{\tilde{\R}^{p+m_0-m_1}} W_1^{S^1}
        \] 
        is a $G$-homotopy equivalence.
    \end{itemize}
\end{definition}

\begin{remark}
    The composition of two admissible morphisms is again admissible.
\end{remark}

The formulation of the relative Bauer--Furuta invariant by Manolescu \cite{Man03} can be summarized as follows.

\begin{theorem}\label{thm:rel_BF}
    By taking a finite-dimensional approximation of the Seiberg--Witten equations on $X$, we obtain an admissible morphism:
    \[
    \Phi_X(\mathfrak{t}, \hat{g}): \Sigma^{-\frac{\sigma(X)}{16} \mathbb{H}} \SWF(Y_0, \s_0, g_0) \to \Sigma^{b^+(X)\tilde{\R}}\SWF(Y_1, \s_1, g_1)
    \]
\end{theorem}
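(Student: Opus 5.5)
The plan is to follow Manolescu's construction in \cite{Man03}, in the refined form of Khandhawit--Lin--Sasahira \cite{KLS18}, and to track the $\Pin(2)$-action and the grading shifts so that the output lands in $\mathfrak{C}_G$ with the stated suspensions. First I fix the gauge: on $X$ I work in the double Coulomb slice, i.e.\ $1$-forms $a$ with $d^*a = 0$ whose restriction to each boundary component lies in $\ker d^*$ on $Y_i$. The Seiberg--Witten map is then
\[
SW = L + C : i\Omega^1_{CC}(X) \oplus \Gamma(S^+) \to i\Omega^+(X) \oplus \Gamma(S^-) \oplus V(Y_0) \oplus V(Y_1).
\]
Here $L = (d^+, \slashed{D}^+_{\hat{A}_0})$ together with the restriction $r$ to the boundary, and $C$ is the quadratic part, which is compact. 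The conjugation by $j$ acts by $-1$ on forms and by quaternionic multiplication on spinors. Hence $SW$ is $G$-equivariant, and the forms and spinors are modelled on $\tilde{\R}$ and $\mathbb{H}$ respectively.

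Next I take finite-dimensional approximations. I choose $U' \subset i\Omega^+ \oplus \Gamma(S^-)$ and spectral cutoffs $\lambda \ll 0 \ll \mu$, and set $U = (L, p^{\mu}_{-\infty} r_0, p^{\infty}_{\lambda} r_1)^{-1}(U' \oplus V^{\mu}_{\lambda}(Y_0) \oplus V^{\mu}_{\lambda}(Y_1))$. Then I consider the map $\mathrm{pr}_{U'} \circ SW$ together with the projected boundary restrictions. The key analytic input is the compactness of the moduli of finite-energy half-trajectories. From it, a large ball $B(U,R)$ maps, with its boundary landing outside a small neighbourhood of the relevant Conley index pairs, into a quotient built from the index pair for $Y_1$ and the dual index pair for $Y_0$. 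Using Spanier--Whitehead (equivariant Conley) duality for $Y_0$, this yields a $G$-map
\[
\Sigma^{U} I^{\mu}_{\lambda}(Y_0) \to \Sigma^{U'} I^{\mu}_{\lambda}(Y_1).
\]
Then I desuspend by the appropriate $V^0_\lambda$'s to get a morphism in $\mathfrak{C}_G$. The suspension bookkeeping comes from the index of $L$ with these boundary conditions, computed separately on the two summands. On the form part, $\operatorname{ind}_{\R}$ equals $-b^+(X)$ plus the contributions of $V^0_\lambda(\tilde{\R})$, since $b_1 = 0$ and the $Y_i$ are rational homology spheres. On the spinor part, $\operatorname{ind}_{\C}\slashed{D}$ equals $n(Y_1) - n(Y_0) - \sigma(X)/8$, plus the spinor parts of $V^0_\lambda$; this follows from APS and the definition of $n(Y,\s,g)$. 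Dividing by $2$ to count in $\mathbb{H}$ gives the shifts $-\frac{\sigma(X)}{16}\mathbb{H}$ on the source and $b^+(X)\tilde{\R}$ on the target. Independence of the choices of $U'$, $\lambda$, $\mu$, $R$ and $\hat g$ follows by the standard homotopy argument, deforming the data through families.

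For admissibility I restrict to fixed points. On the $S^1$-fixed set the spinor vanishes, so the map becomes the linear map $a \mapsto (d^+a, r(a))$ on the form part. Its kernel is $0$ and its cokernel is exactly $H^+(X) \cong \R^{b^+}$, because $b_1(X) = 0$ and $H^1(Y_i) = 0$. The levels satisfy $l_1 - m_1 = l_0 - m_0 + b^+$ after the $b^+\tilde{\R}$ suspension. If $b^+(X) = 0$, we are in the equal-level case and the linear map is a $G$-homotopy equivalence on $S^1$-fixed sets. If $b^+(X) > 0$, the levels strictly increase, and on $G$-fixed points, which are $S^0$, the map is the identity. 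The main obstacle is the analytic compactness step: showing that approximate solutions on the truncated ball stay away from the exit sets uniformly as $\lambda, \mu, U' \to \infty$. I would take this from \cite{Man03} and \cite{KLS18}, with only the equivariance under $j$ checked additionally.
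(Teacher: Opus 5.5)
Your overall plan is the standard Manolescu construction, in the Khandhawit/Khandhawit--Lin--Sasahira form for cobordisms. That is all the paper relies on here: it states the theorem as a summary of Manolescu's construction and gives no independent proof. However, two steps of your sketch fail as written.

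\textbf{The spectral projections at the two ends are interchanged.} In the paper's convention, $I^\mu_\lambda$ has level $\dim V^0_\lambda(\tilde{\R})$. So the negative eigenspaces of $l$ are the unstable directions of the flow, whose linearization is $\partial_t x=-lx$. A solution then extends with finite energy over $[0,\infty)\times Y_1$ only if its boundary value has no component in $V^0_{-\infty}(Y_1)$. It extends over $(-\infty,0]\times Y_0$ only if it has no component in $V^\infty_0(Y_0)$. Hence the Fredholm map is $(L,p^\infty_0 r_0,p^0_{-\infty} r_1)$, and the approximation should be $U=(L,p^\infty_\lambda r_0,p^\mu_{-\infty} r_1)^{-1}(U'\oplus V^\mu_\lambda(Y_0)\oplus V^\mu_\lambda(Y_1))$. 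With your choice, boundary data in $V^\lambda_{-\infty}(Y_1)$ and $V^\infty_\mu(Y_0)$ are left unconstrained. Up to finite dimension, these are exactly the boundary values of solutions of $Lx=0$ on $X$. So the kernel is infinite-dimensional and your $U$ is not a finite-dimensional approximation.

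\textbf{The gauge slice is missing Khandhawit's flux normalization.} Your description of the double Coulomb slice omits the condition $\int_{Y_i}\iota^*(*a)=0$. This matters for a cobordism, and the paper does apply the theorem with two boundary components, taking $Y_0=S^3$. Take $f$ harmonic on $X$ with $f|_{Y_0}=0$ and $f|_{Y_1}=1$. Then $a=i\,df$ satisfies $d^*a=0$, $\iota^*a=0$ on both ends, and $d^+a=0$. So it lies in the kernel of $a\mapsto(d^+a,r(a))$ on your slice, contradicting your claim that this kernel is $0$. Moreover, the gauge transformations $e^{itf}$ with $t\in\R$ preserve your slice. This non-compact residual symmetry has three consequences:
\begin{itemize}
\item it destroys the boundedness needed for the finite-dimensional approximation;
\item it raises the form-part index to $1-b^+(X)$, which breaks your suspension bookkeeping;
\item it breaks the $b^+(X)=0$ case of the admissibility argument.
\end{itemize}
Imposing the flux condition leaves only the constant $S^1$ as residual gauge, using $b_1(X)=0$. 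Your kernel/cokernel computation and your fixed-point argument for admissibility are then correct.

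A smaller bookkeeping point: before duality, the map has the form $S^U\to S^{U'}\wedge I(-Y_0)\wedge I(Y_1)$, using the index of the reversed flow at the incoming end. It is not $\Sigma^U I(Y_0)\to\Sigma^{U'}I(Y_1)$, since $U$ already contains the boundary directions.
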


\begin{remark}
    $\Phi_X(\mathfrak{t}, \hat{g})$ is independent of the metric $\hat{g}$ up to canonical isomorphisms of the Seiberg--Witten Floer spectra of $Y_0$ and $ Y_1$. We sometimes denote this as:
    \[
    \Phi_X(\mathfrak{t}): \Sigma^{-\frac{\sigma(X)}{16} \mathbb{H}} \SWF(Y_0, \s_0) \to \Sigma^{b^+(X)\tilde{\R}}\SWF(Y_1, \s_1).
    \]
\end{remark}

\section{Seiberg--Witten Floer Stable Homotopy Type of AR Homology Spheres}

\subsection{Graded Roots and the Lattice Spectrum}

This section reviews the explicit combinatorial description of the SWF spectrum for almost-rational plumbed homology spheres, as recently established in \cite{DSS26}. This class of manifolds includes all Seifert fibered rational homology spheres with base orbifold $S^2$. We begin by recalling the formal definition of this class.

\begin{definition}
    A rational homology sphere $Y$ is called an \emph{almost-rational plumbed homology sphere} if it can be realized as the boundary of a 4-manifold $X$ obtained by plumbing along an almost-rational graph (see \cite[Definition 8.1]{Nem05}). For brevity, we will often refer to such a manifold simply as an AR homology sphere.
\end{definition}

We utilize the framework of graded roots.

\begin{definition}[{\cite[Definition 3.2]{Nem05}}, {\cite[Section 2.3]{DM19}}]
    Let $R$ be an infinite tree with vertices $\mathcal{V}(R)$ and edges $\mathcal{E}(R)$. We say that $R$ is a \emph{graded root} with a grading function $\chi: \mathcal{V}(R) \to \Q$ taking values in a coset of $2\Z$ in $\Q$ if:
    \begin{itemize}
        \item $|\chi(u) - \chi(v)| = 2$ for any edge $(u, v) \in \mathcal{E}(R)$,
        \item $\chi(u) < \max\{\chi(v), \chi(w)\}$ for any edges $(u,v), (u,w) \in \mathcal{E}(R)$ with $v \neq w$,
        \item $\chi$ is bounded above, 
        \item $\chi^{-1}(k)$ is finite for any $k \in \Q$, and
        \item $\#\chi^{-1}(k) = 1$ for $k \ll 0$.
    \end{itemize}
\end{definition}

\begin{definition}[{\cite[Definition 2.11]{DM19}}]
    A \emph{symmetric graded root} is a graded root $R$ together with an involution $J : \mathcal{V}(R) \to \mathcal{V}(R)$ such that
    \begin{itemize}
        \item $ \chi(v) = \chi(Jv)$ for any vertex $v$,
        \item $(v, w)$ is an edge in $R$ if and only if $(Jv, Jw)$ is an edge on $R$,
        \item for every $k \in \Q$, there is at most one $J$-invariant vertex $v$ with $\chi(v) = k$.
    \end{itemize}
\end{definition}

Given a graded root, one can construct an associated $\Z[U]$-module that encodes its combinatorial structure into an algebraic object.

\begin{definition}[{\cite[Section 2.3]{DM19}}]
    For a graded root $(R,\chi)$, the \emph{associated $\Z[U]$-module} $\mathbb{H}^-(R)$ is defined by
    \[
    \mathbb{H}^-(R) = \bigoplus_{v \in \mathcal{V}(R)} \mathbb{Z}[U]v \Big/ \sim,
    \]
    where the congruence relation $\sim$ is generated by $U \cdot v = w$ for all $(v, w) \in \mathcal{E}(R)$ with $\chi(v) - \chi(w) = 2$. Also, we define the degree of $v \in \mathcal{V}(R)$ to be $\chi(v)$, so that $U$ is the map of degree $-2$. When $R$ is symmetric, the symmetry of the module $\mathbb{H}^-(R)$ is induced by the action of $J$ on the generators $v \in \mathcal{V}(R)$.
\end{definition}

\begin{remark}
    Recall that a vertex $v$ of a graded root $R$ is called a \emph{leaf} if the number of edges incident to $v$ is exactly one. According to \cite[Section 3.3]{Nem05}, any graded root has only finitely many leaves. Since $R$ is a tree, for any two leaves $v_0$ and $v_1$, there exists a unique simple path connecting them. Consequently, in the associated module $\mathbb{H}^-(R)$, there exist integers $n_0, n_1 \ge 0$ such that $U^{n_0}v_0 = U^{n_1}v_1$.
\end{remark}

\begin{remark}
    In what follows, we will often identify a graded root with its associated $\Z[U]$-module and use these terms interchangeably when there is no danger of confusion.
\end{remark}

For an AR homology sphere $Y$ and a spin structure $\s$, the associated symmetric graded root $R(Y, \s)$ is combinatorially defined from its plumbing graph. The precise construction is given in \cite[Section 4]{Nem05}, \cite[Section 4]{DSS26}, and \cite[Section 2.3]{DM19}. Then, the module associated to $R(Y, \s)$ is isomorphic to the $S^1$-equivariant co-Borel homology of its SWF spectrum (cf.~\cite[Lemma 5.9]{DSS26}).

We extract two fundamental numerical invariants from a given symmetric graded root:

\begin{definition}
    Let $R$ be a symmetric graded root.
    \begin{enumerate}[label=(\arabic*)]
        \item We denote the grading of the uppermost vertex in $R$ by $2\delta(R)$.
        \item We denote the grading of the uppermost $J$-invariant vertex in $R$ by $2\beta(R)$.
    \end{enumerate}
\end{definition}

In the context of AR homology spheres, these invariants coincide with the well-known gauge-theoretic invariants $\delta$ and $\bar{\mu}$ (the Neumann--Siebenmann invariant):

\begin{theorem}[{\cite[Section 7.1]{DSS26}}]\label{thm:DSS7.1}
    Let $Y$ be an AR homology sphere with a spin structure $\s$ and let $R(Y, \s)$ be the symmetric graded root associated to $(Y, \s)$. Then the following equalities hold:
    \begin{enumerate}[label=(\arabic*)]
        \item $\delta(R(Y, \s)) = \delta(Y, \s)$.
        \item $\beta(R(Y, \s)) = \beta(Y, \s) = -\bar{\mu}(Y, \s)$
    \end{enumerate}
    where $\beta(Y, \s)$ is the invariant introduced by Manolescu \cite{Man16a, Man16b}.
\end{theorem}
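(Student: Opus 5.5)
The plan is to deduce both equalities from the identification, recalled above, of $\mathbb{H}^-(R(Y,\s))$ with the $S^1$-equivariant co-Borel homology of $\SWF(Y,\s)$ (\cite[Lemma 5.9]{DSS26}), combined with Dai--Sasahira--Stoffregen's explicit description of the $\Pin(2)$-homotopy type as a lattice spectrum. Throughout I will keep track of the grading shift $\frac{1}{2}n(Y,\s,g)$ in the definition of $\SWF(Y,\s)$. I will also fix the normalizations of $\delta$ and $\beta$ as half of the relevant degrees, matching the convention that vertex gradings equal $2\delta(R)$ and $2\beta(R)$.

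For (1), I will work with the $U$-tower. The invariant $\delta(Y,\s)$ is half the maximal degree of an element of the $S^1$-equivariant homology that is nonzero under every power of $U$; equivalently, it is half the bottom degree of the $U$-tower in the Borel model, depending on the convention. In $\mathbb{H}^-(R)$ every vertex $v$ generates an infinite $U$-tail, since $\chi$ is bounded above and $\#\chi^{-1}(k)=1$ for $k\ll 0$. Moreover, every element of $\mathbb{H}^-(R)$ is $U$-nontorsion modulo the torsion submodule. The quotient of $\mathbb{H}^-(R)$ by its $U$-torsion is therefore a single tower $\Z[U]$ whose top degree is $\max\chi$. Transporting this statement through Lemma~5.9 of \cite{DSS26} and matching grading shifts gives $\delta(R)=\delta(Y,\s)$.

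For (2), I plan to compute Manolescu's $\beta$ from the $\Pin(2)$-equivariant Borel homology of the lattice spectrum, which \cite{DSS26} identify with $\SWF(Y,\s)$. The key structural input is that the $j$-action on the spectrum is induced by the involution $J$ on $R$. Following Dai--Manolescu \cite{DM19}, the $\Pin(2)$-homology of a symmetric graded root splits via the $J$-orbits: free orbits $\{v,Jv\}$ with $v\neq Jv$ contribute $\mathbb{F}[q]/(q^2)$-free pieces and do not support the infinite $V$-tower. The $J$-invariant vertices form a single descending chain, the invariant spine, and this chain carries the $q$-towers that determine $\alpha,\beta,\gamma$. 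Running the Gysin or Serre-type spectral sequence relating $S^1$- and $\Pin(2)$-homology, I will show that the middle tower starts at the top $J$-invariant vertex. This gives $\beta(Y,\s)=\beta(R)$, and in fact also $\gamma=\beta(R)$. The main obstacle is this step: one must ensure that no differential or extension shifts the $\beta$-tower away from the top invariant vertex. I would first try to exhibit an explicit $\Pin(2)$-map from $S^{\text{(grading)}}$ onto the invariant spine, giving a lower bound, and then use the projection to the spine to obtain the matching upper bound.

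Finally, $\beta(R)=-\bar{\mu}(Y,\s)$ is a purely lattice-theoretic statement. I will prove it by comparing Némethi's construction of $R(Y,\s)$ with the Neumann--Siebenmann formula $\bar{\mu}=\frac{1}{8}(\sigma(X_\Gamma)-w^2)$, where $w$ is the spherical Wu class of the plumbing. The $J$-invariant lattice points are exactly the characteristic elements symmetric under $k\mapsto -k$. For these elements the Riemann--Roch-type weight function restricted to the invariant locus is maximized at the Wu class, and almost-rationality guarantees that the corresponding vertex is the top one; its grading computes to $-2\bar{\mu}$. This is the argument of \cite{DM19} for AR graphs, and I would import it directly.
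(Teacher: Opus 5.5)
The paper gives no proof of this statement; it is imported from \cite[Section 7.1]{DSS26}.

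\textbf{What is fine.} Your part (1) works: the quotient of $\mathbb{H}^-(R)$ by $U$-torsion is a single tower topped in degree $\max\chi=2\delta(R)$, and transporting this through the graded identification with co-Borel homology gives $\delta(R)=\delta(Y,\s)$. Importing Dai--Manolescu's result that the top $J$-invariant vertex of an AR root sits in grading $-2\bar\mu$ is also legitimate for the last equality. Your description of that result is off, though. A characteristic vector fixed by $k\mapsto -k$ exists only when the form is even (namely $k=0$). What realizes grading $-2\bar\mu$ is the $J$-invariant cube with opposite corners $\pm w$, all of whose vertices have square $w^2$. Almost-rationality is what excludes a higher symmetric component.

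\textbf{The gap: the middle step $\beta(Y,\s)=\beta(R)$.}

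First, \cite{DM19} works with involutive Heegaard Floer homology. That is a module over $\mathbb{F}[U,Q]/(Q^2)$ carrying only $\bar d,\underline d$. It does not compute $\Pin(2)$-Borel homology, a module over $\mathbb{F}[q,v]/(q^3)$, so the splitting you plan to follow is not there. Your description of that splitting is also wrong: a free pair of branches contributes $\tilde H^{G}_*(G_+\wedge_{S^1}Z)\cong\tilde H^{S^1}_*(Z)$, on which $q$ acts by $0$ and $v$ by $U^2$. It is not an $\mathbb{F}[q]/(q^2)$-free piece.

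Second, the ``projection to the spine'' you use for the upper bound does not exist as a map when $\delta(R)>\beta(R)$. In that case the top vertex is a non-invariant leaf with $c(x)>2q$. A $\Pin(2)$-map $X\to S^{q\mathbb{H}}$ that is the identity on the stem would restrict to an $S^1$-map $S^{c(x)\C}\to S^{q\mathbb{H}}$ that is the identity on $S^0$. Pulling back the $K_{S^1}$-Bott class to $S^0$ would then force $(1-\theta)^{c(x)}\mid(1-\theta)^{2q}$, which is false.

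\textbf{How to close it.} Your lower bound is fine: the stem inclusion $S^{\frac{\beta(R)}{2}\mathbb{H}}\to\Hty(R)$ is a local map, and Manolescu's $\beta$ is monotone under local maps. For the upper bound you have two options.
\begin{enumerate}
\item Use the local map $\Hty(R)\to A_{\beta(R),\delta(R)}$ of Lemma~\ref{lem:local_map_AR}, together with $\beta(A_n)=0$ (Stoffregen's computation).
\item Argue directly with the cofiber sequence $S^{q\mathbb{H}}\to X\to X/S^{q\mathbb{H}}$. The quotient is $G_+\wedge_{S^1}W$ with $W$ free away from the basepoint. So its Borel homology is finite-dimensional and annihilated by $q$. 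Consequently every $v$-tower class of $X$ comes from the stem. The connecting map also lands in $\ker q$, which on the stem is only the $\alpha$-row. Hence $\beta$ and $\gamma$ of $X$ equal those of the stem, while $\alpha$ may jump, as it does for $A_n$.
\end{enumerate}
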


According to \cite[Section 7.1]{DSS26}, for any symmetric graded root $R$, there exists a $\Pin(2)$-equivariant stable homotopy type $\Hty(R)$ whose $S^1$-equivariant co-Borel homology is isomorphic to $\mathbb{H}^-(R)$. We recall the construction of the $\Pin(2)$-lattice spectrum below.

\begin{construction}\label{con:lattice_spectrum}
    The stable homotopy type $\Hty(R)$ is constructed as follows:
    \begin{enumerate}[label=(\roman*)]
        \item First, we truncate $R$ at a vertex of grading $h \ll 0$ on the infinite downward stem, choosing $h$ such that $2\beta(R) - h \equiv 0 \pmod 4$. To simplify notation, for any vertex $v$, we define its associated complex dimension as $c(v) := \frac{\chi(v) - h}{2}$, and we let $q := \frac{2\beta(R) - h}{4}$ denote the quaternionic dimension of the stem.
        \item We split the upper part of $R$ into two symmetric components by removing the $J$-invariant stem. Let $\mathcal{L} = \{x_0, \dots, x_k\}$ be the set of leaves on one side, so the set of non-$J$-invariant leaves of $R$ is exactly $\{x_0, \dots, x_k, Jx_0, \dots, Jx_k\}$. We introduce a quaternionic representation sphere $S^{q\mathbb{H}}$ for the stem, and for each symmetric pair of leaves, we introduce a $G$-space $S^{c(x_i)\C} \vee jS^{c(x_i)\C}$.
        \item We define a space $X$ by taking the disjoint union of these spaces and quotienting by an equivalence relation $\sim$ designed to realize the connectivity of the graded root:
        \[
        X = \left( S^{q\mathbb{H}} \sqcup \bigsqcup_{i = 0}^{k} \left( S^{c(x_i)\C} \vee jS^{c(x_i)\C} \right) \right) \Bigg/ \sim
        \]
        The congruence relation $\sim$ is generated by two types of gluings. First, for any two leaves $x, y \in \mathcal{L}$, let $m, n \ge 0$ be the unique minimal integers such that $U^m x = U^n y$. Let $u$ be this intersection vertex, which has complex dimension $c(u) = \frac{\chi(x) - 2m - h}{2}$. We glue the corresponding branches together along the canonical inclusions of the subspace $S^{c(u)\C} \vee jS^{c(u)\C}$:
        \begin{align*}
        S^{c(u)\C} \vee jS^{c(u)\C} &\hookrightarrow S^{c(x)\C} \vee jS^{c(x)\C} \\
        S^{c(u)\C} \vee jS^{c(u)\C} &\hookrightarrow S^{c(y)\C} \vee jS^{c(y)\C}.
        \end{align*}
        Second, we glue each pair of symmetric branches to the $J$-invariant stem. For each leaf $x \in \mathcal{L}$, let $n \ge 0$ be the minimal integer such that $U^n x = U^n Jx$. The complex dimension of this intersection vertex is $c = \frac{\chi(x) - 2n - h}{2}$. To ensure the $G$-equivariance of the gluing when $c$ is odd (say, $c = 2a + 1$), we utilize the canonical orthogonal decomposition of the stem sphere $S^{q\mathbb{H}} \cong (\mathbb{H}^q)^+$:
        \[
        S^{q\mathbb{H}} \cong \left( \mathbb{H}^a \oplus (\C \oplus j\C) \oplus \mathbb{H}^{q - a - 1} \right)^+.
        \]
        (For even $c = 2a$, the $\C \oplus j\C$ term is simply omitted.) We then glue $S^{c(x)\C} \vee jS^{c(x)\C}$ to $S^{q\mathbb{H}}$ as follows: for the $S^{c(x)\C}$ component, we map the subspace $S^{c\C} \subset S^{c(x)\C}$ to the $c$-dimensional sub-representation sphere $(\mathbb{H}^a \oplus \C)^+ \subset S^{q\mathbb{H}}$. Similarly, for the $jS^{c(x)\C}$ component, we map the subspace $jS^{c\C} \subset jS^{c(x)\C}$ to the sub-representation sphere $(\mathbb{H}^a \oplus j\C)^+ \subset S^{q\mathbb{H}}$. Note that the intersection of these two images in $S^{q\mathbb{H}}$ is precisely the quaternionic sphere $S^{a\mathbb{H}} \cong (\mathbb{H}^a)^+$. This specific embedding guarantees that the $j$-action remains well-defined across the quotient, as the interchange of the symmetric branches naturally matches the quaternionic structure intertwining $\C$ and $j\C$ within the stem.
        \item Finally, we obtain the $\Pin(2)$-equivariant stable homotopy type as the spectrum
        \[
        \Hty(R) := \left(X, 0, -\frac{h}{4}\right).
        \]
    \end{enumerate}
\end{construction}

\begin{remark}
    Note that for any symmetric graded root $R$, the $S^1$-fixed point set of the $G$-space $X$ is isomorphic to $S^0$. In other words, $X$ is a $G$-space of type SWF at level $0$.
\end{remark}

The topological significance of the space $\Hty(R)$ is established by the following theorem, which bridges the gap between the combinatorial root and the gauge-theoretic spectrum:

\begin{theorem}[{\cite[Theorem 1.2, Section 7.1]{DSS26}}]\label{thm:DSS1.2}
    Let $Y$ be an almost-rational plumbed homology sphere equipped with a spin structure $\s$, and let $R(Y, \s)$ be the symmetric graded root associated to $(Y, \s)$. Then there is a $\Pin(2)$-equivariant stable homotopy equivalence:
    \[
    \Hty(R(Y, \s)) \simeq \SWF(Y, \s).
    \]
\end{theorem}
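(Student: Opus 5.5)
Since this is \cite[Theorem 1.2]{DSS26}, I only outline how I would prove it; I would not reprove it in detail here. The plan is to build an explicit $\Pin(2)$-map between $\Hty(R(Y,\s))$ and $\SWF(Y,\s)$ and then check that it is an equivalence using homological criteria. The check splits into the $S^1$-fixed points and the free part. This reduction is justified because both objects are, up to suspension, of type SWF and the $\Pin(2)$-action is free off the $S^1$-fixed set. So it suffices to prove two things. First, the map restricts to an equivalence on $S^1$-fixed points. Second, it induces an isomorphism on $S^1$-equivariant (co-)Borel homology. An equivariant Whitehead argument, applied cell by cell over the free part, then upgrades these to a stable $\Pin(2)$-equivalence.

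\textbf{Step 1: the gradings agree.} By \cite[Lemma 5.9]{DSS26}, the co-Borel homology of $\SWF(Y,\s)$ is isomorphic to $\mathbb{H}^-(R(Y,\s))$. This comes from the chain of identifications
\[
\text{Seiberg--Witten Floer homology} \;\cong\; \text{monopole Floer homology} \;\cong\; \text{Heegaard Floer homology} \;\cong\; \text{lattice homology}.
\]
The last isomorphism is Némethi's theorem, which applies to almost-rational graphs. By construction, $\Hty(R)$ has the same co-Borel homology, and its $S^1$-fixed set is $S^0$. Its absolute grading shift is $-h/4$, and I would verify that this is consistent with $n(Y,\s)$ and with the level of the Conley index.

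\textbf{Step 2: the map on each piece.} I would construct the map sphere by sphere, following the colimit structure of $X$ in Construction~\ref{con:lattice_spectrum}.
\begin{itemize}
\item Each leaf pair $x, Jx$ is realized by a lattice point, that is, a characteristic element on the plumbing $W$. Each such point determines a spin$^c$ (or spin) cobordism with $b^+ = 0$. Its relative Bauer--Furuta invariant (Theorem~\ref{thm:rel_BF}) gives a map $S^{c(x)\C} \to \SWF(Y,\s)$, and the $j$-action turns this into a map on $S^{c(x)\C} \vee jS^{c(x)\C}$.
\item The $J$-invariant stem corresponds to the spin structure. The spin Bauer--Furuta map is $\Pin(2)$-equivariant and gives a map $S^{q\mathbb{H}} \to \SWF(Y,\s)$.
\end{itemize}

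\textbf{Step 3: gluing compatibility.} These maps must agree on the subspheres along which $X$ is glued. I would get this from families Bauer--Furuta invariants over paths in the lattice, which supply homotopies between the maps for adjacent lattice points, restricted to the common subsphere $S^{c(u)\C}$. I would then verify that these homotopies are coherent. Because the pieces are spheres glued along subspheres, the coherence needed is only up to the first-order gluing data.

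\textbf{Main obstacle.} I expect the hard part to be this coherent, $j$-equivariant gluing. In particular, the odd-dimensional intersection $c = 2a+1$ forces the $\C \oplus j\C$ splitting of $S^{q\mathbb{H}}$, and the homotopies for the two symmetric branches must match under $j$. My first approach would be to work $S^1$-equivariantly on one half of the root and then induce up to $\Pin(2)$, using freeness off the fixed points. The remaining check is on the stem, which I would handle by comparing the result with the spin map.

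Once the map is built, injectivity of $U$-towers and surjectivity onto each leaf generator show that it induces the isomorphism of Step 1. Together with the fixed-point equivalence, this completes the argument.
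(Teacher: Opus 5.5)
The paper gives no proof of this statement; it is quoted from \cite{DSS26}. Judged on its own, your outline has a genuine gap at the $J$-invariant stem.

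You take the $\Pin(2)$-map on $S^{q\mathbb{H}}$ to be ``the spin Bauer--Furuta map''. As a spectrum the stem is $S^{\frac{\beta}{2}\mathbb{H}}$, with $\beta=\beta(Y,\s)=-\bar\mu(Y,\s)$. To map onto all of it you would need a spin filling $W$ of $(Y,\s)$ with two properties. First, $b^+(W)=0$; otherwise the target is $\Sigma^{b^+\tilde{\R}}\SWF(Y,\s)$ and the map is not an equivalence on $S^1$-fixed points. Second, $b_2(W)=8\beta$, since the relative invariant of such a $W$ is a map $S^{\frac{b_2(W)}{16}\mathbb{H}}\to\SWF(Y,\s)$. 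Such a $W$ generally does not exist:
\begin{itemize}
\item The plumbing itself is spin only when all weights are even. A $J$-fixed characteristic vector must satisfy $k=-k$, i.e.\ $k=0$. In lattice terms, $J$-invariant vertices come from $J$-invariant components of sublevel sets, and these typically contain no $J$-fixed lattice point.
\item No other filling helps in general. The relative invariant of any negative-definite spin filling is a local map out of $S^{\frac{b_2(W)}{16}\mathbb{H}}$, which forces $\beta(Y,\s)\ge b_2(W)/8\ge 0$. So for $Y=\Sigma(2,3,7)$, where $\beta=-1$, there is no negative-definite spin filling at all.
\end{itemize}
The $\Pin(2)$-equivariant map on the quaternionic stem therefore has to be built differently. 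One option is to manufacture it from the $S^1$-equivariant data of conjugate pairs $k,-k$, extending over the free $\Pin(2)$-cells of $S^{q\mathbb{H}}$. The other is to deduce the $\Pin(2)$-statement from the $S^1$-statement by a separate argument. This is exactly the $\Pin(2)$-specific content of the theorem, and your plan has no mechanism for it.

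Two further steps are asserted rather than argued. First, the module isomorphism of your Step~1, obtained through the chain of identifications you list, says nothing by itself about where your map sends the class of each leaf sphere $S^{c(x)\C}$. You would need to identify the maps induced by the relative Bauer--Furuta invariants of $(W,k)$ with the cobordism maps $F_{W,k}$ used to define the Ozsv\'ath--Szab\'o/N\'emethi isomorphism with lattice homology. Alternatively, you need an independent injectivity argument. Without one of these, ``surjectivity onto each leaf generator'' is unsupported. Second, adjacent lattice points are distinct spin$^c$ structures, so there are no families of Bauer--Furuta invariants over paths joining them. The gluing homotopies would have to come from cut-and-paste instead. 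For instance, $k$ and $k+2\,\mathrm{PD}[S_v]$ agree off a tubular neighbourhood of the sphere $S_v$, whose lens-space boundary has positive scalar curvature; combined with the equivariant Hopf theorem, this compares the two maps on the common subsphere.
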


\subsection{Local Equivalence and Monotone Subroots}

We require an appropriate notion of equivalence in the stable homotopy category:

\begin{definition}[{\cite[Definition 2.7]{Sto20}}]
    We say that two objects $\mathcal{X}_0 = (X_0, m_0, n_0), \mathcal{X}_1 = (X_1, m_1, n_1) \in \mathfrak{C}_G$ $(n_0 - n_1 \in \Z)$ are \emph{locally equivalent} if there exist morphisms $\phi: \mathcal{X}_0 \to \mathcal{X}_1$ and $\psi: \mathcal{X}_1 \to \mathcal{X}_0$ in $\mathfrak{C}_G$ such that their representative maps
    \[
     \tilde{\phi}: \Sigma^{(a + m_1)\tilde{\R}}\Sigma^{(b + n_1) \mathbb{H}} X_0 \to \Sigma^{(a + m_0)\tilde{\R}}\Sigma^{(b + n_0) \mathbb{H}} X_1
    \]
    and
    \[
     \tilde{\psi}: \Sigma^{(a + m_0)\tilde{\R}}\Sigma^{(b + n_0) \mathbb{H}} X_1 \to \Sigma^{(a + m_1)\tilde{\R}}\Sigma^{(b + n_1) \mathbb{H}} X_0
    \]
    induce $G$-homotopy equivalences on their $S^1$-fixed point sets. 
    Furthermore, a morphism satisfying this condition in one direction (such as $\phi$ or $\psi$ alone) is called a \emph{local map}.
\end{definition}

\begin{remark}
    Note that if both the domain
    and the target objects are of the form $(X, m, n)$ where $X$ is a space of type SWF, then any local map between them is admissible.
\end{remark}

This topological definition naturally descends to an algebraic notion on the level of graded roots:

\begin{definition}[{\cite[Definition 7.2]{DSS26}}]
    Let $R_0$ and $R_1$ be two graded roots. A \emph{graded root map} is a grading-preserving, $\Z[U]$-equivariant map from the $\Z[U]$-module $\mathbb{H}^-(R_0)$ to the $\Z[U]$-module $\mathbb{H}^-(R_1)$. If $R_0$ and $R_1$ are symmetric, then we require $F$ to intertwine these symmetries. We denote such a map by
    \[
    F \colon R_0 \rightarrow R_1.
    \]
    We say that a graded root map is \emph{local} if it induces an isomorphism after inverting $U$. We say that $R_0$ and $R_1$ are \emph{locally equivalent} if there exist local maps in both directions.
\end{definition}

The geometric and algebraic notions of local equivalence are deeply intertwined. According to \cite[Lemma 7.3]{DSS26}, under appropriate assumptions, a graded root map can be lifted to a morphism between the corresponding spectra. Furthermore, as explicitly noted in the second paragraph of the proof of \cite[Theorem 7.4]{DSS26}, if the original graded root map is a local map, then the lifted morphism of spectra is also a local map. Combining these facts yields the following theorem:

\begin{theorem}[{\cite[Lemma 7.3 and Theorem 7.4]{DSS26}}]\label{thm:local_map_equivalence}
    Let $R_0$ and $R_1$ be symmetric graded roots whose uppermost $J$-invariant vertices lie in the same grading modulo 4. For any graded root map $F: R_0 \to R_1$, there exists a $G$-equivariant morphism
    \[
    \mathcal{F}: \Hty(R_0) \to \Hty(R_1)
    \]
    that induces the map $F$ on ($S^1$-equivariant) co-Borel homology. Furthermore, $F$ is a local map of graded roots if and only if $\mathcal{F}$ is a local map of spectra.
\end{theorem}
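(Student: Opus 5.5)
The plan is to build $\mathcal{F}$ by hand on the explicit models $\Hty(R_0)=(X_0,0,-h/4)$ and $\Hty(R_1)=(X_1,0,-h/4)$ of Construction~\ref{con:lattice_spectrum}, then deduce locality from the localization theorem.

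\textbf{Choosing a common truncation.} The hypothesis $2\beta(R_0)\equiv 2\beta(R_1)\pmod 4$ lets us truncate both roots at the same grading $h\ll 0$ with $2\beta(R_\epsilon)-h\equiv 0 \pmod 4$. The two spectra then have the same third coordinate, so no formal $\mathbb{H}$-desuspension is needed. It therefore suffices to produce an honest $G$-map $f\colon X_0\to X_1$, possibly after a common suspension.

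\textbf{Building $f$ piece by piece.} A graded root map is determined by the images of vertices, since generators go to generators of the same grading.
\begin{enumerate}[label=(\alph*)]
\item \emph{The stem.} Let $v$ be a $J$-invariant vertex of $R_0$. Then $F(v)$ is $J$-invariant of the same grading, hence lies on the $J$-invariant stem of $R_1$. In particular $\beta(R_0)\le\beta(R_1)$, so $q_0\le q_1$. We map $S^{q_0\mathbb{H}}$ into $S^{q_1\mathbb{H}}$ by the standard inclusion $\mathbb{H}^{q_0}\subset\mathbb{H}^{q_1}$.
\item \emph{The leaves.} For each leaf $x\in\mathcal{L}_0$, choose a leaf $y$ of $R_1$ lying above $F(x)$, using $J$ to place it on the chosen side or its mirror. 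We have $c(x)=c(F(x))\le c(y)$, and we send $S^{c(x)\C}$ into $S^{c(y)\C}$ by the subsphere inclusion. We then extend to $jS^{c(x)\C}$ by $j$-equivariance.
\item \emph{Compatibility with the gluings.} Both gluing relations in $X_0$ (leaf to leaf along $S^{c(u)\C}\vee jS^{c(u)\C}$, and leaf to stem along $(\mathbb{H}^a\oplus\C)^+$) are inclusions of coordinate subspheres. Because $F$ commutes with $U$, the meeting vertex $u$ of $x,x'$ maps to a vertex lying below the meeting vertex of the chosen $y,y'$. Likewise, the meeting vertex of $x$ and $Jx$ maps into the stem. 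Hence every square of inclusions commutes, and $f$ descends to the quotient.
\end{enumerate}

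\textbf{Main obstacle.} The step needing care is the odd case $c=2a+1$ in the leaf-to-stem gluing. Here the image of $S^{c\C}$ must land in the correct half $(\mathbb{H}^a\oplus\C)^+$ of the target stem, not in $(\mathbb{H}^a\oplus j\C)^+$. This is exactly why the leaf $y$ is chosen on the side dictated by $J$, and why the mod $4$ hypothesis is needed: it makes the quaternionic decompositions of the two stems match coordinatewise.

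\textbf{Identifying the induced map.} $X_\epsilon$ is built from spheres glued along subspheres, so its $S^1$-equivariant co-Borel homology is computed cellularly as $\mathbb{H}^-(R_\epsilon)$, with leaf spheres giving the generators and inclusions giving $U$-powers. This is the identification underlying Theorem~\ref{thm:DSS1.2}. Under it, each inclusion above induces $x\mapsto F(x)$, so $\mathcal{F}_*=F$.

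\textbf{Locality.} By the localization theorem, inverting $U$ in $S^1$-co-Borel homology computes the homology of the $S^1$-fixed set. Here that fixed set is $S^0$ for both $X_0$ and $X_1$. So $F$ becomes an isomorphism after inverting $U$ if and only if $f^{S^1}\colon S^0\to S^0$ has degree $\pm1$. Equivalently, $f^{S^1}$ is a $G$-homotopy equivalence, which is the definition of a local map of spectra. This argument is independent of the particular lift chosen.
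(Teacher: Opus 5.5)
\textbf{Main gap: graded root maps are not vertex maps.} Your claim that ``generators go to generators'' is false for the maps in the statement. Here a graded root map is an arbitrary grading-preserving, $\Z[U]$-equivariant, $J$-intertwining homomorphism $\mathbb{H}^-(R_0)\to\mathbb{H}^-(R_1)$. Such a map can send a vertex $x$ to any integral combination of vertices of grading $\chi(x)$ compatible with $U$, for instance $w_1+w_2-w_3$, $2w$, or $0$. It can act by $-1$ on the stem, and it can send a $J$-invariant vertex to $w+Jw$ rather than to a stem vertex. This has three consequences:
\begin{itemize}
\item Step (a) breaks down. In particular $\beta(R_0)\le\beta(R_1)$ fails for $F=0$, so the stem inclusion $S^{q_0\mathbb{H}}\subset S^{q_1\mathbb{H}}$ need not exist.
\item A map assembled from subsphere inclusions can only realize maps sending each vertex to a single vertex with coefficient $+1$.
\item Some maps, such as $-\mathrm{id}$, cannot be realized by any unstable map, since $f^{S^1}\colon S^0\to S^0$ cannot have degree $-1$.
\end{itemize}
Realizing a general $F$ needs a different mechanism: equivariant pinch and degree maps on the free cells, plus an equivariant extension or obstruction argument, in general after suspension. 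That is what the citation of DSS26, Lemma~7.3, supplies. Your argument covers at most vertex-valued maps. These do happen to be the only maps the paper actually uses (Lemma~\ref{lem:local_graded_map} and the monotone-subroot maps).

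\textbf{Second gap: step (c) fails even for vertex-valued $F$.} Two leaves $x,x'\in\mathcal{L}_0$ in the same component, meeting at an off-stem vertex $u$, can be sent to opposite sides, $F(x)=w$ and $F(x')=Jw$, whenever $F(u)$ lies on the stem. Concretely:
\begin{itemize}
\item let $R_0$ have stem top in degree $2m$, $u$ in degree $2m+2$, and leaves $x,x'$ in degree $2m+6$ above $u$;
\item let $R_1$ have stem top in degree $2m+4$ and leaves $w,Jw$ in degree $2m+6$.
\end{itemize}
An $S^1$-equivariant embedding $S^{c(x')\C}\to jS^{c(w)\C}$ must be complex-antilinear in the coordinate of $S^{c(w)\C}$. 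With coordinate inclusions, the two images of the common subsphere $S^{c(u)\C}$ then land in the target stem as $\C^+$ and $(j\C)^+$ respectively. No identification $\phi$ of the stem fixes this: coincidence would require $\phi(z)=j\phi(\bar z)$, and since left multiplication by $1-j$ is invertible, this forces $\phi=0$. So the squares do not commute and $f$ does not descend. The mod $4$ hypothesis does not help here.

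You would need to choose the embedding of each leaf sphere inductively. It is prescribed on the initial subspace $\C^{c(u)}$ by the branches already chosen, and can be extended by a dimension count (or by equivariant homotopy extension).

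\textbf{Smaller omissions.} You also do not treat the case where $F(x)$ lies on the stem of $R_1$ and no non-invariant leaf lies above it, as for the target $X_{m,m}$. There $S^{c(x)\C}$ has to be mapped into $S^{q_1\mathbb{H}}$.

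Your localization argument for the ``if and only if'' is fine once a map inducing $F$ exists. After suspension, you should phrase it via the nonequivariant degree of $f^{S^1}$, which by parity forces the $G$-fixed map to be the identity.
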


To facilitate explicit computations of these local equivalence classes, it is often convenient to reduce a general symmetric graded root to a simplified standard form.
We now recall the construction of a monotone graded root (see \cite[Section 6]{DM19} for details). Fix a positive integer $n$. Let $h_1, h_2, \dots , h_n$ be a strictly decreasing sequence, and $r_1, r_2, \dots , r_n$ be a strictly increasing sequence of rational numbers taking values in a single coset of $2\Z$ in $\Q$, and $h_n \ge r_n$. A graded root $M = M(h_1, r_1; h_2, r_2; \dots; h_n, r_n)$ is constructed as follows:

\begin{enumerate}[label=(\roman*)]
    \item First, we form the stem of our graded root by drawing a single infinite downward $U$-tower whose uppermost vertex has degree $r_n$.
    \item Next, for each $1 \le i < n$, we introduce a symmetric pair of leaves $v_i$ and $Jv_i$ in degree $h_i$, and connect these to the stem by a pair of paths meeting the stem at degree $r_i$.
    \item If $h_n > r_n$, we similarly introduce a pair of vertices $v_n$ and $Jv_n$ in degree $h_n$ and connect these to the stem at degree $r_n$. In the degenerate case where $h_n = r_n$, we declare $v_n$ to be the $J$-invariant vertex of degree $r_n$ lying on the stem and take no further action.
\end{enumerate}

\begin{figure}[htbp]
    \centering
    \begin{tikzpicture}[xscale=0.55, yscale=0.4, thick]
        \begin{scope}[shift={(-7.5,0)}]
            \foreach \y in {5, 3, 1, -1, -3, -5} {
                \draw[dotted, gray, thick] (-5.5, \y) -- (5.5, \y);
            }
            
            \node[left] at (-5.5, 6) {Degree};
            \node[left] at (-5.5, 5) {$h_1$};
            \node[left] at (-5.5, 3) {$h_2$};
            \node[left] at (-5.5, 1) {$h_n$};
            \node[left] at (-5.5, -1) {$r_n$};
            \node[left] at (-5.5, -3) {$r_2$};
            \node[left] at (-5.5, -5) {$r_1$};
            
            \draw (0, -1) -- (0, -6);
            \node at (0, -6.5) {$\vdots$};
            
            \draw (-1, 1) node[above=1pt] {$v_n$} -- (0, -1);
            \draw (1, 1) node[above=1pt] {$Jv_n$} -- (0, -1);
            
            \draw (-3, 3) node[above=1pt] {$v_2$} -- (0, -3);
            \draw (3, 3) node[above=1pt] {$Jv_2$} -- (0, -3);
            
            \draw (-5, 5) node[above=1pt] {$v_1$} -- (0, -5);
            \draw (5, 5) node[above=1pt] {$Jv_1$} -- (0, -5);
            
            \node at (0, -8.5) {The case $h_n > r_n$};
        \end{scope}

        \begin{scope}[shift={(7.5,0)}]
            
            \foreach \y in {5, 3, 1, -1, -3} {
                \draw[dotted, gray, thick] (-5.5, \y) -- (5.5, \y);
            }
            
            \node[left] at (-5.5, 6) {Degree};
            \node[left] at (-5.5, 5) {$h_1$};
            \node[left] at (-5.5, 3) {$h_2$};
            \node[left] at (-5.5, 1) {$h_n=r_n$};
            \node[left] at (-5.5, -1) {$r_2$};
            \node[left] at (-5.5, -3) {$r_1$};
            
            \draw (0, 1) -- (0, -6);
            \node at (0, -6.5) {$\vdots$};
            
            \node[above=1pt] at (0, 1) {$v_n$};
             
            \draw (-2, 3) node[above=1pt] {$v_2$} -- (0, -1);
            \draw (2, 3) node[above=1pt] {$Jv_2$} -- (0, -1);
            
            \draw (-4, 5) node[above=1pt] {$v_1$} -- (0, -3);
            \draw (4, 5) node[above=1pt] {$Jv_1$} -- (0, -3);
            
            \node at (0, -8.5) {The case $h_n = r_n$};
        \end{scope}
        
    \end{tikzpicture}
    \caption{Monotone graded roots $M(h_1, r_1; \dots ; h_n, r_n)$ for $h_n > r_n$ and $h_n = r_n$ (illustrated for $n=3$).}
    \label{fig:monotone_root_comparison}
\end{figure}

A symmetric graded root constructed in this way is called a \emph{monotone graded root}. As shown in \cite[Section 6]{DM19}, any symmetric graded root $R$ contains a monotone graded root as a subroot. In particular, the canonical one constructed via the greedy algorithm in \cite[Section 6]{DM19} is referred to as the \emph{monotone subroot} of $R$.

The utility of monotone graded roots stems from the fact that they completely capture the local equivalence class of the original root:

\begin{theorem}[{\cite[Theorem 6.1]{DM19}, \cite[Section 7.1] {DSS26}}]\label{thm:monotone_subroot}
    Any symmetric graded root $R$ is locally equivalent to its monotone subroot.
\end{theorem}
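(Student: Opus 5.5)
The plan is to produce local graded root maps in both directions between $R$ and its monotone subroot $M = M(h_1, r_1; \dots; h_n, r_n) \subset R$. The map $M \to R$ is simply the inclusion of the subroot. It is grading preserving, $\Z[U]$-equivariant and $J$-equivariant, because $M$ is a $J$-invariant subtree of $R$ closed under moving downward. It is local because both roots share the infinite downward stem, so the map becomes an isomorphism after inverting $U$. All the work therefore lies in constructing a retraction $F \colon R \to M$ that is grading preserving, commutes with $U$ and $J$, and is the identity on the stem; locality is then automatic.

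First I would recall the greedy description of $M$ from \cite[Section 6]{DM19}. For a vertex $v$, write $r(v)$ for the grading of the vertex where the paths from $v$ and $Jv$ first meet, i.e.\ the vertex $U^m v = U^m Jv$ with $m$ minimal. The greedy choice then reads as follows. The vertex $v_1$ is an uppermost vertex of $R$, so $h_1 = 2\delta(R)$, and $r_1 = r(v_1)$. Inductively, $v_{i+1}$ is a vertex of maximal grading among those with $r(v) > r_i$, and $h_{i+1} = \chi(v_{i+1})$, $r_{i+1} = r(v_{i+1})$. The process stops when the maximum is attained on the stem, at the top $J$-invariant vertex with $h_n = r_n = 2\beta(R)$, or when the maximum is $v_n$ with $r_n$ maximal. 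The key inequality I extract from this is: if $r(v) > r_{i-1}$, then $\chi(v) \le h_i$.

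Next I would define $F$. Split $R$ minus the $J$-invariant stem into two halves interchanged by $J$, and fix $v_i$ in the same half for all $i$. For a vertex $v$ not on the stem, let $i$ be the largest index with $r_i \le r(v)$; such an $i$ exists because $r(v) \ge r_1$ follows from the maximality of $h_1$ and the tree structure, which I would check. By the key inequality, $\chi(v) \le h_i$. We then map $v$ according to its grading:
\begin{itemize}
\item if $\chi(v) > r_i$, we send $v$ to the vertex of grading $\chi(v)$ on the path from $v_i$ (or from $Jv_i$, according to the half containing $v$) down to the stem;
\item if $\chi(v) \le r_i$, we send $v$ to the stem vertex of grading $\chi(v)$.
\end{itemize}
Stem vertices go to themselves. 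By construction $F$ preserves grading and commutes with $J$.

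The main obstacle is $U$-equivariance, that is, that edges go to edges. Moving from $v$ to $Uv$, either $Uv$ still lies off the stem, in which case $r(Uv) = r(v)$ and the half is unchanged, so the index $i$ is the same and we simply move one step down the same branch of $M$. Or $Uv$ lies on the stem, which happens exactly when $\chi(v) - 2 = r(v)$, and then $F(Uv)$ is the stem vertex. In the second case I must check that $UF(v)$ is also on the stem, i.e.\ that $\chi(v) - 2 \le r_i$. This requires $r(v) \le r_i$, hence $r(v) = r_i$, whenever $v$ sits just above the stem; otherwise $v$ would have $r(v) > r_i$ but lie below $r_{i+1}$, and I would have to rule this out using the monotone choice of $r_{i+1}$. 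This is where the careful case analysis goes, possibly after modifying $F$ to use the smallest index $i$ with $r_i \ge r(v)$ for vertices near the stem. The degenerate case $h_n = r_n$ is handled the same way, sending everything with $r(v) \ge r_n$ into the stem.

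Once $F$ is a symmetric graded root map, it is local because it is the identity on the stem. The two maps therefore exhibit the desired local equivalence. Via Theorem~\ref{thm:local_map_equivalence}, noting that $R$ and $M$ have the same $\beta$, the equivalence also lifts to $\Hty(R) \simeq_{\mathrm{loc}} \Hty(M)$.
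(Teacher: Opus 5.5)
Your overall strategy is the natural one: the inclusion $M \hookrightarrow R$ in one direction, and in the other a $U$- and $J$-equivariant retraction $F\colon R \to M$ that is the identity on the stem. The paper itself does not reprove this and simply quotes \cite[Theorem 6.1]{DM19}.

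\textbf{The gap.} The retraction you actually define fails, and the case you hope to rule out ``using the monotone choice of $r_{i+1}$'' genuinely occurs. There is a structural reason. Let $m$ be minimal with $U^m v$ on the stem, so that $U^m v = U^m Jv$ has grading $r(v)$. Then equivariance forces $U^m F(v) = U^m JF(v)$, so $F(v)$ and $JF(v)$ must merge at grading $\ge r(v)$. Your rule (largest $i$ with $r_i \le r(v)$) puts $F(v)$ on the branch of $v_i$, which merges at $r_i$. So it fails whenever $r_i < r(v)$ and $\chi(v) > r_i$.

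Here is a concrete example. Take the stem to have top vertex $s_8$ in grading $8$. Attach a symmetric pair of leaves in grading $10$ at $s_0$, and a symmetric pair $w, Jw$ in grading $6$ directly to $s_4$. Then $M = M(10,0;8,8)$ and $r(w)=4$, and your rule gives $i=1$. So $UF(w)$ is the grading-$4$ vertex on the branch of $v_1$, while $F(Uw) = s_4$. Such dominated branches are exactly what the monotone subroot discards, so they cannot be excluded.

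Separately, your claim that $r(v) \ge r_1$ for every off-stem $v$ is false. Add to $M(10,0)$ a symmetric pair of leaves in grading $4$ attached to the stem in grading $-10$. The monotone subroot is unchanged, and no index $i$ with $r_i \le r(v)$ exists.

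\textbf{The fix.} For \emph{every} off-stem vertex, use the smallest index $i$ with $r_i \ge r(v)$. A hybrid rule applied only ``near the stem'' does not work: $r$ is constant on each component of $R$ minus the stem, and $i$ must stay constant down each such component for the off-stem edge check to go through. With this choice, the construction works as follows.
\begin{itemize}
\item The index $i$ always exists, since $r(v) \le 2\beta(R) = r_n$.
\item Either $i = 1$ or $r_{i-1} < r(v)$, so your key inequality gives $\chi(v) \le h_i$ and $F(v)$ is defined.
\item If $Uv$ lies on the stem, then $\chi(v) - 2 = r(v) \le r_i$. Hence $UF(v)$ is the stem vertex $Uv$, as required.
\end{itemize}
Your remaining checks (off-stem edges, $J$-equivariance, locality via the stem) then go through as written.

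You should also make the tie-breaking in the greedy algorithm precise. Take $r_i$ to be the maximal value of $r$ among vertices of grading $h_i$ with $r > r_{i-1}$. Otherwise the $h_i$ need not be strictly decreasing, and you do not recover the canonical monotone subroot.
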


Motivated by this simplification, we define a family of elementary symmetric graded roots that serve as the fundamental building blocks:

\begin{definition}
    For integers $m, n \in \Z$ with $m \le n$, let $X_{m, n}$ be the symmetric graded root defined by
    \[
    X_{m, n} = (\Z[U]v \oplus \Z[U]Jv) \big/ \Z[U](U^{n - m}v - U^{n - m} Jv)
    \]
    equipped with the grading $\chi(v) = \chi(Jv) = 2n$. By construction, $\delta(X_{m, n}) = n$ and $\beta(X_{m, n}) = m$. We define
    \[
    A_{m, n} := \Hty(X_{m, n}).
    \]
    Note that we allow the degenerate case $m = n$, in which case $A_{m, n}$ is stably $G$-homotopy equivalent to $\Sigma^{\frac{m}{2}\mathbb{H}}S^0$. 
\end{definition}

We construct a corresponding family of topological spaces that realize these elementary roots. Let $A_n$ be a $G$-subspace of the quaternionic representation sphere $S^{n\mathbb{H}}$. Using the canonical decomposition $S^{n\mathbb{H}} \cong (\C^n \oplus j\C^n)^+$, we explicitly define $A_n$ as the union of the two complex sub-representation spheres:
\[
A_n := (\C^n)^+ \cup (j\C^n)^+ \subset S^{n\mathbb{H}}.
\]
Geometrically, this space is obtained by taking two copies of the complex representation sphere $S^{n\C}$ and identifying them at the origin and at infinity. By construction, for any integers $m$ and $n$ with $m \le n$, we have the following isomorphism in the stable homotopy category:
\[
A_{m, n} \simeq (A_{n - m}, 0, -\frac{m}{2}).
\]

\subsection{Admissible Morphisms to Representation Spheres}

We can construct explicit local maps. An analogous algebraic reduction is implicitly utilized in \cite[Section 6]{DM19} via the standard complex. Inspired by their approach, we provide here a direct and elementary proof in the context of graded roots.

\begin{lemma}\label{lem:local_graded_map}
    For any symmetric graded root $R$, there exists a local map of graded roots
    \[
    F : R \to X_{\beta(R), \delta(R)}.
    \]
\end{lemma}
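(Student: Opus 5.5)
We construct $F$ directly on generators, using the fact that $\mathbb{H}^-(R)$ is generated as a $\Z[U]$-module by vertices subject only to the edge relations $Uv = w$. Write $\delta = \delta(R)$ and $\beta = \beta(R)$, and let $v, Jv$ denote the two generators of $X_{\beta,\delta}$ in degree $2\delta$. Recall that $X_{\beta,\delta}$ has exactly two vertices in each degree $2k$ with $\beta < k \le \delta$, namely $U^{\delta-k}v$ and $U^{\delta-k}Jv$. In each degree $2k \le 2\beta$ it has a single $J$-invariant vertex $U^{\delta-k}v = U^{\delta-k}Jv$.

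\textbf{Step 1: the value on each vertex.} The $J$-invariant vertices of $R$ form the downward stem, which runs from the vertex $s_\beta$ of degree $2\beta$ downward. Removing the stem (together with the edges joining it to the rest) splits the remaining vertices into a set $\mathcal{A}$ and its mirror image $J\mathcal{A}$. There is no edge between $\mathcal{A}$ and $J\mathcal{A}$, since $R$ is a tree and the two halves are exchanged by $J$. Every such vertex lies above degree $2\beta$ only if it is above the stem top, but non-invariant vertices may also occur below $2\beta$, hanging off the stem lower down. We then define:
\begin{itemize}
\item $F(w) = U^{\delta - \chi(w)/2}\, v$ for $w \in \mathcal{A}$;
\item $F(Jw) = U^{\delta - \chi(w)/2}\, Jv$ for $w \in \mathcal{A}$;
\item $F(s) = U^{\delta-\chi(s)/2} v = U^{\delta-\chi(s)/2} Jv$ for $s$ on the stem.
\end{itemize}
The exponents are nonnegative because $\chi \le 2\delta$ on all of $R$. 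The last formula is consistent because $\chi(s) \le 2\beta$, so $U^{\delta-\chi(s)/2}v$ is already $J$-invariant in $X_{\beta,\delta}$.

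\textbf{Step 2: $F$ is a well-defined graded root map.} Each edge relation $Uw = w'$ of $R$ must be respected.
\begin{itemize}
\item For an edge inside $\mathcal{A}$, inside $J\mathcal{A}$, or inside the stem, the relation holds by construction, since $F$ depends only on the degree and the side.
\item For an edge from $w \in \mathcal{A}$ down to a stem vertex $s$ with $\chi(s) = \chi(w) - 2$, the required identity $U F(w) = F(s)$ holds because both sides equal $U^{\delta-\chi(s)/2}v$, which makes sense as $\chi(s) \le 2\beta$.
\item An edge from a stem vertex upward into $\mathcal{A}$ cannot occur unless the stem vertex is the lower endpoint, by the second graded root axiom together with the structure of the tree; this is the same case as the previous item.
\end{itemize}
By construction $F$ preserves grading, and $F \circ J = J \circ F$.

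\textbf{Step 3: locality.} After inverting $U$, both modules become $\Z[U,U^{-1}]$. The image of the bottom stem generator is a $U$-power of the tower generator, so the induced map is an isomorphism.

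\textbf{Main obstacle.} The delicate point is the edge case at the junction between a non-invariant branch and the stem. Here one must check that every non-$J$-invariant vertex adjacent to the stem has degree at most $2\beta + 2$, so that its downward neighbor on the stem has degree at most $2\beta$. This follows because the stem's top vertex is the uppermost $J$-invariant vertex and every stem vertex has degree at most $2\beta$. I would verify this carefully using the tree and symmetric-root axioms, in particular the axiom that there is at most one $J$-invariant vertex per degree, which forces the $J$-invariant set to be exactly this stem.
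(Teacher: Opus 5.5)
Your proof is correct, but it takes a different route from the paper's.

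\textbf{The paper's route.} It first uses Theorem~\ref{thm:monotone_subroot} to replace $R$ by its monotone subroot $M(h_1,r_1;\dots;h_n,r_n)$. It then presents $\mathbb{H}^-(M)$ as the free module on $v_i, Jv_i$ modulo the explicit relations $U^{(h_i-r_i)/2}v_i - U^{(h_n-r_i)/2}v_n$ (and their $J$-images, plus the stem relation). Finally it defines the map on these leaf generators by $v_i \mapsto U^{(2\delta-h_i)/2}v$ and $Jv_i \mapsto U^{(2\delta-h_i)/2}Jv$.

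\textbf{Your route.} You put essentially the same formula on every vertex of the original root. You check it against the vertex/edge presentation from the definition of $\mathbb{H}^-(R)$. The only structural input you need is that the $J$-fixed vertices form exactly the downward ray below the vertex of degree $2\beta$, and that $J$ swaps the components of the complement in pairs.

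\textbf{Closing your flagged point.} This input is easy to make rigorous:
\begin{itemize}
\item[(1)] The fixed set of an automorphism of a tree is a subtree.
\item[(2)] In a graded root, the path between two vertices goes down and then up, because each vertex has a unique lower neighbour. So a subtree with at most one vertex per degree is contained in the downward ray from its top vertex.
\item[(3)] That subtree contains all vertices of degree $\ll 0$, so it is the whole ray.
\item[(4)] A component of $R$ minus the stem that is preserved by $J$ would have a $J$-fixed attaching vertex, which is impossible.
\end{itemize}
So you should take $\mathcal{A}$ to be one component from each $J$-orbit; this replaces your somewhat vague ``the two halves are exchanged by $J$''.

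With that in place, your ``main obstacle'' is automatic. Each stem vertex has its lower neighbour on the stem. Hence in any edge joining the stem to $\mathcal{A}\cup J\mathcal{A}$, the stem vertex is the lower endpoint, and it has degree $\le 2\beta$ by definition of $\beta$. That is exactly what makes $U^{\delta-\chi(s)/2}v = U^{\delta-\chi(s)/2}Jv$ in $X_{\beta,\delta}$.

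\textbf{Comparison.} Your approach is self-contained. It avoids the monotone-subroot theorem, the composition with the local map $R \to M$, and the implicit check that the monotone subroot has the same $\delta$ and $\beta$ as $R$. The paper's approach is shorter given that machinery, and it works with a finite presentation matching the monotone models used elsewhere in DSS.
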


\begin{proof}
    Since any symmetric graded root is locally equivalent to its monotone subroot, we may assume without loss of generality that $R$ is monotone. Let $R = M(h_1, r_1; \dots; h_n, r_n)$. By definition, the grading of the uppermost vertex is $h_1$, and the grading of the uppermost $J$-invariant vertex is $r_n$. Thus, we have $h_1 = 2\delta(R)$ and $r_n = 2\beta(R)$.

    Let $F$ be the free $\Z[U]$-module given by
    \[
    F = \bigoplus_{i=1}^n (\Z[U]v_i \oplus \Z[U]Jv_i),
    \]
    and let $N$ be the submodule of $F$ generated by the following elements:
    \[
    U^{\frac{h_i - r_i}{2}} v_i - U^{\frac{h_n - r_i}{2}} v_n, \quad U^{\frac{h_i - r_i}{2}} Jv_i - U^{\frac{h_n - r_i}{2}} Jv_n \quad (1 \le i \le n - 1),
    \]
    and
    \[
    U^{\frac{h_n - r_n}{2}} v_n - U^{\frac{h_n - r_n}{2}} Jv_n.
    \]
    Then the associated $\Z[U]$-module is given by the quotient $\mathbb{H}^-(R) = F / N$.

    We define a grading-preserving $\Z[U]$-module homomorphism $\tilde{f}$ from $F$ to the target module $\mathbb{H}^-(X_{\beta(R), \delta(R)})$ by mapping the basis elements as follows:
    \[
    v_i \mapsto U^{\frac{2\delta(R) - h_i}{2}}v, \quad Jv_i \mapsto U^{\frac{2\delta(R) - h_i}{2}}Jv.
    \]
    One can easily check that $\tilde{f}$ maps each generator of $N$ to $0$ in $\mathbb{H}^-(X_{\beta(R), \delta(R)})$. Therefore, $\tilde{f}$ descends to a well-defined map 
    \[
    f: \mathbb{H}^-(R) \to \mathbb{H}^-(X_{\beta(R), \delta(R)}).
    \]
    Moreover, we must show that $f$ is a local map, which means that it induces an isomorphism after inverting $U$. Recall that for any graded root with a single infinite downward stem, the localized module $U^{-1}\mathbb{H}^-$ is a free $\Z[U, U^{-1}]$-module of rank $1$, generated by any element on the infinite stem. In our setting, $U^{-1}\mathbb{H}^-(R)$ is generated by $v_n$, and $U^{-1}\mathbb{H}^-(X_{\beta(R), \delta(R)})$ is generated by $v$. Since $f(v_n) = U^{\frac{2\delta(R) - h_n}{2}}v$, and $U$ is invertible in the localized module, $f$ maps the generator of $U^{-1}\mathbb{H}^-(R)$ to a generator of $U^{-1}\mathbb{H}^-(X_{\beta(R), \delta(R)})$ up to a unit in $\Z[U, U^{-1}]$. Thus, the induced map 
    \[
    U^{-1}f: U^{-1}\mathbb{H}^-(R) \to U^{-1}\mathbb{H}^-(X_{\beta(R), \delta(R)})
    \]
    is an isomorphism. This concludes the proof.
\end{proof}

This algebraic mapping property allows us to deduce the existence of a corresponding local map at the level of stable homotopy types:

\begin{lemma}\label{lem:local_map_AR}
    Let $Y$ be an AR plumbed homology sphere equipped with a spin structure $\s$. Then there exists a local map of spectra
    \[
    \mathcal{F}: \SWF(Y, \s) \to A_{\beta(Y, \s), \delta(Y, \s)}.
    \]
\end{lemma}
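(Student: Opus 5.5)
The plan is to combine Lemma~\ref{lem:local_graded_map} with the lifting result Theorem~\ref{thm:local_map_equivalence} and the identification Theorem~\ref{thm:DSS1.2}. Write $R = R(Y,\s)$ for the symmetric graded root of $(Y,\s)$. By Theorem~\ref{thm:DSS7.1} we have $\delta(R) = \delta(Y,\s)$ and $\beta(R) = \beta(Y,\s)$. So the target root $X_{\beta(R),\delta(R)}$ is exactly $X_{\beta(Y,\s),\delta(Y,\s)}$, and $\Hty$ of it is $A_{\beta(Y,\s),\delta(Y,\s)}$ by definition.

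First, Lemma~\ref{lem:local_graded_map} gives a local map of graded roots $F\colon R \to X_{\beta(R),\delta(R)}$. Second, I will apply Theorem~\ref{thm:local_map_equivalence} to $F$. Its hypothesis is that the uppermost $J$-invariant vertices of $R_0 = R$ and $R_1 = X_{\beta(R),\delta(R)}$ lie in the same grading modulo $4$. These vertices have gradings $2\beta(R)$ and $2\beta(X_{\beta(R),\delta(R)}) = 2\beta(R)$, which are equal, so the hypothesis holds trivially. The theorem then yields a $G$-equivariant morphism $\mathcal{F}_0\colon \Hty(R) \to \Hty(X_{\beta(R),\delta(R)}) = A_{\beta(Y,\s),\delta(Y,\s)}$ inducing $F$ on co-Borel homology. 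Because $F$ is local, $\mathcal{F}_0$ is a local map of spectra. Third, I precompose with the stable equivalence $\SWF(Y,\s)\simeq \Hty(R)$ of Theorem~\ref{thm:DSS1.2} to obtain $\mathcal{F}$.

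It remains to check that this composite is still local. A stable homotopy equivalence in $\mathfrak{C}_G$ restricts to a $G$-homotopy equivalence on $S^1$-fixed point sets, since equivariant homotopy equivalences restrict to fixed loci. A composite of maps inducing equivalences on $S^1$-fixed points again induces such an equivalence. Hence $\mathcal{F}$ is local.

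The main obstacle is bookkeeping of the formal gradings, rather than any conceptual difficulty. I need the truncation height $h$ used to build $\Hty(R)$ and $\Hty(X_{\beta,\delta})$ to satisfy the mod-$4$ condition $2\beta - h \equiv 0$ compatibly. Since both roots share the same $\beta$, a common choice of $h$ works. I would also confirm that the identification $A_{m,n}\simeq (A_{n-m},0,-\tfrac{m}{2})$ matches the normalization $(X,0,-h/4)$, so that no extra suspension shift is introduced. If $\beta(Y,\s)$ is non-integral (a rational shift for a rational homology sphere), I would interpret $A_{\beta,\delta}$ via the same formula with the rational $n$-coordinate allowed in $\mathfrak{C}_G$.
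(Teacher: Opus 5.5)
Your proposal is correct and follows essentially the same route as the paper. The paper likewise takes the local graded root map $R(Y,\s)\to X_{\beta(Y,\s),\delta(Y,\s)}$ from Lemma~\ref{lem:local_graded_map} and Theorem~\ref{thm:DSS7.1}, lifts it to a local map of spectra via Theorem~\ref{thm:local_map_equivalence}, and precomposes with the equivalence $\SWF(Y,\s)\simeq\Hty(R(Y,\s))$ of Theorem~\ref{thm:DSS1.2}; your checks of the mod-$4$ hypothesis and of locality under composition are details the paper leaves implicit.
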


\begin{proof}
    By Theorem~\ref{thm:DSS7.1} and Lemma~\ref{lem:local_graded_map}, there exists a local map of graded roots
    \[
    F: R(Y, \s) \to X_{\beta(Y, \s), \delta(Y, \s)}.
    \]
    Applying Theorem~\ref{thm:local_map_equivalence}, this map lifts to a local map of spectra 
    \[
    \mathcal{F}: \Hty(R(Y, \s)) \to A_{\beta(Y, \s), \delta(Y, \s)}
    \]
    which induces the map $F$ on co-Borel homology. Finally, recall from Theorem~\ref{thm:DSS1.2} that there is a $G$-equivariant stable homotopy equivalence $\SWF(Y, \s) \simeq \Hty(R(Y, \s))$. This completes the proof.
\end{proof}

\begin{remark}
    We note that the local map of spectra $\mathcal{F}$ constructed in Lemma~\ref{lem:local_map_AR} is explicitly utilized to compute the $\kappa$-invariant for AR homology spheres in \cite[Section 7.2]{DSS26}.
\end{remark}

We are now ready to establish the existence of admissible morphisms from the SWF spectrum of an AR homology sphere to certain representation spheres. The proof of the following theorem combines Lemma~\ref{lem:local_graded_map} and Lemma~\ref{lem:local_map_AR} with the explicit $G$-map $A_n \to S^{\tilde{\R}}$, which was utilized in the calculation of the $\kappa$-invariant in \cite{DSS26}. This result will serve as a crucial ingredient in the proof of our main theorem later in this paper.

\begin{theorem}\label{thm:admissible_map}
    Let $Y$ be an AR plumbed homology sphere equipped with a spin structure $\s$. If $\beta(Y, \s) = \delta(Y, \s)$, then there exists an admissible morphism
    \[
    f: \SWF(Y, \s) \to \Sigma^{\frac{\beta(Y, \s)}{2}\mathbb{H}} S^0.
    \]
    If $\beta(Y, \s) < \delta(Y, \s)$, then there exists an admissible morphism
    \[
    f: \SWF(Y, \s) \to \Sigma^{\frac{\beta(Y, \s)}{2}\mathbb{H}} S^{\tilde{\R}}.
    \]
\end{theorem}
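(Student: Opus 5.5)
By Lemma~\ref{lem:local_map_AR}, there is a local map $\mathcal{F}\colon \SWF(Y,\s)\to A_{\beta,\delta}$, where we write $\beta=\beta(Y,\s)$ and $\delta=\delta(Y,\s)$. Since both source and target are of type SWF (up to formal shifts), $\mathcal{F}$ is admissible by the remark following the definition of local equivalence. Because compositions of admissible morphisms are admissible, it suffices to produce an admissible morphism from $A_{\beta,\delta}$ to the stated target.

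\emph{Case $\beta=\delta$.} Here $A_{\beta,\delta}$ is stably equivalent to $\Sigma^{\frac{\beta}{2}\mathbb{H}}S^0$, by the degenerate case noted in its definition. The required map is just $\mathcal{F}$ composed with this equivalence. An equivalence is an isomorphism on $S^1$-fixed points, so the composite is still admissible.

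\emph{Case $\beta<\delta$.} Put $n=\delta-\beta\ge 1$. We have $A_{\beta,\delta}\simeq (A_n,0,-\tfrac{\beta}{2})=\Sigma^{\frac{\beta}{2}\mathbb{H}}(A_n,0,0)$, so it suffices to build a $G$-map $g\colon A_n\to S^{\tilde{\R}}$ and check that it is admissible as a morphism $(A_n,0,0)\to(S^{\tilde{\R}},0,0)$.

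\begin{itemize}
\item \emph{Construction of $g$.} Recall $A_n=(\C^n)^+\cup(j\C^n)^+$, two spheres glued at $0$ and $\infty$. The element $j$ swaps the two spheres, and $S^1$ acts on each one by complex scalars. Define $g$ on $(\C^n)^+$ as a path in $S^{\tilde{\R}}=\tilde{\R}^+$ from $0$ to $+\infty$ that factors through $|z|$, for instance $z\mapsto |z|$. Define $g$ on $(j\C^n)^+$ by $jz\mapsto -|z|$. Then $g$ sends $0\mapsto 0$ and $\infty\mapsto\infty$, so it is well defined on the union.
\item \emph{Equivariance.} The map is $S^1$-invariant, and $S^1$ acts trivially on $\tilde{\R}$. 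It also intertwines $j$ with $-1$ on $\tilde{\R}$, so it is $G$-equivariant.
\item \emph{Fixed points.} The fixed-point sets are $A_n^{S^1}=A_n^{G}=\{0,\infty\}=S^0$, while $(S^{\tilde{\R}})^{S^1}=S^{\tilde{\R}}$ and $(S^{\tilde{\R}})^G=S^0$. In the definition of admissibility, $A_n$ has level $l_0=0$ with $m_0=0$, and $S^{\tilde{\R}}$ has level $l_1=1$ with $m_1=0$. Thus $l_0-m_0<l_1-m_1$, and we need $g^G\colon S^0\to S^0$ to be a homotopy equivalence, which holds since $g$ fixes $0$ and $\infty$. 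This stays true after suspending by $\tilde{\R}^p\oplus\mathbb{H}^q$.
\end{itemize}

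Finally, apply $\Sigma^{\frac{\beta}{2}\mathbb{H}}$, which preserves admissibility because it changes neither $S^1$- nor $G$-fixed points, and compose with $\mathcal{F}$.

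The main point needing care is the bookkeeping of the fixed-point conditions in the composite. The first map $\mathcal{F}$ satisfies the equal-level condition, with an $S^1$-fixed equivalence. The second map $g$ satisfies the strict-inequality condition, with a $G$-fixed equivalence. I would check directly that their composite satisfies the strict-inequality condition: the level differences add, giving a strict inequality, and $G$-fixed points of an $S^1$-fixed equivalence are again equivalent, so the $G$-fixed map of the composite is an equivalence. This is consistent with the remark that compositions of admissible morphisms are admissible.

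A secondary concern is the rational shift: $\frac{\beta}{2}$ may be non-integral, so the morphism sets are nonempty only if the $n$-components differ by integers. This holds by construction, since the third component of $\SWF(Y,\s)$ matches that of $A_{\beta,\delta}$ up to an integer (Theorem~\ref{thm:local_map_equivalence} already requires this).
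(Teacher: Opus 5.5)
Your proposal is correct and follows essentially the same route as the paper: you compose the local map $\mathcal{F}\colon \SWF(Y,\s)\to A_{\beta,\delta}$ from Lemma~\ref{lem:local_map_AR} either with nothing (when $n=\delta-\beta=0$, so $A_0=S^0$) or with the collapse map $A_n\to S^{\tilde{\R}}$, and then check that the $G$-fixed-point map of the composite is a homotopy equivalence $S^0\to S^0$. Your explicit formula $z\mapsto |z|$, $jz\mapsto -|z|$ is the collapse shown in Figure~\ref{fig:G-map}, and your fixed-point bookkeeping matches the paper's.
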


\begin{proof}
    By Lemma~\ref{lem:local_map_AR}, there exists a local map of spectra $\mathcal{F}: \SWF(Y, \s) \to A_{\beta(Y, \s), \delta(Y, \s)}$.
    Let us write $\SWF(Y, \s) = (X, 0, \frac{h}{2})$ and $A_{\beta(Y, \s), \delta(Y, \s)} = (A_n, 0, -\frac{\beta(Y, \s)}{2})$ in the stable homotopy category, where $n = \delta(Y, \s) - \beta(Y, \s) \ge 0$. By the definition of a local map, there exists a representative map
    \[
    \tilde{\mathcal{F}}: \Sigma^{a\tilde{\R}}\Sigma^{b_0\mathbb{H}} X \to \Sigma^{a\tilde{\R}}\Sigma^{b_1\mathbb{H}} A_n
    \]
    for some integers $a, b_1, b_0 \gg 0 \: (b_0 - b_1 = \frac{h + \beta(Y, \s)}{2} \in \Z)$, whose restriction to the $S^1$-fixed point set induces a $G$-homotopy equivalence. By construction, the $S^1$-fixed point sets of both $X$ and $A_n$ are isomorphic to $S^0$, and their $G$-fixed point sets are also $S^0$. Therefore, the restriction to the $S^1$-fixed point set yields a map
    \[
    \tilde{\mathcal{F}}^{S^1}: \Sigma^{a\tilde{\R}}S^0 \xrightarrow{\simeq} \Sigma^{a\tilde{\R}}S^0
    \]
    which is a $G$-homotopy equivalence, and the further restriction to the $G$-fixed point set gives a homotopy equivalence $\tilde{\mathcal{F}}^G: S^0 \xrightarrow{\simeq} S^0$.
    
    To conclude the existence of the required admissible map $f$, we consider two cases:
    \begin{enumerate}[label=(\roman*)]
        \item If $\beta(Y, \s) = \delta(Y, \s)$, then $n = 0$, and thus $A_0 = S^0$. Therefore, the local map $\mathcal{F}$ itself serves as the desired admissible map.
        \item If $\beta(Y, \s) < \delta(Y, \s)$, then $n > 0$. We obtain a $G$-map $f: A_n \to S^{\tilde{\R}}$ by collapsing the two spheres (see Figure~\ref{fig:G-map}). This map restricts to an isomorphism on the $G$-fixed point set $S^0$. Consequently, the induced map on the $G$-fixed point sets for the composition $f \circ \tilde{\mathcal{F}}: \Sigma^{a\tilde{\R}}\Sigma^{b_0\mathbb{H}} X \to \Sigma^{a\tilde{\R}}\Sigma^{b_1\mathbb{H}} S^{\tilde{\R}}$ is a homotopy equivalence from $S^0$ to $S^0$. Therefore, regarding $f$ as a stable map, we obtain the desired admissible morphism $f \circ \mathcal{F}: \SWF(Y, \s) \to \Sigma^{\frac{\beta(Y, \s)}{2}\mathbb{H}} S^{\tilde{\R}}$.
    \end{enumerate}
    This completes the proof.
\end{proof}

\begin{figure}[htbp]
    \centering
    \begin{tikzpicture}[scale=0.7, transform shape]
        \begin{scope}[shift={(0,0)}]
            \node at (-3.5, 0) {\Large $A_n = $};

            \draw[thick] (0,2) to[out=210, in=90] (-2,0) to[out=270, in=150] (0,-2);
            \draw[thick] (0,2) to[out=250, in=90] (-0.5,0) to[out=270, in=110] (0,-2);
            \draw[thick] (-2,0) arc (180:360:0.75 and 0.2);
            \draw[dashed, thick] (-2,0) arc (180:0:0.75 and 0.2);
            \node at (-2.6, 1) {$S^{n\C}$};

            \draw[thick] (0,2) to[out=330, in=90] (2,0) to[out=270, in=30] (0,-2);
            \draw[thick] (0,2) to[out=290, in=90] (0.5,0) to[out=270, in=70] (0,-2);
            \draw[thick] (0.5,0) arc (180:360:0.75 and 0.2);
            \draw[dashed, thick] (0.5,0) arc (180:0:0.75 and 0.2);
            \node at (2.6, 1) {$jS^{n\C}$};

            \fill (0,2) circle (2.5pt) node[above] {$\infty$};
            \fill (0,-2) circle (2.5pt) node[below] {$0$};

            \draw[<->, thick] (-1.5, -2.8) to[bend right=30] node[below] {$j$} (1.5, -2.8);
        \end{scope}

        \draw[->, thick] (3.2, 0) -- (5.2, 0) node[midway, above] {$f$};

        \begin{scope}[shift={(8,0)}]
            \fill[gray!20] (0,2) to[out=210, in=90] (-2,0) to[out=270, in=150] (0,-2) 
                           to[out=110, in=270] (-0.5,0) to[out=90, in=250] cycle;

            \fill[gray!20] (0,2) to[out=330, in=90] (2,0) to[out=270, in=30] (0,-2) 
                           to[out=70, in=270] (0.5,0) to[out=90, in=290] cycle;

            \draw[thick] (0,2) to[out=210, in=90] (-2,0) to[out=270, in=150] (0,-2);
            \draw[thick] (0,2) to[out=330, in=90] (2,0) to[out=270, in=30] (0,-2);
            \draw[thick] (0,2) to[out=250, in=90] (-0.5,0) to[out=270, in=110] (0,-2);
            \draw[thick] (0,2) to[out=290, in=90] (0.5,0) to[out=270, in=70] (0,-2);
            \draw[thick] (-2,0) arc (180:360:0.75 and 0.2);
            \draw[dashed, thick] (-2,0) arc (180:0:0.75 and 0.2);
            \draw[thick] (0.5,0) arc (180:360:0.75 and 0.2);
            \draw[dashed, thick] (0.5,0) arc (180:0:0.75 and 0.2);

            \fill (0,2) circle (2.5pt) node[above] {$\infty$};
            \fill (0,-2) circle (2.5pt) node[below] {$0$};

            \node at (3.5, 0) {\Large $\simeq S^{\tilde{\R}}$};

            \draw[<->, thick] (-1.5, -2.8) to[bend right=30] node[below] {$j$} (1.5, -2.8);
        \end{scope}

    \end{tikzpicture}
    \caption{The $G$-map $f: A_n \to S^{\tilde{\R}}$ obtained by collapsing the two spheres. The curved arrows indicate the involution $j \in \Pin(2)$ that interchanges the two symmetric complex representation spheres.}
    \label{fig:G-map}
\end{figure}

\section{A $\frac{10}{8}$-type inequality for AR homology spheres}

In this section, we will prove the main theorem by using the necessary condition of the $\frac{10}{8} + 4$ inequality established by Hopkins--Lin--Shi--Xu \cite{HLSX22}.

\subsection{Constraints on the intersection forms of $4$-manifolds bounded by AR homology spheres}

To connect our Seiberg--Witten theoretic setup with stable homotopy theory, we first recall the definition of a Furuta--Mahowald class. This concept, introduced in \cite{HLSX22}, plays a central role in extracting inequalities from equivariant stable maps.

\begin{definition}[{\cite{HLSX22}, Definition 1.15}]
    For $p \ge 1$, \emph{a Furuta--Mahowald class} of level-$(p,q)$ is a stable map 
    \[
    \gamma \in \pi_{p\mathbb{H}- q\tilde{\R}}^{\Pin(2)}S^0
    \]
    that fits into the diagram:
    \[
    \begin{tikzcd}
    S^{p\mathbb{H}} \ar[rd, "\gamma"] &  \\ 
    S^0 \ar[u,"a_{\mathbb{H}}^p"] \ar[r, swap, "a^q_{\tilde{\R}}"] & S^{q\tilde{\R}}
    \end{tikzcd}
    \]
    where $a_\mathbb{H} \in \pi_{-\mathbb{H}}^{\Pin(2)} S^0$ and $a_{\tilde{\R}} \in \pi_{-\tilde{\R}}^{\Pin(2)} S^0$ are stable homotopy classes that represent the inclusions $S^0 \hookrightarrow S^\mathbb{H}$ and $S^0 \hookrightarrow S^{\tilde{\R}}$ of fixed points.
\end{definition}

In our framework, the topological conditions of a Furuta--Mahowald class can be rephrased using the language of admissible morphisms in the category $\mathfrak{C}_G$. The following theorem establishes the exact equivalence between these two notions. Its proof relies on an application of equivariant obstruction theory, following the approach of \cite[Lemma 5.4]{Lin15}.

\begin{theorem}\label{thm:equivalence_of_morphism}
    For any integers $p \ge 1$ and $q \ge 1$, a morphism $\gamma : S^{p \mathbb{H}} \to S^{q \tilde{\R}}$ is a Furuta-Mahowald class of level-$(p, q)$ if and only if $\gamma$ is admissible.
\end{theorem}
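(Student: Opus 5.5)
We unwind both notions to conditions on fixed points. Here $\gamma$ is a morphism from $(S^0,0,-p)$ to $(S^0,-2q,0)$, i.e.\ from $S^{p\mathbb{H}}$ (level $0$) to $S^{q\tilde{\R}}$ (level $q$ after destabilization). The levels satisfy $l_0-m_0 = 0 < q = l_1 - m_1$ because $q\ge 1$. So admissibility is the first condition in the definition: after suspending by $\tilde{\R}^a\oplus\mathbb{H}^b$, the restriction $\gamma^G$ is a homotopy equivalence $S^0\to S^0$. Since $(S^{a\tilde{\R}+b\mathbb{H}+p\mathbb{H}})^G = S^0$ and $(S^{(a+q)\tilde{\R}+b\mathbb{H}})^G=S^0$, this says $\gamma^G$ is a degree $\pm1$ map of $S^0$ preserving the basepoint. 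On the other side, $\gamma$ is Furuta--Mahowald exactly when $\gamma\circ a_{\mathbb{H}}^p \simeq a_{\tilde{\R}}^q$ as $G$-equivariant stable maps $S^0\to S^{q\tilde{\R}}$.

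\emph{Furuta--Mahowald $\Rightarrow$ admissible.} Take $G$-fixed points of the commuting triangle. On $G$-fixed points, $a_{\mathbb{H}}^p$ and $a_{\tilde{\R}}^q$ restrict to the identity $S^0\to S^0$. Fixed points of a $G$-homotopy form a homotopy, so $\gamma^G\circ\mathrm{id}\simeq \mathrm{id}$, and $\gamma^G$ is a homotopy equivalence. The same holds after any stabilization by $\tilde{\R}^a\oplus\mathbb{H}^b$.

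\emph{Admissible $\Rightarrow$ Furuta--Mahowald.} This is the direction requiring obstruction theory, following \cite[Lemma 5.4]{Lin15}. First, if $\gamma^G$ is the non-identity self-equivalence (degree $-1$ on the suspension coordinate $S^{a}$), we may be forced to allow it. Alternatively, we may precompose with a reflection in $\tilde{\R}$, which is a $G$-map of degree $-1$ on fixed points. Instead, I would note that $\gamma\circ a^p_{\mathbb{H}}$ and $a^q_{\tilde{\R}}$ are two $G$-maps $S^{V}\to S^{V+q\tilde{\R}}$, where $V=a\tilde{\R}\oplus b\mathbb{H}$ is a large stabilizing representation. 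They agree up to homotopy on the $G$-fixed set, after possibly adjusting the sign as above. We then extend a homotopy cell by cell over $S^V$ relative to $S^{V^G}$, in three stages:
\begin{itemize}
\item On $S^{V^{S^1}}\setminus S^{V^G}$, the group acts through $\Z/2$ freely, and the target fixed set $S^{(a+q)\tilde{\R}}$ has dimension $a+q$, exceeding the source dimension $a$. Hence the relevant obstruction groups $\pi_{k}(S^{a+q})$ with $k\le a$ vanish, and the homotopy extends over the $S^1$-fixed part.
\item On the free part $S^V\setminus S^{V^{S^1}}$, the cells have dimension at most $\dim V$. The target is $S^{V+q\tilde{\R}}$ of dimension $\dim V+q>\dim V$, so by equivariant obstruction theory for free $G$-cells, the obstructions lie in $\pi_k(S^{\dim V+q})$ with $k\le \dim V$, which vanish.
\item The homotopy therefore extends to all of $S^V$, giving $\gamma\circ a_{\mathbb{H}}^p\simeq a_{\tilde{\R}}^q$ stably.
\end{itemize}

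The main obstacle is the sign/basepoint subtlety on $G$-fixed points. A homotopy equivalence $S^0\to S^0$ of pointed spaces is the identity, so after unstabilizing the fixed-point maps agree. However, after stabilization by $a\tilde{\R}$, the $G$-fixed points are $S^0$ while the $\Z/2$-fixed points are $S^{a}$, and there degree $\pm1$ issues can appear. I would handle this by working with the $G$-fixed set $S^0$ directly, which carries no sign issue, and running the connectivity argument relative to it. The dimension gap of $q\ge1$ makes every obstruction group vanish, on both the $\Z/2$-free part of the $S^1$-fixed set and the $G$-free part.
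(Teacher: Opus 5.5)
Your proposal is correct and follows the paper's proof: the converse comes from taking $G$-fixed points of the triangle, and the main direction is two-stage equivariant obstruction theory relative to $S^0$. The first stage works on the $S^1$-fixed set $S^{a\tilde{\R}}$, where $\Z/2$ acts freely away from $S^0$, and the second on the free $G$-part, with every obstruction group killed by the dimension gap from $q \ge 1$. Your sign worry is moot, and precomposing with a reflection would change $\gamma$: a pointed homotopy equivalence $S^0 \to S^0$ is the identity, so the two maps agree exactly on the $G$-fixed set, and maps $S^{a\tilde{\R}} \to S^{(a+q)\tilde{\R}}$ have no degree to adjust.
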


\begin{proof}
     Let $\gamma: S^{p\mathbb{H}} \to S^{q\tilde{\R}}$ be an admissible morphism. We must verify that its representative map $\tilde{\gamma}: S^{a \tilde{\R} \oplus (p + b) \mathbb{H}} \to S^{(a + q) \tilde{\R} \oplus b \mathbb{H}}$ makes the following diagram commute up to $G$-homotopy:
    \[
    \begin{tikzcd}
        S^{a \tilde{\R} \oplus (p + b) \mathbb{H}} \ar[rd, "\tilde{\gamma}"] &  \\ 
        S^{a \tilde{\R} \oplus b \mathbb{H}} \ar[u,"\tilde{a}_{\mathbb{H}}^p"] \ar[r, swap, "\tilde{a}^q_{\tilde{\R}}"] & S^{(a + q) \tilde{\R} \oplus b \mathbb{H}}
    \end{tikzcd}
    \]
    That is, we need to show that $\tilde{\gamma} \circ \tilde{a}_{\mathbb{H}}^p \simeq_G \tilde{a}^q_{\tilde{\R}}$. By taking a sufficiently large suspension, we may assume $a > 0$ and $b > 0$. To prove this, we apply equivariant obstruction theory (see \cite[Proposition 8.3.1]{tD79}) in two steps.

    First, we prove that the restrictions of these maps to the $S^1$-fixed point set, $(\tilde{\gamma} \circ \tilde{a}_{\mathbb{H}}^p)^{S^1}, (\tilde{a}^q_{\tilde{\R}})^{S^1}: S^{a\tilde{\R}} \to S^{(a + q)\tilde{\R}}$, are $G$-homotopic (i.e., $WS^1 = \Pin(2)/S^1 \cong \Z/2$-homotopic). 
    The definition of admissibility implies that the $G$-fixed point maps are homotopy equivalences $S^0 \xrightarrow{\simeq} S^0$, which must be the identity. Since the $\Z/2$-action on $S^{a\tilde{\R}} \setminus S^0$ is free, and since $q \ge 1$ implies that the target sphere is at least $a$-connected, all the relevant obstruction groups vanish. Thus, \cite[Proposition 8.3.1]{tD79} guarantees that these two maps are $\Z/2$-homotopic on $S^{a\tilde{\R}}$.

    Second, we prove that the maps $\tilde{\gamma} \circ \tilde{a}_{\mathbb{H}}^p, \tilde{a}^q_{\tilde{\R}}: S^{a \tilde{\R} \oplus b \mathbb{H}} \to S^{(a + q) \tilde{\R} \oplus b \mathbb{H}}$ are $G$-homotopic on the entire space. The $G$-action on $S^{a \tilde{\R} \oplus b \mathbb{H}} \setminus S^{a \tilde{\R}}$ is free, and $q \ge 1$ implies that the target sphere is at least $(a+4b)$-connected. Hence, all the relevant obstruction groups vanish. Again, by \cite[Proposition 8.3.1]{tD79}, these two maps are $G$-homotopic.

    Conversely, suppose that $\gamma$ is a Furuta--Mahowald class of level-$(p,q)$. By definition,
    \[
    \gamma\circ a_{\mathbb H}^p\simeq_G a_{\tilde{\R}}^q.
    \]
    Taking $G$-fixed points shows that $\gamma^G:S^0\to S^0$ is homotopic to the identity. Hence $\gamma$ is admissible.
\end{proof}

With this equivalence in hand, we can translate the existence of an admissible morphism into an algebraic constraint. The following condition for the existence of a Furuta--Mahowald class is a key algebraic topology result from \cite{HLSX22}.

\begin{theorem}[{\cite{HLSX22}, Theorem 1.21}]\label{thm:inequality}
    For $p\ge 2$, a level-$(p,q)$ Furuta--Mahowald class exists if and only if:
    \[ 
    q \ge  
    \begin{cases}
    2p+2 \quad p\equiv 1&\pmod 8 \\
    2p+2 \quad p\equiv 2&\pmod 8 \\
    2p+3 \quad p\equiv 3&\pmod 8 \\
    2p+3 \quad p\equiv 4&\pmod 8 \\
    2p+2 \quad p\equiv 5&\pmod 8 \\
    2p+2 \quad p\equiv 6&\pmod 8 \\
    2p+3 \quad p\equiv 7&\pmod 8 \\
    2p+4 \quad p\equiv 8&\pmod 8.
    \end{cases}
    \]
\end{theorem}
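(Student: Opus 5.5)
Since this statement is quoted from \cite{HLSX22}, we would not reprove it inside the present paper; if we had to, we would follow their strategy. The plan is to turn the existence of a level-$(p,q)$ Furuta--Mahowald class into a nonequivariant extension problem. The relation $\gamma\circ a_{\mathbb{H}}^p \simeq a_{\tilde{\R}}^q$ says that $a_{\tilde{\R}}^q$ factors through $a_{\mathbb{H}}^p$. The cofiber of $a_{\mathbb{H}}^p$ is built from the unit sphere $S(p\mathbb{H})$, on which $\Pin(2)$ acts freely. Hence the obstruction is the vanishing of a map out of the Thom spectrum of $q\tilde{\R}$ over the orbit space $S(p\mathbb{H})/\Pin(2)$. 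Concretely, the class exists if and only if a specific element lifts through a cell filtration; this is the description of $\pi^{\Pin(2)}_{p\mathbb{H}-q\tilde{\R}}S^0$ via Borel completion and the Segal conjecture (tom Dieck splitting). The resulting question is a Mahowald-invariant-type question about the top cell of a Thom spectrum of the form $(S(p\mathbb{H})/\Pin(2))^{q\tilde{\R}-\text{tangent}}$.

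The second step is nonexistence, i.e.\ the lower bounds on $q$. Equivariant $KO$-theory, as in Furuta's argument and Jones' refinement, gives the weaker bound $q\ge 2p+1$ or $2p+2$ depending on $p \bmod 8$. To gain the additional $1$ or $2$, we would run the Adams spectral sequence (at the prime $2$) for maps into the relevant Thom spectrum. We would compare it with the $C_4\subset \Pin(2)$-equivariant computation and use $ko$- and $tmf$-based detection. The $8$-fold periodicity in $p$ comes from the $KO$-Thom isomorphism for $\mathbb{H}$, which is a form of James periodicity for stunted quaternionic projective spaces. So it suffices to treat $p$ in a single period, plus a stable range argument.

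The third step is existence, i.e.\ the upper bounds. Here we would show that, for $q$ equal to the stated bound, every obstruction class in the Adams spectral sequence either vanishes or is killed by a differential. Then the factorization exists, and this produces $\gamma$ explicitly as a lift of $a_{\tilde{\R}}^q$. We expect the main obstacle to be the sharp case $p\equiv 0\pmod 8$ with $q=2p+4$. There the nonexistence for $q=2p+3$ requires detecting a subtle extension (a $\nu$- or $\eta$-multiplication class) that $KO$ does not see. We would first try to detect it in $tmf$-homology of the Thom spectrum. In parallel, we would compute the relevant differentials in the $C_4$-equivariant Adams spectral sequence.
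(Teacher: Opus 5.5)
Deferring to \cite{HLSX22} is exactly what the paper does. It quotes Theorem~\ref{thm:inequality} as a black box and gives no proof of it. What the paper supplies instead is Theorem~\ref{thm:equivalence_of_morphism}, which identifies admissible morphisms $S^{p\mathbb{H}}\to S^{q\tilde{\R}}$ with Furuta--Mahowald classes so that the cited criterion can be applied. As a proof inside this paper, your proposal therefore coincides with the paper's treatment.

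The sketch you attach of ``their strategy'' would not stand as a proof, and parts of it are inaccurate.

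\emph{Reduction step.} Your first step is sound: the cofiber sequence $S(p\mathbb{H})_+\to S^0\to S^{p\mathbb{H}}$ and freeness of the $\Pin(2)$-action reduce the question to nullity of a nonequivariant map involving a Thom spectrum over $S(p\mathbb{H})/\Pin(2)$. No Segal conjecture or tom Dieck splitting is needed for this.

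\emph{The $KO$ baseline is misstated.} The $KO$-theoretic bound (\cite[Theorem 1.8]{Sch03}; it is the first row of Table~(ii) in Remark~\ref{rem:correction_term} with $\bar{\mu}=\delta=0$, after writing the paper's $p$ as $2p$) is $q\ge 2p+1,\,2p+1,\,2p+2,\,2p+3$ for $p\equiv 0,1,2,3 \pmod 4$. So the gain needed over $KO$ is:
\begin{itemize}
\item $0$ for $p\equiv 2,3,6,7$;
\item $1$ for $p\equiv 1,5$;
\item $2$ for $p\equiv 4$;
\item $3$ for $p\equiv 0 \pmod 8$.
\end{itemize}
It is not ``$1$ or $2$''. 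In particular, for $p\equiv 0 \pmod 8$ you must exclude $q=2p+1,2p+2,2p+3$, not only $2p+3$.

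\emph{The single-period reduction is not available.} $S(p\mathbb{H})/\Pin(2)$ has dimension $4p-2$, so the number of cells that can carry obstructions grows with $p$. James periodicity, by contrast, only identifies stunted projective spaces with a fixed number of cells, with a period that grows with that number. Moreover, the answer is not periodic from the start. The K3 surface gives a level-$(1,3)$ class, while the formula for $p\equiv 1\pmod 8$ would force $q\ge 4$; this is why $p\ge 2$ is assumed.

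\emph{Existence is only asserted.} The existence half of the ``if and only if'' is not argued at all.

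For comparison, what \cite{HLSX22} actually do is analyze maps between finite spectra built from $B\Pin(2)$ and its Thom spectra, using cell diagrams, known stable stems, and the $j$-based Atiyah--Hirzebruch spectral sequence. The $\mathit{tmf}$-detection and $C_4$-equivariant Adams spectral sequence steps you list are speculation on your part and are not carried out, so they cannot stand in for that argument.
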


We now apply these stable homotopy tools to our geometric setting. Note that, in the following geometric arguments, we may always assume without loss of generality that our spin $4$-manifold $X$ satisfies $b_1(X) = 0$. Indeed, we can perform surgeries on loops generating $H_1(X;\Z)$ to achieve $b_1(X) = 0$ without changing the intersection form. With this reduction understood, the next lemma demonstrates how the intersection form of $X$ dictates the existence of a specific admissible morphism.

\begin{lemma}\label{lem:existence_of_admissible}
    Let $Y$ be an AR homology sphere, and let $X$ be a compact, smooth, indefinite, spin $4$-manifold with $\sigma(X) \le 0$ and boundary $\partial X = Y$. Suppose that $Y$ is an integral homology sphere and the intersection form of $X$ is isomorphic to $p (-E_8) \oplus q \left( \begin{smallmatrix} 0 & 1 \\ 1 & 0 \end{smallmatrix} \right)$ for some integers $p \ge 0$ and $q > 0$. Then we have the following:
    \begin{itemize}
        \item If $-\bar{\mu}(Y) = \delta(Y)$, then there exists an admissible morphism from $S^{\frac{1}{2}(p + \bar{\mu}(Y))\mathbb{H}}$ to $S^{q \tilde{\R}}$.
        \item If $-\bar{\mu}(Y) < \delta(Y)$, then there exists an admissible morphism from $S^{\frac{1}{2}(p + \bar{\mu}(Y))\mathbb{H}}$ to $S^{(q + 1) \tilde{\R}}$.
    \end{itemize}
\end{lemma}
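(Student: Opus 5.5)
We plan to compose the relative Bauer--Furuta invariant of $X$ with the admissible morphism of Theorem~\ref{thm:admissible_map}, and then move the resulting map into the stated form by formal suspensions in $\mathfrak{C}_G$.

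First, we remove a small ball from $X$ and regard the result $W = X \setminus \operatorname{int} D^4$ as a spin cobordism from $S^3$ to $Y$. As explained above, we may assume $b_1(W) = 0$. Since $Y$ is an integral homology sphere it has a unique spin structure $\s$, and the spin structure $\mathfrak{t}$ on $W$ restricts to it. We also have $\sigma(W) = -8p$ and $b^+(W) = q$, and $\SWF(S^3) = S^0$. Theorem~\ref{thm:rel_BF} then gives an admissible morphism
\[
\Phi_W(\mathfrak{t}) \colon \Sigma^{\frac{p}{2}\mathbb{H}} S^0 \to \Sigma^{q\tilde{\R}} \SWF(Y, \s).
\]

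Second, we use Theorem~\ref{thm:DSS7.1} to write $\beta(Y,\s) = -\bar{\mu}(Y)$. Theorem~\ref{thm:admissible_map} then supplies an admissible morphism $f$ from $\SWF(Y,\s)$ to $\Sigma^{-\frac{\bar\mu(Y)}{2}\mathbb{H}} S^0$ when $-\bar\mu(Y) = \delta(Y)$, and to $\Sigma^{-\frac{\bar\mu(Y)}{2}\mathbb{H}} S^{\tilde{\R}}$ when $-\bar\mu(Y) < \delta(Y)$. We suspend $f$ by $q\tilde{\R}$ and compose it with $\Phi_W$. This gives a morphism $S^{\frac{p}{2}\mathbb{H}} \to \Sigma^{-\frac{\bar\mu(Y)}{2}\mathbb{H}} S^{q\tilde{\R}}$ in the first case, with target $S^{(q+1)\tilde{\R}}$ in the second. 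Applying the inverse formal suspension $\Sigma^{\frac{\bar\mu(Y)}{2}\mathbb{H}}$ converts this into a morphism $S^{\frac12(p+\bar\mu(Y))\mathbb{H}} \to S^{q\tilde{\R}}$ (respectively $S^{(q+1)\tilde{\R}}$).

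The main obstacle is the bookkeeping needed to show the composite is still admissible. Admissibility of a composition of admissible morphisms is stated in a remark above, but we should check that suspending $f$ by $q\tilde{\R}$ and shifting both ends by a quaternionic suspension preserve admissibility. For the $\mathbb{H}$-shift this holds because it does not change $S^1$- or $G$-fixed points. For the $\tilde{\R}$-suspension we need that the $G$-fixed-point map of $f$ stays a homotopy equivalence $S^0 \to S^0$ after suspending by $q\tilde{\R}$, whose $G$-fixed part is trivial; this is the condition actually established in the proof of Theorem~\ref{thm:admissible_map}. We must also track the levels: in the second case the target level jumps by one, so the relevant condition is the strict-inequality version involving $G$-fixed points, while in the first case it is the equality version. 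In both cases the conclusion is that $G$-fixed points map by a homotopy equivalence $S^0 \to S^0$. Finally, the grading shift $\frac12(p+\bar\mu(Y))$ is a legitimate integer or half-integer index, because Rokhlin's theorem combined with $\bar\mu \equiv \mu \pmod 2$ makes $p + \bar\mu(Y)$ an even integer. If $p+\bar\mu(Y) < 0$, we read the source as a formal desuspension, and the statement still holds as a morphism in $\mathfrak{C}_G$.
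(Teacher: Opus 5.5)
Your proposal is correct and follows essentially the same route as the paper. Both arguments compose the relative Bauer--Furuta morphism $\Sigma^{\frac{p}{2}\mathbb{H}}S^0 \to \Sigma^{q\tilde{\R}}\SWF(Y)$, obtained by viewing $X$ as a cobordism from $S^3$, with the $q\tilde{\R}$-suspension of the admissible morphism from Theorem~\ref{thm:admissible_map}, use $\beta(Y) = -\bar{\mu}(Y)$, and shift the quaternionic grading on both ends. Your extra check that these suspensions preserve admissibility is a detail the paper leaves implicit. Note only that the equality-version condition you mention applies to $f$ itself in the first case; the final composite always lands at level $q$ or $q+1 > 0$, so it is admissible via the $G$-fixed-point condition.
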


\begin{remark}
    The number $\frac{1}{2}(p + \bar{\mu}(Y))$ is an integer because $p \equiv \bar{\mu}(Y) \equiv \mu(Y) \pmod 2$, where $\mu(Y)$ is the Rokhlin invariant of $Y$.
\end{remark}

\begin{proof}[Proof of Lemma \ref{lem:existence_of_admissible}]
    By Theorem~\ref{thm:rel_BF}, viewing $X$ as a spin cobordism from $S^3$ to $Y$ provides an admissible morphism
    \[
    \Phi_X: \Sigma^{\frac{1}{2} p \mathbb{H}} S^0 \to \Sigma^{q\tilde{\R}}\SWF(Y).
    \]
    
    Suppose $-\bar{\mu}(Y) = \delta(Y)$. Theorem~\ref{thm:admissible_map} gives another admissible morphism
    \[
    f: \SWF(Y) \to \Sigma^{\frac{\beta(Y)}{2}\mathbb{H}} S^0.
    \]
    Composing $f$ with $\Phi_X$ and applying the relation $-\bar{\mu}(Y) = \beta(Y)$, we obtain the following morphism:
    \[
    f \circ \Phi_X: S^{\frac{1}{2}p\mathbb{H}} \to \Sigma^{-\frac{\bar{\mu}(Y)}{2}\mathbb{H}} S^{q \tilde{\R}}.
    \]
    By Definition~\ref{def:Pin(2)-category}, this morphism is represented by a map of the form
    \[
    \widetilde{f \circ \Phi_X}: \Sigma^{a\mathbb{H} \oplus b\tilde{\R}}S^{\frac{1}{2}(p + \bar{\mu}(Y)) \mathbb{H}} \to \Sigma^{a\mathbb{H} \oplus b\tilde{\R}} S^{q \tilde{\R}}
    \]
    for some integers $a, b \ge 0$. This representative map naturally induces the desired admissible morphism from $S^{\frac{1}{2}(p + \bar{\mu}(Y))\mathbb{H}}$ to $S^{q \tilde{\R}}$.
    
    Now suppose $-\bar{\mu}(Y) < \delta(Y)$. In this case, Theorem~\ref{thm:admissible_map} yields an admissible morphism
    \[
    f: \SWF(Y) \to \Sigma^{\frac{\beta(Y)}{2}\mathbb{H}} S^{\tilde{\R}}.
    \]
    Similarly, composition with $\Phi_X$ and the relation $-\bar{\mu}(Y) = \beta(Y)$ lead to
    \[
    f \circ \Phi_X: S^{\frac{1}{2}(p + \bar{\mu}(Y))\mathbb{H}} \to S^{(q + 1) \tilde{\R}}.
    \]
    This gives the desired admissible morphism.
\end{proof}

Finally, by combining the geometric realization of admissible morphisms from Lemma \ref{lem:existence_of_admissible} with the algebraic constraints of Theorem \ref{thm:inequality}, we obtain a $\frac{10}{8}$-type inequality for spin $4$-manifolds bounded by AR homology spheres. We are now ready to prove Theorem \ref{thm:main_inequality}.

\begin{proof}[Proof of Theorem~\ref{thm:main_inequality}]
    By Lemma~\ref{lem:existence_of_admissible}, we obtain an admissible morphism. According to Theorem~\ref{thm:equivalence_of_morphism}, this morphism represents a Furuta--Mahowald class of level $(\frac{1}{2}(p + \bar{\mu}(Y)), q)$ if $-\bar{\mu}(Y) = \delta(Y)$, and of level $(\frac{1}{2}(p + \bar{\mu}(Y)), q + 1)$ if $-\bar{\mu}(Y) < \delta(Y)$. Applying the necessary conditions of Theorem~\ref{thm:inequality} to these classes immediately yields the desired inequalities.
\end{proof}

\subsection{Connected sum of AR homology spheres}

Next, we consider the constraints on the intersection form of $4$-manifolds bounded by connected sums of AR homology spheres.

\begin{fact}[{\cite[Fact 4.4]{Sto20}}]\label{fact:conn_sum_SWF}
    Let $Y_0,Y_1$ be rational homology three-spheres with spin structures $\s_0,\s_1$ and $(X_i, m_i, n_i) = \SWF(Y_i, \s_i)$ for $i = 0, 1$. Then
    $\SWF(Y_0 \mathop{\#} Y_1, \s_0 \mathop{\#} \s_1)$ is locally equivalent to $(X_0 \wedge X_1, m_0 + m_1, n_0 + n_1)$.
\end{fact}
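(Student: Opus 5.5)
The plan is to build local maps in both directions using the relative Bauer--Furuta invariant of two elementary spin cobordisms, rather than comparing Conley indices directly.

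First, I will let $W$ be the cobordism from $Y_0 \sqcup Y_1$ to $Y_0 \mathop{\#} Y_1$. It is obtained by attaching a single $4$-dimensional $1$-handle to $(Y_0 \sqcup Y_1) \times [0,1]$, joining the two components. I will let $\overline{W}$ be the reversed cobordism (a $3$-handle attachment) from $Y_0 \mathop{\#} Y_1$ to $Y_0 \sqcup Y_1$. Both are spin, since the spin structures $\s_0,\s_1$ extend uniquely across the handle and restrict to $\s_0 \mathop{\#} \s_1$. Both have $b_1 = 0$, because the handle connects different components and so creates no new loop. Both also have $b^+ = 0$ and $\sigma = 0$.

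Next, I will apply Manolescu's construction behind Theorem~\ref{thm:rel_BF} to these cobordisms. The only new feature is a disconnected end: for a disjoint union, the Coulomb slice is the direct sum, the approximated flow is a product, and the Conley index of a product is the smash product. Hence the Floer spectrum of $Y_0 \sqcup Y_1$ is $(X_0 \wedge X_1, m_0 + m_1, n_0 + n_1)$. The grading shifts agree because both the Dirac index and the signature are additive under boundary connected sum of spin fillings, which gives $n(Y_0 \mathop{\#} Y_1) = n(Y_0) + n(Y_1)$. Since $\sigma(W) = 0$ and $b^+(W) = 0$, Theorem~\ref{thm:rel_BF} then gives morphisms
\[
\Phi_W : (X_0 \wedge X_1, m_0+m_1, n_0+n_1) \to \SWF(Y_0 \mathop{\#} Y_1), \qquad \Phi_{\overline{W}} : \SWF(Y_0 \mathop{\#} Y_1) \to (X_0 \wedge X_1, m_0+m_1, n_0+n_1),
\]
with no suspension on either side.

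It then remains to check that both maps are local, i.e.\ that their restrictions to $S^1$-fixed points are $G$-homotopy equivalences. On the $S^1$-fixed locus the Seiberg--Witten map reduces to the linear map $(d^+, d^*)$ on forms together with boundary restrictions. Since $b_1(W) = b^+(W) = 0$ and $H^1(W, \partial W) \to H^1(W)$ creates no kernel or cokernel, this linear map is an isomorphism between the $\tilde{\R}$-representation spheres. The levels $l_0 - m_0$ and $l_1 - m_1$ agree by the same index count, so the second admissibility condition holds and both morphisms are local. This proves the local equivalence.

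The main obstacle is the step that makes Theorem~\ref{thm:rel_BF} rigorous when the incoming end is disconnected. This needs the identification of the Conley index of $Y_0 \sqcup Y_1$ with the smash product, compatibly with the $\Pin(2)$ action and the spectral cut-offs $\lambda, \mu$. It also needs a check that the zero-mode bookkeeping (the constant function on each component in the Coulomb gauge) does not introduce an extra $\tilde{\R}$ shift. I would handle this by imposing the Coulomb condition separately on each component and absorbing the one extra constant into the index count in the degree computation.
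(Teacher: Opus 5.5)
The paper gives no proof of this Fact; it simply quotes it from Stoffregen. Your strategy is a genuinely different route, and a reasonable one. The $1$-handle cobordism $W\colon Y_0\sqcup Y_1\to Y_0\#Y_1$ and its reverse $\overline{W}$ are spin with $b_1=b_2=0$. So if their relative Bauer--Furuta maps exist and are local, you get exactly a local equivalence. This needs no gluing or neck-stretching theorem, and it does not claim a full stable homotopy equivalence.

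Several parts are fine as they stand:
\begin{itemize}
\item Your topological input is correct.
\item The smash-product description of $\SWF(Y_0\sqcup Y_1)$ is fine.
\item The additivity of $n$ under boundary connected sum is not needed. The $\mathbb{H}$-shift of the cobordism map is already $-\sigma(W)/16=0$, and for the disjoint union you only use additivity under disjoint union of fillings.
\end{itemize}

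The gap is in the locality step, which is the heart of the argument. Your conclusion that the form part is an isomorphism is true, but not for the reason you give, and not with the slice you propose.

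\textbf{The stated reason is false.} You claim that $H^1(W,\partial W)\to H^1(W)$ has no kernel. But $\partial W$ has three components, so $H^1(W,\partial W;\R)\cong H^0(\partial W;\R)/H^0(W;\R)\cong\R^2$, while $H^1(W;\R)=0$. Concretely, let $f$ be harmonic on $W$ and equal to a constant $c_j$ on each boundary component $Y_j$. Then $a=i\,df$ satisfies $d^*a=0$, $d^+a=0$ and $\iota^*a=0$. So if you impose only the Coulomb condition on $W$ and on each boundary component separately, as your last paragraph proposes, the linear map on the $S^1$-fixed locus has a $2$-dimensional kernel.

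\textbf{The proposed remedy would break locality.} These kernel elements are gauge directions: the transformations $e^{if}$ preserve that slice. They cannot be ``absorbed into the index count.'' Any extra $\tilde{\R}$ in the source or target turns the fixed-point map into a map between spheres of different dimensions. Such a map is not a homotopy equivalence, so locality would fail.

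\textbf{The fix is a genuine global slice modulo constants} for a manifold with several boundary components. Either of the following works:
\begin{itemize}
\item Manolescu's Coulomb--Neumann condition, where the normal component of $a$ vanishes on $\partial W$.
\item Khandhawit's double Coulomb slice, which imposes $\int_{Y_j}\iota^*(*a)=0$ on \emph{each} boundary component $Y_j$.
\end{itemize}
Either condition kills exactly these modes, since $\int_W|df|^2=\sum_j c_j\int_{Y_j}\partial_\nu f$. It leaves the constant (diagonal) $S^1$ as the residual gauge group, which matches the diagonal $\Pin(2)$-action on $X_0\wedge X_1$. The form part of the linearization then has kernel $H^1(W;\R)=0$ and index $b_1(W)-b^+(W)=0$, so it is an isomorphism. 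With this slice there is no $\tilde{\R}$-shift, the $S^1$-fixed-point maps of $\Phi_W$ and $\Phi_{\overline{W}}$ are $G$-homotopy equivalences, and your argument goes through.
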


\begin{theorem}
    Let $Y_1, \dots, Y_n$ be AR homology spheres with spin structures $\s_1, \dots, \s_n$, respectively, and let $(Y, \s) = \mathop{\#}_{i = 1}^n (Y_i, \s_i)$. Let $(X, \mathfrak{t})$ be a compact, smooth, indefinite, spin $4$-manifold with boundary $\partial (X, \mathfrak{t}) = (Y, \s)$. Suppose that the intersection form of $X$ is isomorphic to $p(-E_8) \oplus q\left(\begin{smallmatrix} 0 & 1 \\ 1 & 0 \end{smallmatrix}\right)$ $(p \ge 0, q > 0)$ such that $p + \sum_i\bar{\mu}(Y_i,\s_i) \ge 2$. Then there exists a Furuta--Mahowald class of level $\left(\frac{1}{2}\left(p + \sum_{i = 1}^n \bar{\mu}(Y_i, \s_i)\right), q + m\right)$, where $m$ is the number of components $Y_i$ satisfying $-\bar{\mu}(Y_i, \s_i) < \delta(Y_i, \s_i)$.
\end{theorem}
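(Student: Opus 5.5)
The plan is to mimic the single-component argument of Lemma~\ref{lem:existence_of_admissible}, replacing the target map by a smash product of the maps from Theorem~\ref{thm:admissible_map}.

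\emph{Step 1: a map out of the connected sum.} For each $i$, Theorem~\ref{thm:admissible_map} gives an admissible morphism
\[
f_i\colon \SWF(Y_i,\s_i)\to \Sigma^{\frac{\beta(Y_i,\s_i)}{2}\mathbb{H}} S^{\epsilon_i\tilde{\R}},
\]
where $\epsilon_i=0$ if $\beta(Y_i,\s_i)=\delta(Y_i,\s_i)$ and $\epsilon_i=1$ otherwise. By Theorem~\ref{thm:DSS7.1}, $\beta(Y_i,\s_i)=-\bar{\mu}(Y_i,\s_i)$, so $\sum_i\epsilon_i=m$. Write $\SWF(Y_i,\s_i)=(X_i,m_i,n_i)$ with $X_i$ of type SWF. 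Smashing representatives of the $f_i$ gives a $G$-map
\[
\textstyle\bigwedge_i X_i \to \bigwedge_i S^{\epsilon_i\tilde{\R}}
\]
after suspensions, with the degree shifts adding. On $S^1$-fixed points, and hence on $G$-fixed points, a smash product of maps is the smash product of the fixed-point maps, since $(A\wedge B)^H=A^H\wedge B^H$. Each $f_i$ is a homotopy equivalence on $G$-fixed points $S^0\to S^0$, so the smash is too. When $m\ge1$ the levels satisfy $l_0-m_0<l_1-m_1$, so this is admissible in the first sense; when $m=0$ it is a $G$-equivalence on $S^1$-fixed points. Fact~\ref{fact:conn_sum_SWF} then provides a local map $\SWF(Y,\s)\to(\bigwedge_i X_i,\sum m_i,\sum n_i)$, which is admissible by the remark after the definition of local equivalence. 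Composing yields an admissible morphism
\[
F\colon \SWF(Y,\s)\to \Sigma^{-\frac{1}{2}\sum_i\bar{\mu}(Y_i,\s_i)\mathbb{H}} S^{m\tilde{\R}}.
\]

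\emph{Step 2: compose with the Bauer--Furuta map.} As in Lemma~\ref{lem:existence_of_admissible}, after surgery we may assume $b_1(X)=0$. Viewing $X$ as a cobordism from $S^3$ to $Y$, Theorem~\ref{thm:rel_BF} gives an admissible morphism $\Phi_X\colon S^{\frac{p}{2}\mathbb{H}}\to\Sigma^{q\tilde{\R}}\SWF(Y,\s)$. Here $\sigma(X)=-8p$, and $\Sigma^{q\tilde{\R}}$ commutes with $F$. The composite $(\Sigma^{q\tilde{\R}}F)\circ\Phi_X$ is admissible, because compositions of admissible morphisms are admissible. Desuspending by $\frac12\sum\bar{\mu}$ copies of $\mathbb{H}$ in the category $\mathfrak{C}_G$ then gives an admissible morphism
\[
S^{\frac12(p+\sum_i\bar{\mu}(Y_i,\s_i))\mathbb{H}}\to S^{(q+m)\tilde{\R}}.
\]
The quaternionic exponent is an integer, by the Rokhlin remark and additivity of $\mu$.

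\emph{Step 3: conclude.} The hypothesis $p+\sum_i\bar{\mu}\ge2$ gives level $\frac12(p+\sum_i\bar\mu)\ge1$, and $q+m\ge1$. So Theorem~\ref{thm:equivalence_of_morphism} applies and identifies this morphism with a Furuta--Mahowald class of the stated level.

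\emph{Main obstacle.} The step needing the most care is Step 1, specifically checking that the smash product of admissible morphisms is admissible in the mixed case where some $\epsilon_i=0$ and some $\epsilon_i=1$. Both admissibility conditions reduce to statements about fixed-point maps, and fixed points commute with smash products, so I would verify directly that the composite is a homotopy equivalence on $G$-fixed points $S^0$. That is all Theorem~\ref{thm:equivalence_of_morphism}'s converse direction actually uses. A secondary point is bookkeeping the formal desuspension indices $n_i$ so that the rational shifts cancel exactly to $-\frac12\sum\bar\mu$.
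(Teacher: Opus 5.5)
Your proposal is correct and is essentially the paper's proof. Both use the local map $\SWF(Y,\s)\to\bigwedge_i\SWF(Y_i,\s_i)$ from Fact~\ref{fact:conn_sum_SWF}, the smash product of the admissible morphisms from Theorem~\ref{thm:admissible_map}, composition with $\Phi_X(\mathfrak{t})$, and then Theorem~\ref{thm:equivalence_of_morphism}; your fixed-point check that the smash product stays admissible fills in a step the paper only asserts. One small slip: the direction of Theorem~\ref{thm:equivalence_of_morphism} you need (admissible implies Furuta--Mahowald) is the one the paper proves first, not its converse.
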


\begin{proof}
    By Fact~\ref{fact:conn_sum_SWF}, there exists a local map
    \[
    f: \SWF(Y, \s) \to \bigwedge_{i = 1}^n \SWF(Y_i, \s_i).
    \]
    By Theorem~\ref{thm:admissible_map}, for each $i$, we have an admissible morphism for $\SWF(Y_i, \s_i)$. Smashing these maps together yields an admissible morphism
    \[
    g: \bigwedge_{i = 1}^n \SWF(Y_i, \s_i) \to \bigwedge_{i = 1}^n \Sigma^{-\frac{1}{2}\bar{\mu}(Y_i, \s_i)\mathbb{H}}S^{\epsilon_i \tilde{\R}} = \Sigma^{-\frac{1}{2}\sum_{i = 1}^n \bar{\mu}(Y_i, \s_i)\mathbb{H}}S^{m\tilde{\R}},
    \]
    where $\epsilon_i = 0$ if $-\bar{\mu}(Y_i, \s_i) = \delta(Y_i, \s_i)$, and $\epsilon_i = 1$ if $-\bar{\mu}(Y_i, \s_i) < \delta(Y_i, \s_i)$.
    
    On the other hand, Theorem~\ref{thm:rel_BF} provides the relative Bauer--Furuta invariant
    \[
    \Phi_X(\mathfrak{t}): \Sigma^{\frac{1}{2} p \mathbb{H}} S^0 \to \Sigma^{q\tilde{\R}}\SWF(Y, \s).
    \]
    Composing $\Phi_X(\mathfrak{t})$ with $\Sigma^{q\tilde{\R}}f$ and $\Sigma^{q\tilde{\R}}g$, we obtain an admissible morphism
    \[
    \Sigma^{\frac{1}{2} p \mathbb{H}} S^0 \to \Sigma^{-\frac{1}{2}\sum_{i = 1}^n \bar{\mu}(Y_i, \s_i)\mathbb{H}}S^{(q + m)\tilde{\R}}.
    \]
    This naturally defines an admissible morphism from $S^{\frac{1}{2}(p + \sum_{i = 1}^n \bar{\mu}(Y_i, \s_i))\mathbb{H}}$ to $S^{(q + m)\tilde{\R}}$. Applying Theorem~\ref{thm:equivalence_of_morphism}, this morphism yields the desired Furuta--Mahowald class.
\end{proof}

By combining the Furuta--Mahowald class constructed for connected sums with the necessary conditions from Theorem~\ref{thm:inequality}, we obtain Theorem~\ref{thm:conn_sum}, which serves as a natural generalization of Theorem~\ref{thm:main_inequality}. The proof is essentially identical to that of Theorem~\ref{thm:main_inequality}.

\section{Review of $\Pin(2)$-equivariant $KO$-theory}

\subsection{Some facts about equivariant $KO$-theory}
In this section, following the expository style of \cite{Man14} and \cite{Lin15}, we briefly recall the basic definitions and collect several fundamental facts regarding equivariant $KO$-theory that will be used throughout this paper.

Let $G$ be a compact topological group and $X$ a compact $G$-space. The equivariant $KO$-theory of $X$, denoted by $KO_G(X)$, is the Grothendieck group associated with $G$-equivariant real vector bundles on $X$. Similarly, we denote by $K_G(X)$ and $KSp_G(X)$ the Grothendieck groups associated with $G$-equivariant complex and quaternionic vector bundles on $X$, respectively. When $G$ is trivial, we simply write $KO(X)$, $K(X)$, and $KSp(X)$. When $X$ is a point, $RO(G) = KO_G(\pt)$ is the real representation ring of $G$. In general, $KO_G(X)$ is an algebra over $RO(G)$.

\begin{fact}
    A continuous $G$-map $f: X \to X'$ induces a map $f^*: KO_G(X') \to KO_G(X)$.
\end{fact}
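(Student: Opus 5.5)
The statement is the basic functoriality fact: a continuous $G$-map $f\colon X \to X'$ induces $f^*\colon KO_G(X') \to KO_G(X)$. I will prove it by constructing $f^*$ on the level of equivariant bundles by pullback, then extending to the Grothendieck group through its universal property.

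First, let $E \to X'$ be a $G$-equivariant real vector bundle. Form the pullback
\[
f^*E = \{(x,e) \in X \times E : f(x) = \pi(e)\}
\]
with the diagonal action $g\cdot(x,e) = (gx, ge)$. This action preserves $f^*E$ because $f$ and $\pi$ are $G$-equivariant: $f(gx) = gf(x) = g\pi(e) = \pi(ge)$. Local triviality of $f^*E$ is the usual non-equivariant argument. On each fibre, $g$ acts as the linear map $E_{f(x)} \to E_{gf(x)}$, so $f^*E$ is again a $G$-equivariant real vector bundle over $X$.

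Second, I check the compatibilities needed for descent:
\begin{itemize}
\item a $G$-isomorphism $E \cong E'$ induces a $G$-isomorphism $f^*E \cong f^*E'$;
\item $f^*(E \oplus E') \cong f^*E \oplus f^*E'$ canonically and equivariantly, since the fibre product commutes with fibrewise direct sum;
\item $f^*(E \otimes E') \cong f^*E \otimes f^*E'$ in the same way, as is the trivial bundle with its $G$-action on the fibre $V$.
\end{itemize}
So pullback defines a monoid homomorphism from isomorphism classes of $G$-bundles on $X'$ to those on $X$. The universal property of the Grothendieck group then extends it uniquely to a group homomorphism $f^*\colon KO_G(X') \to KO_G(X)$. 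This map is in fact a ring homomorphism and $RO(G)$-linear, and one can also record $(f\circ h)^* = h^* f^*$ and $\mathrm{id}^* = \mathrm{id}$.

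For pointed spaces and reduced theory $\widetilde{KO}_G$, compatibility with restriction to the basepoint gives the induced map on kernels. The only nontrivial point, standard for compact $G$, is that $G$-homotopic maps induce equal maps. This is not needed for the mere existence of $f^*$, so I will cite it rather than prove it: Segal's equivariant K-theory shows that a $G$-bundle over $X \times [0,1]$ is isomorphic to the pullback of its restriction to $X \times \{0\}$. No step here is a real obstacle.
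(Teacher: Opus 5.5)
Your proposal is correct, and it follows the standard route. The paper records this as a standard fact without proof, recalled from the expository treatments of Manolescu and Lin and ultimately from Segal. Pulling back equivariant bundles and then extending by the universal property of the Grothendieck group is exactly the justification the paper relies on; it even invokes ``the underlying pullback maps of vector bundles'' in the proof of Fact~\ref{fact:subgroup_pointed}. As you note, homotopy invariance is a separate fact, listed next in the paper, and is not needed for the existence of $f^*$.
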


\begin{fact}
    If two continuous $G$-maps $f, g \colon X \to X'$ are $G$-homotopic, they induce the same homomorphism $f^* = g^* \colon KO_G(X') \to KO_G(X)$.
\end{fact}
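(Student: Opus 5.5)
This is a standard fact about equivariant $KO$-theory, and I would prove it by making $KO_G$ a homotopy-invariant functor through the bundle-theoretic definition. First I reduce to one statement about bundles. Let $H\colon X\times[0,1]\to X'$ be a $G$-homotopy from $f$ to $g$, where $G$ acts trivially on $[0,1]$. Write $i_t\colon X\to X\times[0,1]$ for $x\mapsto (x,t)$. Then $f=H\circ i_0$ and $g=H\circ i_1$, and by functoriality (the preceding Fact) we get $f^*=i_0^*H^*$ and $g^*=i_1^*H^*$. So it suffices to show $i_0^*=i_1^*$ on $KO_G(X\times[0,1])$. Since $KO_G$ is the Grothendieck group of isomorphism classes of $G$-equivariant real vector bundles, and pullback is additive, it is enough to prove the following: for any $G$-vector bundle $E\to X\times[0,1]$, the restrictions $E|_{X\times\{0\}}$ and $E|_{X\times\{1\}}$ are isomorphic as $G$-bundles over $X$.

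Second, I prove this bundle statement in the usual way. Let $\pi\colon X\times[0,1]\to X$ be the projection and fix $t\in[0,1]$. Consider the pulled-back bundle $\pi^*(E|_{X\times\{t\}})$. It agrees with $E$ over the slice $X\times\{t\}$. I then do the following:
\begin{itemize}
\item The identity map on the slice is an equivariant section of $\operatorname{Hom}(\pi^*(E|_{X\times\{t\}}),E)$ over the closed $G$-invariant subspace $X\times\{t\}$.
\item I extend this section to a $G$-invariant neighbourhood. Non-equivariantly this uses Tietze extension, which works because $X$ is compact Hausdorff. I then make the extension equivariant by averaging over $G$ with Haar measure, which uses that $G$ is compact.
\item Invertibility is an open condition, so the averaged extension is an isomorphism on a neighbourhood of the slice. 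Since $X$ is compact, that neighbourhood contains a tube $X\times(t-\varepsilon,t+\varepsilon)$.
\end{itemize}
This shows that the isomorphism class of $E|_{X\times\{s\}}$ is locally constant in $s$. As $[0,1]$ is connected, the class is constant, and in particular $E|_{X\times\{0\}}\cong E|_{X\times\{1\}}$.

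The main obstacle is the equivariant extension step: the extended section must remain equivariant and still be an isomorphism near the slice. Haar averaging handles both points. It preserves the identity on the slice, because the section there is already equivariant. It also produces a section that is close to the original in a neighbourhood, so invertibility survives after possibly shrinking the neighbourhood. This is exactly Segal's argument in \emph{Equivariant $K$-theory}, Proposition 1.3, which works verbatim for real bundles. If a pointed or reduced version $\widetilde{KO}_G$ is needed for pointed maps, it follows from the unreduced case, since reduced $KO_G$ is the kernel of restriction to the basepoint and pointed homotopies commute with that restriction.
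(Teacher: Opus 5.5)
Your proof is correct. The paper gives no proof of this statement; it is listed among the background facts on equivariant $KO$-theory, recalled in the expository style of Manolescu and Lin and ultimately resting on Segal~\cite{Seg68}. Your argument is exactly Segal's proof of homotopy invariance: reduce to $i_0^*=i_1^*$ on $KO_G(X\times[0,1])$, then extend the identity section, average over $G$, use that invertibility is an open condition, apply the tube lemma, and conclude by connectedness. This works verbatim for real $G$-bundles, so your version adds self-containedness and nothing more.

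One minor precision. Extend the identity section of $\operatorname{Hom}(\pi^*(E|_{X\times\{t\}}),E)$ over all of the compact space $X\times[0,1]$, or at least over a $G$-invariant set, so that the Haar average is defined. Invertibility near $X\times\{t\}$ then follows simply because the averaged section still equals the identity on that slice and invertibility is open; no separate closeness estimate is needed.
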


\begin{fact}
    For every subgroup $H \subset G$, we have functorial restriction maps $KO_G(X) \to KO_H(X)$.
\end{fact}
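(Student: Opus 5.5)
This fact is the last of three basic facts about equivariant $KO$-theory. It concerns restriction of the group action: for a closed subgroup $H \subset G$ and a compact $G$-space $X$, there is a functorial restriction map $KO_G(X) \to KO_H(X)$. The plan is to define this map first on isomorphism classes of bundles, then extend it to the Grothendieck group, and finally check functoriality.

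First, let $E \to X$ be a $G$-equivariant real vector bundle. It remains an $H$-equivariant real vector bundle if we only remember the action of $H$: the projection is still $H$-equivariant, and each $h \in H$ still acts by fiberwise linear isomorphisms $E_x \to E_{hx}$. A $G$-equivariant bundle isomorphism is in particular $H$-equivariant. Restriction also commutes with direct sums, since the $H$-action on $E \oplus E'$ is the restriction of the $G$-action. So restriction is a homomorphism of commutative monoids from isomorphism classes of $G$-bundles to isomorphism classes of $H$-bundles.

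Second, we pass to Grothendieck groups. By the universal property of group completion, a monoid homomorphism extends uniquely to a group homomorphism
\[
\operatorname{res}^G_H \colon KO_G(X) \to KO_H(X).
\]
Tensor products are also preserved, so this is a ring homomorphism. It is compatible with the module structures over $RO(G) \to RO(H)$, because restriction on $X$ agrees with restriction of representations pulled back along $X \to \pt$.

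Third, functoriality has two parts. For a $G$-map $f \colon X \to X'$ and a $G$-bundle $E$ on $X'$, the pullback $f^*E$ with the restricted $H$-action equals the $H$-equivariant pullback of $E$ with its restricted action. Hence $\operatorname{res}^G_H \circ f^* = f^* \circ \operatorname{res}^G_H$. For nested subgroups $K \subset H \subset G$, we have $\operatorname{res}^H_K \circ \operatorname{res}^G_H = \operatorname{res}^G_K$, because forgetting structure in stages gives the same result.

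No step is a genuine obstacle, since this is standard (Segal's equivariant $K$-theory). The only point needing a moment's care is that everything is well defined on isomorphism classes before group completion, and that is immediate.
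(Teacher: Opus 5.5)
Your argument is correct and is the standard one. The paper states this as a basic fact without proof, following Manolescu, Lin and ultimately Segal, and forgetting the $G$-action down to $H$ on equivariant bundles, passing to Grothendieck groups, and checking compatibility with pullbacks and with nested restrictions (reading ``subgroup'' as closed subgroup, as you do) is exactly what underlies it.
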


\begin{fact}
    If $G$ acts freely on $X$, then the pull-back map $KO(X / G) \to KO_G(X)$ is a ring isomorphism.
\end{fact}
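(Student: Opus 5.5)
The plan is to exhibit an explicit inverse to the pull-back map, using descent of equivariant bundles along the quotient $\pi\colon X \to X/G$. Throughout, $G$ is a compact Lie group (in our applications $G = \Pin(2)$ or one of its subgroups) and $X$ is a compact Hausdorff $G$-space on which $G$ acts freely.

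First I would establish that $\pi\colon X \to X/G$ is a principal $G$-bundle.
\begin{itemize}
\item The key input is the slice theorem (Gleason) for free actions of compact Lie groups on completely regular spaces.
\item It says that every point of $X/G$ has a neighborhood $U$ with a $G$-equivariant homeomorphism $\pi^{-1}(U) \cong U \times G$, where $G$ acts on the right factor by translation.
\item This is the step I expect to be the main obstacle, since it is the only genuinely topological input. Everything after it is formal.
\item Since $X$ is compact, $X/G$ is compact Hausdorff, so the Grothendieck groups on both sides are formed from honest finite-dimensional bundles.
\end{itemize}

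Second I would define the descent functor. For a $G$-equivariant real vector bundle $E \to X$, set $E/G \to X/G$.
\begin{itemize}
\item To see that $E/G$ is locally trivial, restrict to $\pi^{-1}(U) \cong U \times G$.
\item There, an equivariant bundle is determined by its restriction $E_0 = E|_{U \times \{e\}}$: the map $E_0 \times G \to E|_{U\times G}$, $(v,g) \mapsto g\cdot v$, is an equivariant isomorphism.
\item Hence $(E|_{\pi^{-1}(U)})/G \cong E_0$ is a vector bundle over $U$.
\item Shrinking $U$ so that $E_0$ is trivial gives local triviality of $E/G$.
\end{itemize}

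Third I would check that descent and pull-back are mutually inverse.
\begin{itemize}
\item The natural map $E \to \pi^*(E/G)$, $v \mapsto (x, [v])$, is an equivariant bundle map which is a fiberwise isomorphism, by the local computation above. Hence it is an isomorphism.
\item Conversely, $(\pi^*F)/G \to F$, $[(x,w)] \mapsto w$, is an isomorphism for any bundle $F$ over $X/G$.
\item Both constructions are natural in bundle maps and commute with $\oplus$ and $\otimes$.
\item Therefore $\pi^*$ induces an isomorphism of semirings of isomorphism classes of bundles, and so a ring isomorphism $KO(X/G) \to KO_G(X)$ on the associated Grothendieck groups.
\end{itemize}

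Finally, the same argument applies to pointed spaces and hence to reduced theories, where $G$ acts freely away from the basepoint. Taking the kernel of restriction to the basepoint on both sides gives the corresponding isomorphism $\KO(X/G) \cong \KO_G(X)$.
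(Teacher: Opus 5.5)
The paper gives no proof of this. It records it as a standard fact of equivariant $K$-theory, the real analogue of Segal's result for free actions. Your descent argument ($E \mapsto E/G$ is inverse to $\pi^*$ on isomorphism classes, then pass to Grothendieck rings) is the standard proof.

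As written, however, it covers only compact Lie groups, while the paper's setup allows an arbitrary compact topological group. For non-Lie $G$ your first step genuinely fails. Gleason's theorem needs $G$ to be Lie, and free actions of non-Lie groups need not be locally trivial: for example, $\prod_n \SO(2)$ acting on $\prod_n \SO(3)$, with quotient $\prod_n S^2$, admits no local cross-sections. This is harmless for the paper, which only uses $\Pin(2)$, $S^1$ and their subgroups.

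The restriction is also easy to remove, because you only need $E/G$ to be locally trivial, not $X \to X/G$. Given $x \in X$ and a basis $v_1, \dots, v_n$ of $E_x$, freeness makes $gx \mapsto g v_i$ a well-defined section over the orbit $Gx$. It is continuous because $g \mapsto gx$ is a homeomorphism $G \to Gx$. Extend it to a section over $X$ (Tietze) and average over $G$ with Haar measure. The averaged sections $s_i$ satisfy $s_i(hy) = h\, s_i(y)$ and still equal $g v_i$ on $Gx$. So they form a frame on a $G$-invariant open neighborhood $W$ of $Gx$. This gives $E|_W \cong W \times \R^n$ equivariantly, with trivial action on $\R^n$, and hence $(E|_W)/G \cong (W/G) \times \R^n$. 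With this in place of Gleason, the rest of your argument goes through verbatim for any compact group, and the step you expected to be the main obstacle becomes elementary.

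Your closing paragraph on reduced groups does not follow as written. If $G$ fixes the basepoint, the action on $X$ is not free, and the unreduced map is not an isomorphism (already $KO(\pt) = \Z$ while $KO_G(\pt) = RO(G)$). So there is no isomorphism to restrict to kernels. The reduced version, which the paper lists as a separate fact, needs a relative argument. For example, excise a small invariant neighborhood of the basepoint and apply the free case to the resulting pair. This does not affect the statement you were asked to prove.
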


\begin{fact}\label{fact:trivial_action}
    Let $\Z \operatorname{Ir}_{\R}$, $\Z \operatorname{Ir}_{\C}$, and $\Z \operatorname{Ir}_{\mathbb{H}}$ be the free abelian groups generated by the isomorphism classes of irreducible $G$-representations of the respective types. If $G$ acts trivially on $X$, then there is an isomorphism of $\Z$-modules:
    $$
    KO_G(X) \cong ( KO(X) \otimes \Z \operatorname{Ir}_{\R} ) \oplus ( K(X) \otimes \Z \operatorname{Ir}_{\C} ) \oplus ( KSp(X) \otimes \Z \operatorname{Ir}_{\mathbb{H}} ).
    $$
\end{fact}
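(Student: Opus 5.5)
This is the standard decomposition of equivariant $K$-theory for a trivial action (Segal), adapted to the real case. I will deduce it from a bundle-level statement and then pass to Grothendieck groups.

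\textbf{Step 1: isotypic decomposition of bundles.} Let $E \to X$ be a $G$-equivariant real vector bundle and suppose $G$ acts trivially on $X$. Then $G$ acts fiberwise linearly on each $E_x$. For each irreducible real representation $V$ of $G$, set $D_V = \operatorname{End}_G(V)$. By Schur's lemma and Frobenius, $D_V$ is one of $\R$, $\C$, $\mathbb{H}$, and this is exactly the sense in which $V$ has real, complex, or quaternionic type. Form the bundle $\operatorname{Hom}_G(V, E)$, whose fiber over $x$ is $\operatorname{Hom}_G(V, E_x)$. It is a vector bundle over $X$, since it is the image of the continuous fiberwise projection given by averaging over the compact group $G$ with Haar measure, and a continuous family of projections has locally constant rank. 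The right action of $D_V$ by precomposition makes it a real, complex, or quaternionic bundle accordingly.

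\textbf{Step 2: the evaluation isomorphism.} I claim the evaluation map
\[
\bigoplus_V \operatorname{Hom}_G(V,E)\otimes_{D_V} V \longrightarrow E
\]
is an isomorphism of $G$-bundles. Fiberwise this is the isotypic decomposition of a finite-dimensional real representation, so it is an isomorphism on each fiber. It is continuous, and a continuous fiberwise-bijective bundle map is an isomorphism. Only finitely many $V$ occur locally, and since $X$ is compact only finitely many occur in total.

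Conversely, for any $D_V$-bundle $F$, the bundle $F\otimes_{D_V} V$ is a $G$-bundle. The two constructions are mutually inverse up to natural isomorphism, because $\operatorname{Hom}_G(V, W\otimes_{D_{W}} W')$ vanishes unless $W' \cong V$, by Schur.

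\textbf{Step 3: pass to Grothendieck groups.} The functor sending $E$ to $\bigl(\operatorname{Hom}_G(V,E)\bigr)_V$ is therefore an equivalence between $G$-real bundles on $X$ and tuples of $D_V$-bundles, almost all zero. It is additive under direct sum, so taking Grothendieck groups gives
\[
KO_G(X)\cong \bigoplus_{V \text{ real}} KO(X)\oplus\bigoplus_{V \text{ complex}} K(X)\oplus\bigoplus_{V\text{ quat}} KSp(X),
\]
which is precisely the stated tensor product decomposition.

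The main care point is the bookkeeping for complex-type irreducibles. A complex irreducible complex representation $W$ with $W\not\cong \overline W$ gives a single real irreducible $V = W_\R$ with $D_V=\C$. So $\operatorname{Ir}_\C$ must be understood as indexing real irreducibles of complex type, each counted once, not as indexing both $W$ and $\overline{W}$. With that convention, Step 2 gives exactly one $K(X)$ summand per such $V$, and the identification holds as stated.
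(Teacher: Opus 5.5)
Your argument is correct: it is the standard isotypic-decomposition proof (Segal's argument, adapted to real bundles), and the paper gives no proof of this statement, simply recalling it as background following Manolescu and Lin. Your reading of $\Z\operatorname{Ir}_{\C}$ as indexing real irreducibles of complex type, each counted once, is also the convention the paper uses later, e.g.\ in $\KO_{S^1}(S^4)\cong(\KO(S^4)\otimes\Z\cdot 1_{\R})\oplus\bigoplus_{n\ge 1}(\K(S^4)\otimes\Z\cdot V_n)$.
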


The following fact is stated for complex equivariant $K$-theory in \cite[Section 2, Example (iii)]{Seg68}. Since Segal's method establishes a bijection between the isomorphism classes of vector bundles independent of the choice of base field, it induces an isomorphism of the associated equivariant $KO$-groups.

\begin{fact}\label{fact:subgroup}
    Let $H$ be a closed subgroup of a compact group $G$, and let $X$ be a compact $H$-space. Then the natural inclusion $i \colon X \hookrightarrow G \times_H X$ induces an isomorphism
    \[
        KO_G(G \times_H X) \xrightarrow{\cong} KO_H(X).
    \]
    Specifically, this isomorphism is given by the composition $i^* \circ r^G_H$, where $r^G_H \colon KO_G(G \times_H X) \to KO_H(G \times_H X)$ is the restriction of the group action from $G$ to $H$.
\end{fact}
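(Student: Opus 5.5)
The plan is to reduce to Segal's argument by making the statement about equivariant bundles rather than $K$-groups. Concretely, I would show that restriction-then-pullback, $E \mapsto i^*(r^G_H E)$, gives a bijection between isomorphism classes of $G$-equivariant real vector bundles on $G \times_H X$ and $H$-equivariant real vector bundles on $X$. It also respects direct sums. Passing to Grothendieck groups then gives the claimed isomorphism $i^* \circ r^G_H$. Nothing about the base field will be used beyond the existence of (real) vector bundles, so the complex proof of \cite{Seg68} should transfer verbatim.

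\textbf{Step 1: the inverse construction.} Given an $H$-equivariant real vector bundle $F \to X$, form $G \times_H F \to G \times_H X$, the quotient of $G \times F$ by $(gh, f) \sim (g, hf)$. Because $H$ is a closed subgroup of a compact Lie group, $G \to G/H$ is a principal $H$-bundle with local sections. Hence $G \times_H F$ is locally trivial over $G \times_H X$ and inherits a real linear structure on fibers. The group $G$ acts by left multiplication on the first factor, which makes $G \times_H F$ a $G$-equivariant real vector bundle. Pulling back along $i: x \mapsto [e,x]$ recovers $F$ $H$-equivariantly, via $f \mapsto [e,f]$.

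\textbf{Step 2: the other composite.} Conversely, let $E$ be a $G$-bundle on $G \times_H X$ and set $F = i^*E$; this carries an $H$-action. Define $G \times_H F \to E$ by $[g, e] \mapsto g\cdot e$. It is well defined because the action is compatible with $h$. It is a $G$-equivariant bundle map covering the identity, and it is fiberwise a linear isomorphism because $g$ acts by linear isomorphisms between fibers. Every point of $G\times_H X$ has the form $g\cdot i(x)$, which gives surjectivity onto fibers, and injectivity follows similarly. A continuous bijective bundle map that is a fiberwise linear isomorphism is a bundle isomorphism.

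Both constructions are additive and natural. Together these give the bijection on isomorphism classes, and hence the isomorphism $KO_G(G\times_H X)\cong KO_H(X)$ given by $i^*\circ r^G_H$.

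\textbf{Main obstacle.} The only delicate point is local triviality and continuity of $G\times_H F$. This needs local sections of $G\to G/H$ (Gleason's theorem for closed subgroups of compact Lie groups; here $G=\Pin(2)$, so it is elementary). For compact $X$ one also needs continuity of the inverse of a fiberwise-iso bundle map. Both points are standard and identical to the complex case.
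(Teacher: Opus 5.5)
Your proposal is correct and takes the same approach as the paper: the paper cites Segal's bundle-level bijection, $E \mapsto i^* r^G_H E$ with inverse $F \mapsto G \times_H F$, and notes that it does not depend on the base field, while you write that argument out for real bundles. Your caveat is warranted, since the statement only says ``compact group'': local triviality of $G \times_H F$ uses local sections of $G \to G/H$, which exist for compact Lie groups such as $\Pin(2)$.
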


Now suppose that $X$ has a distinguished base point $x_0$ that is fixed by $G$. We define the reduced equivariant $KO$-theory of $X$, denoted by $\KO_G(X)$, as the kernel of the restriction map $KO_G(X) \to KO_G(x_0)$.

\begin{fact}
    If the action of $G$ on $X$ is free away from the base point, then the pull-back map $\KO(X / G) \to \KO_G(X)$ is a ring isomorphism.
\end{fact}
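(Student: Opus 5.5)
This is the standard fact from Segal's paper on equivariant $K$-theory: reduced equivariant $KO$-theory of a space on which $G$ acts freely away from a fixed base point agrees with reduced $KO$-theory of the quotient. The plan is to deduce it from the unreduced free-action fact stated just before it, applied to a suitable free $G$-space. The comparison between the two sides is made through the exact sequences of a pair.

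Set $X^\circ = X\setminus\{x_0\}$, on which $G$ acts freely. Since $X$ is compact and the base point is nondegenerate (e.g.\ $X$ is a $G$-CW complex with $x_0$ a $0$-cell, as in all our applications), choose a closed $G$-invariant neighbourhood $N$ of $x_0$ that $G$-equivariantly deformation retracts onto $x_0$. Let $C = X\setminus \operatorname{int} N$. Then $C$ is a compact free $G$-space, and $X/N \simeq_G X$ relative to the base point.

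The first step is to identify reduced groups with relative groups. Concretely, $\KO_G(X)\cong KO_G(X,x_0)\cong KO_G(X,N)$, and by excision $KO_G(X,N)\cong KO_G(C,\partial C)$. The same chain of isomorphisms works nonequivariantly for the quotient, where $(X/G, x_0)$ has the neighbourhood $N/G$ and the complement $C/G$. The pullback along $X\to X/G$ is compatible with all these identifications, because each of them is natural.

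The second step is to show that $KO(C/G,\partial C/G)\to KO_G(C,\partial C)$ is an isomorphism. I compare the long exact sequences of the pairs $(C,\partial C)$ and $(C/G,\partial C/G)$, extended to negative degrees via suspensions $\Sigma^{n}$ with trivial action. The free action is preserved by products with $\R^n$, so the earlier unreduced fact gives isomorphisms $KO^{-n}(C/G)\cong KO^{-n}_G(C)$ and $KO^{-n}(\partial C/G)\cong KO^{-n}_G(\partial C)$. The five lemma then gives the relative isomorphism. Multiplicativity holds because pullback is a ring map on unreduced groups and the reduced product is the restriction of that ring map.

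The main obstacle is the technical setup rather than the algebra. One must make the boundary $\partial C$ well behaved, and one needs the unreduced fact in all degrees $KO^{-n}$, not just degree $0$. If this becomes awkward, the fallback is to use Segal's direct argument: pointed $G$-bundles on $X$, trivialised near $x_0$, descend to bundles on $X/G$. Trivialising near the fixed base point reduces the fibre there to a trivial representation. On the free part, descent is the usual equivalence between $G$-bundles on a free space and bundles on its quotient. This gives the isomorphism directly on reduced Grothendieck groups.
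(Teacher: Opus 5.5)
The paper gives no proof of this statement. It is listed, in the expository style of Manolescu and Lin, as a standard fact from Segal's equivariant $K$-theory, so your argument is an independent derivation rather than a variant of anything in the paper.

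The derivation is sound. Excising onto the compact free pair $(C, C\cap N)$ and comparing long exact sequences by the five lemma reduces everything to the unreduced free case. The comparison maps commute with the connecting homomorphisms because forming $G$-quotients commutes with cones and with trivial-action suspensions, and pullback is natural. Your worry about $\partial C$ is unnecessary: Segal's exact sequence holds for any closed invariant subspace of a compact $G$-space, and $C/(C\cap N)\to X/N$ is a continuous bijection of compact Hausdorff spaces, hence a homeomorphism.

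Compared with citing the fact, your route is self-contained, but as written it needs a well-pointed base point. That is harmless for the paper's uses (spaces of type SWF, representation spheres, $A_n$). It can also be removed. The deformation retraction is used only to identify $KO_G(X,N)$ with $\KO_G(X)$, while the excision/five-lemma step works for every closed invariant neighbourhood $N$ of $x_0$. So you can pass to the direct limit over shrinking $N$ on both sides, using $KO_G(X,x_0)=\varinjlim_N KO_G(X,N)$ and its analogue on $X/G$. This continuity follows from extending equivariant bundles and isomorphisms from $x_0$ to invariant neighbourhoods.

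Two smaller corrections.

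First, in degree $-1$ do not realise $KO_G^{-1}(C)$ as $\KO_G(\Sigma C_+)$. That space is free only away from a fixed base point, so invoking the comparison there is circular; also $C\times\R^n$ is not compact. Instead, use the split exact sequence of $(C\times S^1, C\times\mathrm{pt})$ to identify $KO^{-1}_G(C)$ with the kernel of $KO_G(C\times S^1)\to KO_G(C)$. Since $C\times S^1$ is compact and free with quotient $(C/G)\times S^1$, only the unreduced fact is needed, and only this degree enters the five lemma.

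Second, the fallback is wrong as phrased. Near the fixed point a $G$-bundle is $E|_U\cong U\times E_{x_0}$ with $E_{x_0}$ an arbitrary representation. Trivialising therefore does not make the fibre a trivial representation, and $E$ need not descend to $X/G$. A direct argument has to work with the relative class of $(E, X\times E_{x_0})$ on $(C, C\cap N)$, which is just your main argument again.
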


\begin{fact}
    There is a natural isomorphism $\KO_G(X \vee X') \cong \KO_G(X) \oplus \KO_G(X')$.
\end{fact}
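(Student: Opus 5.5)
The statement to prove is the last fact: $\KO_G(X \vee X') \cong \KO_G(X) \oplus \KO_G(X')$ for pointed $G$-spaces whose basepoints are $G$-fixed. The plan is to argue directly from the definition of reduced $KO_G$ as the kernel of restriction to the basepoint. The argument is formal: it uses only that $G$-vector bundles on a wedge are obtained by gluing bundles on the two pieces over the common fixed basepoint.

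First, I will use the two inclusions $i \colon X \hookrightarrow X \vee X'$ and $i' \colon X' \hookrightarrow X \vee X'$, together with the collapse maps $r \colon X \vee X' \to X$ and $r' \colon X \vee X' \to X'$. All four are pointed $G$-maps, and $r \circ i = \mathrm{id}_X$, $r' \circ i' = \mathrm{id}_{X'}$, while $r \circ i'$ and $r' \circ i$ are constant at the basepoint. By functoriality, $(i^*, i'^*)$ defines a homomorphism
\[
\Theta \colon \KO_G(X \vee X') \to \KO_G(X) \oplus \KO_G(X').
\]
Consider also $\Psi(\xi, \xi') = r^*\xi + r'^*\xi'$. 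Pullback along a constant-to-basepoint map factors through $\KO_G(x_0) = 0$, so it kills reduced classes. Hence $\Theta \circ \Psi = \mathrm{id}$, and $\Theta$ is surjective.

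The main step is injectivity of $\Theta$. Take a class $[E] - [F]$ in the kernel of $KO_G(X \vee X') \to KO_G(x_0)$ whose restrictions to $X$ and to $X'$ are zero. I expect this to be the only real obstacle, since it requires working with actual bundles rather than formal functoriality.

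To handle it, I will add a trivial bundle $V \times (X \vee X')$ with $V$ a $G$-representation, so that the restrictions become stable isomorphisms. This gives $G$-isomorphisms $\phi \colon E|_X \oplus V \cong F|_X \oplus V$ and $\phi' \colon E|_{X'} \oplus V \cong F|_{X'} \oplus V$. They need not agree on the fibre over $x_0$, which is a $G$-isomorphism of representations. I will correct this by composing $\phi'$ with the constant bundle automorphism of $F|_{X'} \oplus V$ given by $\phi|_{x_0} \circ (\phi'|_{x_0})^{-1}$. After this correction the two isomorphisms agree at $x_0$ and glue to a global $G$-isomorphism $E \oplus V \cong F \oplus V$ over $X \vee X'$. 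Continuity of the glued map holds because $X \vee X'$ is a pushout of compact spaces along the closed point $x_0$. So the class is zero, $\Theta$ is injective, and $\Theta$ and $\Psi$ are mutually inverse isomorphisms.

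As a cross-check, the same conclusion should also follow from the long exact sequence of the cofibration $X \to X \vee X' \to X'$, which is split by $r$.
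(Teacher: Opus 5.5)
The paper gives no proof of this statement; it is listed among the standard background facts on equivariant $KO$-theory, so the question is only whether your argument stands on its own. Your split surjectivity via the collapse maps $r, r'$ is fine. The step you yourself flag as the real obstacle, however, has a gap.

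You propose to compose $\phi'$ with ``the constant bundle automorphism of $F|_{X'}\oplus V$ given by $\phi|_{x_0}\circ(\phi'|_{x_0})^{-1}$''. Only the summand $V$ is a product bundle here. $F|_{X'}\oplus V$ is an arbitrary $G$-bundle, so ``constant'' has no meaning for it. What you actually have is a $G$-automorphism of the single fibre $F_{x_0}\oplus V$. Extending a fibre automorphism to a bundle automorphism over all of $X'$ is an obstruction-theoretic problem, and it can fail. Already for $G$ trivial, a reflection of $T_{x_0}S^2$ does not extend to an automorphism of $TS^2$. Such an automorphism would have negative determinant everywhere, and its positive eigenlines would then form a line field on $S^2$, which does not exist. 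Nothing in your choice of $V$ is designed to rule out such obstructions.

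The easy repair is to normalise $F$ first. By Segal's result, every $G$-bundle over a compact $G$-space is a direct summand of a product bundle. This is also what justifies your implicit replacement of an arbitrary stabilising bundle by a trivial one. So $F\oplus F^{\perp}\cong (X\vee X')\times W$, and $[E]-[F]=[E\oplus F^{\perp}]-[(X\vee X')\times W]$.

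Now $\phi$ and $\phi'$ take values in the product bundles $X\times(W\oplus V)$ and $X'\times(W\oplus V)$. The discrepancy $\psi=\phi_{x_0}\circ(\phi'_{x_0})^{-1}$ lies in $\mathrm{Aut}_G(W\oplus V)$, and $\mathrm{id}_{X'}\times\psi$ is a genuine $G$-equivariant bundle automorphism. With this correction your gluing goes through.

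Alternatively, promote your cross-check to the main argument. The cofibration long exact sequence listed in the paper, applied to $X\subset X\vee X'$ with $(X\vee X')\cup_X CX=CX\vee X'\simeq_G X'$, gives exactness of $\KO_G(X')\xrightarrow{r'^*}\KO_G(X\vee X')\xrightarrow{i^*}\KO_G(X)$. Hence if $\Theta(\alpha)=0$, then $\alpha=r'^*\beta$ for some $\beta$. Then $\beta=(r'\circ i')^*\beta=i'^*\alpha=0$, so $\alpha=0$, with no bundle-level gluing at all.
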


\begin{fact}\label{fact:natural_product}
    There is a natural product map $\KO_G(X) \otimes \KO_G(X') \to \KO_G(X \wedge X')$.
\end{fact}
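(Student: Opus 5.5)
The statement to prove is Fact~\ref{fact:natural_product}: the natural product map $\KO_G(X) \otimes \KO_G(X') \to \KO_G(X \wedge X')$. The plan is to build it from the external tensor product of equivariant real vector bundles and then check that it descends to the reduced theory on the smash product.

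\textbf{Unreduced external product.} Let $\pi_1, \pi_2$ be the projections of $X \times X'$ onto its factors. For $G$-equivariant real bundles $E \to X$ and $E' \to X'$, set
\[
E \boxtimes E' := \pi_1^*E \otimes \pi_2^*E'.
\]
This is a $G$-equivariant real bundle, with $G$ acting diagonally. The operation is biadditive on isomorphism classes because the tensor product distributes over direct sums. By the universal property of the Grothendieck group, it therefore extends to a bilinear map
\[
KO_G(X) \otimes KO_G(X') \to KO_G(X \times X').
\]
Naturality in $G$-maps follows from the compatibility of pullback with tensor products.

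\textbf{Passing to reduced groups.} Take $a \in \KO_G(X)$ and $b \in \KO_G(X')$, so $a|_{x_0} = 0$ and $b|_{x_0'} = 0$. Then the product $a \boxtimes b$ restricts to zero on the wedge $X \vee X' = X \times \{x_0'\} \cup \{x_0\} \times X'$. Indeed, on $X \times \{x_0'\}$ it restricts to $a \cdot b|_{x_0'} = 0$, and symmetrically on the other piece.

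\textbf{Descending to the smash product.} This is the main step. Since $X \vee X'$ is a closed $G$-invariant subspace of the compact $G$-space $X \times X'$, the long exact sequence of the pair applies. Using $KO_G(X\times X', X\vee X') \cong \KO_G(X \wedge X')$, its relevant part reads
\[
\KO_G(X \wedge X') \to KO_G(X \times X') \to KO_G(X \vee X').
\]
Exactness lifts $a \boxtimes b$ to a class in $\KO_G(X \wedge X')$. To make this lift canonical, I would use the splitting
\[
KO_G(X\times X') \cong \KO_G(X\wedge X') \oplus \KO_G(X) \oplus \KO_G(X') \oplus KO_G(\pt),
\]
which comes from the $G$-fixed basepoints via the retractions onto each factor. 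Under this splitting, the first map $\KO_G(X \wedge X') \to KO_G(X\times X')$ is injective, so the lift is unique. Equivalently, the product can be defined directly at the level of pairs, $KO_G(X,x_0)\otimes KO_G(X',x_0') \to KO_G(X\times X', X\vee X')$, using the relative form of the tensor product of complexes.

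The expected obstacle is justifying this injectivity and the relative-group identification equivariantly. This needs the $G$-basepoints to be well behaved, for instance $G$-CW pairs, so that $(X\times X', X\vee X')$ is a good pair and Segal's equivariant excision applies. Given that, naturality of the reduced product follows from naturality of the exact sequence.
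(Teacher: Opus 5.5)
Your argument is correct. There is nothing in the paper to compare it against step by step: the paper gives no proof of this statement, listing it among the background facts on equivariant $KO$-theory recalled from Manolescu's and Lin's expositions (and ultimately from Segal's foundations). What you supply is the standard Atiyah--Segal construction: form the external tensor product on $X \times X'$, then identify $\KO_G(X\wedge X')$ with the kernel of restriction to $X\vee X'$. That makes the fact self-contained, which the paper never attempts.

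Two small points are worth tightening.

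\begin{enumerate}
\item The ``good pair'' worry is unnecessary. For a compact $G$-space and a closed $G$-subspace, the relative group $KO_G(X\times X', X\vee X')$ is by definition $\KO_G$ of the quotient $X\wedge X'$. Segal's exactness of
\[
KO_G(X\times X', X\vee X')\to KO_G(X\times X')\to KO_G(X\vee X')
\]
holds in that generality, so no $G$-CW hypothesis is needed. (The spaces of type SWF used later in the paper are $G$-CW anyway.)

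\item Make explicit where the $G$-fixed basepoints enter; they enter twice.
\begin{itemize}
\item The vanishing of $a\boxtimes b$ on $X\vee X'$ uses injectivity of $KO_G(X\vee X')\to KO_G(X)\oplus KO_G(X')$. This follows from the wedge decomposition of reduced groups listed just before the statement.
\item The injectivity of $\KO_G(X\wedge X')\to KO_G(X\times X')$ holds because $KO_G^{-1}(X\times X')\to KO_G^{-1}(X\vee X')$ is split surjective. The splitting sends $(\alpha,\beta,c)$, with $\alpha,\beta$ reduced and $c$ a point class, to $\pi_1^*\alpha+\pi_2^*\beta+\pi^*c$. The connecting map therefore vanishes.
\end{itemize}
Both steps need the projections and basepoint inclusions to be $G$-maps, which is exactly what the fixed-basepoint convention in the paper guarantees.
\end{enumerate}
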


\begin{fact}\label{fact:Bott_isom}
    Let $V$ be a real representation space of $G$. Suppose that the dimension $n$ of $V$ is divisible by $8$ and $V$ is a spin representation, meaning that the group action $G \to \SO(V)$ lifts to $\Spin(V)$. Then we have the equivariant Bott isomorphism $\phi_V : \KO_G(X) \xrightarrow{\cong} \KO_G(\Sigma^V X)$, given by multiplication by the Bott class $b_V \in \KO_G(V^+)$ via the natural map $\KO_G(V^+) \otimes \KO_G(X) \to \KO_G(\Sigma^V X)$. This isomorphism is functorial with respect to pointed maps $f : X \to X'$.
\end{fact}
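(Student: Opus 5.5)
This statement is the equivariant Bott (Thom) isomorphism in real $K$-theory. The plan is to derive it from Atiyah's equivariant Thom isomorphism for spin bundles rather than prove it from scratch. It has three parts: construct the Bott class $b_V$, check that multiplication by it is natural, and then show that it is an isomorphism.

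\textbf{Constructing $b_V$.} Since $n=\dim V\equiv 0 \pmod 8$ and $G\to \SO(V)$ lifts to $\Spin(V)$, the real Clifford algebra $C\ell(V)\cong M_{16^{n/8}}(\R)$ has a real $\Z/2$-graded spinor module $\Delta=\Delta^+\oplus\Delta^-$. The group $G$ acts on $\Delta$ through the lift $G\to\Spin(V)\subset C\ell(V)^\times$. Clifford multiplication $c(v)\colon\Delta^+\to\Delta^-$ is $G$-equivariant and invertible for $v\neq 0$. The triple $(\Delta^+,\Delta^-,c)$ over $V$ therefore defines a class $b_V\in \KO_G(V^+)$. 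Using $c(v)/|v|$ at infinity, one can equivalently realize $b_V$ as $[\Delta^+]-[\Delta^-]$ glued by a clutching function on $S(V)$. The map $\phi_V(x)=b_V\cdot x$ is then defined through the external product of Fact~\ref{fact:natural_product}, $\KO_G(V^+)\otimes\KO_G(X)\to\KO_G(V^+\wedge X)=\KO_G(\Sigma^VX)$.

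\textbf{Naturality.} For a pointed map $f\colon X\to X'$, we have $(\mathrm{id}\wedge f)^*(b_V\cdot y)=b_V\cdot f^*y$ because external products are natural. This is formal.

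\textbf{Isomorphism.} Following Atiyah's ``Bott periodicity and the index of elliptic operators'', I would construct an inverse
\[
\alpha_V\colon \KO_G(\Sigma^VX)\to\KO_G(X)
\]
as a $G$-equivariant families index. View $V^+\cong S^V$ and twist the Dirac-type operator $\bar\partial$ on $V$ (the real Clifford-linear operator $\sum e_i\, c(e_i)\,\partial_i$) by a family of bundles over $V$ parametrized by $X$. This makes $\alpha_V$ a $\KO_G(X)$-module homomorphism. The argument then has two steps.
\begin{itemize}
\item \emph{Normalization.} Show that $\alpha_V(b_V)=1\in KO_G(\pt)=RO(G)$. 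Since the index is multiplicative under products, this can be checked on $V=\R^8$ with the spin lift. It reduces to the harmonic-oscillator computation, whose kernel is a one-dimensional trivial representation (Gaussian times the vacuum spinor). Module linearity then gives $\alpha_V\circ\phi_V=\mathrm{id}$.
\item \emph{Rotation trick.} Show that $\phi_V\circ\alpha_V=\mathrm{id}$. Atiyah's trick compares the two factors of $V$ in $\Sigma^V\Sigma^V X$ via the swap map $V\oplus V\to V\oplus V$. This swap is $G$-homotopic to $\mathrm{id}\oplus(-1)$ through $\Spin(V\oplus V)$, and since $n$ is divisible by $8$, multiplication by $-1$ on $V$ acts trivially on $b_V$ (it is even-dimensional and spin-compatible).
\end{itemize}

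The main obstacle is checking that the analytic index construction is compatible with the real and $\Z/2$-grading structure, and that the rotation homotopy stays $G$-equivariant: the homotopy must lift through $\Spin$, and this is exactly where the spin hypothesis enters. Alternatively, one can simply cite Atiyah's equivariant Thom isomorphism for $KO_G$-oriented ($G$-spin, rank $\equiv 0 \pmod 8$) bundles, applied to the trivial bundle $X\times V\to X$, and pass to reduced groups using the based inclusion $x_0\hookrightarrow X$ and the splitting $KO_G(X)=\KO_G(X)\oplus RO(G)$.
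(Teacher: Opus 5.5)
The paper gives no proof of this statement: it records it as standard background (equivariant Bott periodicity, as used by Manolescu and Lin). Your outline is the standard correct justification: build the Atiyah--Bott--Shapiro class from the graded spinor module, get naturality from the external product, and then either run Atiyah's index map with the rotation trick or cite the $KO_G$ Thom isomorphism for $G$-spin bundles of rank divisible by $8$.

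One step needs tightening. In the normalization $\alpha_V(b_V)=1$, you cannot in general reduce equivariantly to $V=\R^8$, because a spin $G$-representation need not split into $G$-invariant $8$-dimensional spin summands. The harmonic-oscillator argument works on $V$ directly instead: its kernel is one-dimensional and its cokernel vanishes (a nonequivariant computation), and the kernel is acted on by the connected group $\Spin(V)$, through which $G$ acts, so it is the trivial representation.
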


\begin{fact}\label{fact:Bott_res}
    Bott classes behave well under restriction maps; that is, for a restriction map $r$, we have $r(b_V) = b_{r(V)}$.
\end{fact}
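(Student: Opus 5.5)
The plan is to go back to the construction of the equivariant Bott class and check that every ingredient in it is natural with respect to change of group. Throughout, $r$ denotes the restriction $\KO_G(V^+) \to \KO_H(V^+)$ along a closed subgroup $H \subset G$, as in the earlier fact on restriction maps. Here $r(V)$ is $V$ regarded as an $H$-representation via $H \hookrightarrow G \to \SO(V)$. If $G \to \SO(V)$ lifts to $\rho\colon G \to \Spin(V)$, then $\rho|_H$ is a spin lift for $r(V)$. So the hypotheses of Fact~\ref{fact:Bott_isom} hold for $r(V)$ and $b_{r(V)}$ makes sense. The spin structure on $r(V)$ must be taken to be the restricted one; otherwise the statement is not even well posed.

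\textbf{Step 1: recall how $b_V$ is built.} Following Atiyah (and Atiyah--Bott--Shapiro), with $\dim V = 8k$ we take the irreducible real $\Z/2$-graded module $\Delta = \Delta^+ \oplus \Delta^-$ over the Clifford algebra $\mathrm{Cl}(V)$. We let $G$ act on it through $\rho$ followed by $\Spin(V) \subset \mathrm{Cl}(V)^{\times}$. Then $b_V$ is the class in $\KO_G(V^+) = KO_G(D(V), S(V))$ of the compactly supported difference complex
\[
V \times \Delta^+ \longrightarrow V \times \Delta^-, \qquad (v, s) \mapsto (v, v\cdot s).
\]
This map is $G$-equivariant because $\rho(g)\,v\,\rho(g)^{-1} = g\cdot v$ in $\mathrm{Cl}(V)$. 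It is an isomorphism off the origin, since $v\cdot v = -|v|^2$.

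\textbf{Step 2: naturality of the difference construction.} Restriction of the group $KO_G(D(V),S(V)) \to KO_H(D(V),S(V))$ simply forgets part of the equivariance. It therefore sends the class of a $G$-equivariant complex $(E^+ \to E^-)$ that is acyclic on $S(V)$ to the class of the same complex viewed $H$-equivariantly. This commutes with the passage from difference complexes to relative $KO$, because that passage uses only the bundles and the isomorphism over the subspace, not the group.

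\textbf{Step 3: identify the restricted complex.} Applying Step 2 to the complex of Step 1 yields the complex $V \times \Delta^+ \to V \times \Delta^-$, $(v,s) \mapsto (v, v\cdot s)$, with $H$ acting through $\rho|_H$. This is exactly the complex defining $b_{r(V)}$, so $r(b_V) = b_{r(V)}$.

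The only step needing care is the normalization. Any fixed choices in the definition of $b_V$ — the grading convention on $\Delta$, the sign of Clifford multiplication, the identification $\KO_G(V^+) \cong KO_G(D(V),S(V))$ — are made independently of the group. They are therefore preserved verbatim under restriction, and this is the point the argument relies on. If one instead defines $b_V$ abstractly as the Thom class generating $\KO_G(V^+)$ as a free $KO_G(\pt)$-module, the same conclusion follows, because a generator whose non-equivariant restriction is the standard Bott generator is fixed by that normalization.
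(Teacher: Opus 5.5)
The paper gives no proof of this statement. It is listed among the background facts on equivariant $KO$-theory, following Manolescu's and Lin's expositions, and ultimately rests on Atiyah's Clifford-module construction of the equivariant Bott class. So there is no argument in the paper to compare yours against.

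Your Steps 1--3 are the standard justification, and they are correct. This relies on the caveat you already flag: $b_{r(V)}$ must be built from the restricted lift $\rho|_H$. In the paper's only use of the fact, $H = S^1$ is connected, and $r(8D)$ and $r(4D+4)$ are trivial representations. So $\rho|_{S^1}$ lands in $\{\pm 1\}$ and is forced to be the trivial lift, and no ambiguity arises there.

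One correction concerns your closing remark, which is false: defining $b_V$ abstractly as a Thom generator normalized by its non-equivariant restriction does not give the same conclusion. For $G = \Pin(2)$, $D$ is a unit of $RO(G)$ (since $D^2 = 1$), and its non-equivariant restriction is $1$.

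\begin{itemize}
\item Hence $D\,b_V$ is another free generator of $\KO_G(V^+)$ over $KO_G(\pt)$, with the same non-equivariant image as $b_V$.
\item In fact $D\,b_V$ is exactly the Bott class of the other spin lift $g \mapsto \epsilon(g)\rho(g)$, where $\epsilon\colon G \to \{\pm 1\} \subset \Spin(V)$ is the sign character. This lift replaces $\Delta^{\pm}$ by $\Delta^{\pm}\otimes D$.
\item So that normalization does not pin down $b_V$, and independently chosen generators for $G$ and $H$ need not correspond under $r$.
\end{itemize}

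The fact genuinely depends on the concrete construction, or on compatible choices, as in the paper's ``we can choose suitable Bott classes''. Drop that last sentence; the rest of your argument stands on its own.
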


\begin{fact}\label{fact:spin_rep}
    For any real $G$-representation $V$, the direct sum $2V$ is orientable and $4V$ is a spin representation.
\end{fact}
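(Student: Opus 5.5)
The plan is to realize each multiple of $V$ as a scalar extension of $V$ and then factor the representation through a structure group that is already known to be orientable or simply connected. Let $\rho\colon G\to O(V)\cong O(n)$ be the action, with $n=\dim_\R V$.

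For orientability of $2V$, identify $2V=V\oplus V$ with $V\otimes_\R\C$, viewing the second summand as $iV$. The action $\rho\otimes 1$ commutes with multiplication by $i$, so $2\rho$ factors as
\[
G\xrightarrow{\rho}O(n)\hookrightarrow U(n)\hookrightarrow O(2n).
\]
Since $U(n)$ is connected, its image in $O(2n)$ lies in $SO(2n)$, so $G$ preserves an orientation of $2V$.

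For the spin statement, identify $4V$ with $V\otimes_\R\mathbb{H}$, with quaternionic scalars acting on the right. The action $\rho\otimes 1$ commutes with right $\mathbb{H}$-multiplication, so $4\rho$ factors as
\[
G\to O(n)\hookrightarrow Sp(n)\hookrightarrow SO(4n).
\]
The inclusion $O(n)\hookrightarrow Sp(n)$ is real matrices acting $\mathbb{H}$-linearly; real orthogonal matrices preserve the standard quaternionic Hermitian form. Next, $Sp(n)$ is compact, connected and simply connected. So the restriction of the double cover $\Spin(4n)\to SO(4n)$ to $Sp(n)$ is a trivial covering, and the inclusion $Sp(n)\hookrightarrow SO(4n)$ has a unique continuous homomorphic lift through $\Spin(4n)$ mapping identity to identity. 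Composing this lift with $G\to Sp(n)$ gives the required lift of $4\rho$ to $\Spin(4V)$, so $4V$ is a spin representation.

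The only step that needs care is that the lift of $Sp(n)\hookrightarrow SO(4n)$ is a group homomorphism, not merely a continuous map. This is the standard fact that a homomorphism from a simply connected Lie group lifts uniquely through a covering homomorphism once the identity is sent to the identity.

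A cohomological route would give the same conclusion at the level of characteristic classes: $w(4V)=w(V)^4=1+w_1(V)^4+\cdots$ in mod-$2$ cohomology of $BG$, so $w_1$ and $w_2$ of $4V$ vanish. I prefer the quaternionic factorization because it directly produces the homomorphism $G\to\Spin(4V)$ and avoids relating $w_2$ over $BG$ to lifting, which is less immediate for disconnected groups such as $\Pin(2)$.
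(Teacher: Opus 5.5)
Your argument is correct and complete. The paper gives no proof of this statement to compare against: it is listed without proof among the standard background facts on equivariant $KO$-theory, following Manolescu and Lin. Your factorization through $U(n)$ and $Sp(n)$ is therefore a valid self-contained justification of something the paper takes for granted.

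Two implicit steps are worth stating explicitly.
\begin{enumerate}
  \item To place $\rho$ in $O(n)$, and hence in $U(n)$ and $Sp(n)$, you need a $G$-invariant inner product on $V$. This exists because $G$ is compact. For orientability of $2V$ alone it is not needed, since $GL(n,\R)\subset GL(n,\C)$ already maps into $GL^+(2n,\R)$.
  \item The lift of $Sp(n)\hookrightarrow SO(4n)$ is a homomorphism for the following reason. If $s$ is the continuous lift with $s(1)=1$, then $(g,h)\mapsto s(gh)s(h)^{-1}s(g)^{-1}$ is continuous with values in $\{\pm 1\}$. Since $Sp(n)\times Sp(n)$ is connected and this map is $1$ at $(1,1)$, it is identically $1$.
\end{enumerate}

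Your route also covers the paper's other spinness claim. The same lifting argument shows that any representation factoring through some $Sp(m)$ is spin. This is exactly why $H=\mathbb{H}$ is spin: $\Pin(2)\subset Sp(1)$ acts on it by left multiplication. Consequently $8D$, $4D+H$ and $2H$ are spin, as the paper asserts just before introducing the Bott classes.
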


\begin{fact}
    Let $A \subset X$ be a closed $G$-subspace (containing the base point). Then there exists a long exact sequence:
    \[
    \cdots \to \KO_G^i(X \cup_A CA) \to \KO_G^i(X) \to \KO_G^i(A) \to \KO_G^{i + 1}(X \cup_A CA) \to \cdots
    \]
    where $CA$ denotes the cone on $A$.
\end{fact}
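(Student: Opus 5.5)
This is the cofiber long exact sequence in equivariant $KO$-theory; I would derive it from the Puppe sequence of the inclusion $i\colon A \hookrightarrow X$, as in the classical nonequivariant case.

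\textbf{Setting up the groups.} Write $\KO_G^{i}$ for the reduced theory extended to all integers. For $i \le 0$ put
\[
\KO_G^{-n}(Z) := \KO_G(\Sigma^n Z),
\]
with $\Sigma^n$ the suspension by the trivial representation $\R^n$. Equivariant Bott periodicity (Fact~\ref{fact:Bott_isom} applied to $V=\R^8$, which is trivially spin) gives $\KO_G(Z)\cong \KO_G(\Sigma^8 Z)$. This lets us define $\KO_G^{i}$ for positive $i$ by $\KO_G^{i} := \KO_G^{i-8k}$ for $k$ large, so the groups are $8$-periodic. Since the Bott isomorphism is natural in pointed $G$-maps, the extension is well defined and functorial.

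\textbf{Exactness at a single spot.} The key step is to show that
\[
\KO_G(X\cup_A CA) \xrightarrow{\;j^*\;} \KO_G(X) \xrightarrow{\;i^*\;} \KO_G(A)
\]
is exact, where $j\colon X\to X\cup_A CA$ is the inclusion.
\begin{itemize}
\item The composite $i^*j^*$ vanishes because $j\circ i$ factors through the contractible cone $CA$, and $G$-homotopic maps induce the same homomorphism.
\item Conversely, suppose $\xi\in\KO_G(X)$ restricts to zero on $A$. Represent $\xi$ as $E-\underline{\R^N}\otimes W$ with $E$ a $G$-bundle over $X$, adjusting by stable $G$-isomorphism. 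Equivariant homotopy extension (valid because $A\subset X$ is a closed $G$-subspace, assumed to be a $G$-cofibration, e.g.\ a $G$-CW pair) then lets us choose a trivialization of $E|_A$.
\item Gluing $E$ to the trivial bundle over $CA$ along this trivialization produces a bundle on $X\cup_A CA$ that pulls back to $E$.
\end{itemize}
This is the standard argument, with all constructions carried out $G$-equivariantly.

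\textbf{Extending to the long sequence.} Apply the same exactness to the successive pairs of the Puppe sequence
\[
A\to X\to X\cup_A CA\to \Sigma A\to \Sigma X\to\cdots,
\]
using the $G$-homotopy equivalences $(X\cup CA)\cup CX\simeq \Sigma A$, and so on. This yields exactness in all nonpositive degrees. The connecting map $\KO_G^{i}(A)\to\KO_G^{i+1}(X\cup_A CA)$ is induced by the collapse $X\cup_A CA\to\Sigma A$. Bott periodicity then shifts the exactness to all integers $i$.

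\textbf{Main obstacle.} The real difficulty is the equivariant gluing and homotopy-extension step in the exactness argument. It requires that $(X,A)$ has the equivariant homotopy extension property, and that an equivariant bundle which is stably trivial on $A$ admits a $G$-trivialization after stabilizing by a representation. For compact $G$ this follows from Segal's foundations~\cite{Seg68}, using that every $G$-bundle over a compact $G$-space embeds in a trivial bundle $X\times V$. I would cite this rather than reprove it.
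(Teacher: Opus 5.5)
The paper gives no proof of this statement; it records it as a standard background fact from Segal's equivariant $K$-theory (in the style of Manolescu and Lin), and your argument (exactness at $\KO_G(X)$ by clutching, then the Puppe sequence, then Bott periodicity for the trivial representation $\R^8$) is exactly the standard proof being relied on, so it is correct. Two small corrections:
\begin{enumerate}
\item The trivialization of $E|_A$ does not come from homotopy extension. It comes from $\xi|_A=0$, after replacing $E$ by $E\oplus(X\times U)$ using Segal's complement lemma, as your last paragraph in fact says.
\item No cofibration hypothesis on $(X,A)$ is needed. The statement uses the mapping cone $X\cup_A CA$ rather than $X/A$, and the inclusion $X\hookrightarrow X\cup_A CA$ is automatically a closed $G$-cofibration. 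Hence $(X\cup_A CA)\cup CX\simeq\Sigma A$ holds for any closed $G$-subspace of a compact $G$-space. The extra assumption is harmless here, since all spaces in the paper are $G$-CW complexes.
\end{enumerate}
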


The following fact is a pointed version of Fact~\ref{fact:subgroup}.

\begin{fact}\label{fact:subgroup_pointed}
    Let $H$ be a closed subgroup of a compact group $G$, and let $X$ be a pointed compact $H$-space. Then the isomorphism of Fact~\ref{fact:subgroup} induces an isomorphism of reduced groups:
    \[
        \widetilde{KO}_G(G_+ \wedge_H X) \xrightarrow{\cong} \widetilde{KO}_H(X),
    \]
    where $G_+ \wedge_H X = (G \times_H X)/(G \times_H \{x_0\})$.
\end{fact}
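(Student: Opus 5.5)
The statement to prove is Fact~\ref{fact:subgroup_pointed}: the reduced version of the induction isomorphism $KO_G(G\times_H X)\cong KO_H(X)$. I will deduce it from Fact~\ref{fact:subgroup} by a five-lemma argument on split short exact sequences, using naturality of the isomorphism in the $H$-space.

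\textbf{Step 1: identify the base point.} The pointed space $G_+\wedge_H X$ is the quotient of $G\times_H X$ by the closed $G$-subspace $G\times_H\{x_0\}\cong G/H$. Reduced groups are defined as kernels of restriction to the base point. So I first write
\[
\widetilde{KO}_G(G_+\wedge_H X)\cong \ker\bigl(KO_G(G\times_H X)\to KO_G(G\times_H\{x_0\})\bigr).
\]
This holds because $G/H\hookrightarrow G\times_H X$ has a $G$-equivariant retraction, namely $G\times_H X\to G\times_H\{x_0\}$ induced by the constant map $X\to\{x_0\}$. Hence the long exact sequence of the pair $(G\times_H X, G\times_H\{x_0\})$ splits into short exact sequences. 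Its relative term is $\widetilde{KO}_G$ of the quotient, since $G/H$ is a closed $G$-subspace and the pair is a $G$-NDR pair.

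\textbf{Step 2: apply Fact~\ref{fact:subgroup} to both terms.} Apply it to $X$ and to the $H$-space $\{x_0\}$. The isomorphisms $i^*\circ r^G_H$ are natural in the $H$-space: for the $H$-map $\{x_0\}\hookrightarrow X$ and the induced $G$-map $G\times_H\{x_0\}\to G\times_H X$, the square commutes. This is because both restriction and pullback along the inclusions $i$ commute with pullback along $G\times_H(-)$ of $H$-maps. The same holds for the retraction $X\to\{x_0\}$. This gives a commutative ladder
\[
\begin{array}{ccccc}
KO_G(G\times_H X) & \to & KO_G(G/H) \\
\downarrow\cong & & \downarrow\cong \\
KO_H(X) & \to & KO_H(\{x_0\}),
\end{array}
\]
where both horizontal maps are split surjections.

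\textbf{Step 3: conclude.} The kernels of the horizontal maps are identified by the vertical isomorphisms. The top kernel is $\widetilde{KO}_G(G_+\wedge_H X)$ by Step 1. The bottom kernel is $\widetilde{KO}_H(X)$ by definition. The induced isomorphism is explicitly the restriction of $i^*\circ r^G_H$, which is exactly the map described in the statement.

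\textbf{Main obstacle.} The only delicate point is Step 1: justifying that the reduced group of the quotient $G_+\wedge_H X$ equals the kernel of restriction to $G/H$, rather than to a single point. I would handle it via the split long exact sequence of the cofibration $G/H\to G\times_H X\to G_+\wedge_H X$. An alternative is to apply Fact~\ref{fact:subgroup} to $CX$-type cones. The splitting comes from the equivariant retraction, and the cofibration property holds if $x_0$ is a nondegenerate base point, which is standard for the CW-type spaces used in this paper.
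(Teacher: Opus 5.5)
Your proposal is correct and follows the paper's proof: both write the two reduced groups as kernels of restriction to the base-point loci $G\times_H\{x_0\}\cong G/H$ and $\{x_0\}$, and match these kernels using the commutative square whose horizontal maps are the induction isomorphisms of Fact~\ref{fact:subgroup}. The only difference is your Step 1. There you use the equivariant retraction and the cofibration sequence to justify identifying $\widetilde{KO}_G(G_+\wedge_H X)$ with $\ker\bigl(KO_G(G\times_H X)\to KO_G(G\times_H\{x_0\})\bigr)$, whereas the paper simply takes this identification as its definition.
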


\begin{proof}
    By the definition of reduced $KO$-theory, we have
    \begin{align*}
        \widetilde{KO}_G(G_+ \wedge_H X) &= \ker\left(i_G^* \colon KO_G(G \times_H X) \to KO_G(G \times_H \{x_0\})\right), \\
        \widetilde{KO}_H(X) &= \ker\left(i_H^* \colon KO_H(X) \to KO_H(\{x_0\})\right),
    \end{align*}
    where $x_0$ is the basepoint of $X$, and $i_G \colon G \times_H \{x_0\} \hookrightarrow G \times_H X$ and $i_H \colon \{x_0\} \hookrightarrow X$ are the natural inclusion maps. Since the underlying pullback maps of vector bundles and the restriction map commute, the following diagram is commutative:
    \[
    \begin{tikzcd}[row sep=large, column sep=huge]
        KO_G(G \times_H X) \arrow[r, "\cong"] \arrow[d, "i_G^*"']
            & KO_H(X) \arrow[d, "i_H^*"]
            \\
        KO_G(G \times_H \{x_0\}) \arrow[r, "\cong"]
            & KO_H(\{x_0\}).
    \end{tikzcd}
    \]
    Therefore, the horizontal isomorphism restricts to an isomorphism between the kernels of the vertical maps, yielding $\widetilde{KO}_G(G_+ \wedge_H X) \xrightarrow{\cong} \widetilde{KO}_H(X)$.
\end{proof}

\begin{fact}
    The complex and real representation rings of $S^1$ are given by
    \begin{align*}
        R(S^1) &\cong \Z[\theta, \theta^{-1}], \\
        RO(S^1) &\cong \Z[V],
    \end{align*}
    where $\theta$ is the standard one-dimensional complex representation of $S^1$, and $V$ is the standard two-dimensional real representation of $S^1$.
\end{fact}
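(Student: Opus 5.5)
The statement to prove is the last Fact: $R(S^1)\cong\Z[\theta,\theta^{-1}]$ and $RO(S^1)\cong\Z[V]$. This is classical, and I would derive it directly from the classification of irreducible representations of the circle.

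\emph{Complex representation ring.} Since $S^1$ is compact abelian, every finite-dimensional complex representation splits, by complete reducibility and Schur's lemma, into one-dimensional characters. The continuous homomorphisms $S^1\to U(1)=S^1$ are exactly $z\mapsto z^n$ for $n\in\Z$, which is $\theta^n$. Tensor products of characters multiply, $\theta^m\otimes\theta^n=\theta^{m+n}$. Hence $R(S^1)$ is the group ring $\Z[\Z]=\Z[\theta,\theta^{-1}]$.

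\emph{Real irreducibles.} For $RO(S^1)$ I would first classify the real irreducibles. Complexify a real irreducible $W$: $W\otimes\C$ is self-conjugate, so its characters come in pairs $\theta^n,\theta^{-n}$. One gets the trivial one-dimensional representation $1$, and for each $n\ge1$ the two-dimensional representation $V_n$, given by rotation by angle $n\phi$ on $\R^2$, which is $\theta^n$ regarded as a real representation. These are pairwise non-isomorphic, and $V_n\otimes\C=\theta^n+\theta^{-n}$. Thus complexification $RO(S^1)\to R(S^1)$ is injective, with image the free abelian group on $1$ and $\theta^n+\theta^{-n}$ for $n\ge1$, i.e.\ the $\Z/2$-invariants $\Z[\theta+\theta^{-1}]$ under $\theta\mapsto\theta^{-1}$.

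\emph{Ring structure.} Set $V=V_1$ and $t=\theta+\theta^{-1}$. The identity
\[
(\theta^n+\theta^{-n})t=(\theta^{n+1}+\theta^{-(n+1)})+(\theta^{n-1}+\theta^{-(n-1)})
\]
gives $V_nV=V_{n+1}+V_{n-1}$ (with $V_0=2$), a Chebyshev-type recursion. By induction $V_n$ is a monic polynomial of degree $n$ in $V$. So the map $\Z[V]\to RO(S^1)$ is surjective. It is injective because the monic polynomials $V_n$ form a $\Z$-basis of $\Z[V]$, or equivalently because $t$ is algebraically independent in $\Z[\theta^{\pm1}]$.

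\emph{Main obstacle.} The only point needing care is the real classification, namely that no real irreducible beyond $1$ and the $V_n$ occurs. This follows because the real form of $\theta^n\oplus\theta^{-n}$ is unique: the complex conjugate-linear structure pairs the $\theta^n$ and $\theta^{-n}$ eigenspaces, so any real representation is determined by its complexification.
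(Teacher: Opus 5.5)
Your proof is correct. The paper gives no proof to compare against: it records this as a standard background fact, and your argument is the usual derivation. The two intermediate results you prove are exactly what the paper relies on later.

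\begin{itemize}
\item Your list of real irreducibles is the trivial line $1_{\R}$ and the $V_n$ for $n\ge 1$. This underlies the decomposition of $\KO_{S^1}(S^4)$ in the computation of $\ko_4(A_n)$.
\item You identify the image of complexification $RO(S^1)\to R(S^1)$ with the Laurent polynomials invariant under $\theta\mapsto\theta^{-1}$. This is precisely the symmetry property invoked in the case $n=2$ of that computation.
\end{itemize}

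The one step you state loosely is the classification of real irreducibles; the appeal to ``uniqueness of the real form'' needs a little more justification. A more direct argument is as follows. Take a weight vector $x+iy\in W\otimes\C$ on which $S^1$ acts by $\theta^n$. Then $\operatorname{span}_{\R}\{x,y\}$ is an $S^1$-invariant subspace of $W$.
\begin{itemize}
\item If $n=0$, both $x$ and $y$ are fixed vectors, so $W$ contains a trivial line.
\item If $n\neq 0$, $x$ and $y$ are linearly independent and span a copy of $V_{|n|}$.
\end{itemize}
Irreducibility then forces $W\cong 1_{\R}$ or $W\cong V_{|n|}$.
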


\subsection{$\Pin(2)$-equivariant $KO$-theory}

Let $G = \Pin(2)$, and let $D$, $K$, and $H$ be $G$-representations defined as follows. We set $D = \tilde{\mathbb{R}}$ and $H = \mathbb{H}$. The representation space of $K$ is $\C$ (identified with $\mathbb{R}^2$), where the $G$-action is given by $S^1 \subset G$ acting by multiplication by $z^2$, and $j \in G$ acting as a reflection across the diagonal. We adopt these notations for the representations following \cite{Lin15} and \cite{Sch03}.

\begin{theorem}[{\cite[Proposition 5.1]{Sch03}}]
    There is a ring isomorphism
    \[
    RO(\Pin(2)) \cong \Z[D, K, H] / (D^2 - 1, DK - K, DH - H, H^2 - 4(1 + D + K)).
    \]
\end{theorem}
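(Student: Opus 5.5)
The plan is to determine $RO(\Pin(2))$ through characters. Write $G = \Pin(2) = S^1 \sqcup jS^1$, with $S^1$ of index $2$ and $j^2 = -1$. First classify the complex irreducibles by Clifford–Mackey theory relative to $S^1$:
\begin{itemize}
\item the trivial representation $1$ and the sign representation $\tilde{\C} = D \otimes \C$;
\item for each $k \ge 1$, the $2$-dimensional induced representation $W_k = \operatorname{Ind}_{S^1}^{G} \theta^k$, whose restriction to $S^1$ is $\theta^k + \theta^{-k}$.
\end{itemize}
The representations $\theta^k$ and $\theta^{-k}$ are swapped by conjugation with $j$, which is why $W_k$ is irreducible.

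Next determine the Frobenius–Schur type of each $W_k$. The $j$-action squares to $j^2 = -1$, which acts on $W_k$ by $(-1)^k$. So $W_k$ is of real type for $k$ even and of quaternionic type for $k$ odd. This gives the real irreducibles:
\begin{itemize}
\item $1$ and $D$;
\item $K_m$ for $m \ge 1$, of real dimension $2$, with $S^1$ acting by $z^{2m}$ and $j$ acting by a reflection, so that $K = K_1$;
\item $H_m$ for $m \ge 1$, of real dimension $4$, whose complexification is $2W_{2m-1}$, so that $H = H_1 = \mathbb{H}$.
\end{itemize}
Hence $RO(G)$ is free abelian on $\{1, D, K_m, H_m\}$.

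Real characters determine real representations, so every ring relation can be checked on characters. Evaluate on $z = e^{i\phi} \in S^1$ and on the coset $jS^1$:
\begin{itemize}
\item $\chi_D$ is $1$ on $S^1$ and $-1$ on $jS^1$;
\item $\chi_{K_m}(z) = 2\cos(2m\phi)$, and $\chi_{K_m}$ vanishes on $jS^1$, since each such element acts as a reflection;
\item $\chi_{H_m}(z) = 4\cos((2m-1)\phi)$, and $\chi_{H_m}$ vanishes on $jS^1$, since these elements act as quaternions with zero real part.
\end{itemize}
From these one reads off $D^2 = 1$, $DK = K$, and $DH = H$, because $K$ and $H$ vanish on $jS^1$. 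For the last relation, $\chi_H^2(z) = 16\cos^2\phi = 8 + 8\cos 2\phi$, which equals $\chi_{4(1+D+K)}(z)$; on $jS^1$ both sides are $0$. This gives a well-defined ring map
\[
\Psi : \Z[D, K, H]/(D^2 - 1,\ DK - K,\ DH - H,\ H^2 - 4(1 + D + K)) \to RO(G).
\]

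To prove surjectivity, use the Chebyshev-type identities $K K_m = K_{m+1} + K_{m-1}$ (with $K_0 = 1 + D$) and $K H_m = H_{m+1} + H_{m-1}$ (with the convention $H_0 = H_1$). Both follow from the product formula $2\cos a \cos b = \cos(a+b) + \cos(a-b)$ applied to the characters. By induction, $K_m$ equals $K^m$ plus lower terms, and $H_m$ equals $K^{m-1}H$ plus lower terms, so every irreducible lies in the image.

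For injectivity, the relations show that the quotient ring is spanned as a $\Z$-module by $1$, $D$, $K^n$ for $n \ge 1$, and $K^n H$ for $n \ge 0$: any monomial containing $D$ together with $K$ or $H$ reduces, and $H^2$ reduces. Under $\Psi$ these spanning elements go to a unitriangular family with respect to the basis $1, D, K_n, H_{n+1}$, so their images are linearly independent. Therefore $\Psi$ is an isomorphism.

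I expect the main obstacle to be the classification step: showing that the induced representations with odd weight are of quaternionic type, so that they contribute $4$-dimensional real irreducibles rather than $2$-dimensional ones, and that no other irreducibles exist. Once that is settled, everything else is routine character arithmetic.
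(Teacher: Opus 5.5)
The paper does not prove this statement. It quotes it from Schmidt \cite[Proposition 5.1]{Sch03} and uses it only as input for the $KO_G$ computations. Your character-theoretic argument is correct and gives a self-contained proof of the cited fact. The steps all check out:
\begin{itemize}
\item the Clifford--Mackey classification of the complex irreducibles;
\item the identification $K = K_1$ and $H = H_1$;
\item the character identities, including $\chi_H^2 = 8 + 8\cos 2\phi$ on $S^1$ and $0$ on $jS^1$;
\item the recursions with the conventions $K_0 = 1 + D$ and $H_0 = H_1$;
\item the unitriangularity of $1, D, K^n, K^nH$ against the basis $1, D, K_n, H_{n+1}$.
\end{itemize}
Compared with the bare citation, your route also makes explicit the $\Z$-basis of $RO(\Pin(2))$ and the weights of $K$ and $H$ on $S^1$. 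These are exactly what the paper uses implicitly later: the change of generators $A = K - (1+D)$, $B = H - 2(1+D)$ in Theorem~\ref{thm:KO_G(lD)}(1), and the computation $c_{\C}\circ r^G_{S^1}(A) = (\theta - \theta^{-1})^2$.

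Two small points would tighten your writeup.

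First, the Frobenius--Schur step is slightly heuristic as phrased, since knowing how $j^2$ acts does not by itself fix the type. It is cleanest via the indicator. Since $(jz)^2 = -1$ for every $z \in S^1$,
\[
\int_G \chi_{W_k}(g^2)\,dg = \tfrac12\cdot 0 + \tfrac12\cdot 2(-1)^k = (-1)^k .
\]
Alternatively, exhibit the equivariant antilinear map $\sigma(e) = je$, $\sigma(je) = (-1)^k e$, which satisfies $\sigma^2 = (-1)^k$.

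Second, ``real characters determine real representations'' rests on injectivity of complexification $RO(G) \to R(G)$. This holds because distinct real irreducibles complexify to multiples of distinct complex irreducibles: $W$ for real type, $W \oplus \overline{W}$ for complex type (which does not occur here), and $2W$ for quaternionic type.

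Both points are routine. The classification step you flagged as the main obstacle is in fact standard.
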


Following the notations of \cite{Lin15} and \cite{Sch03}, we denote $\KO_G((kD + lH)^+)$ by $KO_G(kD + lH)$. By Fact~\ref{fact:spin_rep}, and since $H$ factors through $\Spin(4)$, the representations $8D$, $4D + H$, and $2H$ are all spin representations. Therefore, by Fact~\ref{fact:Bott_isom}, we can choose Bott classes $b_{8D} \in KO_G(8D)$, $b_{4D+H} \in KO_G(4D + H)$, and $b_{2H} \in KO_G(2H)$. Since their underlying real dimensions are all even, these Bott classes commute with each other with respect to the multiplicative structure induced by the natural product map (Fact~\ref{fact:natural_product}). Furthermore, we can choose them to be compatible with each other, which means that $b_{8D} \cdot b_{2H} = b_{4D + H}^2$ \cite[Section 2.2]{Lin15}.

\begin{definition}
    For $k, l \in \Z$, we define $KO_G(kD+lH)$ to be $KO_G((k + 8a)D + (l + 2b)H)$ for any integers $a, b$ such that $k + 8a \ge 0$ and $l + 2b \ge 0$. Here, the identification between the groups for different choices of $a$ and $b$ is given by multiplication by the compatible Bott classes $b_{8D}$ and $b_{2H}$.
\end{definition}

\begin{definition}[{\cite[Corollary 5.8]{Sch03}}]
    We define the bi-graded ring
    \[
    KO_G(*, *) := \bigoplus_{k, l \in \Z} KO_G(kD + lH).
    \]
    For arbitrary indices $k_0, l_0, k_1, l_1 \in \Z$, we choose non-negative integers $a_0, b_0, a_1, b_1$ such that $k_i + 8a_i \ge 0$ and $l_i + 2b_i \ge 0$ for $i=0, 1$. The product of elements $x \in KO_G(k_0D + l_0 H)$ and $y \in KO_G(k_1D + l_1H)$ is then extended via the following composition of maps:
    \begin{align*}
        &KO_G(k_0D + l_0 H) \otimes KO_G(k_1D + l_1H) \\
        &\xrightarrow{\cong \text{ (Bott)}} KO_G((k_0 + 8a_0)D + (l_0 + 2b_0)H) \otimes KO_G((k_1 + 8a_1)D + (l_1 + 2b_1)H) \\
        &\xrightarrow{\text{product}} KO_G((k_0 + k_1 + 8(a_0 + a_1))D + (l_0 + l_1 + 2(b_0 + b_1))H) \\
        &\xrightarrow{\cong \text{ (Bott)}^{-1}} KO_G((k_0 + k_1)D + (l_0 + l_1)H).
    \end{align*}
    This extension is well-defined and independent of the choices of $a_i$ and $b_i$ because the Bott classes lie in the center of the $RO(G)$-algebra \cite[Section 2.2]{Lin15}.
\end{definition}

\begin{remark}
    As shown in \cite[Corollary 5.8]{Sch03}, there exists a surjective ring homomorphism from a commutative polynomial ring to $KO_G(*, *)$. Consequently, this bi-graded ring is commutative.
\end{remark}

Next, we introduce the elements $\gamma(D) \in KO_G(-D)$, $\eta(D) \in KO_G(D)$, and $\lambda(D), c \in KO_G(4D)$. $\gamma(D) \in KO_G(-D)$ is defined to be the unique element satisfying the equation $\gamma(D) b_{8D} = i^*b_{8D}$ in $KO_G(7D)$, where $i \colon 7D^+ \hookrightarrow 8D^+$ is the standard inclusion. It is worth noting that, by the equivariant Hopf theorem (see, for instance, \cite[Section 8.4]{tD79}), the $G$-homotopy class of such an inclusion map is unique, so the definition of $\gamma(D)$ is independent of the choice of the inclusion. Moreover, for the standard inclusion $i \colon kD + lH \hookrightarrow (k+1)D + lH$, the induced homomorphism $i^*$ acts as multiplication by $\gamma(D)$. This follows from the naturality of the product map and the equivariant Hopf theorem, which ensure the commutativity of the following diagram:
\[
\begin{tikzcd}[row sep=large, column sep=huge]
    KO_G((k + 9)D + lH) \arrow[r, "(i \oplus \mathrm{id}_{8D})^* = (\mathrm{id} \oplus i_{7D \hookrightarrow 8D})^*"]
    & KO_G((k + 8)D + lH) \\
    KO_G((k + 1)D + lH) \otimes KO_G(8D) \arrow[r, "\mathrm{id} \otimes (\cdot \gamma(D))"'] \arrow[u, "\text{product}"]
    & KO_G((k + 1)D + lH) \otimes KO_G(7D) \arrow[u, "\text{product}"']
\end{tikzcd}
\]
Since the left vertical map is surjective by equivariant Bott periodicity, $i^*$ is completely determined by this diagram, verifying $i^*(x) = x \cdot \gamma(D)$. Similarly, we define $\gamma(H)\in KO_G(-H)$.

The element $\eta(D) \in KO_G(D)$ is defined as the Hurewicz image of $\tilde{\eta}(D) \in \pi_G^0(D)$, which is represented by the Hopf map $\eta \colon H^+ \to 3D^+$ \cite[Proposition 4.3(2), Section 5]{Sch03}.

For the elements $\lambda(D), c \in KO_G(4D)$, since $4D + 4$ and $8D$ are spin representations, Facts~\ref{fact:Bott_isom} and \ref{fact:trivial_action} yield the following isomorphisms:
\begin{align*}
    KO_G(4D) &\cong KO_G(8D + 4) \cong KO_G(4) \\
    &\cong \left( \KO(S^4) \otimes \Z \operatorname{Ir}_{\R} \right) \oplus \left( \widetilde{K}(S^4) \otimes \Z \operatorname{Ir}_{\C} \right) \oplus \left( \widetilde{KSp}(S^4) \otimes \Z \operatorname{Ir}_{\mathbb{H}} \right).
\end{align*}
We can choose suitable Bott classes so that under these isomorphisms, $\lambda(D)$ corresponds to $([V_{\mathbb{H}}] - 4\R) \otimes 1 \in \KO(S^4) \otimes \Z \operatorname{Ir}_{\R}$, and $c$ corresponds to $([V_{\mathbb{H}}] - \mathbb{H}) \otimes H \in \widetilde{KSp}(S^4) \otimes \Z \operatorname{Ir}_{\mathbb{H}}$. Here, $V_{\mathbb{H}}$ is the quaternionic Hopf bundle over $S^4 \cong \mathbb{H}P^1$, and $\mathbb{H}$ and $\R$ denote the respective trivial vector bundles \cite[Section 2.2]{Lin15}.
The following theorems provide the explicit algebraic structure of the bi-graded ring $KO_G(*, *)$. These relations, originally calculated and proved in \cite{Sch03}, serve as the computational foundation for the $\ko$-invariant in subsequent sections.

\begin{theorem}[{\cite[Proposition 5.5]{Sch03}, \cite[Theorem 2.13]{Lin15}}]\label{thm:KO_G(lD)}
    As $\Z$-modules we have the following isomorphisms:
    \begin{enumerate}[label=(\arabic*)]
        \item $KO_G(pt) \cong RO(\Pin(2)) \cong \Z[D,A,B]/(D^2 - 1,DA - A,DB - B,B^2 - 4 (A - 2B))$, where $A = K-(1 + D)$ and $B = H - 2(1 + D)$.
        \item $KO_G(-lD)\cong \Z \oplus \bigoplus_{n\ge 1} \Z/2$ for $l = 1,2$, generated by $\gamma(D)^{|l|}$ and $\gamma(D)^{|l|} A^n$.
        \item $KO_G(D) \cong \Z$, generated by $\eta(D)$.
        \item $KO_G(lD) \cong \Z \oplus \bigoplus_{m\ge 0} \Z/2$ for $l=2,3.$ The generators are $\eta(D)^{2}$ and $\gamma(D)^2 A^m c$ for $l = 2$; $\gamma(D)\lambda(D)$ and $\gamma(D) A^m c$ for $l = 3$.
        \item $KO_G(4D)$ is freely generated by $\lambda(D), D\lambda(D), A^n \lambda(D)$ and $A^m c$ for $m \ge 0$ and $n \ge 1$.
        \item $KO_G(5D) \cong \Z$, generated by $\eta(D)\lambda(D)$.
    \end{enumerate}
\end{theorem}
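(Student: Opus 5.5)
The plan is to compute $KO_G(lD)$ for $-2\le l\le 5$ by induction on $l$, using the equivariant cell structure of the representation spheres $lD^+$, where $G=\Pin(2)$. The key observation is that $S^1\subset G$ acts trivially on $D=\tilde{\R}$, while $G/S^1\cong\Z/2$ acts freely on $lD^+\setminus(0\cup\infty)$. Hence $lD^+$ is obtained from $(l-1)D^+$ by attaching one free cell of type $G/S^1\times D^l$, which gives a cofiber sequence
\[
(G/S^1)_+\wedge S^{l-1}\longrightarrow (l-1)D^+\longrightarrow lD^+ .
\]

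First I would compute the third term of the resulting long exact sequence. By Fact~\ref{fact:subgroup_pointed} with $H=S^1$, we have $\KO_G^{*}((G/S^1)_+\wedge S^{l-1})\cong \KO^{*}_{S^1}(S^{l-1})$. Since $S^1$ acts trivially on $S^{l-1}$, Fact~\ref{fact:trivial_action} gives
\[
\KO^{*}_{S^1}(S^{l-1})\cong KO^{*-l+1}(\pt)\oplus\bigoplus_{n\ge1}K^{*-l+1}(\pt).
\]
Here the two summands come from the trivial real irreducible and the complex irreducibles $\theta^n$, and there are no quaternionic irreducibles of $S^1$. These groups are known from $KO^{-k}(\pt)=\Z,\Z/2,\Z/2,0,\Z,0,0,0$ and from $K^{-k}(\pt)=\Z$ or $0$ according to parity.

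Next I would run the long exact sequence
\[
\cdots\to \KO_G(lD)\to \KO_G((l-1)D)\to \KO_{S^1}(S^{l-1})\to\cdots
\]
starting from $l=0$, where $KO_G(\pt)=RO(\Pin(2))$. By the discussion preceding the theorem, the map $KO_G(lD)\to KO_G((l-1)D)$ is multiplication by $\gamma(D)$. The map $KO_G((l-1)D)\to \KO_{S^1}(S^{l-1})$ is restriction to $S^1$ followed by the $S^1$-equivariant Bott/suspension identification, and by Fact~\ref{fact:Bott_res} it can be computed on generators. For example, at $l=1$ the restriction $RO(G)\to RO(S^1)$ sends $A\mapsto \theta^2+\theta^{-2}-2$ and sends $D\mapsto 1$. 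The kernel and cokernel of this map then give $KO_G(D)\cong\Z$, generated by $\eta(D)$ because $\eta(D)$ restricts to zero and is detected after forgetting to $\Z/2$. For the negative degrees $l=-1,-2$ I would invoke the $8D$ Bott periodicity, so that the induction effectively runs cyclically: going from $5D$ up to $8D\cong 0D$ closes the loop, and this identifies $KO_G(-D)$ and $KO_G(-2D)$ with $KO_G(7D)$ and $KO_G(6D)$.

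The main obstacle is twofold: pinning down the connecting homomorphisms and solving the extension problems, especially in degrees $2D,3D,4D$, where $\Z/2$-torsion towers $A^m c$ appear. At each stage I would identify explicit candidate elements, namely $\gamma(D)^k A^n$, $\eta(D)^2$, $\gamma(D)\lambda(D)$, $A^m c$, and $\lambda(D)$. I would then show they hit generators of the associated graded by restricting to $S^1$ and by mapping to the $G$-fixed points $KO(S^0)$. Relations such as $2\eta(D)=0$ modulo the appropriate classes and $\gamma(D)\eta(D)$ being the image of the degree-two class would be checked using the Hopf-map description of $\eta(D)$. Order-two behaviour of the torsion classes follows from their image under $\gamma(D)$ landing in the $\Z/2$ summands coming from $KO^{-1},KO^{-2}$. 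Torsion-freeness of $KO_G(4D)$ would follow from the isomorphism $KO_G(4D)\cong \KO_G(4)$ and Fact~\ref{fact:trivial_action}, which also supplies the stated free basis $\lambda(D),D\lambda(D),A^n\lambda(D),A^m c$. Finally, $KO_G(5D)\cong\Z$, generated by $\eta(D)\lambda(D)$, follows from the sequence at $l=5$ together with multiplicativity by $\lambda(D)$.
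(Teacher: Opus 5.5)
The paper does not prove this statement; it quotes it from Schmidt and Lin. So I am comparing your sketch with what a complete argument needs.

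Your framework is the natural one: the cofibration $(G/S^1)_+\wedge S^{(l-1)D}\to S^{(l-1)D}\to S^{lD}$, the identification of the third term with $\KO_{S^1}$ of a trivial sphere, and $\gamma(D)$ as the middle map. It is the same diagram the paper uses in the proof of Proposition~\ref{prop:ko_A_n}. Parts (5) and (6) go through essentially as you say:
\begin{itemize}
\item For (5) you still need the unitriangular change of basis from the irreducibles $K_{2k},H_{2k+1}$ to $A^n\lambda(D)$, $A^mc$.
\item For (6) use $\KO_{S^1}(S^5)=0$ together with $\ker\bigl(r^G_{S^1}\colon KO_G(4D)\to\KO_{S^1}(S^4)\bigr)=\Z(1-D)\lambda(D)=\Z\,\gamma(D)\eta(D)\lambda(D)$.
\end{itemize}

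\textbf{The gap is the torsion}, which is the step you flag yourself, and the detection tools you propose cannot see it.
\begin{itemize}
\item Every torsion generator in the statement ($\gamma(D)^{|l|}A^n$, $\gamma(D)^2A^mc$, $\gamma(D)A^mc$) is a multiple of $\gamma(D)$.
\item $r^G_{S^1}(\gamma(D))=0$, because after restricting to $S^1$ the inclusion $7D^+\hookrightarrow 8D^+$ becomes the null-homotopic $S^7\hookrightarrow S^8$.
\item Restriction to the fixed points $S^0\subset lD^+$ is multiplication by a power of $\gamma(D)$ into the torsion-free group $RO(G)$.
\end{itemize}
So neither map can show that these classes are nonzero, independent, or of order exactly $2$.

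Your account of where the $\Z/2$'s come from is also wrong. $\KO_{S^1}(S^{l-1})$ has only one $KO$-summand, but the towers are infinite and indexed by $S^1$-weights. At $l=2$ the tower is $\ker\gamma(D)$, which is the image of the connecting map out of the weight summands $\K(S^2)$.

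\textbf{What is missing} is a computation of the connecting map $\delta\colon\KO_{S^1}(S^{l})\to KO_G(lD)$. In an upward induction from $l=0$, this amounts to knowing the groups $KO_G^{-1}((l-1)D)=KO_G((l-1)D+1)$, which your induction never tracks. Even injectivity of $\gamma(D)$ on $KO_G(D)$ needs $KO_G^{-1}(\pt)\to KO_{S^1}^{-1}(\pt)$ to be onto.

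One way to close the gap:
\begin{itemize}
\item Observe that $\delta$ is the transfer, i.e. induction from $S^1$ to $G$. On the weight-$n$ summand it is realification $\K\to\KO$ for $n$ even and quaternionification $\K\to\widetilde{KSp}$ for $n$ odd.
\item Since $\KO_{S^1}(S^3)=\KO_{S^1}(S^7)=0$, $\gamma(D)$ maps $KO_G(4D)$ onto $KO_G(3D)$ and $KO_G(8D)\cong RO(G)$ onto $KO_G(7D)$, in each case with kernel $\operatorname{im}\delta$. The same vanishing makes $\gamma(D)$ injective on $KO_G(3D)$ and on $KO_G(7D)$.
\item The maps $\K(S^4)\to\widetilde{KSp}(S^4)$ and $\K(S^8)\to\KO(S^8)$ are multiplication by $2$. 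This gives one $\Z/2$ for each odd weight in $KO_G(3D)$ and one for each even weight in $KO_G(-D)$.
\item $KO_G(2D)$ and $KO_G(6D)$ then follow by comparing these injections with $\gamma(D)$ into $KO_G(D)$ and $KO_G(5D)$. Use that restrictions from $G$ are $j$-invariant, and that $j$ acts by $-1$ on $\K(S^2)$ and $\K(S^6)$.
\end{itemize}

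Finally, for (3): the nonzero image of $\eta(D)$ in $KO^{-1}(\pt)=\Z/2$ only shows that $\eta(D)$ is an odd multiple of a generator. The right argument is that $\gamma(D)$ maps $KO_G(D)$ isomorphically onto $\ker(RO(G)\to RO(S^1))=\Z(1-D)$, and $\gamma(D)\eta(D)=1-D$.
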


\begin{theorem}[{\cite[Proposition 5.7]{Sch03}, \cite[Theorem 2.16]{Lin15}}]\label{thm:relations}
    The following relations hold for these elements:
    \begin{enumerate}[label=(\arabic*)]
        \item $H\lambda(D) = 4c$, $Hc = (A + 2 + 2D)\lambda(D)$, $Dc = c$.
        \item $(D + 1)\gamma(D) = 2A\gamma(D) = B\gamma(D) = 0$.
        \item $(D + 1)\eta(D) = A\eta(D) = B\eta(D) = 0$.
        \item $\gamma(D)\eta(D) = 1 - D$, $\gamma(D)\lambda(D) = \eta(D)^3$.
        \item $\gamma(D)^8 b_{8D} = 8(1 - D)$, $ \gamma(H)^2 b_{2H} = K - 2H + D + 5$.
        \item $\gamma(H + 4D)b_{H + 4D} = 4(1 - D)$.
        \item $\eta(D)\lambda(D) = \gamma(D)^3 b_{8D}$, $\eta(D)c = 0$.
        \item $\gamma(H)\lambda(H) = 4 - H$ and $\gamma(H)c(H) = H - 1 - D - K$.
    \end{enumerate}
\end{theorem}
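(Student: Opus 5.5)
The statement is quoted from \cite[Proposition 5.7]{Sch03} and \cite[Theorem 2.16]{Lin15}, so my plan is to reconstruct it by detecting every identity through maps out of $KO_G(*,*)$ that are injective enough on the relevant bigraded pieces. The main detection tools are three. First, restriction to the $S^1$-fixed points, using Fact~\ref{fact:trivial_action} for the action of $\Z/2 = G/S^1$ on the fixed sphere. Second, restriction to the subgroups $S^1$ and $\langle j\rangle \cong \Z/4$. Third, the Bott isomorphisms of Fact~\ref{fact:Bott_isom}, which transport every degree $kD + lH$ into a degree that Theorem~\ref{thm:KO_G(lD)} describes explicitly. In each degree I will first fix the group using Theorem~\ref{thm:KO_G(lD)}, and then compute both sides of the identity under these maps.

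\textbf{Relations (1)--(3).} Relation (1) lives in $KO_G(4D) \cong KO_G(4)$. Under this identification, $\lambda(D)$ and $c$ correspond to explicit classes $([V_{\mathbb H}]-4\R)\otimes 1$ and $([V_{\mathbb H}]-\mathbb H)\otimes H$, tensored with irreducible representations. Multiplication by $H$ or $D$ is then a computation with the tensor products of irreducible representations in $RO(\Pin(2))$, together with the realification map $\widetilde{KSp}(S^4) \to \KO(S^4)$ and quaternionification in the other direction. This yields $H\lambda(D) = 4c$, $Hc = (A+2+2D)\lambda(D)$ and $Dc = c$.

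For (2), note that $\gamma(D)$ is the pullback along $S^0 \hookrightarrow S^D$. Any representation class whose restriction to the fixed point $0$ is killed by the cofiber sequence $S(D)_+ = G/S^1_+ \to S^0 \to S^D$ annihilates $\gamma(D)$. Since $KO_G(G/S^1) \cong RO(S^1)$ by Fact~\ref{fact:subgroup}, the annihilator of $\gamma(D)$ is the kernel of $RO(G) \to RO(S^1)$ modulo the image of the transfer. Computing this kernel gives $D+1$ and $B$, and the $2$-torsion of $A\gamma(D)$ recorded in Theorem~\ref{thm:KO_G(lD)}(2) gives $2A\gamma(D) = 0$. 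Relation (3) is the dual statement for $\eta(D) \in KO_G(D) \cong \Z$: multiplication by $D$ acts by $-1$ on this $\Z$ (a degree computation on the fixed points), and $A$ and $B$ act by their virtual dimension restricted to the fixed points, which is $0$.

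\textbf{Relations (4)--(8).} For (4) I will compose the Hopf map $H^+ \to 3D^+$ with fixed-point inclusions and evaluate in $KO_G(0) = RO(G)$. There $\gamma(D)\eta(D)$ is the image of the degree-$0$ class coming from the cofiber of $G/S^1_+$, which equals $1 - D$ as an Euler-class computation. The identity $\gamma(D)\lambda(D) = \eta(D)^3$ follows because both sides generate the free part of $KO_G(3D)$ (Theorem~\ref{thm:KO_G(lD)}(4)) and agree after restriction to $S^1$.

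Relations (5), (6) and (8) are Euler-class identities. For a spin representation $V$ of dimension divisible by $8$, $\gamma(V) b_V$ is the restriction of the Bott class to the origin. In $RO(G)$ this equals the alternating sum of the half-spin representations, $\Delta^+_V - \Delta^-_V$. I would compute these for $V = 8D$, $2H$ and $4D+H$ by restricting to the maximal torus and to $j$, using the character table of $\Pin(2)$. I expect the values $8(1-D)$, $K - 2H + D + 5$ and $4(1-D)$ to drop out of this computation. For (7), $\eta(D)c = 0$ because $KO_G(5D) \cong \Z$ is torsion-free and $c$ vanishes after restriction to fixed points. The identity $\eta(D)\lambda(D) = \gamma(D)^3 b_{8D}$ is obtained by multiplying (4) and (5) and cancelling, which is legitimate since $KO_G(5D) \cong \Z$ is detected by rank.

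\textbf{Main obstacle.} I expect the hardest step to be the Euler-class computations in (5)--(8), especially $\gamma(H)^2 b_{2H}$ and the classes $\lambda(H)$ and $c(H)$. These computations require sign and normalization conventions for the compatible Bott classes (in particular $b_{8D} b_{2H} = b_{4D+H}^2$) that must match the choices used to define $\lambda(D)$ and $c$. I would fix these normalizations by first checking each identity after restriction to $S^1$, where $KO_{S^1}$ is well understood. Any remaining sign ambiguity would then be resolved on the $j$-fixed points.
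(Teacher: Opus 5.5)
The paper gives no proof of this statement; it imports it from \cite[Proposition 5.7]{Sch03} and \cite[Theorem 2.16]{Lin15}. Your sketch therefore has to stand on its own. Several parts of it do: the trivial-action computation for (1), the $\Delta^+_V-\Delta^-_V$ computation for (5)--(6), and the cancellation for $\eta(D)\lambda(D)=\gamma(D)^3b_{8D}$. The half-spin computation really does give $8(1-D)$, $K-2H+D+5$ and $4(1-D)$, up to the normalization of the Bott classes. In the cancellation, given (4) and (5), both sides have $\gamma(D)^5$-image $8(1-D)$ in the torsion-free ring $RO(G)$. However, two steps fail as written, and one relation is not addressed.

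\textbf{Relations (2) and (3).} In (2) you have interchanged the two ends of the cofibre sequence $G/S^1_+\to S^0\to S^D$. The kernel of $\operatorname{res}\colon RO(G)\to RO(S^1)$ is $\Z(1-D)$. It contains neither $1+D$ nor $B$, which restrict to $2$ and $2V_1-4$ (here $V_n$ is the weight-$n$ real $2$-plane of $S^1$). Moreover, $1-D$ does not kill $\gamma(D)$, since $(1-D)\gamma(D)=2\gamma(D)\neq 0$.

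That kernel is in fact the image of $\gamma(D)\cdot\colon KO_G(D)\to RO(G)$. This map is injective, because $KO^{-1}_G(\pt)\to KO^{-1}_{S^1}(\pt)=\Z/2$ is onto. This is what gives $\gamma(D)\eta(D)=\pm(1-D)$. Relation (3) then follows at once, because $1+D$, $A$ and $B$ annihilate $1-D$ in $RO(G)$ (using $D^2=1$, $DA=A$, $DB=B$). Your justification of (3) via ``virtual dimension restricted to the fixed points'' does not work: no fixed-point dimension of $A$ or $B$ vanishes, e.g.\ $\dim A^G=-1$ and $\dim B^G=-2$.

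The annihilator of $\gamma(D)$ is instead the image of the map $RO(S^1)\cong\widetilde{KO}_G(G/S^1_+)\to RO(G)$. This map is induced by the cofibre $S^0\to\Sigma^{1-D}(G/S^1_+)\simeq G/S^1_+$ of $S^{-D}\hookrightarrow S^0$. It is the transfer, i.e.\ induction. Its image is spanned by $\operatorname{Ind}(1)=1+D$, $\operatorname{Ind}(V_1)=H$, $\operatorname{Ind}(V_2)=2K,\dots$, which is exactly the ideal $(1+D,\,B,\,2A)$.

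\textbf{The identity $\gamma(D)\lambda(D)=\eta(D)^3$.} This cannot be checked by restricting to $S^1$. That restriction is identically zero on $KO_G(3D)$, because $\widetilde{KO}_{S^1}(S^3)\cong KO^{-3}(\pt)\oplus\bigoplus_{n\ge 1}K^{-3}(\pt)=0$. Knowing that both sides generate $KO_G(3D)/\mathrm{tors}\cong\Z$ still leaves a sign undetermined. It also leaves an arbitrary element of $\bigoplus_m\Z/2\cdot\gamma(D)A^mc$ undetermined.

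One way to close this gap:
\begin{enumerate}[label=(\alph*)]
\item Write $\eta(D)^3=a\,\gamma(D)\lambda(D)+\sum_m t_m\,\gamma(D)A^mc$.
\item By (1)--(2), $\gamma(D)H=0$. Hence $A\gamma(D)\lambda(D)=\gamma(D)Hc=0$.
\item Then $A\eta(D)=0$ forces every $t_m=0$, since the classes $\gamma(D)A^{m+1}c$ are independent.
\item Finally, fix $a$ by comparing $\gamma(D)^3\eta(D)^3=(1-D)^3=4(1-D)$ with $\gamma(D)^4\lambda(D)$. This requires actually computing the restriction of $\lambda(D)$ to $S^0$.
\end{enumerate}

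\textbf{Relation (8).} Your plan does not cover (8). The classes $\lambda(H)$ and $c(H)$ are not restrictions of Bott classes of $8$-dimensional spin representations, so the $\Delta^+_V-\Delta^-_V$ recipe does not apply, and you give no other argument.

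\textbf{A smaller point on $\eta(D)c=0$.} Restriction to $S^1$ does not kill $c$: its image is $([V_{\mathbb{H}}]-2)\otimes V_1\neq 0$. The clean argument is that $Dc=c$ and $D\eta(D)=-\eta(D)$ give $2\eta(D)c=0$ in $KO_G(5D)\cong\Z$, hence $\eta(D)c=0$.
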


\section{Calculation of $\ko$-invariant for AR homology spheres}

\subsection{Definition of the $\ko$-invariant}

In this section, we briefly review the definition of the $\ko$-invariant introduced in \cite{Lin15}.

\begin{definition}[{\cite[Definition 5.1]{Lin15}}]\label{def:varphi}
    For $l = -2, -1, \cdots, 5$ we define the group homomorphism $\varphi_l: KO_G(lD) \to \Z$ as follows:
    \begin{enumerate}[label=(\arabic*)]
        \item For $l = 0$, $\varphi_0(D) = -1, \varphi_0(A) = \varphi_0(B) = 0$, then extend $\varphi_0$ by the multiplicative structure on $RO(G)$.
        \item For $l = -1, -2$, $\varphi_l(\gamma(D)^{|l|}) = 1, \varphi_l(\gamma(D)^{|l|}A^n) = 0$ for $n \ge 1$.
        \item For $l = 1$, $\varphi_1(\eta(D)) = 1$.
        \item For $l = 2$, $\varphi_2(\eta(D)^2) = 1, \varphi_2(\gamma(D)^2 A^m c) = 0$ for $m \ge 0$.
        \item For $l = 3$, $\varphi_3(\gamma(D)\lambda(D)) = 1, \varphi_3(\gamma(D) A^m c) = 0$ for $m \ge 0$.
        \item For $l = 4$, $\varphi_4(\lambda(D)) = 1, \varphi_4(D\lambda(D)) = -1, \varphi_4(A^n \lambda(D)) = 0, \varphi_4(A^m c) = 0$ for $n \ge 1, m\ge 0$.
        \item For $l = 5$, $\varphi_5(\eta(D)\lambda(D)) = 1$.
    \end{enumerate}
    For the other $l \in \Z$, we use the Bott isomorphism to identify $KO_G(lD)$ with $KO_G((l - 8k)D)$ for $-2 \le l - 8k \le 5$ and apply the above definition. Also, for any $a \in RO(G)$, $b \in KO_G(lD)$, we have $\varphi_0(a) \cdot \varphi_l(b) = \varphi_l(a \cdot b)$ \cite[Lemma 5.2]{Lin15}.
\end{definition}

Let $X$ be a space of type SWF at level $l$. A choice of $G$-homotopy equivalence $X^{S^1} \simeq (lD)^+$ composed with the natural inclusion $X^{S^1} \hookrightarrow X$ gives an inclusion map $i: (lD)^+ \to X$. This map induces a homomorphism $i^*: \KO_G(X) \to KO_G(lD)$, and we consider the composition $\varphi_l \circ i^*: \KO_G(X) \to \Z$. 

\begin{lemma}[{\cite[Proposition 5.6]{Lin15}}]
    The submodule $\operatorname{Im}(i^*)$ and the map $\varphi_l\circ i^*$ are both independent of the choice of the $G$-homotopy equivalence $X^{S^1} \simeq (lD)^+$. Moreover, we have $\operatorname{Im}(\varphi_l\circ i^*)=(2^k)$ for some $k\in \Z_{\ge 0}$.    
\end{lemma}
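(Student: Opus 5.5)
The plan is to reduce both statements to algebraic facts about $KO_G(*,*)$. For independence, let $i, i' \colon (lD)^+ \to X$ be the inclusions coming from two $G$-homotopy equivalences $X^{S^1} \simeq (lD)^+$. They differ by precomposition with a $G$-self-equivalence $g$ of $(lD)^+$, so $i' \simeq_G i \circ g$. Since $G$ acts on $(lD)^+$ through $\Z/2$ by $-1$, the equivariant Hopf theorem (tom Dieck, Section 8.4) classifies such self-maps by their degrees on the fixed set $S^0$ and on the free part. For a self-equivalence, both degrees are $\pm 1$, and they are compatible, so $g$ is $G$-homotopic either to the identity or to a reflection $r$ in one coordinate.

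First I would show that $r^*$ acts on $KO_G(lD)$ as multiplication by a unit $u \in RO(G)$ with $\varphi_0(u) = \pm 1$. I would try $u = D$, or a sign, obtained by comparing the reflection with the action of $j$ or $-1$ after Bott periodicity. Then $\operatorname{Im}(i'^*) = r^*\operatorname{Im}(i^*)$, and $\operatorname{Im}(i^*)$ is an $RO(G)$-submodule, because $i^*$ is $RO(G)$-linear and $\KO_G(X)$ is an $RO(G)$-module. Hence the two images coincide. By \cite[Lemma 5.2]{Lin15}, $\varphi_l(u b) = \varphi_0(u)\varphi_l(b) = \pm\varphi_l(b)$. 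The sign is the delicate point. I expect it to be handled either by checking that the fixed-point degree must be $+1$ (equivalences coming from $X^{S^1}$ preserve the basepoint and $S^0$ pointwise), so that $g$ is actually homotopic to the identity, or by noting that the claimed invariant is really the ideal $\operatorname{Im}(\varphi_l \circ i^*)$, which is insensitive to signs.

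For the second claim, $\operatorname{Im}(\varphi_l \circ i^*)$ is a subgroup of $\Z$, hence of the form $(N)$, and it must be shown that $N$ is a nonzero power of $2$. For nonvanishing, I would use the fact that $X$ is a finite $G$-CW complex with free action off $X^{S^1}$. Localization (Segal) at the prime ideal of $RO(G)$ supported on the element $j$ (or on $S^1$) shows that $i^*$ becomes an isomorphism after inverting a suitable element such as $1-D$ or $H - 2 - 2D$. In particular, some power of an element $e$ with $\varphi_0(e) = 2^s \neq 0$ times a generator of $KO_G(lD)$ lies in $\operatorname{Im}(i^*)$, and its $\varphi_l$-value is a nonzero power of $2$ times the generator value $1$. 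Concretely, one can also see this directly: smashing the cofiber sequence $X^{S^1} \to X \to X/X^{S^1}$ with Bott classes, the quotient has free action, and the Euler-class element $\gamma(H)^N$ kills $\KO_G$ of a free cell complex of bounded dimension. Hence $\gamma(H)^N \cdot b$ lifts for any $b$, and $\varphi$ of it is $\varphi_0$ applied to a power of $4 - H$ or of $\gamma(D)\eta(D) = 1-D$, which is $\pm$ a power of $2$.

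For the $2$-power claim, I would show that $N$ divides $2^k$ for the $k$ just produced, and hence is itself a power of $2$. The main obstacle I expect is the precise computation in this last step: identifying which Euler class annihilates $\KO_G$ of free $G$-complexes and computing its $\varphi_0$-value from the relations in Theorem~\ref{thm:relations}.
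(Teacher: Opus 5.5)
\textbf{The gap is in the independence claim, at the sign you flag, and neither of your proposed ways out works.}

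First, the reflection class cannot be excluded. Every pointed $G$-self-equivalence of $(lD)^+$ restricts to the identity on $S^0=((lD)^+)^G$, and the reflection $r$ is no exception. So ``fixed-point degree $+1$'' does not force $g\simeq_G\mathrm{id}$, and two identifications $X^{S^1}\simeq(lD)^+$ really can differ by $r$.

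Second, retreating to the ideal only proves that $\operatorname{Im}(\varphi_l\circ i^*)$ is independent. The lemma asserts that the homomorphism $\varphi_l\circ i^*$ itself is independent.

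Your candidate units make things worse: $u=D$ and $u=-1$ both have $\varphi_0(u)=-1$, so if either were correct the lemma would be false.

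What is missing is a computation of $u$ that uses the $G$-fixed points:
\begin{enumerate}
\item $G$ acts on $(l+8)D$ by scalars $\pm1$, so any two orientation-reversing orthogonal maps are $G$-homotopic. Hence $r_{lD}\wedge\mathrm{id}_{8D}\simeq_G\mathrm{id}_{lD}\wedge r_{8D}$.
\item $KO_G(8D)$ is free of rank one over $RO(G)$ on $b_{8D}$, and $r_{8D}^*$ is $RO(G)$-linear, so $r_{8D}^*b_{8D}=u\,b_{8D}$ for some $u\in RO(G)$. The Bott isomorphism $x\mapsto x\,b_{8D}$ then shows that $r^*$ is multiplication by $u$ on $KO_G(lD)$.
\item Since $r$ fixes $S^0$ pointwise, restrict along $S^0\hookrightarrow(8D)^+$ and use $\gamma(D)^8b_{8D}=8(1-D)$. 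This gives $8u(1-D)=8(1-D)$, hence $16\varphi_0(u)=16$ and $\varphi_0(u)=1$.
\end{enumerate}
Combined with the underlying degree $-1$, this forces $u=-D$; for $l=1$ it reads $-D\eta(D)=\eta(D)$. Then $u\operatorname{Im}(i^*)=\operatorname{Im}(i^*)$, and $\varphi_l(ux)=\varphi_0(u)\varphi_l(x)=\varphi_l(x)$, which is the statement.

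Your second part is fine and is the standard argument. $X/X^{S^1}$ is $G$-free off the basepoint, so for $4N>\dim X$ the zero-section map $X/X^{S^1}\to\Sigma^{NH}(X/X^{S^1})$ is $G$-null. Equivalently, obstruction theory gives a $G$-map $X\to(lD+NH)^+$ extending the inclusion. Either way, $\gamma(H)^N b$ lifts through $i^*$.

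The element you were unsure of is $\gamma(H)^2b_{2H}=K-2H+D+5$, which has $\varphi_0=4$. Take $N=2M$ and $b=x\,b_{2H}^M$ with $\varphi_l(x)=1$. This puts $4^M$ in the subgroup $\operatorname{Im}(\varphi_l\circ i^*)$, which is therefore $(2^k)$.

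Of your localization alternatives, inverting $H-2-2D=B$ would not help, since $\varphi_0(B)=0$.
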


\begin{definition}[{\cite[Definition 5.7]{Lin15}}]\label{def:kappao}
    For a $G$-space $X$ of type SWF at level $l$, we define $\ko(X)$ to be the integer $k$ such that $\operatorname{Im}(\varphi_l\circ i^*)=(2^k)$.
\end{definition}

To formulate the invariant, we prepare the following numerical sequences.
For $j, k \in \Z$ with $j \ge 1$, we define $\beta^j_k := \sum_{i = 0}^{j - 1} \alpha_{k - i}$, where 
\[
\alpha_i = 
\begin{cases} 
1 & \text{for } i \equiv 1, 2, 3, 5 \pmod{8}, \\ 
0 & \text{for } i \equiv 0, 4, 6, 7 \pmod{8}. 
\end{cases}
\]

We recall the behavior of the $\ko$-invariant under suspensions.

\begin{proposition}[{\cite[Proposition 5.9]{Lin15}}]
    Let $X$ be a space of type SWF at level $l$. Then we have the following:
    \begin{enumerate}[label=(\arabic*)]
        \item $\ko(\Sigma^{8D}X) = \ko(X)$.
        \item $\ko(\Sigma^{2H}X) = \ko(X) + 2$.
        \item $\ko(\Sigma^{H + 4D}X) = \ko(X) + 3 - \beta^4_{l + 4}$.
    \end{enumerate}
\end{proposition}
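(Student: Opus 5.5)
The statement is the suspension behaviour of $\ko$ (Lin's Proposition 5.9). The plan is to reduce everything to how Bott classes restrict to the $S^1$-fixed point set, and then to evaluate $\varphi$ on the result.

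Write $i\colon (lD)^+\to X$ for the fixed-point inclusion. For a spin representation $V$, the space $\Sigma^V X$ is of type SWF at level $l+\dim V^{S^1}$. Its fixed-point inclusion is $i\wedge j_V$, where $j_V\colon (V^{S^1})^+\to V^+$ is the inclusion. By Fact~\ref{fact:Bott_isom}, every class in $\KO_G(\Sigma^V X)$ has the form $b_V\cdot x$ with $x\in\KO_G(X)$. By naturality of the product,
\[
(i\wedge j_V)^*(b_V x)=j_V^*(b_V)\cdot i^*(x),
\]
computed in the bigraded ring $KO_G(*,*)$. It follows that $\operatorname{Im}\varphi\circ(i\wedge j_V)^*$ is the image of $\operatorname{Im}(i^*)\subset KO_G(lD)$ under multiplication by $j_V^*(b_V)$, followed by $\varphi_{l+\dim V^{S^1}}$.

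\textbf{Case (1), $V=8D$.} Here $j_V$ is the identity, so the map is the Bott identification. By the definition of $\varphi_l$ for $l$ outside $[-2,5]$ through Bott periodicity, $\varphi$ is preserved. Hence $\ko$ is unchanged.

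\textbf{Case (2), $V=2H$.} Here $V^{S^1}=0$, so $j_V^*(b_{2H})=\gamma(H)^2 b_{2H}=K-2H+D+5$ by Theorem~\ref{thm:relations}(5). Writing this in terms of $A$ and $B$ gives $A-2B-2D+(\ldots)$. Applying $\varphi_0$ through the multiplicativity $\varphi_l(a b)=\varphi_0(a)\varphi_l(b)$ gives $\varphi_0(K-2H+D+5)$. Using $\varphi_0(D)=-1$, $\varphi_0(K)=\varphi_0(A+1+D)=0$ and $\varphi_0(H)=\varphi_0(B+2+2D)=0$, this equals $-1+5=4$. So every $\varphi$-value is multiplied by $4$, and $\ko$ increases by $2$.

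\textbf{Case (3), $V=H+4D$.} Now the level shifts by $4$, and $j_V^*(b_{H+4D})=\gamma(H)b_{H+4D}\in KO_G(4D)$. Multiplication by this class does \emph{not} factor through $\varphi_0$, because it changes degree. I expect this to be the main obstacle. The plan is to identify $\gamma(H)b_{H+4D}$ in terms of the generators of Theorem~\ref{thm:KO_G(lD)}(5). Using $\gamma(D)^4\cdot\gamma(H)b_{H+4D}=\gamma(H+4D)b_{H+4D}=4(1-D)$ from Theorem~\ref{thm:relations}(6), together with the relations $\gamma(D)\lambda(D)=\eta(D)^3$ and $\gamma(D)\eta(D)=1-D$, I expect it to be $\lambda(D)$ modulo $A$- and $c$-terms. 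Then, for each residue $l \bmod 8$ with $-2\le l\le 5$, I will compute the map
\[
\varphi_{l+4}(\lambda(D)\cdot -)\colon KO_G(lD)\to\Z
\]
on the generators $\gamma(D)^{|l|}$, $\eta(D)^k$, $\gamma(D)\lambda(D)$, $\lambda(D)$ and $\eta(D)\lambda(D)$. Here I use $\lambda(D)^2$ expressed via $b_{8D}$, for instance $\eta(D)\lambda(D)=\gamma(D)^3 b_{8D}$, which gives $\lambda(D)^2$ from $\gamma(D)^4 b_{8D}$ after dividing out. This shows that this map equals $2^{3-\beta^4_{l+4}}$ times $\varphi_l$. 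The image ideal is therefore scaled by $2^{3-\beta^4_{l+4}}$.

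A subtle point is that $\operatorname{Im}(i^*)$ is only a submodule. Multiplication by $\lambda(D)$ composed with $\varphi$ must be shown to be proportional to $\varphi_l$ on all of $KO_G(lD)$, including the torsion and $A^n$ summands, which $\varphi$ kills. This is checked on generators, using $Ac\eta=0$-type relations from Theorem~\ref{thm:relations}.
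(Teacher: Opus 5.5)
The paper gives no proof of this proposition; it is quoted from \cite[Proposition 5.9]{Lin15}. So I compare your proposal with the natural argument built on the $\alpha,\beta$ sequences.

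\textbf{Cases (1) and (2).} Your general reduction $(i\wedge j_V)^*(b_Vx)=j_V^*(b_V)\cdot i^*(x)$ is correct. Your treatment of both cases is correct, including $\varphi_0(K-2H+D+5)=4$.

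\textbf{Case (3).} This is correct in outline, with one inaccuracy. Theorem~\ref{thm:relations}(6) does not show that $\gamma(H)b_{H+4D}$ is $\lambda(D)$ modulo $A$- and $c$-terms. One has $\gamma(D)^4\lambda(D)=4(1-D)$. On the other hand, $\gamma(D)^4$ kills $A^n\lambda(D)$, $A^mc$ and $(1+D)\lambda(D)$. For the $c$-terms, note $\gamma(D)^3c=0$: it lies in $KO_G(D)=\Z\eta(D)$, while $\eta(D)\gamma(D)^3c=0$ and $\eta(D)^2$ has infinite order. Hence relation (6) only gives $\varphi_4(\gamma(H)b_{H+4D})=1$, and a $D\lambda(D)$-term cannot be excluded. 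This does not hurt you, because $\varphi_{l+4}(D\lambda(D)y)=-\varphi_{l+4}(\lambda(D)y)$. With that correction, your residue-by-residue check does produce the factor $2^{3-\beta^4_{l+4}}$. For instance, $\varphi_8(\lambda(D)^2)=4$ follows from $\gamma(D)^4\lambda(D)^2=4(1-D)\lambda(D)$ together with $\gamma(D)^4b_{8D}=(1-D)\lambda(D)$.

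\textbf{A shorter route.} Your approach is more laborious than it needs to be. The sequence $\alpha$ is tailored to the identity $\varphi_{l-1}(\gamma(D)x)=2^{\alpha_l}\varphi_l(x)$ for all $x\in KO_G(lD)$. This is checked once on the generators of Theorem~\ref{thm:KO_G(lD)}, and iterating it gives $\varphi_l(\gamma(D)^4w)=2^{\beta^4_{l+4}}\varphi_{l+4}(w)$. Apply this to $w=\gamma(H)b_{H+4D}\,y$ and use (6). You get $2^{\beta^4_{l+4}}\varphi_{l+4}(\gamma(H)b_{H+4D}\,y)=\varphi_l(4(1-D)y)=8\varphi_l(y)$. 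This is the required proportionality on all of $KO_G(lD)$ at once, torsion and $A$-summands included. It does not require identifying $\gamma(H)b_{H+4D}$, nor computing products such as $\lambda(D)^2$ and $\lambda(D)c$. The same identity also underlies the $\beta^{k_1-k_0}_{k_1}$ term in Proposition~\ref{prop:ko_admissible}.

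Your approach yields an explicit description of multiplication by $\lambda(D)$. The cost is an eight-case computation and the need to control the $D\lambda(D)$- and $c$-components by hand.
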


These suspension properties allow us to extend the definition of the $\ko$-invariant from spaces to spectra.

\begin{definition}[{\cite[Definition 5.11]{Lin15}}]\label{def:kappao_spectrum}
    For a spectrum $\mathcal{X} = (X, m, n) \in \mathfrak{C}_G$, where $X$ is a space of type SWF, we define
    \[
    \ko(\mathcal{X}) := \ko(\Sigma^{(8M - m)D}\Sigma^{(2N - n')H}X) - 2N - s
    \]
    for $M, N, n' \in \Z$ and $s \in [0, 1)$ such that $8M - m \ge 0$, $2N - n' \ge 0$, and $n = n' + s$. The well-definedness of this invariant follows from \cite[Proposition 5.12]{Lin15}.
\end{definition}

As a consequence of this definition, the suspension formulas naturally generalize to the spectrum level.

\begin{proposition}[{\cite[Proposition 5.13]{Lin15}}]\label{prop:suspension_ko}
    For a spectrum $\mathcal{X} = (X, m, n) \in \mathfrak{C}_G$, where $X$ is a space of type SWF at level $l$, we have the following:
    \begin{enumerate}[label=(\arabic*)]
        \item $\ko(\Sigma^{8D}\mathcal{X}) = \ko(\mathcal{X})$.
        \item $\ko(\Sigma^{2H}\mathcal{X}) = \ko(\mathcal{X}) + 2$.
        \item $\ko(\Sigma^{H + 4D}\mathcal{X}) = \ko(\mathcal{X}) + 3 - \beta^4_{l - m + 4}$.
    \end{enumerate}
\end{proposition}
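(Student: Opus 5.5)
The plan is to reduce all three formulas to the space-level statements of \cite[Proposition 5.9]{Lin15}, using Definition~\ref{def:kappao_spectrum} and its well-definedness (\cite[Proposition 5.12]{Lin15}). Write $\mathcal{X} = (X, m, n)$ with $X$ of type SWF at level $l$. Fix auxiliary data $M, N, n', s$ as in Definition~\ref{def:kappao_spectrum}, so that $8M - m \ge 0$, $2N - n' \ge 0$ and $n = n' + s$ with $s \in [0,1)$.

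First I would record two elementary facts used repeatedly.
\begin{enumerate}[label=(\alph*)]
\item For a genuine representation $F$, the formal suspension satisfies $\Sigma^F(X, m, n) = (\Sigma^F X, m, n)$, so $m$ and $n$ are unchanged.
\item Iterated suspensions of $G$-spaces commute up to $G$-homeomorphism, namely $\Sigma^{F}\Sigma^{F'}X \cong \Sigma^{F \oplus F'}X \cong \Sigma^{F'}\Sigma^{F}X$. Also, $\Sigma^{kD}\Sigma^{jH}X$ is of type SWF at level $l + k$, since $H^{S^1} = 0$ and $D^{S^1} = D$.
\end{enumerate}
Since $\ko$ of a space depends only on its $G$-homotopy type, the suspensions may be reordered freely inside $\ko$.

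For (1), the object $\Sigma^{8D}\mathcal{X} = (\Sigma^{8D}X, m, n)$ can be computed with the same data $M, N, n', s$. This gives
\[
\ko(\Sigma^{8D}\mathcal{X}) = \ko\bigl(\Sigma^{8D}\,\Sigma^{(8M-m)D}\Sigma^{(2N-n')H}X\bigr) - 2N - s .
\]
Applying Proposition 5.9(1) to the space $\Sigma^{(8M-m)D}\Sigma^{(2N-n')H}X$ removes the extra $8D$, and the result is $\ko(\mathcal{X})$. For (2), I would again use the same data and absorb the extra $2H$ into the quaternionic suspension. This yields
\[
\ko(\Sigma^{2H}\mathcal{X}) = \ko\bigl(\Sigma^{(8M-m)D}\Sigma^{(2(N+1)-n')H}X\bigr) - 2N - s .
\]
The right-hand side equals $\bigl[\ko(\Sigma^{(8M-m)D}\Sigma^{(2(N+1)-n')H}X) - 2(N+1) - s\bigr] + 2$. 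The bracket is $\ko(\mathcal{X})$ computed with the admissible choice $N+1$, so by well-definedness we get $\ko(\mathcal{X}) + 2$. Alternatively, one can apply Proposition 5.9(2) directly.

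For (3), set $Z := \Sigma^{(8M-m)D}\Sigma^{(2N-n')H}X$, which is of type SWF at level $l + 8M - m$. Then
\[
\ko(\Sigma^{H+4D}\mathcal{X}) = \ko(\Sigma^{H+4D}Z) - 2N - s .
\]
Proposition 5.9(3) gives $\ko(\Sigma^{H+4D}Z) = \ko(Z) + 3 - \beta^4_{l+8M-m+4}$. Since $\alpha_i$ depends only on $i \bmod 8$, the same holds for $\beta^4_k$, so $\beta^4_{l+8M-m+4} = \beta^4_{l-m+4}$. Substituting back yields the formula in (3).

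The only delicate point is bookkeeping rather than substance. One must check that the suspended object is again of the form required by Definition~\ref{def:kappao_spectrum}, with the same $m$ and $n$. In case (3), one must also check that the level entering Proposition 5.9 is $l + 8M - m$ rather than $l$; this is exactly where the shift by $m$ in the index of $\beta^4$ comes from. I expect this level-tracking step in (3) to be the main place where care is needed; everything else follows from commuting suspensions and the independence of Definition~\ref{def:kappao_spectrum} from the choices made.
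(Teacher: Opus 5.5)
Your proposal is correct and follows essentially the route the paper intends: the paper gives no separate proof, citing \cite[Proposition 5.13]{Lin15} and noting that the spectrum-level formulas follow from Definition~\ref{def:kappao_spectrum} together with the space-level \cite[Proposition 5.9]{Lin15}. Your argument carries out exactly that reduction, and your bookkeeping in (3) is what produces the index $\beta^4_{l-m+4}$: the auxiliary space $Z$ has level $l+8M-m$, and $\beta^4_k$ is $8$-periodic in $k$.
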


Finally, by evaluating this invariant on the Seiberg-Witten Floer spectrum, we define the $\ko$-invariants for a rational homology $3$-sphere.

\begin{definition}[{\cite[Definition 5.14]{Lin15}}]
    Let $Y$ be an oriented rational homology $3$-sphere with a spin structure $\s$. We define $\ko_i(Y, \s) := \ko(\Sigma^{iD}\SWF(Y, \s))$ for any $i \in \Z_{\ge 0}$. Then $\ko_{i + 8}(Y, \s) = \ko_i(Y, \s)$, which allows us to define $\ko_i(Y, \s)$ for $i \in \Z / 8$.
\end{definition}

\subsection{Calculation of the $\ko$-invariants for AR homology spheres}

In this subsection, we compute the $\ko$-invariants of AR homology spheres. Our strategy, inspired by the computation of the $\kappa$-invariants in \cite{DSS26}, is to utilize the local maps between the SWF spectra of AR homology spheres and the fundamental $G$-spectra. We first establish that a local map between two spectra yields an inequality between their $\ko$-invariants.

\begin{proposition}\label{prop:ko_local}
    Let $\mathcal{X}_0 = (X_0, m_0, n_0), \mathcal{X}_1 = (X_1, m_1, n_1) \in \mathfrak{C}_G$ be two spectra, where $X_0$ and $X_1$ are spaces of type SWF. If there exists a local map $F: \mathcal{X}_0 \to \mathcal{X}_1$, then $\ko_i(\mathcal{X}_0) \le \ko_i(\mathcal{X}_1)$ for any $i \in \Z/8$.
\end{proposition}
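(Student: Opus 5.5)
The plan is to reduce to an unstable statement about spaces and then use that $\ko$ is defined through the image of the restriction to the $S^1$-fixed points.

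\emph{Step 1: reduce to a genuine $G$-map of spaces.} Since $\Sigma^{iD}$ commutes with morphisms and preserves locality, it suffices to treat $i=0$, after replacing $\mathcal{X}_j$ by $\Sigma^{iD}\mathcal{X}_j$. Note that $n_0-n_1\in\Z$ because a morphism exists. Choose a representative
\[
\tilde F\colon \Sigma^{(a+m_1)D}\Sigma^{(b+n_1)H}X_0\to\Sigma^{(a+m_0)D}\Sigma^{(b+n_0)H}X_1
\]
whose restriction to $S^1$-fixed points is a $G$-homotopy equivalence. Suspending further by $8D$ and $2H$, we may take $a+m_j\equiv 0 \pmod 8$, and we may also arrange the parity of the $H$-exponents. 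By Proposition~\ref{prop:suspension_ko}(1),(2) and Definition~\ref{def:kappao_spectrum}, the two sides become $\ko(\Sigma^{(b+n_1)H}X_0)$ and $\ko(\Sigma^{(b+n_0)H}X_1)$, shifted by the same normalization. In this comparison the shifts coming from $H$-suspension must match. The fractional parts $s$ of $n_0$ and $n_1$ agree, and after Bott periodicity the difference of the $H$-exponents is exactly accounted for by the $-2N$ terms. I will check this bookkeeping carefully, in particular when an odd number of $H$'s is involved, where Proposition~\ref{prop:suspension_ko}(3) enters. The cleanest way is to suspend both sides by the same extra $H+4D$ so that only even multiples appear.

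\emph{Step 2: the unstable inequality.} Let $f\colon Z_0\to Z_1$ be a $G$-map between spaces of type SWF at the same level $l$, such that $f^{S^1}$ is a $G$-homotopy equivalence $(lD)^+\to(lD)^+$. Write $i_j\colon (lD)^+\to Z_j$ for the inclusions; then $f\circ i_0\simeq i_1\circ f^{S^1}$. Hence
\[
i_0^*\circ f^*=(f^{S^1})^*\circ i_1^*,
\]
and therefore $\operatorname{Im}\bigl((f^{S^1})^*\circ i_1^*\bigr)\subseteq \operatorname{Im}(i_0^*)$. Since $f^{S^1}$ is a $G$-homotopy equivalence, $(f^{S^1})^*$ is an automorphism of $KO_G(lD)$. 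Moreover $\varphi_l\circ (f^{S^1})^*=\pm\varphi_l$: by the equivariant Hopf theorem, a self-equivalence of $(lD)^+$ is homotopic to $\pm\mathrm{id}$ (or to $D$-multiplication), which acts on $\varphi_l$ by a unit. This is also the content of the independence statement in \cite[Proposition 5.6]{Lin15}, which I would invoke directly. It follows that
\[
\varphi_l(\operatorname{Im} i_1^*)\subseteq\varphi_l(\operatorname{Im} i_0^*),
\]
that is, $(2^{\ko(Z_1)})\subseteq(2^{\ko(Z_0)})$, which gives $\ko(Z_0)\le\ko(Z_1)$.

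\emph{Step 3: assemble.} Apply Step 2 to $\tilde F$. Its domain and target are spaces of type SWF at the same level, namely $l_0+a+m_1=l_1+a+m_0$, which follows from the fixed-point map being an equivalence. Combined with the normalization of Step 1, this yields $\ko_i(\mathcal{X}_0)\le\ko_i(\mathcal{X}_1)$.

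The main obstacle is the bookkeeping in Step 1: verifying that the normalizing constants in Definition~\ref{def:kappao_spectrum} coincide for the two sides. The conceptual content, Step 2, is short.
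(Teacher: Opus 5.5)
Your Step 2 is exactly the paper's argument. The square of $S^1$-fixed-point inclusions gives $i_0^*\circ\tilde F^*=(\tilde F^{S^1})^*\circ i_1^*$, and hence $(\tilde F^{S^1})^*(\operatorname{Im} i_1^*)\subset\operatorname{Im} i_0^*$. Your remark that $\varphi_l\circ(\tilde F^{S^1})^*$ has the same image as $\varphi_l$, by \cite[Proposition 5.6]{Lin15}, makes explicit a point the paper passes over.

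The problem is Step 1, which is wrong as written, not just unchecked. In your representative the domain is $\Sigma^{(a+m_1)D}\Sigma^{(b+n_1)H}X_0$ and the target is $\Sigma^{(a+m_0)D}\Sigma^{(b+n_0)H}X_1$.
\begin{itemize}
\item You cannot make $a+m_0$ and $a+m_1$ both divisible by $8$ unless $m_0\equiv m_1 \pmod 8$. Locality only forces $l_0-m_0=l_1-m_1$, so $m_0-m_1=l_0-l_1$ is arbitrary.
\item Divisibility by $8$ is also the wrong target. To compute $\ko_0(\mathcal{X}_0)$ via Definition~\ref{def:kappao_spectrum}, the $D$-exponent on $X_0$ must be $\equiv -m_0 \pmod 8$.
\item When $n_0-n_1$ is odd, the two $H$-exponents have opposite parity. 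No common suspension by $H+4D$ makes both even, and such a suspension would also shift the $D$-exponents by $4$.
\end{itemize}

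None of this is needed. Normalize only the domain, as the paper does: take $a+m_1=8M-m_0$ and $b+n_1=2N-\lfloor n_0\rfloor$. The target exponents are then forced to be $8M-m_1$ and $2N-\lfloor n_1\rfloor$. This is because $n_0-n_1\in\Z$ means $n_0$ and $n_1$ have the same fractional part $s$. Hence Definition~\ref{def:kappao_spectrum} subtracts the same constant $2N+s$ from both space-level invariants. Parity never enters, and Proposition~\ref{prop:suspension_ko}(3) is not used. Applying your Step 2 to this $\tilde F$ then gives $\ko_0(\mathcal{X}_0)\le\ko_0(\mathcal{X}_1)$ directly. The bookkeeping you identified as the main obstacle is trivial once the representative is chosen this way.
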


\begin{proof}
    We prove the case $i = 0$; the other cases can be proved similarly. By taking sufficiently many suspensions, we can choose a representative map $\tilde{F}$ of $F$ as follows:
    \[
        \tilde{F}: \Sigma^{(8a - m_0)D}\Sigma^{(2b - n_0')H} X_0 \to \Sigma^{(8a - m_1)D}\Sigma^{(2b - n_1') H} X_1
    \]
    where $n_0'$ and $n_1'$ are the greatest integers less than or equal to $n_0$ and $n_1$, respectively. Taking the $S^1$-fixed point sets, we obtain the following commutative diagram:
    \[
    \begin{tikzcd}
        \Sigma^{(8a - m_0)D}\Sigma^{(2b - n_0')H} X_0 \arrow[r, "\tilde{F}"]
            &\Sigma^{(8a - m_1)D}\Sigma^{(2b - n_1') H} X_1
            \\
        \Sigma^{(8a - m_0)D}X_0^{S^1} \arrow[u, hook, "i_0"] \arrow[r, "\simeq"', "\tilde{F}^{S^1}"]
            &\Sigma^{(8a - m_1)D} X_1^{S^1} \arrow[u, hook, "i_1"']
    \end{tikzcd}
    \]
    Applying $\KO_G$, we have:
    \[
    \begin{tikzcd}
        \KO_G(\Sigma^{(8a - m_0)D}\Sigma^{(2b - n_0')H} X_0) \arrow[d, "i_0^*"']
            &\KO_G(\Sigma^{(8a - m_1)D}\Sigma^{(2b - n_1') H} X_1) \arrow[l, "\tilde{F}^*"'] \arrow[d, "i_1^*"]
            \\
        \KO_G(\Sigma^{(8a - m_0)D}X_0^{S^1})
            &\KO_G(\Sigma^{(8a - m_1)D} X_1^{S^1}). \arrow[l, "\cong", "(\tilde{F}^{S^1})^*"']
    \end{tikzcd}
    \]
    By the commutativity of the diagram, we have $i_0^* \circ \tilde{F}^* = (\tilde{F}^{S^1})^* \circ i_1^*$. This implies that under the isomorphism $(\tilde{F}^{S^1})^*$, the image of $i_1^*$ is mapped into the image of $i_0^*$; that is, 
    \[
        (\tilde{F}^{S^1})^*(\operatorname{Im}(i_1^*)) \subset \operatorname{Im}(i_0^*).
    \]
    Therefore,
    \[
        \ko(\Sigma^{(8a - m_0)D}\Sigma^{(2b - n_0')H} X_0) \le \ko(\Sigma^{(8a - m_1)D}\Sigma^{(2b - n_1') H} X_1).
    \]
    By Definition~\ref{def:kappao_spectrum}, this yields $\ko_0(\mathcal{X}_0) \le \ko_0(\mathcal{X}_1)$.
\end{proof}

Following the approach used in \cite{DSS26}, we first determine the $\ko$-invariants of the fundamental $G$-spaces $A_n$.

\begin{proposition}\label{prop:ko_A_n}
    Let $A_n = S^{n\C} \cup_{S^0} jS^{n\C}$ be the $G$-space defined in Section 3. Then the values of $\ko_i(A_n)$ for $i \in \Z/8$ are given as follows:
    \[
    \begin{array}{c|cccccccc}
        i \pmod 8 & 0 & 1 & 2 & 3 & 4 & 5 & 6 & 7 \\ \hline
        \ko_i(A_0) & 0 & 0 & 0 & 0 & 0 & 0 & 0 & 0 \\
        \ko_i(A_1) & 1 & 1 & 1 & 0 & 0 & 0 & 0 & 0 \\
        \ko_i(A_n) \ (n \ge 2) & 1 & 1 & 1 & 0 & 1 & 0 & 0 & 0
    \end{array}
    \]
\end{proposition}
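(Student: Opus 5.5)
We need $\ko_i(A_n)=\ko(\Sigma^{iD}A_n)$, where $A_n$ is a space of type SWF at level $0$ and $(\Sigma^{iD}A_n)^{S^1}=(iD)^+$. The case $n=0$ is immediate: $A_0=S^0$, so $i^*$ is the identity on $KO_G(iD)$. For each $l$ the map $\varphi_l$ is surjective by Definition~\ref{def:varphi}; for instance $\varphi_l(\gamma(D)^{|l|})=1$, $\varphi_1(\eta(D))=1$ and $\varphi_4(\lambda(D))=1$. Hence $\ko_i(A_0)=0$ for all $i$.

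For $n\ge1$ I would compute $\operatorname{Im}\bigl(i^*\colon \KO_G(\Sigma^{iD}A_n)\to KO_G(iD)\bigr)$ from the cofiber sequence of the pair $(\Sigma^{iD}A_n,\Sigma^{iD}S^0)$. The quotient $A_n/S^0$ is $S^{n\C}\vee jS^{n\C}$ with the two summands swapped by $j$, which is $G_+\wedge_{S^1}S^{n\C}$. Therefore
\[
\Sigma^{iD}A_n/\Sigma^{iD}S^0\cong G_+\wedge_{S^1}\Sigma^{2i+2n}S^0,
\]
since $D$ restricts trivially to $S^1$. By Fact~\ref{fact:subgroup_pointed}, its reduced $KO_G$-theory equals $\KO_{S^1}(S^{i\R\oplus n\C})$. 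Exactness then gives that $\operatorname{Im}(i^*)$ is the kernel of the coboundary
\[
\delta\colon KO_G(iD)\to \KO^{1}_{S^1}(S^{i\R\oplus n\C}),
\]
or, equivalently, the kernel of the map $KO_G(iD)\to KO_{S^1}$-theory of the corresponding suspension of $S(n\C)$. Concretely, $A_n$ is the cofiber of the $G$-map $G\times_{S^1}S(\C^n)\to \pt$ (suitably, $S^0\to A_n$ with cofiber the induced sphere). The condition for $x\in KO_G(iD)$ to extend over $\Sigma^{iD}A_n$ is then that its restriction to $S^1$, pulled back to $\Sigma^{i}(S(\C^n)_+)$, vanishes in $KO_{S^1}$-theory, that is, that $\operatorname{res}^G_{S^1}(x)$ is killed by the Euler class of $n\C$ (that of $\theta^{n}$ in $KO_{S^1}$). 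Equivalently, $x$ extends if and only if $\operatorname{res}^G_{S^1}x\in KO^{-i}(\pt)\otimes RO(S^1)$ lies in the annihilator of $e(n\C)$, which is essentially the ideal generated by $e$-torsion.

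The plan is then to go through $i=0,\dots,7$, using the generators of Theorem~\ref{thm:KO_G(lD)} and the $S^1$-restrictions of $D,A,B,\gamma,\eta,\lambda,c$. Under restriction, $D\mapsto1$ and $H\mapsto 2V$, while $\eta(D)$ restricts to the real Hopf element $\eta$ and $\gamma(D)$ restricts to something like $0$ or $1$. For each $i$ I find the smallest power of $2$ in $\varphi_i(\ker\operatorname{res}\text{-condition})$. Elements such as $1-D$, $(1-D)\eta(D)$-type classes, and $A^m c$ restrict to zero or to Euler-divisible classes, so they extend. This should give $\ko=0$ for $i=3,5,6,7$ and also for $i=4$ when $n=1$: there $\lambda(D)$ minus a suitable class, or $D\lambda(D)$, has the right restriction because $e(\C)$ hits $\lambda$ appropriately. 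For $i=0,1,2$ the generators $1$, $\eta(D)$ and $\eta(D)^2$ restrict nontrivially ($\eta,\eta^2\in KO^{-1},KO^{-2}$ are nonzero and not annihilated), so only even multiples such as $1-D$ extend, giving $\ko=1$.

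The main obstacle is the $i=4$ column, where the answer distinguishes $n=1$ from $n\ge2$. I would handle it by computing explicitly in $KO_{S^1}(S^{4\R\oplus n\C})$ whether $\lambda(D)+(\text{correction in }A^m c)$ extends. I expect $\lambda(D)-c$, or a similar combination with $\varphi_4=1$, to be annihilated by $e(\C)$ but not by $e(n\C)$ for $n\ge2$. To settle this I will use the relations $H\lambda=4c$ and $Hc=(A+2+2D)\lambda$ from Theorem~\ref{thm:relations}, restricted to $S^1$. As a consistency check, by Proposition~\ref{prop:ko_local} the collapse maps $A_n\to A_{n-1}\to\cdots$ force $\ko_i(A_n)$ to be monotone in $n$, and the comparison with $S^{\tilde\R}$ gives an upper bound.
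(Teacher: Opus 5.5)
\textbf{Setup errors.} Your exact criterion is right, but the setup around it is wrong in ways that matter. Collapsing $A_n^{S^1}=\{0,\infty\}$ identifies $0$ with $\infty$ inside each sphere, so $A_n/S^0\cong G_+\wedge_{S^1}\Sigma\bigl(S(\C^n)_+\bigr)$, not $G_+\wedge_{S^1}S^{n\C}$. Equivalently, $A_n$ is the cofiber of $(G\times_{S^1}S(\C^n))_+\to S^0$.

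With Fact~\ref{fact:subgroup_pointed} this gives exactly: $x\in KO_G(iD)$ lies in $\operatorname{Im}(\iota^*)$ iff $\operatorname{res}^G_{S^1}x$ maps to $0$ in $KO^{-i}_{S^1}(S(\C^n))=KO^{-i}(\C P^{n-1})$. By the $S^1$-cofiber sequence $S(\C^n)_+\to S^0\to S^{n\C}$, this means $\operatorname{res}^G_{S^1}x$ lies in the \emph{image} of $\KO^{-i}_{S^1}(S^{n\C})$, i.e.\ it is a multiple of the Euler class. It does not mean lying in the annihilator of $e(n\C)$.

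The annihilator version breaks the columns you treat as easy. The class $\eta$ does annihilate the Euler class: for instance $\eta\cdot(\beta\otimes V_1-\lambda)=0$, because $c_\C(\eta)=0$ kills the $V_1$-summand and $\eta\lambda\in KO^{-5}=0$. So $\eta(D)$ would extend and you would get $\ko_1=0$. The correct reason $1,\eta(D),\eta(D)^2$ do not extend is different: restricting to one free orbit shows that the composite $KO_G(iD)\to KO^{-i}(\C P^{n-1})\to KO^{-i}(\pt)$ is the forgetful map $r^G$, and these classes are not in $\ker r^G$.

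Also, $\operatorname{res}^G_{S^1}\gamma(D)=0$, not ``$0$ or $1$''. That vanishing is exactly what makes the $\gamma(D)$-multiples $1-D$, $(1-D)\eta(D)$, $(1-D)\eta(D)^2$, $\gamma(D)\lambda(D)$, $(1-D)\lambda(D)$ and $\gamma(D)^kb_{8D}$ extend. Monotonicity in $n$ comes from the local inclusions $A_{n-1}\hookrightarrow A_n$.

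\textbf{The gap at $i=4$, and the $n=2$ entry.} For $i=4$ you only state an expectation. Carrying out your corrected criterion shows that the expectation, and the $n=2$ entry of the table, are false.

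Notation: write $\beta=[V_{\mathbb H}]-2_\C$ and $\lambda=r(\beta)$. After the Bott identifications, $\operatorname{res}^G_{S^1}\lambda(D)=\lambda$ and $\operatorname{res}^G_{S^1}c=\beta\otimes V_1$. The representation $V_m$ pulls back to $r(L^m)$ on $\C P^{n-1}$, where $L$ is the tautological line bundle.

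Since $\widetilde{KO}^{-4}(S^2)=KO^{-6}=0$, the map $KO^{-4}(\C P^1)\to KO^{-4}(\pt)$ is an isomorphism. Hence for $n=1,2$ we get $\operatorname{Im}(\iota^*)=\ker r^G$. This contains $\lambda(D)-c$, with $\varphi_4(\lambda(D)-c)=1$. So $\ko_4(A_2)=0=\ko_4(A_1)$. Indeed, $\lambda-\beta\otimes V_1$ is, up to sign, the restriction to $S^4$ of the Bott class of the spin representation $4\R\oplus 2\C$.

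The dichotomy starts at $n=3$. There $\widetilde{KO}^{-4}(\C P^2)\cong\Z g$ and $r(\beta L^m)=\lambda+m^2g$. Take $x=a\lambda(D)+a'D\lambda(D)+\sum_{k\ge1}p_kA^k\lambda(D)+\sum_{m\ge0}q_mA^mc$. Its image in $KO^{-4}(\C P^2)$ is $(a+a'+q_0)\lambda+(q_0+8p_1+8q_1)g$. Vanishing forces $a+a'=8(p_1+q_1)$, so $\varphi_4(x)=a-a'$ is even, and $\ko_4(A_n)=1$ for $n\ge3$.

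\textbf{Comparison with the paper.} The paper bounds $\operatorname{Im}(\iota^*)$ from both sides, between the $\gamma(D)$-multiples and $\ker r^G$. For $n=2$ it then tests membership using the Euler class $(1-\theta)^2$. But the complexified real Bott class restricts to $(\theta+\theta^{-1}-2)b_{S^4}=-\theta^{-1}(1-\theta)^2b_{S^4}$. The image of $c_\C$ consists of symmetric Laurent polynomials, so no nonzero $(1-\theta)^2P(\theta)$ with $P$ symmetric lies in it. With the correct normalization, $c_\C r^G_{S^1}(\lambda(D)-c)=(2-\theta-\theta^{-1})b_{S^4}$ passes the test. The paper's $n\ge3$ argument via $(1-\theta)^3$-divisibility is unaffected.

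So your cofiber-sequence route, once corrected, is exact and sharper than the paper's two-sided bounds. What it proves, though, is the table with the last row valid only for $n\ge3$ and $\ko_4(A_2)=0$. The statement as written cannot be proved at $(n,i)=(2,4)$.
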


\begin{proof}
    For $n = 0$, the claim holds because $A_0 = S^0$. Therefore, we may assume $n \ge 1$. From the proof of \cite[Lemma 7.7]{DSS26}, we have the cofiber sequence:
    \[
    S^{n\C}\vee S^{n\C} \to A_n \to S^D \to \Sigma^D(S^{n\C}\vee S^{n\C})\to\cdots
    \]
    where $j$ interchanges the two wedge summands. Restricting to the $S^1$ fixed point, we observe that
    \[
    S^0\vee S^0\to A_n^{S^1}\to (S^D)^{S^1}\to \cdots.
    \]
    Note that $A_n^{S^1} = S^0$, $(S^D)^{S^1} = S^D$. Taking the $iD$-th suspension (for $0 \le i < 8$), we obtain:
    \begin{equation}
        \begin{tikzcd}
        \Sigma^{iD}(S^{n\C} \vee S^{n\C}) \arrow[r]
            &\Sigma^{iD}(A_n) \arrow[r] 
            &\Sigma^{iD}(S^D)
            \\
        \Sigma^{iD}(S^0 \vee S^0) \arrow[r] \arrow[u, hook]
            &\Sigma^{iD}S^0 \arrow[r] \arrow[u, hook, "\iota"']
            &\Sigma^{iD}(S^D) \arrow[u, equal]
        \end{tikzcd}
    \end{equation}
    This sequence induces the following diagram:
    \begin{equation} \label{eq:KO_diagram1}
        \begin{tikzcd}
        \KO_G(\Sigma^{iD}(S^{n\C} \vee S^{n\C})) \arrow[d] 
            & \KO_G(\Sigma^{iD}(A_n)) \arrow[l] \arrow[d, "\iota^*"] 
            & \KO_G(S^{(i+1)D}) \arrow[l] \arrow[d, equal] 
            \\
         \KO_G(\Sigma^{iD}(S^0\vee S^0))
            & \KO_G(S^{iD}) \arrow[l] 
            & \KO_G(S^{(i+1)D}) \arrow[l, "\cdot\:\gamma(D)"'] 
        \end{tikzcd}
    \end{equation}
    Here, there is a $G$-equivariant homeomorphism:
    \[
    \Sigma^{iD}(G_+ \wedge_{S^1} S^{n\C}) \cong G_+ \wedge_{S^1} \Sigma^{\R^i} S^{n\C}.
    \]
    Explicitly, this homeomorphism is given by $y \wedge [g, x] \mapsto [g, g^{-1}y \wedge x]$, with its inverse given by $[g, y \wedge x] \mapsto gy \wedge [g, x]$.
    Applying Fact~\ref{fact:subgroup_pointed} with $X = \Sigma^{\R^i}S^{n\C}$, $H = S^1$, $G = \mathrm{Pin}(2)$, and $\alpha$ being the inclusion map, we obtain
    \[
    \KO_G(G_+ \wedge_{S^1} \Sigma^{\R^i} S^{n\C}) \cong \KO_{S^1}(\Sigma^{\R^i} S^{n\C}).
    \]
    Thus, we have an isomorphism:
    \[
    \KO_G(\Sigma^{iD}(S^{n\C} \vee S^{n\C})) \cong \KO_{S^1}(\Sigma^{\R^i} S^{n\C}).
    \]
    Note that this isomorphism holds for $n = 0$ as well. Furthermore, the following two diagrams commute:
    \[
    \begin{tikzcd}
        \KO_G(\Sigma^{iD}(S^0\vee S^0)) \arrow[dr, "\cong"] 
            \\
        \KO_G(\Sigma^{iD}S^0) \arrow[r, "r^G_{S^1}"] \arrow[u, "p^*"]
            & \KO_{S^1}(\Sigma^{\R^i}S^0)
    \end{tikzcd}
    \]
    \[
    \begin{tikzcd}
        \KO_G(\Sigma^{iD}(S^{n\C} \vee S^{n\C})) \arrow[d, "i^*"] \arrow[r, "\cong"]
            &\KO_{S^1}(\Sigma^{\R^i}S^{n\C}) \arrow[d, "i^*"]
            \\
        \KO_G(\Sigma^{iD}(S^0 \vee S^0)) \arrow[r, "\cong"]
            &\KO_{S^1}(\Sigma^{\R^i}S^0)
    \end{tikzcd}
    \]
    where the maps denoted by $i$ are the respective inclusions, $p: S^0 \vee S^0 \to S^0$ is the projection, and $r^G_{S^1}$ is the restriction map from $G$ to $S^1$. The commutativity of these diagrams follows from the fact that the isomorphisms above are given by the composition of the inclusion and the restriction maps, and that the restriction map commutes with the induced maps.

    Using these isomorphisms and diagrams, we can rewrite the diagram \eqref{eq:KO_diagram1} as follows:
    \begin{equation} \label{eq:KO_diagram2}
        \begin{tikzcd}
            \KO_{S^1}(\Sigma^{\R^i}S^{n\C}) \arrow[d, "i^*"] 
                & \KO_G(\Sigma^{iD}(A_n)) \arrow[l] \arrow[d, "\iota^*"] 
                & \KO_G(S^{(i+1)D}) \arrow[l] \arrow[d, equal] 
                \\
            \KO_{S^1}(S^i)
                & \KO_G(S^{iD}) \arrow[l, "r^G_{S^1}"] 
                & \KO_G(S^{(i+1)D}). \arrow[l, "\cdot\:\gamma(D)"'] 
        \end{tikzcd}
    \end{equation}
    If we can determine the image of $\varphi_i \circ \iota^*$, we can find the invariant (Definition~\ref{def:kappao}). Since the rightmost vertical map is the identity, we have $\operatorname{Im}(\cdot \gamma(D)) \subset \operatorname{Im}(\iota^*)$. Thus, the following claim holds.
    
    \begin{claim}\label{cl:above_bound}
        $\ko_i(A_n)$ is bounded above by $1, 1, 1, 0, 1, 0, 0, 0$ for $i \equiv 0, 1, 2, 3, 4, 5, 6, 7 \pmod 8$, respectively. In particular, this shows that $\ko_i(A_n) = 0$ for $i \equiv 3, 5, 6, 7 \pmod 8$.
    \end{claim}
        
    \begin{proof}[Proof of Claim]
        We use Theorems~\ref{thm:KO_G(lD)}, \ref{thm:relations} and  Definition~\ref{def:varphi}.
        
        \begin{description}
            \item[$i = 0$] 
            Since $\gamma(D) \eta(D) = 1 - D$, we have $1 - D \in \operatorname{Im}(\cdot \gamma(D)) \subset \operatorname{Im}(\iota^*)$. Thus, $\varphi_0(1 - D) = 2 \in \operatorname{Im}(\varphi_0 \circ \iota^*)$, which implies $\ko_0(A_n) \le 1$.
            
            \item[$i = 1$] 
            Since $\gamma(D) \eta(D)^2 = (1 - D)\eta(D)$, we have $(1 - D)\eta(D) \in \operatorname{Im}(\iota^*)$. Thus, $\varphi_1((1 - D)\eta(D)) = 2 \in \operatorname{Im}(\varphi_1 \circ \iota^*)$, which implies $\ko_1(A_n) \le 1$.
            
            \item[$i = 2$] 
            Since $\gamma(D) \cdot \gamma(D)\lambda(D) = \gamma(D) \eta(D)^3 = (1 - D)\eta(D)^2$, we have $(1 - D)\eta(D)^2 \in \operatorname{Im}(\iota^*)$. Thus, $\varphi_2((1 - D)\eta(D)^2) = 2 \in \operatorname{Im}(\varphi_2 \circ \iota^*)$, which implies $\ko_2(A_n) \le 1$.
            
            \item[$i = 3$] 
            Since $\gamma(D)\lambda(D) \in \operatorname{Im}(\iota^*)$ and $\varphi_3(\gamma(D)\lambda(D)) = 1 \in \operatorname{Im}(\varphi_3 \circ \iota^*)$, we obtain $\ko_3(A_n) \le 0$.
            
            \item[$i = 4$] 
            Since $\gamma(D) \eta(D) \lambda(D) = (1 - D)\lambda(D)$, we have $(1 - D)\lambda(D) \in \operatorname{Im}(\iota^*)$. Thus, $\varphi_4((1 - D)\lambda(D)) = 2 \in \operatorname{Im}(\varphi_4 \circ \iota^*)$, which implies $\ko_4(A_n) \le 1$.
            
            \item[$i = 5$] 
            Since $\gamma(D) \cdot \gamma(D)^2 b_{8D} = \gamma(D)^3 b_{8D} = \eta(D)\lambda(D) \in \operatorname{Im}(\iota^*)$ and $\varphi_5(\eta(D)\lambda(D)) = 1 \in \operatorname{Im}(\varphi_5 \circ \iota^*)$, we obtain $\ko_5(A_n) \le 0$.
            
            \item[$i = 6$] 
            Since $\gamma(D) \cdot \gamma(D) b_{8D} = \gamma(D)^2 b_{8D} \in \operatorname{Im}(\iota^*)$ and $\varphi_6(\gamma(D)^2 b_{8D}) = 1 \in \operatorname{Im}(\varphi_6 \circ \iota^*)$ (by the Bott isomorphism), we obtain $\ko_6(A_n) \le 0$.
            
            \item[$i = 7$] 
            Since $\gamma(D) b_{8D} \in \operatorname{Im}(\iota^*)$ and $\varphi_7(\gamma(D)b_{8D}) = 1 \in \operatorname{Im}(\varphi_7 \circ \iota^*)$, we obtain $\ko_7(A_n) \le 0$.
        \end{description}
    \end{proof}

    While we used the right square of the diagram~\eqref{eq:KO_diagram2} to find an upper bound for $\ko_i(A_n)$, we now use the left square to find a lower bound.

    \begin{claim}
        $\ko_i(A_n)$ is bounded below by $1, 1, 1, 0, 0, 0, 0, 0$ for $i \equiv 0, 1, 2, 3, 4, 5, 6, 7 \pmod 8$, respectively. Combining this with Claim~\ref{cl:above_bound}, we can determine $\ko_i(A_n)$ except for $i \equiv 4 \pmod 8$.
    \end{claim}

    \begin{proof}[Proof of Claim]
        By extending the left square of the diagram~\eqref{eq:KO_diagram2} with the forgetful map from $S^1$-equivariant to non-equivariant $\KO$-theory, we obtain the following commutative diagram:
        \begin{equation} \label{eq:KO_diagram3}
            \begin{tikzcd}
                \KO(S^{i + 2n}) \arrow[d, "i^*"']
                    & \KO_{S^1}(\Sigma^{\R^i}S^{n\C}) \arrow[d, "i^*"] \arrow[l, "r^{S^1}"']
                    & \KO_G(\Sigma^{iD}(A_n)) \arrow[l] \arrow[d, "\iota^*"]  
                    \\
                \KO(S^i)
                    & \KO_{S^1}(S^i) \arrow[l, "r^{S^1}"]
                    & \KO_G(S^{iD}). \arrow[l, "r^G_{S^1}"] 
            \end{tikzcd}
        \end{equation}
        Since $n \ge 1$, the leftmost inclusion $i: S^i \hookrightarrow S^{i + 2n}$ is nullhomotopic. Thus, the leftmost vertical map $i^*$ is the zero map. On the bottom row, $r^{S^1} \circ r^G_{S^1} = r^G$, which is the forgetful map from $G$-equivariant to non-equivariant $\KO$-theory. By the commutativity of the diagram, $r^G \circ \iota^* = 0$. This implies $\operatorname{Im}(\iota^*) \subset \ker(r^G)$.

        By the argument in the proof of \cite[Proposition 8.1]{Lin15}, the structure of $\ker(r^G: \KO_G(S^{iD}) \to \KO(S^i))$ as an $RO(G)$-submodule is given as follows:
        \begin{itemize}
            \item For $i = 0$, $\ker(r^G)$ is the submodule generated by $1 - D, A, B$.
            \item For $i = 1$, $\ker(r^G)$ is generated by $2\eta(D)$.
            \item For $i = 2$, $\ker(r^G)$ is generated by $2\eta(D)^2$ and $\gamma(D)^2c$.
            \item For $i = 4$, $\ker(r^G)$ is generated by $\lambda(D)-c, (1-D)\lambda(D), A\lambda(D)$, and $Ac$.
            \item For $i = 3 , 5, 6, 7$, $\ker(r^G) \cong \KO_G(S^{iD})$.
        \end{itemize}
        Since $\operatorname{Im}(\iota^*) \subset \ker(r^G)$, applying the homomorphism $\varphi_i$ to the generators of $\ker(r^G)$ listed above yields the minimum possible power of $2$ in $\operatorname{Im}(\varphi_i \circ \iota^*)$. By definition of the $\ko$ invariant, this provides the stated lower bounds.
    \end{proof}

    Next, we determine $\ko_4(A_n)$. If $n = 1$, we have $A_1 = \tilde{G}$, where $\tilde{G}$ is the $G$-space defined in \cite[Section 8.1]{Lin15}. By \cite[Proposition 8.1]{Lin15}, we obtain $\ko_4(A_1) = 0$. Thus, we may assume $n \ge 2$ in what follows.
    
    Suppose that $\ko_4(A_n) = 0$. Then there exists an element $x \in \operatorname{Im}(\iota^*) \subset KO_G(4D)$ such that $\varphi_4(x) = 1$. 
    Recall that $\operatorname{Im}(\iota^*) \subset \ker(r^G)$. Since $\ker(r^G)$ is a free $\Z$-module generated by $\lambda(D) - c, (1 - D)\lambda(D), A^n\lambda(D)$, and $A^m c$ for $n, m \ge 1$, we can express $x$ as
    \[
        x = a(\lambda(D) - c) + b((1 - D)\lambda(D)) + \sum_{n \ge 1} p_n A^n \lambda(D) + \sum_{m \ge 1} q_m A^m c
    \]
    for some integers $a, b, p_n, q_m \in \Z$. 
    Applying $\varphi_4$ to both sides yields $1 = a + 2b$. Thus, we have
    \begin{equation}\label{eq:a_odd}
        a = 1 - 2b,
    \end{equation}
    which implies that $a$ is odd.
    
    By extending the left square of the diagram~\eqref{eq:KO_diagram2} with the complexification map $c_\C$, we obtain the following commutative diagram:
    \begin{equation} \label{eq:KO_diagram4}
        \begin{tikzcd}
            \K_{S^1}(S^4 \wedge S^{n\C}) \arrow[d, "\cdot\:(1 - \theta)^n"']
                & \KO_{S^1}(S^4 \wedge S^{n\C}) \arrow[d, "i^*"] \arrow[l, "c_\C"']
                & \KO_G(\Sigma^{4D}(A_n)) \arrow[l] \arrow[d, "\iota^*"]  
                \\
            \K_{S^1}(S^4)
                & \KO_{S^1}(S^4) \arrow[l, "c_\C"]
                & \KO_G(S^{4D}). \arrow[l, "r^G_{S^1}"] 
        \end{tikzcd}
    \end{equation}
    The leftmost vertical map is given by the composition
    \[
    \K_{S^1}(S^4 \wedge S^{n\C}) \xrightarrow[\cong]{\text{Bott}} \K_{S^1}(S^4) \cong \Z[\theta, \theta^{-1}] \cdot b_{S^4} \xrightarrow{\cdot\:(1 - \theta)^n} \Z[\theta, \theta^{-1}] \cdot b_{S^4} \cong \K_{S^1}(S^4),
    \]
    where $b_{S^4} \in \K_{S^1}(S^4)$ is the Bott class represented by $[V_{\mathbb{H}}] - 2$, regarding the quaternionic Hopf bundle $V_{\mathbb{H}}$ as a complex vector bundle via its natural complex structure. 
    
    Next, we examine the composition along the bottom row of the diagram, $c_{\C} \circ r^G_{S^1}: \KO_G(S^{4D}) \to \K_{S^1}(S^4)$.
    \begin{claim}
        The homomorphism $c_{\C} \circ r^G_{S^1}$ acts on the generators as follows:
        \[
            c_{\C} \circ r^G_{S^1}(\lambda(D)) = 2b_{S^4}, \quad c_{\C} \circ r^G_{S^1}(c) = (\theta + \theta^{-1})b_{S^4}, \quad c_{\C} \circ r^G_{S^1}(A) = (\theta - \theta^{-1})^2.
        \]
    \end{claim}

    \begin{proof}[Proof of Claim]
        Since the Bott isomorphism is given by multiplication by the Bott class, Fact~\ref{fact:Bott_res} ensures that it commutes with the restriction map. Thus, we have the following commutative diagram:
        \[
        \begin{tikzcd}
            \KO_G(S^{4D}) \arrow[r, "r^G_{S^1}"] \arrow[d, "\cong"', "\text{Bott}"]
                & \KO_{S^1}(S^{4})
                \\
            \KO_G(S^4) \arrow[ur, "r^G_{S^1}"']
        \end{tikzcd}
        \]
        Recall from Fact~\ref{fact:trivial_action} that we have:
        \[
        \KO_{S^1}(S^4) \cong \bigl( \KO(S^4) \otimes \Z \cdot 1_{\R} \bigr) \oplus \bigoplus_{n \ge 1} \bigl( \K(S^4) \otimes \Z \cdot V_n \bigr).
        \]
        Here, $1_{\R}$ and $V_n$ denote the trivial real representation and the standard $2$-dimensional irreducible real representation of $S^1$ of weight $n$, respectively. Using the identification via the Bott isomorphism, we can compute the images of $\lambda(D)$ and $c$ under the restriction map $r^G_{S^1}$ as follows:
        \begin{align*}
            r^G_{S^1}(\lambda(D)) &= r^G_{S^1}\bigl( ([V_{\mathbb{H}}] - 4_{\R}) \otimes 1_{\R} \bigr) = ([V_{\mathbb{H}}] - 4_{\R}) \otimes 1_{\R} \in \KO(S^4) \otimes \Z \cdot 1_{\R}, \\
            r^G_{S^1}(c) &= r^G_{S^1}\bigl( ([V_{\mathbb{H}}] - H) \otimes H \bigr) = ([V_{\mathbb{H}}] - 2_{\C}) \otimes V_1 \in \K(S^4) \otimes \Z \cdot V_1.
        \end{align*}
        
        Since $\mathbb{H}$ is complex self-conjugate, $V_{\mathbb{H}}$ satisfies $\overline{V_{\mathbb{H}}} \cong V_{\mathbb{H}}$. Recall that for any complex vector bundle $E$ and complex representation $W$, their complexifications satisfy $c_{\C}(E) = E \oplus \overline{E}$ and $c_{\C}(W) = W \oplus \overline{W}$. Thus, we have:
        \begin{align*}
            c_{\C} \circ r^G_{S^1}(\lambda(D)) &= c_{\C}\bigl(([V_{\mathbb{H}}] - 4_{\R}) \otimes 1_{\R}\bigr) \\
            &= \bigl(([V_{\mathbb{H}}] - 2_{\C}) \oplus \overline{([V_{\mathbb{H}}] - 2_{\C})}\bigr) \otimes 1_{\C} \\
            &= 2b_{S^4},
        \end{align*}
        where $1_{\C}$ denotes the trivial complex representation. Similarly, we have:
        \begin{align*}
            c_{\C} \circ r^G_{S^1}(c) &= c_{\C}\bigl(([V_{\mathbb{H}}] - 2_{\C}) \otimes_{\C} V_1\bigr) \\
            &= \bigl(([V_{\mathbb{H}}] - 2_{\C}) \otimes_{\C} \theta\bigr) \oplus \overline{\bigl(([V_{\mathbb{H}}] - 2_{\C}) \otimes_{\C} \theta\bigr)} \\
            &= \bigl(([V_{\mathbb{H}}] - 2_{\C}) \otimes_{\C} \theta\bigr) \oplus \bigl(\overline{([V_{\mathbb{H}}] - 2_{\C})} \otimes_{\C} \overline{\theta}\bigr) \\
            &= ([V_{\mathbb{H}}] - 2_{\C}) \otimes_{\C} (\theta \oplus \theta^{-1}) \\
            &= (\theta + \theta^{-1})b_{S^4}.
        \end{align*}
        
        Finally, recall that $A = K - (1+D)$. The $S^1$-representation $r^G_{S^1}(K)$ is the realification of $\theta^2$, and $r^G_{S^1}(1+D)$ is the trivial $2$-dimensional real representation. Therefore, $r^G_{S^1}(A)$ is the realification of the complex representation $\theta^2 - 1_{\C}$, which yields:
        \begin{align*}
            c_{\C} \circ r^G_{S^1}(A) &= c_{\C}(\theta^2 - 1_{\C}) \\
            &= (\theta^2 - 1_{\C}) \oplus \overline{(\theta^2 - 1_{\C})} \\
            &= (\theta^2 - 1_{\C}) + (\theta^{-2} - 1_{\C}) \\
            &= (\theta - \theta^{-1})^2.
        \end{align*}
        This completes the proof.
    \end{proof}
    
    Since $r^G_{S^1}((1 - D)\lambda(D)) = 0$, we have:
    \begin{equation}\label{image_x_1}
        c_{\C} \circ r^G_{S^1}(x) = \biggl( a\bigl(2 - (\theta + \theta^{-1})\bigr) + \sum_{n \ge 1} 2p_n(\theta - \theta^{-1})^{2n} + \sum_{m \ge 1} q_m(\theta + \theta^{-1})(\theta - \theta^{-1})^{2m} \biggr) \cdot b_{S^4}.
    \end{equation}
    Note that $\theta - \theta^{-1} = -\theta^{-1}(1 - \theta)(1 + \theta)$ and $2 - (\theta + \theta^{-1}) = -\theta^{-1}(1 - \theta)^2$. Omitting $b_{S^4}$ for simplicity, we can express this using some $Y(\theta) \in \Z[\theta, \theta^{-1}]$ as follows:
    \begin{equation}\label{image_x_2}
        c_{\C} \circ r^G_{S^1}(x) = (1 - \theta)^2 \bigl( -a\theta^{-1} + \theta^{-2}(1 + \theta)^2\bigl(2p_1 + (\theta + \theta^{-1})q_1\bigr) \bigr) + (1 - \theta)^4 Y(\theta).
    \end{equation}
    Let $X(\theta) \in \Z[\theta, \theta^{-1}]$ be the coefficient of $(1 - \theta)^2$, i.e., 
    \[
        X(\theta) = -a\theta^{-1} + \theta^{-2}(1 + \theta)^2\bigl(2p_1 + (\theta + \theta^{-1})q_1\bigr).
    \]

    If $n \ge 3$, then by diagram~\eqref{eq:KO_diagram4}, $c_{\C} \circ r^G_{S^1}(x) \in \operatorname{Im}\bigl(\cdot (1 - \theta)^3\bigr)$. Thus, $1 - \theta$ must divide $X(\theta)$, which implies $X(1) = 0$. Substituting $\theta = 1$ yields:
    \[
        -a + 4(2p_1 + 2q_1) = 0.
    \]
    However, $a$ is odd by \eqref{eq:a_odd}, which is a contradiction. Therefore, if $n \ge 3$, then $\ko_4(A_n) \neq 0$.

    Next, we consider the case $n = 2$. By Bott periodicity $\KO_{S^1}(S^4 \wedge S^{2\C}) \cong RO(S^1)\{b_{S^4 \wedge S^{2\C}}\}$, we have the following commutative diagram:
    \[
    \begin{tikzcd}[row sep=large, column sep=huge]
        \K_{S^1}(S^4 \wedge S^{2\C}) \arrow[d, "\cong"', "\text{Bott}"]
        & \KO_{S^1}(S^4\wedge S^{2\C}) \arrow[l, "c_{\C}"'] \arrow[d, "\cong"', "\text{Bott}"]
        \\
        R(S^1)\{b_{S^4 \wedge S^{2\C}}\} \arrow[d, "\cdot (1 - \theta)^2"']
        & RO(S^1)\{b_{S^4 \wedge S^{2\C}}\} \arrow[l, "c_{\C}"] \arrow[d, "i^*"]
        \\
        R(S^1)\{b_{S^4}\}
        & \KO_{S^1}(S^4) \arrow[l, "c_{\C}"]
    \end{tikzcd}
    \]
    Since the complexification map $c_{\C} \colon RO(S^1) \cong \Z[V] \to \Z[\theta, \theta^{-1}] \cong R(S^1)$ is given by $V \mapsto \theta + \theta^{-1}$, any element in the image of $c_{\C}$ is a symmetric Laurent polynomial in $\theta$ and $\theta^{-1}$. By the commutativity of the diagram, any element in the image of $c_{\C} \circ i^*$ can be expressed as $(1 - \theta)^2 \cdot P(\theta)$, where $P(\theta)$ is symmetric with respect to $\theta$ and $\theta^{-1}$. By equation~\eqref{image_x_2}, we have $c_{\C} \circ r^G_{S^1}(x) = (1 - \theta)^2 \big(X(\theta) + (1 - \theta)^4Y(\theta) \big)$. Thus, $X(\theta) + (1 - \theta)^2Y(\theta)$ must be symmetric, which implies that $\big(X(\theta) + (1 - \theta)^2Y(\theta)\big) - \big(X(\theta^{-1}) + (1 - \theta^{-1})^2Y(\theta^{-1})\big) = 0$. However, we have
    \begin{align*}
        &\big(X(\theta) + (1 - \theta)^2Y(\theta)\big) - \big(X(\theta^{-1}) + (1 - \theta^{-1})^2Y(\theta^{-1})\big) \\
        &\quad = X(\theta) - X(\theta^{-1}) + \mathcal{O}\big((1 - \theta)^2\big) \\
        &\quad = (1 - \theta) Z(\theta) + \mathcal{O}\big((1 - \theta)^2\big),
    \end{align*}
    where $Z(\theta)$ is the coefficient of $(1 - \theta)$, given by
    \[
        Z(\theta) = -\theta^{-1}(1 + \theta)\big(a - (\theta + \theta^{-1} + 2)(2p_1 + (\theta + \theta^{-1})q_1)\big).
    \]
    Substituting $\theta = 1$ into $Z(\theta)$ yields
    \[
        Z(1) = -2\big(a - 8(p_1 + q_1)\big).
    \]
    Since $a$ is odd, we have $Z(1) \neq 0$. This is a contradiction; therefore, $\ko_4(A_2) \neq 0$. 
    
    In summary, if $n \ge 2$, then $\ko_4(A_n) \neq 0$, and by Claim~\ref{cl:above_bound}, $\ko_4(A_n) = 1$. This completes the proof of Proposition~\ref{prop:ko_A_n}.
\end{proof}

To generalize this calculation, we first prepare the following lemma regarding the induced maps on $S^1$-equivariant $KO$-theory.

\begin{lemma}\label{lem:two_inclusion}
    Let $X = S^{4m\C} \cup_{S^{l\C}} S^{n\C}$ be an $S^1$-space for some integers $l, m, n \in \Z_{\ge 0}$ with $l < 4m < n$. Let $i_k \colon S^{4m\C} \hookrightarrow X$ $(k = 0, 1)$ be two inclusions defined such that $i_0$ is the standard inclusion into the first factor $S^{4m\C} \subset X$, and $i_1$ is the composition of the standard inclusions $S^{4m\C} \hookrightarrow S^{n\C} \hookrightarrow X$. Then $i_0^* = i_1^* \colon \KO_{S^1}(X) \to \KO_{S^1}(S^{4m\C})$.
\end{lemma}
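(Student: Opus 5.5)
Since $X = S^{4m\C} \cup_{S^{l\C}} S^{n\C}$, a class in $\KO_{S^1}(X)$ is a pair $(u,v) \in \KO_{S^1}(S^{4m\C}) \oplus \KO_{S^1}(S^{n\C})$ that agrees after restriction to $S^{l\C}$. The plan is to compute $i_0^*$ and $i_1^*$ on such a pair and compare them in $\KO_{S^1}(S^{4m\C})$, which is free of rank one over $RO(S^1)$. We have $i_0^*(u,v) = u$ and $i_1^*(u,v) = j^*v$, where $j\colon S^{4m\C}\hookrightarrow S^{n\C}$ is the standard inclusion. So it suffices to prove $u = j^*v$ whenever $u|_{S^{l\C}} = v|_{S^{l\C}}$.

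First we set up Bott periodicity. Because $4m\C$ has real dimension $8m$ and is a spin (indeed $\mathrm{SU}$) representation, Fact~\ref{fact:Bott_isom} identifies $\KO_{S^1}(S^{4m\C})$ with $RO(S^1)\cdot b_{4m\C}$. For $\KO_{S^1}(S^{n\C})$ we use the same description when $n \equiv 0 \pmod 4$; otherwise we write it as $\KO^{-2(n-4m)}_{S^1}(S^{4m\C})$-type groups.

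Next we compute the restriction maps along $S^{k\C}\hookrightarrow S^{k'\C}$. These are multiplication by the $KO$-Euler class of the complementary representation $(k'-k)\C$, i.e.\ products of $\gamma$-type classes of the weight-one representation $V$. Complexified, this becomes multiplication by $(1-\theta)^{k'-k}$, just as in diagram~\eqref{eq:KO_diagram4}.

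The cleanest route is to pass to complex $K$-theory and use injectivity. Write $u = P\,b_{4m\C}$ with $P \in RO(S^1) = \Z[V]$. The gluing condition says $c_\C(u)$ and $c_\C(v)$ become equal after multiplying by the appropriate powers of $(1-\theta)$ down to $S^{l\C}$. Since $c_\C(u)|_{S^{l\C}} = (1-\theta)^{4m-l}P$, and $R(S^1)$ is a domain, we can cancel $(1-\theta)^{4m-l}$. Comparing this with $c_\C(j^*v) = (1-\theta)^{n-4m}\cdot(\text{Bott coefficient of } v)$, we get
\[
c_\C(u) = c_\C(j^*v) \quad\text{in } R(S^1).
\]
Because $c_\C$ is injective on $RO(S^1) \cong \Z[V]$ (where $V\mapsto \theta+\theta^{-1}$), this gives $u = j^*v$.

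The main obstacle is making the Euler-class description rigorous when $n - 4m$ or $4m - l$ is not divisible by $4$. In that case the $KO$-groups of $S^{n\C}$ are not simply $RO(S^1)$-multiples of a Bott class, and $c_\C$ need not be injective on them; they may carry $2$-torsion from $\KO^{*}$ of spheres, namely $\eta$-type classes. To handle this, I would first reduce via the long exact sequence of the pair to the statement that $i_0 - i_1$ is null on the cofiber. Both $i_0$ and $i_1$ restrict to the same map on $S^{l\C}$. So $i_0^* - i_1^*$ factors through the map induced by
\[
\Sigma S^{l\C}\text{-cofiber} \to S^{4m\C}\cup_{S^{l\C}} CS^{l\C} \simeq S^{4m\C}/S^{l\C},
\]
and the remaining task is to show the relevant class of maps $S^{4m\C}\to \Sigma(\cdot)$ acts trivially on $\KO_{S^1}$. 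I would prove this by checking it on $S^1$-fixed points, where it is the basepoint, and on the free part, using $\KO(\C P^{k})$-type computations and the fact that $4m\C$ is $KO$-orientable. The condition $l<4m<n$ guarantees that the difference of the two inclusions is detected by a class in $\KO_{S^1}$ of an $S^{4m\C}$-sphere that vanishes after Bott periodicity.
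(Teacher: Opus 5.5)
Your argument is correct, but it takes a longer route than the paper's. The paper never uses $\KO_{S^1}(X)$, $S^{n\C}$, or complexification. It restricts along $i\colon S^0\hookrightarrow S^{4m\C}$ and notes that $i_0\circ i=i_1\circ i$ because $S^0\subset S^{l\C}$. It then concludes from the injectivity of $i^*\colon \KO_{S^1}(S^{4m\C})\cong RO(S^1)\cdot b_{4m\C}\to RO(S^1)$, which is multiplication by a nonzero Euler class in the domain $\Z[V]$.

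You instead reduce to showing that $u-j^*v$ is zero once its restriction to $S^{l\C}$ is zero; the only part of the Mayer--Vietoris picture you actually use is restriction to the two pieces. You prove that injectivity in complex $K$-theory:
\begin{itemize}
\item complex Bott periodicity holds for every $S^{k\C}$;
\item restrictions become multiplication by powers of $1-\theta$ in the domain $R(S^1)$;
\item $c_\C$ is injective on $RO(S^1)\cdot b_{4m\C}$, because $c_\C(b_{4m\C})$ is a unit multiple of the complex Bott class and $V\mapsto\theta+\theta^{-1}$ is injective.
\end{itemize}
This avoids identifying any $KO$-Euler class, at the price of the complexification step. Once you have reduced to $u-j^*v$ vanishing on $S^{l\C}$, you could also finish at once as the paper does, since it then vanishes on $S^0$.

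Your final paragraph addresses a non-issue and should be discarded. The only $KO$-group on which you need $c_\C$ to be injective is $\KO_{S^1}(S^{4m\C})$. That group is free of rank one over $RO(S^1)$ regardless of $l$ and $n$. The groups $\KO_{S^1}(S^{l\C})$ and $\KO_{S^1}(S^{n\C})$ enter only through their complexifications. So possible $2$-torsion there when $4\nmid n-4m$ or $4\nmid 4m-l$ is irrelevant. The proposed cofiber/fixed-point/free-part argument is too vague to count as a proof, and it is not needed.

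Two small slips:
\begin{itemize}
\item $4m\C$ with the scalar circle action is not an $\mathrm{SU}$ representation, since its determinant is $\theta^{4m}$. It is still spin, because $\theta^{4m}=(\theta^{2m})^2$.
\item $\KO_{S^1}(X)$ is not in general the set of compatible pairs $(u,v)$, because the map onto that set need not be injective. This is harmless here, as you only use the map.
\end{itemize}
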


\begin{proof}
    By Bott periodicity, $\KO_{S^1}(S^{4m\C}) \cong \Z[V] \cdot b_{4m\C}$. Let $i \colon S^0 \hookrightarrow S^{4m\C}$ be the standard inclusion. Since $l \ge 0$, the subspace $S^0$ is contained in the gluing subspace $S^{l\C}$. Thus, $i_0 \circ i = i_1 \circ i \colon S^0 \hookrightarrow X$. This implies that $(i_0 \circ i)^* = (i_1 \circ i)^* \colon \KO_{S^1}(X) \to \KO_{S^1}(S^0)$. 
    
    Since the induced map $i^* \colon \KO_{S^1}(S^{4m\C}) \to \KO_{S^1}(S^0)$, which corresponds to $\Z[V] \cdot b_{4m\C} \to \Z[V]$, is given by multiplication by the Euler class $(2 - V)^{4m}$, the map $i^*$ is injective. From $i^* \circ i_0^* = i^* \circ i_1^*$, we conclude that $i_0^* = i_1^*$.
\end{proof}

From these calculations, we obtain the following result.

\begin{theorem}
    Let $R$ be a symmetric graded root with $\beta(R) = 0$. Then the values of the $\ko$-invariant $\ko_i(\Hty(R))$ of the spectrum $\Hty(R)$ defined in Section 3 are given as follows:
    \[
    \begin{array}{c|cccccccc}
        i \pmod 8 & 0 & 1 & 2 & 3 & 4 & 5 & 6 & 7 \\ \hline
        \delta(R) = 0 & 0 & 0 & 0 & 0 & 0 & 0 & 0 & 0 \\
        \delta(R) = 1 & 1 & 1 & 1 & 0 & 0 & 0 & 0 & 0 \\
        \delta(R) \ge 2 & 1 & 1 & 1 & 0 & 1 & 0 & 0 & 0
    \end{array}
    \]
    Note that $\ko_i(\Hty(R))$ is equal to $\ko_i(A_{\delta(R)})$.
\end{theorem}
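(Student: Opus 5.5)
The plan is to squeeze $\ko_i(\Hty(R))$ between two copies of $\ko_i(A_{\delta(R)})$ using Proposition~\ref{prop:ko_local}, and then read off the values from Proposition~\ref{prop:ko_A_n}.

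\emph{Upper bound.} Since $\beta(R)=0$, Lemma~\ref{lem:local_graded_map} gives a local map of graded roots $R \to X_{0,\delta(R)}$. Both roots have uppermost $J$-invariant vertex in degree $0$, so Theorem~\ref{thm:local_map_equivalence} lifts it to a local map of spectra $\Hty(R)\to A_{0,\delta(R)}\simeq (A_{\delta(R)},0,0)$. Proposition~\ref{prop:ko_local} then yields $\ko_i(\Hty(R))\le \ko_i(A_{\delta(R)})$ for every $i$. When $\delta(R)=0$ this bound is $0$. Since $\ko_i\ge 0$ always holds for a space of type SWF at level $0$ with trivial shifts (the image of $\varphi_i\circ\iota^*$ is an ideal $(2^k)$ with $k\ge 0$), the row $\delta(R)=0$ follows at once.

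\emph{Lower bound.} Here I would rerun the lower-bound arguments in the proof of Proposition~\ref{prop:ko_A_n} directly on the space $X$ from Construction~\ref{con:lattice_spectrum}, or more simply on the monotone subroot, which is locally equivalent by Theorem~\ref{thm:monotone_subroot}. Let $v$ be a top leaf, in degree $2\delta(R)$. Compose the inclusion of $S^1$-fixed points $S^{iD}\hookrightarrow \Sigma^{iD}X$ with restriction from $G$ to $S^1$. On the $S^1$-level this inclusion factors through the branch sphere $\Sigma^{\R^i}S^{c(v)\C}$, whose complex dimension exceeds that of the stem by $\delta(R)$.

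\begin{itemize}
\item For $\delta(R)\ge1$ the forgetful image is zero, exactly as in diagram~\eqref{eq:KO_diagram3}. Hence $\operatorname{Im}\iota^*\subset\ker r^G$, which gives the lower bounds $1,1,1,0,0,0,0,0$.
\item For $i\equiv4$ and $\delta(R)\ge2$, I would reuse the complexification argument built around diagram~\eqref{eq:KO_diagram4}. The map $\iota^*$ composed with $c_\C\circ r^G_{S^1}$ should land in the image of multiplication by $(1-\theta)^{\delta(R)}$ (and, in the case $\delta(R)=2$, in symmetric multiples of it). The odd-$a$ contradiction then carries over verbatim.
\end{itemize}

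\emph{Main obstacle.} The hardest step is justifying that the inclusion of fixed points really factors, on $S^1$-equivariant $KO$, through a single sphere of dimension $\delta(R)$ above the stem. The space $X$ is a union of many spheres glued along sub-spheres. In particular, after Bott-shifting to the stem dimension (the $h$ and $q$ shifts, which must be absorbed by $\Sigma^{2H}$ periodicity), different paths from $S^0$ to a leaf could a priori induce different restrictions. This is exactly the role of Lemma~\ref{lem:two_inclusion}: for spheres of dimension divisible by $4$, the two inclusions through different branches agree on $\KO_{S^1}$. I would choose the truncation $h$ so that the relevant dimensions are multiples of $4$, and then use the lemma to show that the restriction $\KO_{S^1}(X)\to\KO_{S^1}(S^{i})$ factors through $\KO_{S^1}(\Sigma^{\R^i}S^{c(v)\C})$ at the top leaf. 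For the $G$-equivariant statement, I would use the induction isomorphism of Fact~\ref{fact:subgroup_pointed} on the free part $X\setminus X^{S^1}$, as was done for $A_n$.

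Combining the upper and lower bounds gives the table, and equality with $\ko_i(A_{\delta(R)})$.
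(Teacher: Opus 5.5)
Your upper bound is the paper's argument (Lemma~\ref{lem:local_graded_map}, Theorem~\ref{thm:local_map_equivalence}, Proposition~\ref{prop:ko_local}). The lower bound, however, has a genuine gap: it is run on the wrong restriction map, and so it loses the stem normalization.

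Write $\Hty(R)=(X,0,q)$, where $q=-h/4$ is the quaternionic dimension of the stem (take $q$ even). By Definition~\ref{def:kappao_spectrum}, $\ko_i(\Hty(R))=\ko_i(X)-q$. The fixed-point inclusion $S^{iD}\hookrightarrow\Sigma^{iD}X$ factors through $\Sigma^{iD}S^{q\mathbb{H}}$. Since $\varphi_0(\gamma(H)^2b_{2H})=\varphi_0(K-2H+D+5)=4$, the group $\operatorname{Im}(\varphi_i\circ\iota^*)$ for the fixed-point restriction is exactly $2^q$ times the corresponding group for $\KO_G(\Sigma^{iD}X)\to\KO_G(\Sigma^{iD}S^{q\mathbb{H}})\cong\KO_G(S^{iD})$. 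So it is the restriction to the \emph{stem sphere} that must land in $\ker r^G$ (resp.\ satisfy the complexification constraint). Proving this for the fixed-point restriction only gives $\ko_i(\Hty(R))\ge(\text{table value})-q$.

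You can see the argument never uses $\delta(R)$: once $q\ge1$, the forgetful image of the fixed-point restriction vanishes for every $R$, because $S^i\to S^{i+4q}$ is already null. If your argument bounded $\ko_i(\Hty(R))$, it would give $\ko_0(\Hty(R))\ge1$ also when $\delta(R)=0$ (truncate lower to make $q\ge1$), contradicting the first row. Likewise, at the fixed-point level the $i\equiv4$ argument would see $(1-\theta)^{2q+\delta(R)}$ and an element with $\varphi_4(x)=2^q$, so the parity of $a$ is no longer forced and nothing carries over verbatim.

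The missing step is exactly what Lemma~\ref{lem:two_inclusion} is for in the paper. Let $f\colon S^{(2q+\delta(R))\C}\hookrightarrow X$ be the $S^1$-inclusion of a top-leaf sphere. You must show that $r^G_{S^1}$ of the stem restriction $\KO_G(X)\to\KO_G(S^{q\mathbb{H}})$, which lands in $\KO_{S^1}(S^{2q\C})$, equals $f^*$ followed by restriction to the standard subsphere $S^{2q\C}\subset S^{(2q+\delta(R))\C}$. These are two different embeddings of $S^{2q\C}$ into $X$: the leaf meets the stem only along some $S^{c\C}$ with $c\le 2q$, but the two embeddings agree on $S^0$. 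The lemma uses $2q\equiv0\pmod 4$ and injectivity of restriction to $S^0$ to identify their pullbacks. After cancelling the Bott classes of $q\mathbb{H}$ and $2q\C$, this is literally the left square of \eqref{eq:KO_diagram2} with $n=\delta(R)$. Both the $\ker r^G$ argument and the complexification argument then go through unchanged.

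You instead invoke the lemma to factor $\KO_{S^1}(X)\to\KO_{S^1}(S^i)$ through the top leaf. That factorization is automatic, since the fixed points lie in every branch sphere (there are no distinct ``paths from $S^0$''). It yields nothing beyond the space-level bound.

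Two smaller points:
\begin{itemize}
\item For $\delta(R)=0$, nonnegativity does not follow from ``trivial shifts'', since the shift is $q$. It follows from the stem inclusion being a local map $S^0\to\Hty(R)$, together with Proposition~\ref{prop:ko_local}.
\item The induction isomorphism on the free part plays no role in the lower bound, which uses only restriction maps.
\end{itemize}
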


\begin{proof}
    By Lemma~\ref{lem:local_graded_map}, there exists a local map of graded roots $R \to X_{0, \delta(R)}$. Applying Theorem~\ref{thm:local_map_equivalence}, we obtain a local map of spectra $\mathcal{F}: \Hty(R) \to \Hty(X_{0, \delta(R)}) \simeq (A_{\delta(R)}, 0, 0)$. By Proposition~\ref{prop:ko_local}, this yields the upper bound $\ko_i(\Hty(R)) \le \ko_i(A_{\delta(R)})$ for any $i \in \Z/8$.

    Next, we establish the lower bound. We prove the case $i = 0$; the other cases follow by a similar argument. By Construction~\ref{con:lattice_spectrum}, we can express $\Hty(R) = (X, 0, 2h)$ for some integer $h \in \Z_{\ge 0}$. Since $\beta(R) = 0$, there exists an inclusion $\iota: S^{2h\mathbb{H}} \hookrightarrow X$, which corresponds to the stem of $R$. Thus, the standard inclusion $S^0 \hookrightarrow X$ factors through $S^{2h\mathbb{H}}$. Recall that the $\ko$-invariant of the spectrum $\Hty(R)$ is defined as $\ko_0(X) - 2h$. Although the induced map associated with $S^0 \hookrightarrow S^{2h\mathbb{H}}$ increases the $\ko$-invariant by $2h$, this increment cancels out the $-2h$ term in the definition of the spectrum. Consequently, the value of $\ko_0(\Hty(R))$ can be determined directly from the image of the induced map $\iota^*: \KO_{G}(X) \to \KO_G(S^{2h\mathbb{H}}) \cong RO(G) \cdot b_{2h\mathbb{H}}$.
    
    By Construction~\ref{con:lattice_spectrum}, there exists an $S^1$-equivariant inclusion $f: S^{(4h + \delta(R))\C} \hookrightarrow X$. Combining this with the restriction maps, we obtain the following diagram:
    \[
    \begin{tikzcd}
        \KO_G(X) \arrow[r, "r^G_{S^1}"] \arrow[d, "\iota^*"']
        &\KO_{S^1}(X) \arrow[r, "f^*"] \arrow[d, "\iota^*"']
        &\KO_{S^1}(S^{(4h + \delta(R))\C}) \arrow[d, "i^*"]
        \\
        \KO_G(S^{2h\mathbb{H}}) \arrow[r, "r^G_{S^1}"]
        &\KO_{S^1}(S^{4h\C}) \arrow[r, equal]
        &\KO_{S^1}(S^{4h\C}),
    \end{tikzcd}
    \]
    where $i$ denotes the standard inclusion. The commutativity of the right square follows from Lemma~\ref{lem:two_inclusion}. Using Bott periodicity, we can rewrite the diagram as follows:
    \[
    \begin{tikzcd}
        \KO_G(X) \arrow[r] \arrow[d, "\iota^*"']
        &\KO_{S^1}(S^{\delta(R)\C}) \cdot b_{4h\C} \arrow[d, "i^*"]
        \\
        RO(G) \cdot b_{2h\mathbb{H}} \arrow[r, "r^G_{S^1}"]
        &RO(S^1) \cdot b_{4h\C}.
    \end{tikzcd}
    \]
    This diagram has the same structure as the left square of diagram~\eqref{eq:KO_diagram2}, which was used to determine the lower bound of $\ko_i(A_n)$. Therefore, we can obtain the lower bound of $\ko_i(\Hty(R))$ in exactly the same way as we did for $\ko_i(A_n)$.
\end{proof}

In general, if $\beta(R) \neq 0$, let $\tilde{R}$ be the grading shift of $R$ such that $\beta(\tilde{R}) = 0$. Then we have $\Hty(R) \simeq \Sigma^{\frac{\beta(R)}{2}\mathbb{H}}\Hty(\tilde{R})$. By Definition~\ref{def:kappao_spectrum}, we have:
\[
\ko_i(\Hty(R)) = 
\begin{cases}
    \ko_i(\Hty(\tilde{R})) + \frac{\beta(R)}{2} & \text{if } \lfloor -\frac{\beta(R)}{2} \rfloor \text{ is even}, \\
    \ko_i(\Sigma^{\mathbb{H}}\Hty(\tilde{R})) + \frac{\beta(R)}{2} - 1 & \text{if } \lfloor -\frac{\beta(R)}{2} \rfloor \text{ is odd}
\end{cases}
\]
where $\lfloor \alpha \rfloor$ denotes the greatest integer less than or equal to $\alpha$.

Applying Proposition~\ref{prop:suspension_ko}, we have:
\begin{align*}
    \ko_i(\Sigma^{\mathbb{H}} \Hty(\tilde{R})) 
    &= \ko(\Sigma^{\mathbb{H} \oplus (8 + i)\tilde{\R}}\Hty(\tilde{R})) \\
    &= \ko(\Sigma^{(4 + i)\tilde{\R}}\Hty(\tilde{R})) + 3 - \beta^4_i \\
    &= \ko_{4 + i}(\Hty(\tilde{R})) + 3 - \beta^4_i.
\end{align*}
This completes the proof of Theorem~\ref{thm:ko_calculation}.

We now discuss the constraints on the intersection forms of spin $4$-manifolds bounded by AR homology spheres that arise from the $\ko$-invariants. We recall the following inequalities established by Lin.

\begin{theorem}[{\cite[Corollary 1.12]{Lin15}}]\label{thm:cor_lin_ko}
    Let $(X, \mathfrak{t})$ be an oriented smooth spin $4$-manifold whose boundary is a homology sphere $(Y, \s)$. Suppose that the intersection form of $X$ is $p(-E_8) \oplus q \left( \begin{smallmatrix} 0 & 1 \\ 1 & 0 \end{smallmatrix} \right)$ with $p \ge 0$ and $q > 0$. Then we have the following inequalities:
    \begin{itemize}
        \item If $p = 4l$, then $2l < \ko_{4 + q}(Y) + \beta_{4 + q}^{q}$.
        \item If $p = 4l + 1$, then $2l + \frac{5}{2} < \ko_{q}(Y) + \beta_{q}^{4 + q}$.
        \item If $p = 4l + 2$, then $2l + 3 < \ko_{q}(Y) + \beta_{q}^{4 + q}$.
        \item If $p = 4l + 3$, then $2l + \frac{3}{2} < \ko_{4 + q}(Y) + \beta_{4 + q}^{q}$.
    \end{itemize}
\end{theorem}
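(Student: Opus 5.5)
We would derive the four inequalities from the relative Bauer--Furuta invariant, together with a monotonicity property of $\ko$ along admissible morphisms that raise the level. This is the $KO$-analogue of Manolescu's $\kappa$-inequality and is the content of Lin's argument. After surgering loops we may assume $b_1(X)=0$. Viewing $X$ as a spin cobordism from $S^3$ to $Y$, Theorem~\ref{thm:rel_BF} gives an admissible morphism
\[
\Phi_X\colon \Sigma^{\frac{p}{2}H}S^0 \to \Sigma^{qD}\SWF(Y,\s).
\]
Its domain has level $0$ and its target has level $q>0$ after accounting for the $qD$ shift, so admissibility means that $\Phi_X^G$ is a homotopy equivalence $S^0\to S^0$. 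We split $p$ modulo $4$ so that $\frac p2 H$ becomes $l\cdot 2H$ plus a remainder of $0$, $\tfrac12 H$, $H$ or $\tfrac32 H$. The whole-$H$ part is handled by Proposition~\ref{prop:suspension_ko}(2) and~(3), and the fractional part by the rational shift $n=n'+s$ in Definition~\ref{def:kappao_spectrum}. This explains the constants $2l$, $2l+\frac52$, $2l+3$, $2l+\frac32$ and the switch between $\ko_q$ and $\ko_{4+q}$: an odd remaining copy of $H$ is traded for $4D$ using $H+4D$.

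The key step is the following monotonicity statement. Let $f\colon \Sigma^{aD}Z_0\to \Sigma^{bD}Z_1$ be a $G$-map between suspensions of spaces of type SWF, with target level exceeding source level by $r>0$, such that $f^G$ is a homotopy equivalence. Then
\[
\ko(\text{source}) + (\text{correction}) < \ko(\text{target}) + \beta^{r}_{\ast},
\]
where $\beta^r$ is the sequence built from $\alpha_i$. To prove it, we form the square comparing $f$ with its restriction to $S^1$-fixed points $(l_0D)^+\to (l_1D)^+$, as in the proof of Proposition~\ref{prop:ko_local}. Now the bottom map is no longer an equivalence; it is the inclusion $(l_0D)^+\hookrightarrow(l_1D)^+$, which acts as multiplication by $\gamma(D)^{r}$. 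So we get $\gamma(D)^r\cdot \operatorname{Im}(i_1^*)\subset \operatorname{Im}(i_0^*)$. The quantity $\beta^r$ is then exactly the $2$-adic valuation of $\varphi$ of a product of $r$ consecutive $\gamma(D)$'s. This valuation can be read off Theorems~\ref{thm:KO_G(lD)} and~\ref{thm:relations}; for example $\gamma(D)\eta(D)=1-D$ contributes a factor of $2$, and the pattern of $\alpha_i$ records in which degrees mod $8$ multiplication by $\gamma(D)$ costs a factor of $2$.

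That argument only yields the non-strict inequality $\le$. The main obstacle is upgrading it to the strict inequality. We would mimic Furuta/Manolescu and use that $G$ acts freely off the $S^1$-fixed set. For this we would apply Fact~\ref{fact:subgroup_pointed} to the free part, and use the cofiber sequence of $X^{S^1}\hookrightarrow X$ to show that the element realizing $\ko$ in the target, pulled back to the source, lies in the kernel of the forgetful map $r^G$. That kernel was already described in the proof of Proposition~\ref{prop:ko_A_n}, via \cite[Proposition 8.1]{Lin15}, and the extra factor of $2$ it forces gives strictness. Finally we set $Z_0=S^0$, $Z_1=\SWF(Y,\s)$, insert the $\ko$ of $\Sigma^{\frac p2 H}S^0$ computed in the first step, and collect terms case by case modulo $4$. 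We expect the bookkeeping of the indices of $\beta$ ($\beta^{q}_{4+q}$ versus $\beta^{4+q}_q$) to require care, but it should be routine once monotonicity is in place.
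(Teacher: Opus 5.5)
The paper gives no proof of this statement; it is imported from \cite{Lin15}. Your first two steps are fine. The monotonicity you sketch is exactly Proposition~\ref{prop:ko_admissible}: each multiplication by $\gamma(D)$ from level $l$ to $l-1$ costs $2^{\alpha_l}$, which accounts for $\beta^r$. Combined with Proposition~\ref{prop:suspension_ko}, it gives all four inequalities with $\le$ in place of $<$.

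\textbf{The gap is the strict inequality.} Strictness is the real content here, not a refinement. For $Y=S^3$ and $p=2$ the non-strict version reads $3\le \beta^{4+q}_q$. Since $\beta^{6}_{2}=3$, this permits $q=2$, so it does not even recover Furuta's theorem.

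Your proposed mechanism cannot produce the extra factor of $2$, for three reasons.
\begin{enumerate}
\item Membership of the pulled-back element $\gamma(D)^r y$ in $\ker r^G$ is automatic, because $r^G(\gamma(D))=0$: the inclusion $S^0\hookrightarrow S^{\tilde{\R}}$ is non-equivariantly null. The factor $2$ that $\ker r^G$ forces in degrees $0,1,2$ is precisely the factor $2^{\alpha_{i+1}}$ with $\alpha_1=\alpha_2=\alpha_3=1$, which is already counted in $\beta^r$.
\item Since $S^0\hookrightarrow S^{kH}$ is non-equivariantly null for $k\ge 1$, every element of $\operatorname{Im}(i_0^*)$ for the source sphere already lies in $\ker r^G$, after stabilizing by $2H$ as Definition~\ref{def:kappao_spectrum} allows. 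Intersecting with $\ker r^G$ therefore cannot raise the $2$-adic valuation relative to $\ko$ of the source.
\item At index $4$, which is the one used for $p\equiv 0,3 \pmod 4$, $\ker r^G$ contains $\lambda(D)-c$ with $\varphi_4=1$.
\end{enumerate}
The freeness and cofiber-sequence argument of Proposition~\ref{prop:ko_A_n} bounds the image for one particular target. It gives no comparison along a morphism.

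\textbf{What is missing} is a sphere-specific computation that uses the restriction to $S^1$ rather than the forgetful map to the trivial group.
\begin{enumerate}
\item Since $S^1$ acts trivially on $\tilde{\R}$, we have $r^G_{S^1}(\gamma(D))=0$, consistent with $(1+D)\gamma(D)=0$ in Theorem~\ref{thm:relations}. Hence $\gamma(D)^q y$ vanishes in $S^1$-equivariant $KO$.
\item For $p=4l$ (source at index $4$), write $\gamma(D)^q y = E^l w$ with $E=\gamma(H)^2b_{2H}=K-2H+D+5$ and $w\in KO_G(4D)$.
\item The element $c_{\C}\circ r^G_{S^1}(E)=\theta^{-2}(1-\theta)^4$ is a non-zero-divisor in $\Z[\theta,\theta^{-1}]$. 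Therefore $c_{\C}\circ r^G_{S^1}(w)=0$.
\item By the formulas in the proof of Proposition~\ref{prop:ko_A_n}, the kernel of $c_{\C}\circ r^G_{S^1}$ on $KO_G(4D)$ is $\Z\,(1-D)\lambda(D)$. So $\varphi_4(w)$ is even, and $\varphi_4(E^l w)=4^l\varphi_4(w)\in 2^{2l+1}\Z$. This is exactly the missing factor of $2$.
\end{enumerate}
The case $p\equiv 3 \pmod 4$ is the same argument with $E^{l+1}$. For $p\equiv 1,2\pmod 4$ (source $S^{(2l+1)H}$ at index $0$), one runs the analogous $S^1$-restriction computation with $\gamma(H)\lambda(H)=4-H$ and $\gamma(H)c(H)=H-1-D-K$. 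Without an argument of this kind, your proof establishes only the non-strict inequalities.
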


\begin{proof}[Proof of Theorem~\ref{thm:main_inequality_ko}]
    By applying Theorem~\ref{thm:cor_lin_ko} to the $\ko$-invariants for AR homology spheres computed in Theorem~\ref{thm:ko_calculation}, we obtain the desired inequalities.
\end{proof}

\subsection{Lower and upper bounds on the $\ko$-invariants for connected sums of AR homology spheres}

In this subsection, we bound the $\ko$-invariants of connected sums of AR homology spheres from above and below.

First, we provide a lower bound for the connected sums of AR homology spheres.

\begin{proposition}
    Let $(Y_1, \s_1), \dots, (Y_n, \s_n)$ be spin AR homology spheres, and let $(Y, \s) = \mathop{\#}_{i=1}^n (Y_i, \s_i)$. Then, we have:
    \[
    \ko_i(Y, \s) \ge \ko_i(S^{-\frac{1}{2}\bar{\mu}(Y, \s)\mathbb{H}})\left( = \ko_i\big(S^{-\frac{1}{2}\left(\sum_{k = 1}^n\bar{\mu}(Y_k, \s_k)\right)\mathbb{H}}\big)\right)
    \]
    for all $i \in \Z / 8$.
\end{proposition}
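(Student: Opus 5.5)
The plan is to find a local map from $S^{-\frac{1}{2}\bar{\mu}(Y,\s)\mathbb{H}}$ into $\SWF(Y,\s)$ and then apply Proposition~\ref{prop:ko_local}. This reverses the direction used in Theorem~\ref{thm:admissible_map}, where we mapped out of $\SWF$.

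First, for a single symmetric graded root $R$ there is a local map of graded roots $X_{\beta(R),\beta(R)} \to R$. Here $X_{\beta,\beta}$ is just a single $J$-invariant tower whose top vertex has grading $2\beta(R)$. The map sends the generator to the uppermost $J$-invariant vertex of $R$. It is grading preserving and $\Z[U]$-equivariant, since the source is free on one generator. It commutes with $J$ because the target vertex is $J$-invariant. It becomes an isomorphism after inverting $U$, because that vertex lies on the infinite stem.

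Both roots have the same $\beta$, so the mod-$4$ hypothesis of Theorem~\ref{thm:local_map_equivalence} holds. That theorem lifts the map to a local map of spectra
\[
A_{\beta,\beta}\simeq \Sigma^{\frac{\beta}{2}\mathbb{H}}S^0 \longrightarrow \Hty(R).
\]
By Theorem~\ref{thm:DSS1.2} and Theorem~\ref{thm:DSS7.1} we have $\Hty(R(Y_k,\s_k))\simeq \SWF(Y_k,\s_k)$ and $\beta=-\bar{\mu}$. This gives local maps $S^{-\frac{1}{2}\bar{\mu}(Y_k,\s_k)\mathbb{H}}\to \SWF(Y_k,\s_k)$ for each $k$.

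Next, smash these maps together. A smash product of local maps is local, since on $S^1$-fixed points it is a smash of $G$-homotopy equivalences. This gives a local map
\[
S^{-\frac{1}{2}\sum_k\bar{\mu}(Y_k,\s_k)\mathbb{H}} \longrightarrow \bigwedge_k \SWF(Y_k,\s_k).
\]
Fact~\ref{fact:conn_sum_SWF} says this target is locally equivalent to $\SWF(Y,\s)$. In particular there is a local map from it into $\SWF(Y,\s)$. Composing, and using that compositions of local maps are local, yields a local map $S^{-\frac{1}{2}\bar{\mu}(Y,\s)\mathbb{H}}\to\SWF(Y,\s)$. The identity $\bar{\mu}(Y,\s)=\sum_k\bar{\mu}(Y_k,\s_k)$ in the statement is just additivity of $\bar{\mu}$ under connected sum. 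This follows from additivity of the bookkeeping $\Q$-gradings $n$ in Fact~\ref{fact:conn_sum_SWF}, or directly from plumbing. In any case only the sum matters for the source sphere.

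Finally, apply Proposition~\ref{prop:ko_local} to each $\Sigma^{iD}$-suspension. Suspending a local map by $iD$ keeps it local, so we get $\ko_i(S^{-\frac{1}{2}\bar{\mu}\mathbb{H}})\le \ko_i(Y,\s)$ for every $i\in\Z/8$.

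The main point needing care is the lifting step. Theorem~\ref{thm:local_map_equivalence} is stated for symmetric graded roots, and the degenerate root $X_{\beta,\beta}$ must be admissible there. I expect this is fine, since $A_{m,m}$ was explicitly allowed in the definition. Failing that, the map can be written down directly: it is the inclusion of the stem sphere $S^{q\mathbb{H}}\hookrightarrow X$ in Construction~\ref{con:lattice_spectrum}, which is visibly $G$-equivariant and is the identity on $S^1$-fixed points $S^0$. It is therefore a local map, after matching the $\Q$-grading shifts $-h/4$ and $q=(2\beta-h)/4$.
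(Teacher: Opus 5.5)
Your argument is correct and is essentially the paper's proof. The paper takes the stem-sphere inclusions $S^{-\frac{1}{2}\bar{\mu}(Y_k,\s_k)\mathbb{H}}\hookrightarrow \SWF(Y_k,\s_k)$ from Construction~\ref{con:lattice_spectrum} directly, which is your fallback, rather than lifting the graded-root map $X_{\beta,\beta}\to R$; it then combines these via Fact~\ref{fact:conn_sum_SWF} and applies Proposition~\ref{prop:ko_local}. One minor correction: the additivity $\bar{\mu}(Y,\s)=\sum_k\bar{\mu}(Y_k,\s_k)$ comes from the plumbing definition, not from the $\Q$-grading bookkeeping in Fact~\ref{fact:conn_sum_SWF}, which only detects $\bar{\mu}$ modulo $2$.
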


\begin{proof}
    By Construction~\ref{con:lattice_spectrum}, there exist inclusion maps $i: S^{-\frac{1}{2}\bar{\mu}(Y_k, \s_k)\mathbb{H}} \to \SWF(Y_k, \s_k)$ for all $1 \le k \le n$. Combining the fact that these inclusions are local maps with Fact~\ref{fact:conn_sum_SWF}, we obtain a local map $f: S^{-\frac{1}{2}\left(\sum_{k = 1}^n\bar{\mu}(Y_k, \s_k)\right)\mathbb{H}} \to \SWF(Y, \s)$. Applying Proposition~\ref{prop:ko_local} to this map completes the proof.
\end{proof}

Similarly to the argument in Section 6.2, Definition~\ref{def:kappao_spectrum} and Proposition~\ref{prop:suspension_ko} yield:
\begin{equation}\label{eq:ko_sphere}
\ko_i(S^{-\frac{1}{2}\bar{\mu}(Y, \s)\mathbb{H}}) = 
\begin{cases}
    -\frac{1}{2}\bar{\mu}(Y, \s) & \text{if } \lfloor \frac{1}{2}\bar{\mu}(Y, \s) \rfloor \text{ is even}, \\
    2 - \beta^4_i - \frac{1}{2}\bar{\mu}(Y, \s) & \text{if } \lfloor \frac{1}{2}\bar{\mu}(Y, \s) \rfloor \text{ is odd}
\end{cases}
\end{equation}
for all $i \in \Z / 8$. Since $2 - \beta^4_i \ge -1$ for any $i \in \Z / 8$, we immediately obtain the following corollary:

\begin{corollary}
    Let $(Y_1, \s_1), \dots, (Y_n, \s_n)$ be spin AR homology spheres, and let $(Y, \s) = \mathop{\#}_{i=1}^n (Y_i, \s_i)$. Then, we have:
    \[
    \ko_i(Y, \s) \ge -\frac{1}{2}\bar{\mu}(Y, \s) - 1
    \]
    for all $i \in \Z / 8$.
\end{corollary}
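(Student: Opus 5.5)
The corollary follows directly from the preceding proposition combined with the explicit formula \eqref{eq:ko_sphere} for the $\ko$-invariants of the quaternionic sphere $S^{-\frac{1}{2}\bar{\mu}(Y,\s)\mathbb{H}}$. The proposition gives the lower bound
\[
\ko_i(Y,\s) \ge \ko_i\big(S^{-\frac{1}{2}\bar{\mu}(Y,\s)\mathbb{H}}\big)
\]
for every $i \in \Z/8$, obtained from a local map from the sphere into $\SWF(Y,\s)$. So it suffices to bound the right-hand side below by $-\frac{1}{2}\bar{\mu}(Y,\s) - 1$ uniformly in $i$.

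I would split according to the parity of $\lfloor \frac{1}{2}\bar{\mu}(Y,\s) \rfloor$, as in \eqref{eq:ko_sphere}.

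\emph{Even case.} The value is exactly $-\frac{1}{2}\bar{\mu}(Y,\s)$, which exceeds $-\frac{1}{2}\bar{\mu}(Y,\s)-1$.

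\emph{Odd case.} The value is $2 - \beta^4_i - \frac{1}{2}\bar{\mu}(Y,\s)$. Since $\beta^4_i = \alpha_i+\alpha_{i-1}+\alpha_{i-2}+\alpha_{i-3}$ is a sum of four terms, each $0$ or $1$, we have $\beta^4_i \le 4$, hence $2-\beta^4_i \ge -2$. This crude bound is too weak. Instead, inspecting the residues $\{1,2,3,5\}$ mod $8$ where $\alpha=1$, four consecutive residues contain at most three of them: the window $\{0,1,2,3\}$ gives three, as does $\{1,2,3,4\}$, and no window of four consecutive residues contains all four, since $5$ and $1$ are four apart. Thus $\beta^4_i \le 3$ and $2-\beta^4_i \ge -1$, which yields the claim.

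The only step needing care is confirming $\beta^4_i \le 3$ for all $i$; this is a finite check of the eight windows mod $8$. Otherwise the argument is immediate from the proposition and \eqref{eq:ko_sphere}.
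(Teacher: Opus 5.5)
Your proposal is correct and follows the paper's argument: the paper also combines the preceding proposition with \eqref{eq:ko_sphere} and the observation $2-\beta^4_i \ge -1$, i.e.\ $\beta^4_i \le 3$, for all $i \in \Z/8$. Your check of the four-term windows against the residues $\{1,2,3,5\}$ modulo $8$ verifies that inequality, which the paper only asserts.
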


\begin{corollary}\label{cor:sequence}
    Let $(Y_n, \s_n)$ be a sequence of connected sums of spin AR homology spheres such that $\bar{\mu}(Y_n, \s_n) \to -\infty$ as $n \to \infty$. Then, $\ko_i(Y_n, \s_n) \to \infty$ as $n \to \infty$ for all $i \in \Z / 8$. 
    Examples of such sequences include $\mathop{\#}^n \Sigma(2, 3, 5)$ and $\mathop{\#}^n \big(\Sigma(2, 3, 5) \mathop{\#} \Sigma(2, 3, 11) \big)$.
\end{corollary}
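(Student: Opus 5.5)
The plan is to read off this corollary from the preceding corollary, and then check that the two named families really have $\bar{\mu}\to-\infty$.

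First, apply the preceding corollary to each $(Y_n,\s_n)$. This gives
\[
\ko_i(Y_n,\s_n)\ge -\tfrac12\bar{\mu}(Y_n,\s_n)-1 \quad \text{for every } i\in\Z/8.
\]
Here $\bar{\mu}(Y_n,\s_n)$ means the sum of the Neumann--Siebenmann invariants of the AR summands, as in the preceding proposition. Since $\bar{\mu}(Y_n,\s_n)\to-\infty$, the right-hand side tends to $+\infty$. Because the bound holds for every $i\in\Z/8$ simultaneously, each $\ko_i(Y_n,\s_n)$ tends to $\infty$. The lower bound needs no knowledge of $\delta$ of the summands; it uses only the local map from the stem sphere.

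For the examples, I use additivity of the summed invariant: each family is an $n$-fold connected sum of a fixed AR block. It therefore suffices to show that the block has $\bar{\mu}<0$. Then $\bar{\mu}(Y_n)=n\,\bar{\mu}(\text{block})\to-\infty$.

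For $\Sigma(2,3,5)$, with the orientation as the boundary of the negative-definite $E_8$ plumbing, I would take the value $\bar{\mu}(\Sigma(2,3,5))=-1$. This value follows from the Neumann--Siebenmann formula $\bar{\mu}=\tfrac18(\sigma(P)-w\cdot w)$ with $w=0$ and $\sigma(P)=-8$. Equivalently, $\beta(\Sigma(2,3,5))=d/2=1$ via Theorem~\ref{thm:DSS7.1}.

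For $\Sigma(2,3,11)$, the Wu class of the plumbing gives $\bar{\mu}(\Sigma(2,3,11))=0$, matching $\beta=0$; this is consistent with $\Sigma(2,3,11)$ bounding a spin manifold with form $-2E_8\oplus 2H$ contributions. Then $\Sigma(2,3,5)\#\Sigma(2,3,11)$ has $\bar{\mu}=-1$, and the $n$-fold sums have $\bar{\mu}=-n$.

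The main obstacle is only bookkeeping: orientation and sign conventions for $\bar{\mu}$, namely whether $\Sigma(2,3,5)$ has $\bar{\mu}=-1$ or $+1$. I would fix the convention via Theorem~\ref{thm:DSS7.1} ($\beta=-\bar{\mu}$) and the known value $\beta(\Sigma(2,3,5))=1$. The value for $\Sigma(2,3,11)$ I would similarly cross-check against its graded root, which has $\beta=0$.
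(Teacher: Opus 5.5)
Your proposal is correct and matches the paper. The paper presents this corollary as an immediate consequence of the preceding bound $\ko_i(Y,\s)\ge -\tfrac12\bar{\mu}(Y,\s)-1$, where $\bar{\mu}$ of a connected sum means the sum over the AR summands. Your values $\bar{\mu}(\Sigma(2,3,5))=-1$ and $\bar{\mu}(\Sigma(2,3,11))=0$ agree with those used in the paper's examples, so both families have $\bar{\mu}=-n$; note that the Milnor-fiber remark only determines the second value mod $2$, so take it from $\beta(\Sigma(2,3,11))=0$ as you suggest.
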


Next, we provide an upper bound for the connected sums of AR homology spheres. We recall an important property of the $\ko$-invariant under admissible morphisms.

\begin{proposition}[{\cite[Proposition 6.3]{Lin15}}]\label{prop:ko_admissible}
    Let $\mathcal{X}_0 = (X_0, m_0, n_0), \mathcal{X}_1 = (X_1, m_1, n_1) \in \mathfrak{C}_G$ be two spectrum classes where $X_0$ and $X_1$ are spaces of type SWF at level $k_0$ and $k_1$, respectively. If there exists an admissible morphism $f: \mathcal{X}_0 \to \mathcal{X}_1$, then we have:
    \[
    \ko(\mathcal{X}_0) \le \ko(\mathcal{X}_1) + \beta^{k_1 - k_0}_{k_1}.
    \]
\end{proposition}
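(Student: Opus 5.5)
The strategy is to reduce to spaces, compare the two sides through the $S^1$-fixed inclusions, and extract the defect as a divisibility property of $\gamma(D)^{k_1-k_0}$. This mirrors the proof of Proposition~\ref{prop:ko_local}, where the fixed-point map was an equivalence; here it is only a map of spheres $(k_0D)^+ \to (k_1D)^+$ with $k_1 \ge k_0$, after the level shifts are accounted for.

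First, suspend both objects by $(8M-m_j)D$ and $(2N-n_j')H$. By Definition~\ref{def:kappao_spectrum} and Proposition~\ref{prop:suspension_ko}, these suspensions change $\ko$ in a controlled way: $\Sigma^{8D}$ leaves it unchanged and $\Sigma^{2H}$ adds $2$. Since $n_0-n_1\in\Z$, the fractional parts $s$ agree. So it suffices to treat an honest $G$-map $\tilde f\colon X_0\to X_1$ between spaces of type SWF at levels $k_0\le k_1$. Admissibility forces $k_0\le k_1$ after normalization, with the second condition of admissibility covering the case of equality.

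Next, restrict to $S^1$-fixed points. Then $\tilde f^{S^1}\colon (k_0D)^+\to(k_1D)^+$ is a $G$-map that is a homotopy equivalence on $G$-fixed points $S^0$. By the equivariant Hopf theorem (as used for $\gamma(D)$), this map is $G$-homotopic to the standard inclusion. Hence $(\tilde f^{S^1})^*$ is multiplication by $\gamma(D)^{k_1-k_0}$. The commutative square
\[
i_0^*\circ\tilde f^*=\gamma(D)^{k_1-k_0}\cdot i_1^*
\]
gives $\gamma(D)^{k_1-k_0}\operatorname{Im}(i_1^*)\subset\operatorname{Im}(i_0^*)$.

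It remains to compare $\varphi_{k_0}(\gamma(D)^{k_1-k_0}x)$ with $\varphi_{k_1}(x)$. The key lemma is that for any $x\in KO_G(lD)$ with $\varphi_l(x)=2^a u$, the product $\gamma(D)x$ satisfies $\varphi_{l-1}(\gamma(D)x)\in 2^{a+\alpha_l}\Z$, and in fact equals $2^{\alpha_l}\varphi_l(x)$ up to sign. Iterating this over $l=k_1,k_1-1,\dots,k_0+1$ gives a factor $2^{\beta^{k_1-k_0}_{k_1}}$. Therefore the image of $\varphi_{k_0}\circ i_0^*$ contains $2^{\ko(X_1)+\beta^{k_1-k_0}_{k_1}}$, which yields $\ko(X_0)\le\ko(X_1)+\beta^{k_1-k_0}_{k_1}$.

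The main obstacle is verifying the one-step lemma for each residue $l\bmod 8$. This is done from Theorems~\ref{thm:KO_G(lD)} and \ref{thm:relations}, computing the effect of multiplication by $\gamma(D)$ on each generator:
\begin{itemize}
\item $\gamma(D)\eta(D)=1-D$, with $\varphi_0=2$;
\item $\gamma(D)\eta(D)^2=(1-D)\eta(D)$, giving factor $2$;
\item $\gamma(D)\cdot\gamma(D)\lambda(D)=(1-D)\eta(D)^2$, giving factor $2$;
\item $\gamma(D)\lambda(D)$, giving factor $1$;
\item $\gamma(D)\eta(D)\lambda(D)=(1-D)\lambda(D)$, giving factor $2$.
\end{itemize}
The remaining steps in $l=6,7,0,-1$ have factor $1$ via the $\gamma(D)^k b_{8D}$ relations. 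Torsion generators are killed by $\varphi$. The resulting pattern of factors matches $\alpha_l$ being $1$ exactly for $l\equiv1,2,3,5$, although the indexing convention for $\beta$ and the reduction modulo $8$ via Bott classes must be checked carefully.
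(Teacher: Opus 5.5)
Note first that the paper does not prove this statement; it quotes it from Lin. Your outline is the standard argument for it, and the core is right. You normalize by $(8M-m_j)D$ and $(2N-n_j')H$, identify the $S^1$-fixed map with the standard inclusion so that $i_0^*\circ\tilde f^*=\gamma(D)^{k_1-k_0}\cdot i_1^*$, and use $\varphi_{l-1}(\gamma(D)x)=2^{\alpha_l}\varphi_l(x)$. Your table verifying this identity is correct, and the identity holds exactly. Only this exact equality gives the needed upper bound on $\ko(\mathcal{X}_0)$; the divisibility $\varphi_{l-1}(\gamma(D)x)\in 2^{a+\alpha_l}\Z$ that you state first points the wrong way. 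Three steps need repair.

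\textbf{The equality case.} When the normalized levels are equal, your Hopf-theorem claim is false. The top-dimensional obstruction lies in $\pi_k(S^k)\neq 0$. Concretely, a reflection in one coordinate of $kD$ is a $G$-equivariant self-equivalence of $(kD)^+$ that is the identity on $S^0$ but has degree $-1$, so it is not $G$-homotopic to the identity. In this case, argue as in Proposition~\ref{prop:ko_local}. Transport the chosen identification $X_0^{S^1}\simeq(kD)^+$ to $X_1^{S^1}$ via $\tilde f^{S^1}$, so that $\tilde f\circ i_0=i_1$. Then invoke Lin's independence lemma (his Proposition 5.6, quoted just before Definition~\ref{def:kappao}). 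The obstruction-theory argument is valid only for strictly different levels, where every obstruction lies in some $\pi_j(S^{k_1})=0$ with $j\le k_0<k_1$.

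\textbf{Level bookkeeping.} After your normalization the spaces have levels $k_j+8M-m_j$. So what the argument actually proves is the bound with each $k_j$ replaced by $k_j-m_j$, which is also the quantity admissibility refers to. This coincides with the displayed $\beta^{k_1-k_0}_{k_1}$ when $m_0=m_1=0$, the only case used in Lemma~\ref{lem:ko_above}. Your phrase ``levels $k_0\le k_1$ after normalization'' hides this shift.

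\textbf{Free generators at $l\equiv 0,4 \pmod 8$.} The groups there have free generators not covered by ``torsion generators are killed'': all of $RO(G)$ at $l=0$, and $D\lambda(D)$, $A^n\lambda(D)$, $A^m c$ at $l=4$. These follow from the $RO(G)$-linearity $\varphi_l(ab)=\varphi_0(a)\varphi_l(b)$. You also need that $\gamma(D)c$ is one of the torsion generators of $KO_G(3D)$, so $\varphi_3(\gamma(D)c)=0=\varphi_4(c)$.
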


Applying this proposition, we obtain the following upper bound for the connected sum of AR homology spheres when the $\bar{\mu}$-invariant vanishes.

\begin{lemma}\label{lem:ko_above}
    Let $(Y_1, \s_1), \dots, (Y_n, \s_n)$ be spin AR homology spheres, and let $(Y, \s) = \mathop{\#}_{i=1}^n (Y_i, \s_i)$ with $\bar{\mu}(Y, \s) = \sum_{i = 1}^{n}\bar{\mu}(Y_i, \s_i) = 0$. Then, we have:
    \[
    \ko_i(Y, \s) \le \beta^{m}_{i + m},
    \]
    where $m$ is the number of components $Y_i$ satisfying $-\bar{\mu}(Y_i, \s_i) < \delta(Y_i, \s_i)$.
\end{lemma}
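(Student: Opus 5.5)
The plan is to build an admissible morphism from $\Sigma^{iD}\SWF(Y,\s)$ to a representation sphere of type SWF at a higher level and then apply Proposition~\ref{prop:ko_admissible}. This parallels the proof of the connected-sum theorem in Section~4.

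First, Fact~\ref{fact:conn_sum_SWF} gives a local map
\[
\SWF(Y,\s)\to \bigwedge_{k=1}^n \SWF(Y_k,\s_k).
\]
For each $k$, Theorem~\ref{thm:admissible_map} gives an admissible morphism $\SWF(Y_k,\s_k)\to \Sigma^{-\frac12\bar{\mu}(Y_k,\s_k)\mathbb{H}}S^{\epsilon_k D}$. Here $\epsilon_k=1$ exactly when $-\bar{\mu}(Y_k,\s_k)<\delta(Y_k,\s_k)$, and $\epsilon_k=0$ otherwise.

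Smashing these morphisms and composing with the local map gives an admissible morphism
\[
f\colon \SWF(Y,\s)\to \Sigma^{-\frac12\sum_k\bar{\mu}(Y_k,\s_k)\mathbb{H}}S^{mD}=S^{mD}=(S^{mD},0,0).
\]
The last equality uses the hypothesis $\sum_k\bar{\mu}(Y_k,\s_k)=0$, so no quaternionic shift remains. Suspending by $iD$, with $i$ replaced by a nonnegative representative mod $8$, gives an admissible morphism $\Sigma^{iD}\SWF(Y,\s)\to S^{(i+m)D}$.

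One check is needed: the smash of admissible morphisms, composed with a local map, is again admissible. On $S^1$-fixed points the composite is a smash of maps $S^0\to S^{\epsilon_kD}$ that are homotopy equivalences on $G$-fixed points. Hence the $G$-fixed point map $S^0\to S^0$ is an equivalence. The level of the source is $i$ (after normalizing $m_0$ as in Lemma~\ref{lem:local_map_AR}) and the level of the target is $i+m$. So if $m>0$ we are in the first case of the admissibility definition; if $m=0$ the morphism is local, hence admissible.

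Next, apply Proposition~\ref{prop:ko_admissible} with $k_0=i$ and $k_1=i+m$:
\[
\ko_i(Y,\s)=\ko(\Sigma^{iD}\SWF(Y,\s))\le \ko(S^{(i+m)D})+\beta^{m}_{i+m}.
\]
It remains to see that $\ko(S^{(i+m)D})=0$. The space $S^{(i+m)D}$ is of type SWF at level $i+m$, and the inclusion of its $S^1$-fixed points is the identity. So the image of $\varphi_{i+m}\circ i^*$ is all of $\varphi_{i+m}(KO_G((i+m)D))$. By Definition~\ref{def:varphi} this image contains $1$ (for instance $\varphi_l$ of the listed generator, transported by Bott periodicity). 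Hence the image is $(2^0)$ and $\ko=0$, which is also row $A_0$ of Proposition~\ref{prop:ko_A_n}.

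The main obstacle is the level and index bookkeeping in Proposition~\ref{prop:ko_admissible}. One must confirm that, in the spectrum normalization used for $\SWF(Y,\s)=(X,0,n)$, the levels entering $\beta^{k_1-k_0}_{k_1}$ are $i$ and $i+m$ and not shifted by the desplit $m$-coordinates. I would handle this by passing to honest space representatives, $\Sigma^{(8a+i)D}\Sigma^{bH}$ of both sides with the same $b$, which is possible since the quaternionic shifts cancel. Because $\beta^m_{k}$ depends only on $k$ mod $8$, the $8a$ shift is harmless. The fractional part $s$ in Definition~\ref{def:kappao_spectrum} cancels because both sides carry the same $n$.
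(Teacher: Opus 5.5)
Your proposal is correct and follows the paper's proof. The paper does exactly this: it composes the local map from Fact~\ref{fact:conn_sum_SWF} with the smash of the admissible morphisms from Theorem~\ref{thm:admissible_map} to get an admissible morphism $\SWF(Y,\s)\to S^{m\tilde{\R}}$, suspends by $i\tilde{\R}$, applies Proposition~\ref{prop:ko_admissible}, and uses $\ko(S^{(i+m)\tilde{\R}})=0$. Your extra checks only spell out what the paper leaves implicit; one small slip is that the source and target need not carry the same $n$, but since $n_0-n_1\in\Z$ their fractional parts $s$ agree, so the same $N$ can be used on both sides in Definition~\ref{def:kappao_spectrum}, which is all you need.
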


\begin{proof}
    By Theorem~\ref{thm:admissible_map} and Fact~\ref{fact:conn_sum_SWF}, there exists an admissible morphism
    \[
        f: \SWF(Y, \s) \to S^{m\tilde{\R}}.
    \]
    Applying Proposition~\ref{prop:ko_admissible} and taking the suspension $\Sigma^{i\tilde{\R}}$, we obtain:
    \[
    \ko_i(Y, \s) \le \beta^{m}_{i + m} + \ko(S^{(i + m) \tilde{\R}}).
    \]
    Since $\ko(S^{(i + m) \tilde{\R}}) = 0$, this completes the proof. 
\end{proof}

In general, similarly to the argument in Section 6.2, we have $\ko_i(Y, \s) = \ko_i(\Sigma^{-\frac{1}{2}\bar{\mu}(Y, \s)\mathbb{H}} S)$, where $S = \Sigma^{\frac{1}{2}\bar{\mu}(Y, \s)\mathbb{H}}\SWF(Y, \s)$, and we obtain:
\begin{equation}\label{eq:ko_general}
\ko_i(Y, \s) = 
\begin{cases}
    \ko_i(S) - \frac{\bar{\mu}(Y, \s)}{2} & \text{if } \lfloor \frac{\bar{\mu}(Y, \s)}{2} \rfloor \text{ is even}, \\
    \ko_{4 + i}(S) + 2 - \beta^4_i - \frac{\bar{\mu}(Y, \s)}{2} & \text{if } \lfloor \frac{\bar{\mu}(Y, \s)}{2} \rfloor \text{ is odd}.
\end{cases}
\end{equation}

Combining \eqref{eq:ko_general} with Lemma~\ref{lem:ko_above}, we obtain the following upper bound:

\begin{theorem}\label{thm:ko_general_above}
    Let $(Y_1, \s_1), \dots, (Y_n, \s_n)$ be spin AR homology spheres, and let $(Y, \s) = \mathop{\#}_{i=1}^n (Y_i, \s_i)$. Then we have:
    \[
    \ko_i(Y, \s) \le
    \begin{cases}
        \beta^{m}_{i + m} - \frac{\bar{\mu}(Y, \s)}{2} & \text{if } \lfloor \frac{\bar{\mu}(Y, \s)}{2} \rfloor \text{ is even}, \\
        \beta^{4 + m}_{4 + i + m} - 2 - \frac{\bar{\mu}(Y, \s)}{2} & \text{if } \lfloor \frac{\bar{\mu}(Y, \s)}{2} \rfloor \text{ is odd}.
    \end{cases}
    \]
\end{theorem}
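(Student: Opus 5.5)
The plan is to reduce to the case $\bar{\mu}(Y,\s)=0$ already handled by Lemma~\ref{lem:ko_above}, and then transport the bound using the desuspension formula \eqref{eq:ko_general}.

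Set $S = \Sigma^{\frac{1}{2}\bar{\mu}(Y,\s)\mathbb{H}}\SWF(Y,\s)$. By Fact~\ref{fact:conn_sum_SWF}, $\SWF(Y,\s)$ is locally equivalent to the smash product of the $\SWF(Y_k,\s_k)$. Theorem~\ref{thm:admissible_map} gives, for each $k$, an admissible morphism
\[
\SWF(Y_k,\s_k)\to\Sigma^{-\frac{1}{2}\bar{\mu}(Y_k,\s_k)\mathbb{H}}S^{\epsilon_k\tilde{\R}} .
\]
Smashing these maps and precomposing with the local map, exactly as in the proof of the connected-sum theorem of Section~4, we get an admissible morphism $S \to S^{m\tilde{\R}}$. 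The $\mathbb{H}$-shifts cancel by the definition of $S$.

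\textbf{Case $\lfloor \bar{\mu}/2\rfloor$ even.} Apply Proposition~\ref{prop:ko_admissible} to $\Sigma^{i\tilde{\R}}$ of this morphism, as in the proof of Lemma~\ref{lem:ko_above}. This gives $\ko_i(S)\le \beta^m_{i+m}+\ko(S^{(i+m)\tilde{\R}})=\beta^m_{i+m}$. The first line of \eqref{eq:ko_general} then yields the first bound.

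\textbf{Case $\lfloor \bar{\mu}/2\rfloor$ odd.} By \eqref{eq:ko_general}, $\ko_i(Y,\s)=\ko_{4+i}(S)+2-\beta^4_i-\frac{\bar{\mu}}{2}$. The same argument with $i$ replaced by $4+i$ gives $\ko_{4+i}(S)\le \beta^m_{4+i+m}$. It remains to check the combinatorial identity
\[
\beta^m_{4+i+m}+2-\beta^4_i \le \beta^{4+m}_{4+i+m}-2,
\]
that is, $\beta^{4+m}_{4+i+m}-\beta^m_{4+i+m}\ge 4-\beta^4_i$.

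From $\beta^j_k=\sum_{t=0}^{j-1}\alpha_{k-t}$, the difference $\beta^{4+m}_{4+i+m}-\beta^m_{4+i+m}$ is the sum of $\alpha$ over the four indices $i+1,\dots,i+4$, so it equals $\beta^4_{i+4}$. The required inequality becomes $\beta^4_{i+4}+\beta^4_i\ge 4$. The eight consecutive $\alpha$'s over one period sum to $4$, so $\beta^4_{i+4}+\beta^4_i=4$, with equality.

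An alternative route for this case is to go directly through Proposition~\ref{prop:ko_admissible} applied to $\SWF(Y,\s)\to\Sigma^{-\frac{\bar{\mu}}{2}\mathbb{H}}S^{m\tilde{\R}}$, combined with \eqref{eq:ko_sphere}, which gives $\ko_i(S^{-\frac{\bar{\mu}}{2}\mathbb{H}})=2-\beta^4_i-\frac{\bar{\mu}}{2}$ in this case. I would use this only as a cross-check on the index bookkeeping.

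The main obstacle is the index bookkeeping in the odd case. Specifically, one must confirm that the suspension index entering $\beta^m_{\cdot}$ via Proposition~\ref{prop:ko_admissible} is $4+i+m$ (level $k_1=4+i+m$ of the target, $k_0=4+i$). One must also confirm that the $\beta^4$ term from \eqref{eq:ko_general} is $\beta^4_i$ rather than $\beta^4_{i+4}$, since a mismatch would shift the constant. If the identity came out off by the constant, I would recheck it against Proposition~\ref{prop:suspension_ko}(3) with level $l-m$.
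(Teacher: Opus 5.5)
Your proposal is correct and follows essentially the same route as the paper. You bound $\ko_i(S)\le\beta^m_{i+m}$ and $\ko_{4+i}(S)\le\beta^m_{4+i+m}$ via the admissible morphism $S\to S^{m\tilde{\R}}$ and Proposition~\ref{prop:ko_admissible}, then substitute into \eqref{eq:ko_general} using $\beta^{4+m}_{4+i+m}-\beta^m_{4+i+m}=\beta^4_{4+i}$ and $\beta^4_{4+i}+\beta^4_i=\beta^8_{4+i}=4$, which is exactly the paper's identity. Your explicit construction of $S\to S^{m\tilde{\R}}$ for general $\bar{\mu}$ simply spells out what the paper's ``as in the proof of Lemma~\ref{lem:ko_above}'' leaves implicit.
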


\begin{proof}
    As in the proof of Lemma~\ref{lem:ko_above}, we have $\ko_i(S) \le \beta^{m}_{i + m}$ and $\ko_{4 + i}(S) \le \beta^{m}_{4 + i + m}$. Since $\beta^{m}_{4 + i + m} - \beta^4_i = \beta^{4 + m}_{4 + i + m} - (\beta^4_{4 + i} + \beta^4_i) = \beta^{4 + m}_{4 + i + m} - \beta^8_{4 + i} = \beta^{4 + m}_{4 + i + m} - 4$, combining this with \eqref{eq:ko_general} yields the desired inequality. 
\end{proof}

Applying Theorem~\ref{thm:cor_lin_ko}, we obtain the constraint on intersection forms for connected sums of AR homology spheres.

\begin{corollary}\label{cor:conn_sum_ineq_ko}
    Let $Y_1, \dots, Y_n$ be AR homology spheres. Suppose that each $Y_i$ is an integral homology sphere. Let $Y = \mathop{\#}_{i=1}^n Y_i$. Then, for any smooth, compact, indefinite, spin $4$-manifold $X$ with boundary $\partial X = Y$ and intersection form $p(-E_8) \oplus q \left(\begin{smallmatrix} 0 & 1 \\ 1 & 0 \end{smallmatrix}\right)$ $(p \ge 0, q > 0)$, the following inequality holds:
    \[
    \renewcommand{\arraystretch}{1.2}
    q \ge 
    \left\{
    \begin{array}{l@{\quad}ll}
    p + \sum_{i=1}^n \bar{\mu}(Y_i) + 1 - m & \text{if } p + \sum_{i=1}^n \bar{\mu}(Y_i) \equiv 0, 2 &\pmod{8} \\
    p + \sum_{i=1}^n \bar{\mu}(Y_i) + 2 - m & \text{if } p + \sum_{i=1}^n \bar{\mu}(Y_i) \equiv 4 &\pmod{8} \\
    p + \sum_{i=1}^n \bar{\mu}(Y_i) + 3 - m & \text{if } p + \sum_{i=1}^n \bar{\mu}(Y_i) \equiv 6 &\pmod{8}
    \end{array}
    \right.
    \]
    where $m$ is the number of components $Y_i$ satisfying $-\bar{\mu}(Y_i) < \delta(Y_i)$.
\end{corollary}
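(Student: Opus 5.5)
The plan is to mirror the proof of Theorem~\ref{thm:main_inequality_ko}, replacing the exact values of Theorem~\ref{thm:ko_calculation} with the upper bound of Theorem~\ref{thm:ko_general_above}. Lin's inequality (Theorem~\ref{thm:cor_lin_ko}) is monotone in $\ko$: every case has the form (quantity depending on $p$) $<\ko_j(Y)+\beta^{\ast}_{j}$. Hence replacing $\ko_j(Y)$ by any upper bound still gives a valid necessary condition. So I would substitute the bound from Theorem~\ref{thm:ko_general_above}, with $\bar{\mu}(Y)=\sum_i\bar{\mu}(Y_i)$, into the four cases $p\equiv 0,1,2,3 \pmod 4$ of Theorem~\ref{thm:cor_lin_ko}. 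I would then solve for the minimal admissible $q$.

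First I would reduce to the case $\bar{\mu}(Y)=0$. I would do this by rewriting Lin's inequality for $Y$ in terms of $S=\Sigma^{\frac12\bar{\mu}(Y)\mathbb{H}}\SWF(Y)$. Equivalently, I would note that \eqref{eq:ko_general} shifts $\ko$ by exactly the amount that replaces $p$ with $p+\bar{\mu}(Y)$ in Lin's bounds. That is the same bookkeeping that produced Theorem~\ref{thm:main_inequality_ko} from Theorem~\ref{thm:ko_calculation}, where the parity of $\lfloor\bar{\mu}/2\rfloor$ selects the $\ko_{4+i}$ branch. After this step, set $P=p+\bar{\mu}(Y)$, which is an even integer by the Rokhlin remark. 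The task is then to show that $\ko_j\le\beta^m_{j+m}$ for all $j$ forces $q\ge P+c_0(P)-m$, where $c_0=1,1,2,3$ for $P\equiv 0,2,4,6 \pmod 8$.

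Next I would compare with the $m=0$ case. When $m=0$, the bound $\ko_j\le\beta^0=0$ is attained by $S^0$ (the row $-\bar{\mu}=\delta$ in Theorem~\ref{thm:ko_calculation}). This recovers case (i) of Theorem~\ref{thm:main_inequality_ko}, whose proof I would reuse verbatim. For general $m$, the key identity I would use is $\beta^{q}_{j}+\beta^{m}_{j+m}=\beta^{q+m}_{j+m}$, which is the telescoping definition of $\beta$. With $j=4+q$ or $j=q$ as in Lin's statement, the right-hand side of Lin's inequality becomes $\ko_{4+q}+\beta^q_{4+q}\le\beta^{q+m}_{4+q+m}$. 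This is exactly the $m=0$ right-hand side evaluated at $q'=q+m$. The same holds for the $\beta^{4+q}_q$ cases. Therefore the condition for $(P,q)$ at general $m$ is implied by the $m=0$ condition for $(P,q+m)$, giving $q+m\ge P+c_0(P)$. This is the claimed inequality.

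The main obstacle is the index bookkeeping in the odd-$\lfloor\bar{\mu}/2\rfloor$ branch. There the upper bound carries the extra $-2$ and uses $\beta^{4+m}_{4+i+m}$, and the case of Lin's theorem switches between $p\equiv 0$ and $p\equiv 2 \pmod 4$. I would check that the telescoping still matches after shifting $i\mapsto 4+i$. For this I would use $\beta^8=4$, exactly as in the proof of Theorem~\ref{thm:ko_general_above}. If a direct match fails, I would fall back on checking the finitely many residues of $(P,q,m)$ modulo $8$ explicitly.
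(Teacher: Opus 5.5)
Your proposal is correct and follows essentially the paper's own argument. You bound $\ko$ above using Theorem~\ref{thm:ko_general_above}, substitute this bound into Lin's inequality (Theorem~\ref{thm:cor_lin_ko}), telescope with $\beta^{q}_{j}+\beta^{m}_{j+m}=\beta^{q+m}_{j+m}$, and recognize case (i) of Theorem~\ref{thm:main_inequality_ko} with $q$ replaced by $q+m$. Your sentence ``is implied by the $m=0$ condition'' states the direction backwards: what you need, and what your computation actually shows, is that the inequality at $(P,q)$ \emph{implies} the $m=0$ inequality at $(P,q+m)$.

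The only difference from the paper is cosmetic. You normalize to $\bar\mu=0$ first, whereas the paper writes the bound uniformly as $\ko_i(Y)\le \ko_{i+m}(S^{-\frac12\bar\mu(Y)\mathbb{H}})+\beta^m_{i+m}$, using $\beta^8=4$ in the odd branch. It then compares directly with the sphere spectrum $S^{-\frac12\bar\mu(Y)\mathbb{H}}$, so the odd-branch index bookkeeping you flag is handled in that single step.
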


\begin{proof}
    By Theorem~\ref{thm:ko_general_above} and \eqref{eq:ko_sphere}, we have:
    \[
        \ko_i(Y, \s) \le \ko_{i + m}(S^{-\frac{1}{2}\bar{\mu}(Y, \s)\mathbb{H}}) + \beta^{m}_{i + m}.
    \]
    Combining this with Theorem~\ref{thm:cor_lin_ko}, we obtain:
    \begin{itemize}
        \item If $p = 4l$, then $2l < \ko_{4 + q + m}(S^{-\frac{1}{2}\bar{\mu}(Y, \s)\mathbb{H}}) + \beta^{m}_{4 + q + m} + \beta_{4 + q}^{q}$.
        \item If $p = 4l + 1$, then $2l + \frac{5}{2} < \ko_{q + m}(S^{-\frac{1}{2}\bar{\mu}(Y, \s)\mathbb{H}}) + \beta^{m}_{q + m} + \beta_{q}^{4 + q}$.
        \item If $p = 4l + 2$, then $2l + 3 < \ko_{q + m}(S^{-\frac{1}{2}\bar{\mu}(Y, \s)\mathbb{H}}) + \beta^{m}_{q + m} + \beta_{q}^{4 + q}$.
        \item If $p = 4l + 3$, then $2l + \frac{3}{2} < \ko_{4 + q + m}(S^{-\frac{1}{2}\bar{\mu}(Y, \s)\mathbb{H}}) + \beta^{m}_{4 + q + m} + \beta_{4 + q}^{q}$.
    \end{itemize}
    Since $\beta^{m}_{4 + q + m} + \beta_{4 + q}^{q} = \beta^{q + m}_{4 + q + m}$ and $\beta^{m}_{q + m} + \beta_{q}^{4 + q} = \beta^{4 + q + m}_{q + m}$, we have:
    \begin{itemize}
        \item If $p = 4l$, then $2l < \ko_{4 + q + m}(S^{-\frac{1}{2}\bar{\mu}(Y, \s)\mathbb{H}}) + \beta^{q + m}_{4 + q + m}$.
        \item If $p = 4l + 1$, then $2l + \frac{5}{2} < \ko_{q + m}(S^{-\frac{1}{2}\bar{\mu}(Y, \s)\mathbb{H}}) + \beta^{4 + q + m}_{q + m}$.
        \item If $p = 4l + 2$, then $2l + 3 < \ko_{q + m}(S^{-\frac{1}{2}\bar{\mu}(Y, \s)\mathbb{H}}) + \beta^{4 + q + m}_{q + m}$.
        \item If $p = 4l + 3$, then $2l + \frac{3}{2} < \ko_{4 + q + m}(S^{-\frac{1}{2}\bar{\mu}(Y, \s)\mathbb{H}}) + \beta^{q + m}_{4 + q + m}$.
    \end{itemize}
    These inequalities coincide with the constraints for the case where the SWF spectrum of the boundary $3$-manifold is a sphere spectrum, with $q$ replaced by $q + m$. This corresponds to condition (i) in Theorem~\ref{thm:main_inequality_ko}. Thus, we obtain the desired inequality.
\end{proof}

\begin{remark}
    The constraint obtained from Corollary~\ref{cor:conn_sum_ineq_ko} coincides with the one obtained by applying \cite[Theorem 1.8]{Sch03} to the necessary condition for the existence of an admissible map $S^{\frac{1}{2}(p + \bar{\mu})\mathbb{H}} \to S^{(q + m)\tilde{\R}}$.
\end{remark}

\begin{remark}
    Under the condition that $p + \sum_{i = 1}^n\bar{\mu}(Y_i, \s_i) \ge 4$, the constraint obtained from Theorem~\ref{thm:conn_sum} is stronger than that of Corollary~\ref{cor:conn_sum_ineq_ko}.
\end{remark}

\section{Applications}

\subsection{Applications to knots}\label{subsec:apps}

In this section, we consider the gauge-theoretic applications to knots. To evaluate the minimal genus of surfaces bounded by knots in specific $4$-manifolds, we first recall a key lemma that describes the topological invariants of double branched covers.
\begin{lemma}[{\cite[Lemma 4.2]{KMT25}}]\label{lem:KMT_double_branch}
    Let $(X, S)$ be a connected oriented knot cobordism from $(S^3, K)$ to $(S^3, K')$. Suppose $H_1(X; \Z) = 0$, $[S]$ is divisible by $2$, and $\operatorname{PD}(w_2(X)) = [S]/2 \pmod 2$. Then the double branched cover $\Sigma(S)$ along $S$ has a unique spin structure. Moreover, the following equalities hold:
    \begin{equation}
        \sigma(\Sigma(S)) = 2\sigma(X) - \frac{1}{2}[S]^2 - \sigma(K) + \sigma(K').
    \end{equation}
    \begin{equation}
        b^+(\Sigma(S)) = 2b^+(X) + g(S) - \frac{1}{4}[S]^2 - \frac{1}{2}\sigma(K) + \frac{1}{2}\sigma(K').
    \end{equation}
    \begin{equation}
        b_1(\Sigma(S)) = 0.
    \end{equation}
\end{lemma}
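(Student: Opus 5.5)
We follow the standard strategy for double branched covers, as in \cite{KMT25}. The plan is to establish the existence and uniqueness of the spin structure, then $b_1$, then the signature, and finally derive $b^+$ from an Euler characteristic count.

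\emph{Spin structure and $b_1$.} The double branched cover $\Sigma(S)$ exists because $[S]$ is divisible by $2$. Since $H_1(X;\Z)=0$, there is a unique double cover of $X\setminus S$ branched along $S$, namely the one associated to the surjection $H_1(X\setminus S)\to\Z/2$ sending a meridian to $1$.
\begin{itemize}
\item For $b_1$: the transfer splits $H_1(\Sigma(S);\Q)$ into the $(+1)$-eigenspace, isomorphic to $H_1(X;\Q)=0$, and the $(-1)$-eigenspace. The latter is the homology of $X$ with coefficients in the local system twisted by the cover. I expect to show it vanishes by a Mayer--Vietoris argument over a tubular neighbourhood of $S$ and its complement, using $H_1(X;\Z)=0$. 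The boundary components are double branched covers of knots in $S^3$, hence rational homology spheres, which keeps this bookkeeping manageable.
\item For the spin structure: the formula $w_2(\Sigma(S))=\pi^*(w_2(X)+\operatorname{PD}[S]/2)$ for branched covers, together with the hypothesis $\operatorname{PD}(w_2(X))=[S]/2 \pmod 2$, gives $w_2(\Sigma(S))=0$. Uniqueness of the spin structure follows from $H^1(\Sigma(S);\Z/2)=0$, which comes from the same computation with $\Z/2$ coefficients. I expect this mod $2$ computation to be the most delicate point.
\end{itemize}

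\emph{Signature.} Cap off both ends. Let $F\subset B^4$ and $F'\subset B^4$ be pushed-in Seifert surfaces for $K$ and $K'$ respectively. Glue these pairs to obtain a closed pair $(\hat X,\hat S)$, with $\hat X$ equal to $X$ with two balls attached, so that $\sigma(\hat X)=\sigma(X)$ and $[\hat S]^2=[S]^2$. For this closed pair, the $G$-signature theorem gives
\[
\sigma(\Sigma(\hat S))=2\sigma(\hat X)-\tfrac12[\hat S]^2 .
\]
By the Kauffman--Taylor/Viro theorem, the double branched cover of $B^4$ along a pushed-in Seifert surface has signature equal to the knot signature, with the sign fixed by the orientation conventions. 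Novikov additivity across the rational homology spheres $\Sigma(K)$ and $\Sigma(K')$ then isolates $\sigma(\Sigma(S))$ and yields the stated correction $-\sigma(K)+\sigma(K')$. The main care needed here is getting the orientations and signs right.

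\emph{Computing $b^+$.} Since $X$ is a cobordism between copies of $S^3$ with $H_1(X;\Z)=0$, we have $\chi(X)=b_2(X)$. The surface $S$ is connected with two boundary circles, so $\chi(S)=-2g(S)$. Multiplicativity of the Euler characteristic for branched covers gives
\[
\chi(\Sigma(S))=2\chi(X)-\chi(S)=2b_2(X)+2g(S).
\]
Because $b_1(\Sigma(S))=0$ and $b_3(\Sigma(S))=0$ (the boundary is a rational homology sphere, so $b_3=b_1$ by duality), this equals $b_2(\Sigma(S))$. Therefore
\[
b^+(\Sigma(S))=\tfrac12\bigl(b_2(\Sigma(S))+\sigma(\Sigma(S))\bigr)=b_2(X)+g(S)+\sigma(X)-\tfrac14[S]^2-\tfrac12\sigma(K)+\tfrac12\sigma(K').
\]
Since $b_2(X)+\sigma(X)=2b^+(X)$, this is the claimed formula.
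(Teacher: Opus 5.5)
The paper gives no proof of this lemma; it quotes it from \cite{KMT25}, so your argument has to stand on its own. Several steps are the standard route and are fine: capping with pushed-in Seifert surfaces, the closed $G$-signature formula, Viro/Kauffman--Taylor, Novikov additivity, existence of the spin structure via $w_2(\Sigma(S))=\pi^*(w_2(X)+\operatorname{PD}([S]/2))$, and $\chi(\Sigma(S))=2\chi(X)-\chi(S)=2b_2(X)+2g(S)$.

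\emph{The gap: $H_1(\Sigma(S);\Z/2)=0$.} Uniqueness of the spin structure and $b_1(\Sigma(S))=0$ both rest on this, and your $b^+$ count uses $b_1=0$. You only announce it, and the rational route you sketch does not reach it. The covering involution is isotopic to the identity on $\widetilde{\nu S}\cong S\times D^2$ and on $S\times S^1$. So Mayer--Vietoris only identifies $H_1(\Sigma(S);\Q)^-$ with $H_1(\widetilde W;\Q)^-$ for the exterior $W=X\setminus\nu S$. Showing that this vanishes is the whole content, and the integral hypothesis $H_1(X;\Z)=0$ only becomes usable with $\Z/2$ coefficients. The missing argument runs as follows.
\begin{enumerate}
\item In $H_2(X;\Z/2)\to H_2(X,W;\Z/2)\cong\Z/2\to H_1(W;\Z/2)\to H_1(X;\Z/2)=0$, the first map is intersection with $[S]\equiv 0 \pmod 2$. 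Hence $H_1(W;\Z/2)\cong\Z/2$, generated by a meridian $\mu$.
\item In the Smith--Gysin sequence $H_1(W;\Z/2)\xrightarrow{\mathrm{tr}}H_1(\widetilde W;\Z/2)\xrightarrow{\pi_*}H_1(W;\Z/2)\xrightarrow{\cap w}H_0(W;\Z/2)$ of the double cover, $\cap w$ is injective because $w(\mu)=1$. So $H_1(\widetilde W;\Z/2)$ is generated by $\mathrm{tr}(\mu)=[\tilde\mu]$, which bounds a fibre disk of $\widetilde{\nu S}$.
\item Since $H_1(S\times S^1)\to H_1(\widetilde{\nu S})$ is onto, Mayer--Vietoris shows $H_1(\widetilde W;\Z/2)\to H_1(\Sigma(S);\Z/2)$ is onto and kills $[\tilde\mu]$. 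Hence $H_1(\Sigma(S);\Z/2)=0$.
\end{enumerate}
Then $H_1(\Sigma(S);\Z)$ is finite of odd order, which gives $H^1(\Sigma(S);\Z/2)=0$ and $b_1(\Sigma(S))=0$ at once. You should do this mod $2$ computation first, rather than deduce it afterwards ``from the same computation''.

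\emph{An error in the last step: $b_3(\Sigma(S))=1$, not $0$.} The boundary $-\Sigma(K)\sqcup\Sigma(K')$ has two components. Lefschetz duality gives $H_3(\Sigma(S);\Q)\cong H^1(\Sigma(S),\partial\Sigma(S);\Q)$, which has rank $(2-1)+b_1(\Sigma(S))=1$; your claim that $b_3=b_1$ by duality is false here. With the values you state ($b_0=1$, $b_1=b_3=b_4=0$) you would get $\chi(\Sigma(S))=1+b_2(\Sigma(S))$, contradicting your next line. The correct identity $\chi(\Sigma(S))=b_2(\Sigma(S))$ holds because $b_0$ and $b_3$ cancel, exactly as in your (correct) $\chi(X)=b_2(X)$. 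You should also state that the intersection form of $\Sigma(S)$ is nondegenerate over $\Q$, since the boundary consists of rational homology spheres; this is what justifies $b^+=\tfrac12(b_2+\sigma)$. With these repairs your argument yields the three stated formulas.
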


\begin{proof}[Proof of Corollary~\ref{cor:knot_inequality}]
    By Remark~\ref{rem:rational}, we can apply Theorem~\ref{thm:main_inequality} and Theorem~\ref{thm:main_inequality_ko} to the spin $4$-manifold $\Sigma(S)$ by replacing $p$ and $q$ with $-\frac{1}{8}\sigma(\Sigma(S))$ and $b^+(\Sigma(S))$, respectively. We note that the spin structure of $\Sigma(K)$ is also unique since it is known that $H^1(\Sigma(K); \Z/2) = 0$. Substituting the formulas from Lemma~\ref{lem:KMT_double_branch} into these constraints yields the desired inequality.
\end{proof}

\begin{proof}[Proof of Example~\ref{ex:CP^2_genus}]
    We divide the proof into two parts: an explicit geometric construction for the upper bound, and a gauge-theoretic obstruction for the lower bound.
    
    \textbf{Upper Bound.}
    First, consider the case $X = \C P^2$. Let $C \subset \C P^2$ be a non-singular algebraic curve of degree $6$. Its genus is $g(C) = (6-1)(6-2)/2 = 10$, and it represents the homology class $6 \in H_2(\C P^2; \Z)$. By removing a small open disk $\operatorname{int} D^2$ from $C$, we obtain an oriented, compact, properly embedded surface $C' \subset \C P^2 \setminus \operatorname{int} D^4$ of genus $10$, whose boundary is the unknot $U \subset S^3 = \partial(\C P^2 \setminus \operatorname{int} D^4)$.
    
    On the other hand, in the collar neighborhood $\partial(\C P^2 \setminus \operatorname{int} D^4) \times [0, 1] \cong S^3 \times [0, 1]$, there exists a knot cobordism $C_K$ from $U$ to $K$ whose genus realizes the slice genus $g_4(K)$. By gluing $C'$ and $C_K$ along their common boundary $U$, we obtain a properly embedded surface $S \subset \C P^2 \setminus \operatorname{int} D^4$ bounded by $K$. The genus of this surface is $g(S) = g(C') + g(C_K) = g_4(K) + 10$, and its relative homology class remains $6$. Thus, the minimal genus is bounded above by $g_4(K) + 10$.
    
    For the case $X = \C P^2 \# \C P^2$, taking the connected sum of two such surfaces $C'$ yields a surface of genus $20$ bounded by $U$. Gluing this to $C_K$ gives a surface bounded by $K$ with genus $g_4(K) + 20$ and relative homology class $(6, 6)$.
    
    \textbf{Lower Bound.}
    We proceed by contradiction. \vspace{0.5em}\\
    \noindent\textbf{The Case $X = \C P^2 \setminus \operatorname{int} D^4$.}

    Suppose there exists a connected, oriented, compact, properly embedded surface $S \subset \C P^2 \setminus \operatorname{int} D^4$ bounded by $K$ with homology class $[S] = 6$ such that $g(S) \le g_4(K) + 9$. By adding local unknotted tubes to $S$ if necessary, we may assume that exactly $g(S) = g_4(K) + 9$.
    
    We view the pair $(X, S)$ as a knot cobordism from $(S^3, U)$ to $(S^3, K)$. By Corollary~\ref{cor:knot_inequality}, the minimal genus satisfies:
    \begin{equation} \label{eq:genus_inequality}
        g(S) \ge -2b^+(X) - \frac{1}{4}\sigma(X) + \frac{5}{16}[S]^2 - \frac{5}{8}\sigma(K) + \bar{\mu}(\Sigma(K)) + c
    \end{equation}
    where $c$ is the correction term defined in Remark~\ref{rem:correction_term}.
    Substituting $b^+(X) = 1$, $\sigma(X) = 1$, $[S]^2 = 36$, and $g(S) = g_4(K) + 9$ into \eqref{eq:genus_inequality} yields:
    \[
        g_4(K) + 9 \ge -2 - \frac{1}{4} + \frac{45}{4} - \frac{5}{8}\sigma(K) + \bar{\mu}(\Sigma(K)) + c,
    \]
    which simplifies to:
    \begin{equation}\label{eq:genus_simplified_1}
        g_4(K) + \frac{5}{8}\sigma(K) - \bar{\mu}(\Sigma(K)) - c \ge 0.
    \end{equation}
    
    On the other hand, by Lemma~\ref{lem:KMT_double_branch}, the invariants of the double branched cover $\Sigma(S)$ are given by:
    \begin{align*}
        \sigma(\Sigma(S)) &= 2 - \frac{1}{2}\cdot 6^2 - 0 + \sigma(K) = -16 + \sigma(K), \\
        b^+(\Sigma(S)) &= 2 + (g_4(K) + 9) - \frac{1}{4}\cdot 6^2 - 0 + \frac{1}{2}\sigma(K) = g_4(K) + \frac{1}{2}\sigma(K) + 2.
    \end{align*}
    
    \textit{Case: $X = \C P^2 \setminus \operatorname{int} D^4$ and $K = T(3, 5)$.} \\
    For the torus knot $T(3, 5)$, we have $\sigma(K) = -8$ and $g_4(K) = 4$. From the above equations, we compute $\sigma(\Sigma(S)) = -24$ and $b^+(\Sigma(S)) = 2 > 0$. Thus, the branched cover $\Sigma(S)$ is an indefinite spin $4$-manifold. Its boundary $Y = \Sigma(2, 3, 5)$ has $\bar{\mu}(Y) = -1$ and $\delta(Y) = 1$.
    Since $-\bar{\mu}(Y) = 1 = \delta(Y)$ and $-\frac{1}{8}\sigma(\Sigma(S)) + \bar{\mu}(Y) = 3 - 1 = 2 \equiv 2 \pmod 8$, we have $c = 1$ by Theorem~\ref{thm:main_inequality_ko}. 
    By \eqref{eq:genus_simplified_1}, we obtain:
    \[
        4 + \frac{5}{8}(-8) - (-1) - 1 \ge 0 \implies -1 \ge 0.
    \]
    This is a contradiction.

    \textit{Case: $X = \C P^2 \setminus \operatorname{int} D^4$ and $K = T(3, 7)$.} \\
    For $K = T(3, 7)$, we have $\sigma(K) = -8$ and $g_4(K) = 6$. The boundary $Y = \Sigma(2, 3, 7)$ has $\bar{\mu}(Y) = 1$ and $\delta(Y) = 0$. We compute $\sigma(\Sigma(S)) = -16 + (-8) = -24$ and $b^+(\Sigma(S)) = 6 - 4 + 2 = 4 > 0$. Thus $\Sigma(S)$ is indefinite.
    Since $-\bar{\mu}(Y) = -1 = \delta(Y) - 1$ and $-\frac{1}{8}\sigma(\Sigma(S)) + \bar{\mu}(Y) = 3 + 1 = 4 \equiv 4 \pmod 8$, we have $c = 1$. By \eqref{eq:genus_simplified_1}, we obtain:
    \[
        6 + \frac{5}{8}(-8) - 1 - 1 \ge 0 \implies -1 \ge 0.
    \]
    This is a contradiction.

    \textit{Case: $X = \C P^2 \setminus \operatorname{int} D^4$ and $K = T(3, 11)$.} \\
    For $K = T(3, 11)$, we have $\sigma(K) = -16$ and $g_4(K) = 10$. The boundary $Y = \Sigma(2, 3, 11)$ has $\bar{\mu}(Y) = 0$ and $\delta(Y) = 1$. We compute $\sigma(\Sigma(S)) = -16 + (-16) = -32$ and $b^+(\Sigma(S)) = 10 - 8 + 2 = 4 > 0$. Thus $\Sigma(S)$ is indefinite.
    Since $-\bar{\mu}(Y) = 0 = \delta(Y) - 1$ and $-\frac{1}{8}\sigma(\Sigma(S)) + \bar{\mu}(Y) = 4 + 0 = 4 \equiv 4 \pmod 8$, we have $c = 1$. By \eqref{eq:genus_simplified_1}, we obtain:
    \[
        10 + \frac{5}{8}(-16) - 0 - 1 \ge 0 \implies -1 \ge 0.
    \]
    This is a contradiction.

    \vspace{0.5em}
    \noindent\textbf{The Case $X = (\C P^2 \# \C P^2) \setminus \operatorname{int} D^4$.}
    
    We now suppose there exists a connected, oriented, compact, properly embedded surface $S \subset (\C P^2 \# \C P^2) \setminus \operatorname{int} D^4$ bounded by $K$ with homology class $[S] = (6, 6)$ such that $g(S) = g_4(K) + 19$. 
    Here, $b^+(X) = 2$, $\sigma(X) = 2$, and $[S]^2 = 72$. Substituting these into~\eqref{eq:genus_inequality} yields:
    \[
        g_4(K) + 19 \ge -2 \cdot 2 - \frac{1}{4} \cdot 2 + \frac{5}{16} \cdot 72 - \frac{5}{8}\sigma(K) + \bar{\mu}(\Sigma(K)) + c,
    \]
    which simplifies to:
    \begin{equation}\label{eq:genus_simplified_2}
        g_4(K) + \frac{5}{8}\sigma(K) + 1 - \bar{\mu}(\Sigma(K)) - c \ge 0.
    \end{equation}
    For this manifold, the invariants of the double branched cover are given by $\sigma(\Sigma(S)) = 2 \cdot 2 - \frac{72}{2} + \sigma(K) = -32 + \sigma(K)$ and $b^+(\Sigma(S)) = 2 \cdot 2 + (g_4(K) + 19) - \frac{72}{4} + \frac{1}{2}\sigma(K) = g_4(K) + \frac{1}{2}\sigma(K) + 5$.

    \textit{Case: $X = (\C P^2 \# \C P^2) \setminus \operatorname{int} D^4$ and $K = T(3, 5)$.} \\
    For $K = T(3, 5)$, we compute $\sigma(\Sigma(S)) = -32 + (-8) = -40$ and $b^+(\Sigma(S)) = 4 - 4 + 5 = 5 > 0$. Thus $\Sigma(S)$ is indefinite.
    Since $-\bar{\mu}(Y) = 1 = \delta(Y)$ and $-\frac{1}{8}\sigma(\Sigma(S)) + \bar{\mu}(Y) = 5 - 1 = 4 \equiv 4 \pmod 8$, we have $c = 2$. By \eqref{eq:genus_simplified_2}, we obtain:
    \[
        4 + \frac{5}{8}(-8) + 1 - (-1) - 2 \ge 0 \implies -1 \ge 0.
    \]
    This is a contradiction.

    \textit{Case: $X = (\C P^2 \# \C P^2) \setminus \operatorname{int} D^4$ and $K = T(3, 7)$.} \\
    For $K = T(3, 7)$, we compute $\sigma(\Sigma(S)) = -32 + (-8) = -40$ and $b^+(\Sigma(S)) = 6 - 4 + 5 = 7 > 0$. Thus $\Sigma(S)$ is indefinite.
    Since $-\bar{\mu}(Y) = -1 = \delta(Y) - 1$ and $-\frac{1}{8}\sigma(\Sigma(S)) + \bar{\mu}(Y) = 5 + 1 = 6 \equiv 6 \pmod 8$, we have $c = 2$. By \eqref{eq:genus_simplified_2}, we obtain:
    \[
        6 + \frac{5}{8}(-8) + 1 - 1 - 2 \ge 0 \implies -1 \ge 0.
    \]
    This is a contradiction.

    \textit{Case: $X = (\C P^2 \# \C P^2) \setminus \operatorname{int} D^4$ and $K = T(3, 11)$.} \\
    For $K = T(3, 11)$, we compute $\sigma(\Sigma(S)) = -32 + (-16) = -48$ and $b^+(\Sigma(S)) = 10 - 8 + 5 = 7 > 0$. Thus $\Sigma(S)$ is indefinite.
    Since $-\bar{\mu}(Y) = 0 = \delta(Y) - 1$ and $-\frac{1}{8}\sigma(\Sigma(S)) + \bar{\mu}(Y) = 6 + 0 = 6 \equiv 6 \pmod 8$, we have $c = 2$. By \eqref{eq:genus_simplified_2}, we obtain:
    \[
        10 + \frac{5}{8}(-16) + 1 - 0 - 2 \ge 0 \implies -1 \ge 0.
    \]
    This is a contradiction.
    
    This completes the proof of Example~\ref{ex:CP^2_genus}.
\end{proof}

\begin{remark}
    Note that Theorem~\ref{thm:main_inequality} requires $p + \bar{\mu}(Y, \s) \ge 4$. Since the case $X = \C P^2 \setminus \operatorname{int} D^4$ and $K = T(3, 5)$ yields $p + \bar{\mu}(Y, \s) = 2$, Theorem~\ref{thm:main_inequality_ko} is more suitable for obtaining the desired bounds.
\end{remark}

As mentioned in Remark~\ref{rem:closed_surface}, we can derive the sharp lower bounds necessary for Example~\ref{ex:CP^2_genus} using \cite{EG22}, Lemma~\ref{lem:KMT_double_branch} (\cite[Lemma 4.2]{KMT25}), and \cite[Theorem 1.12]{HLSX22}. 
Let $X = \C P^2 \setminus \operatorname{int} D^4$ and fix a knot $K \in \{T(3, 5), T(3, 7), T(3, 11)\}$. Suppose that $S \subset X$ is a connected, oriented, compact, and properly embedded surface representing the homology class $[S] = 6 \in \Z = H_2(X; \Z)$ such that $\partial S = K$. 
According to the arguments in \cite{EG22} (specifically, the proof of Lemma 6.6 in Appendix A for $T(3, 5)$, the proof of Theorem 1.19 for $T(3, 7)$, and the proof of Lemma 6.2 for $T(3, 11)$), there exists a connected, oriented, compact, and properly embedded surface $S' \subset \C P^2 \setminus \operatorname{int} D^4$ representing the homology class $[S'] = 6 \in \Z = H_2(\C P^2 \setminus \operatorname{int} D^4; \Z)$ such that $\partial S' = -K$ and $g(S') = 10 - g_4(K)$.
By taking the boundary connected sum of $S$ and $S'$, and capping off $K \mathop{\#} (-K)$ with a slice disk in $D^4$, we obtain a connected, oriented, closed surface $F \subset \C P^2 \mathop{\#} \C P^2$ representing the homology class $[F] = (6, 6) \in \Z \oplus \Z = H_2(\C P^2 \mathop{\#} \C P^2; \Z)$ with genus $g(F) = g(S) + 10 - g_4(K)$. 
Applying Lemma~\ref{lem:KMT_double_branch} to the unknot, we obtain the topological invariants of the double branched cover $\Sigma(F)$ as $\sigma(\Sigma(F)) = -32$ and $b^+(\Sigma(F)) = g(F) - 14$. 
Then, applying \cite[Theorem 1.12]{HLSX22} in the case where $p \equiv 2$, we obtain $g(F) - 14 \ge 4 + 2$, which implies $g(F) \ge 20$. 
Since $g(F) = g(S) + 10 - g_4(K)$, we conclude that $g(S) \ge g_4(K) + 10$.
The proof for case (2) of Example~\ref{ex:CP^2_genus}, where $X = (\C P^2 \mathop{\#} \C P^2) \setminus \operatorname{int}D^4$, follows a similar argument; in this case, we obtain $g(S) + 10 - g_4(K) \ge 30$, which yields $g(S) \ge g_4(K) + 20$.

\begin{remark}
    When proving case (1) of Example~\ref{ex:CP^2_genus}, the constraint on the genus of the closed surface can alternatively be deduced from \cite[Theorem 1.6]{Bry98}, instead of applying Lemma~\ref{lem:KMT_double_branch} and \cite[Theorem 1.12]{HLSX22}.
\end{remark}

\begin{remark}\label{rem:philosophy}
    We note that classical sharp bounds, such as Bryan's inequality \cite{Bry98}, rely on the geometric information of the covering involution acting on the $3$-manifold. In contrast, our constraints in Example~\ref{ex:CP^2_genus} are derived without using this specific geometric data. Surprisingly, even after forgetting how the involution acts on the $3$-manifold, our method still provides sharp lower bounds.
\end{remark}

However, there exist infinitely many torus knots that do not satisfy conditions (1) and (2) of Example~\ref{ex:CP^2_genus}. By \cite[Theorem 2.8]{MMRS24}, we have the following:

\begin{proposition}\label{prop:count_ex}
    For any integer $n \ge 19$, there exists a connected, oriented, compact, and properly embedded surface $S \subset \C P^2 \setminus \operatorname{int}D^4$ representing the homology class $[S] = 6 \in \Z = H_2(\C P^2 \setminus \operatorname{int}D^4; \Z)$ such that $\partial S = T(n, n - 1)$ and $g(S) < g_4(T(n, n-1)) + 10$. Moreover, there exists a connected, oriented, compact, and properly embedded surface $S \subset (\C P^2 \mathop{\#} \C P^2) \setminus \operatorname{int}D^4$ representing the homology class $[S] = (6, 6) \in \Z \oplus \Z = H_2((\C P^2 \mathop{\#} \C P^2) \setminus \operatorname{int}D^4; \Z)$ such that $\partial S = T(n, n - 1)$ and $g(S) < g_4(T(n, n - 1)) + 20$.
\end{proposition}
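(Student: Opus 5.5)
This is an existence statement, so I would prove it by explicit construction, citing \cite[Theorem 2.8]{MMRS24} for the key input. I expect that result to give, for torus knots $T(n,n-1)$ with $n$ large, a surface in $\C P^2\setminus\operatorname{int}D^4$ of degree $d$ bounded by $T(n,n-1)$ whose genus is substantially less than the naive bound $g_4(K)+g(C_d)$.

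Concretely, the approach is to realize $T(n,n-1)$, or rather a suitable cobordism to it, inside a degree-$6$ configuration with negative genus cost. The model to compare against is a singular algebraic curve of degree $6$ with a singularity whose link is a torus knot. Removing a neighborhood of the singular point leaves a surface of genus $10-\delta$ (Milnor fibre count) bounded by that link. For the specific family $T(n,n-1)$, MMRS constructs surfaces in $\C P^2$ of degree $d$ bounded by $T(n,n-1)$, e.g.\ via cuspidal curves of degree $d=n$ or by twisting arguments in which a full twist costs less than expected.

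The plan has four steps.
\begin{enumerate}
\item Recall $g_4(T(n,n-1))=\frac{(n-1)(n-2)}{2}$.
\item Invoke \cite[Theorem 2.8]{MMRS24} to get an explicit upper bound for the minimal genus of a surface in $\C P^2\setminus\operatorname{int}D^4$ in class $6$ bounded by $T(n,n-1)$.
\item Check that this bound is strictly less than $\frac{(n-1)(n-2)}{2}+10$ whenever $n\ge 19$. This is where the threshold $19$ should come from: the MMRS bound presumably grows like $g_4$ minus a term linear in $n$, while the correction needed is only $10$.
\item For the $(6,6)$ class in $\C P^2\#\C P^2$, boundary-sum the class-$6$ surface from step~2 with a closed smooth sextic of genus $10$ in the second summand, tubing it in. The resulting genus is strictly less than $g_4+10+10=g_4+20$.
\end{enumerate}

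The main obstacle is step~3: extracting the exact form of the MMRS bound and verifying the numerical inequality at $n=19$ and beyond. If Theorem~2.8 is stated instead as a bound on the $\C P^2$-genus of $T(n,n-1)$ in an arbitrary degree, I would specialize it to degree $6$ (respecting the sign and orientation conventions for $\partial S$) and reduce the verification to an elementary polynomial inequality in $n$. I would check it at $n=19$ directly and then show the difference is monotone in $n$.
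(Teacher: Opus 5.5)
Your proposal follows the paper's proof: it cites \cite[Theorem 2.8]{MMRS24} for a degree-$6$ surface in $\C P^2\setminus\operatorname{int}D^4$ bounded by $T(n,n-1)$, compares its genus with $g_4(T(n,n-1))+10=\frac{1}{2}(n^2-3n+22)$, and handles the class $(6,6)$ by adding a smooth genus-$10$ sextic in the second summand. In step~3, note that the cited genus depends on the parity of $n$: it is $\frac{1}{2}(n^2-4n+40)$ for $n$ even and $\frac{1}{2}(n^2-4n+39)$ for $n$ odd, with margins $\frac{1}{2}(n-18)$ and $\frac{1}{2}(n-17)$, so the threshold $19$ comes from handling both parity classes, not from one check at $n=19$ plus monotonicity.
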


\begin{proof}
    Suppose that $n$ is even. By \cite[Theorem 2.8]{MMRS24}, there exists a connected, oriented, compact, and properly embedded surface $S \subset \C P^2 \setminus \operatorname{int}D^4$ of degree $6$ such that $\partial S = T(n, n - 1)$ and
    \[
    g(S) = \left( \frac{n + 6 - 2}{2} \right)^2 + \left( \frac{n - 6 - 2}{2} \right)^2 = \frac{1}{2}(n^2 - 4n + 40).
    \]
    Recall that $g_4(T(n, n - 1)) + 10 = \frac{1}{2}(n - 1)(n - 2) + 10 = \frac{1}{2}(n^2 - 3n + 22)$. Then, we have
    \[
    g_4(T(n, n - 1)) + 10 - g(S) = \frac{1}{2}\big((n^2 - 3n + 22) - (n^2 - 4n + 40) \big) = \frac{1}{2}(n - 18) > 0,
    \]
    since $n \ge 19$. 
    For the case of $(\C P^2 \mathop{\#} \C P^2) \setminus \operatorname{int}D^4$, by taking the connected sum of $S$ and a smooth closed curve of degree 6 in the second summand, we obtain a connected, oriented, compact, and properly embedded surface $S' \subset (\C P^2 \mathop{\#} \C P^2) \setminus \operatorname{int}D^4$ representing the homology class $[S'] = (6, 6) \in \Z \oplus \Z = H_2((\C P^2 \mathop{\#} \C P^2) \setminus \operatorname{int}D^4; \Z)$ such that $\partial S' = T(n, n - 1)$ and 
    \[
    g(S') = \frac{1}{2}(n^2 - 4n + 40) + 10 = \frac{1}{2}(n^2 - 4n + 60).
    \]
    Since $g_4(T(n, n - 1)) + 20 = \frac{1}{2}(n^2 - 3n + 42)$, we obtain
    \[
    g_4(T(n, n - 1)) + 20 - g(S') = \frac{1}{2}\big((n^2 - 3n + 42) - (n^2 - 4n + 60) \big) = \frac{1}{2}(n - 18) > 0.
    \]

    Now suppose that $n$ is odd. By \cite[Theorem 2.8]{MMRS24}, there exists a connected, oriented, compact, and properly embedded surface $S \subset \C P^2 \setminus \operatorname{int}D^4$ of degree $6$ such that $\partial S = T(n, n - 1)$ and
    \[
    g(S) = \left( \frac{n + 6 - 1}{2} \right) \left( \frac{n + 6 - 3}{2} \right) + \left( \frac{n - 6 - 1}{2} \right) \left( \frac{n - 6 - 3}{2} \right) = \frac{1}{2}(n^2 - 4n + 39).
    \]
    Then, we have
    \[
    g_4(T(n, n - 1)) + 10 - g(S) = \frac{1}{2}\big((n^2 - 3n + 22) - (n^2 - 4n + 39) \big) = \frac{1}{2}(n - 17) > 0,
    \]
    since $n \ge 19$. 
    For the case of $(\C P^2 \mathop{\#} \C P^2) \setminus \operatorname{int}D^4$, a similar connected sum argument yields the desired surface $S'$.
\end{proof}

These results naturally lead to the following question.

\begin{question}\label{q:knot}
    Is there a symplectic geometric characterization of quasi-positive knots that satisfy conditions (1) and (2) of Example~\ref{ex:CP^2_genus}?
\end{question}

\subsection{Splitting $4$-manifolds}

In this subsection, we provide the proof of Theorem~\ref{thm:4-manifold_split}. Recall the decomposition~\eqref{eq:X_decomp} from the introduction. Bauer's strategy for the $11/8$-conjecture involves splitting a hypothetical counterexample $X$ into pieces $X_i$ along integral homology spheres $Y_i$. While Manolescu \cite{Man14} proved the non-existence of decompositions where all $Y_i$ are Floer $K_G$-split, we prove an analogous result for AR homology spheres.

\begin{proof}[Proof of Theorem~\ref{thm:4-manifold_split}]
    Suppose there exists a closed $4$-manifold $X$ with a decomposition of the type~\eqref{eq:X_decomp}, such that all the homology spheres $Y_i$ are AR homology spheres. Let $W_i = X_1 \cup_{Y_1} \cdots \cup_{Y_{i - 1}} X_i$. The intersection form of $W_i$ is $2i (-E_8) \oplus 3i \left( \begin{smallmatrix} 0 & 1 \\ 1 & 0 \end{smallmatrix} \right)$, which implies that the Rokhlin invariant $\mu(Y_i)$ is zero. Thus, $\bar{\mu}(Y_i)$ is even.

    Suppose $\bar{\mu}(Y_1) > 0$; since it is even, we have $\bar{\mu}(Y_1) \ge 2$. Note that the intersection form of $X_1$ is $2 (-E_8) \oplus 3 \left( \begin{smallmatrix} 0 & 1 \\ 1 & 0 \end{smallmatrix} \right)$. By Lemma~\ref{lem:existence_of_admissible}, there exists a Furuta--Mahowald class of level $(1 + \frac{1}{2}\bar{\mu}(Y_1), 3)$ when $\delta(Y_1) = -\bar{\mu}(Y_1)$, and of level $(1 + \frac{1}{2}\bar{\mu}(Y_1), 4)$ when $\delta(Y_1) > -\bar{\mu}(Y_1)$. Because $\bar{\mu}(Y_1) \ge 2$, we can restrict this map to $S^{2\mathbb{H}} \subset S^{(1 + \frac{1}{2}\bar{\mu}(Y_1))\mathbb{H}}$. By appropriately composing with the inclusion $S^{3\tilde{\R}} \hookrightarrow S^{4\tilde{\R}}$, this yields a Furuta--Mahowald class of level $(2, 4)$. This contradicts Theorem~\ref{thm:inequality}. Therefore, we have $\bar{\mu}(Y_1) \le 0$.

    Next, we show that $\bar{\mu}(Y_2) \le 0$. By Theorem~\ref{thm:rel_BF}, the relative Bauer--Furuta invariant of $X_2$ gives rise to an admissible morphism:
    \[
    \Phi: \Sigma^{\mathbb{H}} \SWF(Y_1) \to \Sigma^{3\tilde{\R}} \SWF(Y_2).
    \]
    Following Construction~\ref{con:lattice_spectrum}, we can write $\SWF(Y_1) = (Z_1, 0, -h_1)$ and $\SWF(Y_2) = (Z_2, 0, -h_2)$, where $Z_i$ are spaces of type SWF at level $0$ and $h_i \in \Q$. This morphism can be represented by a map
    \[
    \tilde{\Phi}: \Sigma^{(a + h_1 + 1)\mathbb{H}}\Sigma^{b\tilde{\R}}Z_1 \to \Sigma^{(a + h_2)\mathbb{H}}\Sigma^{(3 + b)\tilde{\R}}Z_2
    \]
    for some integers $a, b \in \Z_{\ge 0}$. Restricting $\tilde{\Phi}$ to the subspace $S^{(-\frac{1}{2}\bar{\mu}(Y_1)-h_1)\mathbb{H}} \hookrightarrow Z_1$ induces an admissible morphism:
    \[
    S^{(-\frac{1}{2}\bar{\mu}(Y_1) + 1)\mathbb{H}} \to \Sigma^{3\tilde{\R}}\SWF(Y_2).
    \]
    Since $-\bar{\mu}(Y_1) \ge 0$ and $\bar{\mu}(Y_1)$ is even, a further restriction to $S^{\mathbb{H}} \hookrightarrow S^{(-\frac{1}{2}\bar{\mu}(Y_1) + 1)\mathbb{H}}$ provides the following morphism:
    \[
    S^{\mathbb{H}} \to \Sigma^{3\tilde{\R}}\SWF(Y_2).
    \]
    Applying the same argument used for $Y_1$, we deduce that $\bar{\mu}(Y_2) \le 0$. By induction, it follows that $\bar{\mu}(Y_i) \le 0$ for all $1 \le i \le r - 1$.

    Finally, we focus on $X_r$. The relative Bauer--Furuta invariant of $X_r$ provides a map:
    \[
    \Sigma^{\mathbb{H}}\SWF(Y_{r - 1}) \to \Sigma^{2\tilde{\R}}S^0.
    \]
    By an argument similar to the above, suitably restricting to the representation spheres yields the following admissible morphism:
    \[
    S^{\mathbb{H}} \to S^{2\tilde{\R}}.
    \]
    This contradicts the proof of Furuta's $10/8$ theorem \cite{Fur01}, which completes the proof.
\end{proof}

\begin{remark}
    We note that extending Theorem~\ref{thm:4-manifold_split} to the case where the homology spheres $Y_i$ are connected sums of AR homology spheres remains out of reach with our current tools. For example, we cannot rule out the case where $Y_1 = \Sigma(2, 3, 11) \mathop{\#} \big(\mathop{\#}^{2n} \Sigma(2, 3, 7)\big)$ for large $n$, whose corresponding spectrum is
    \[
    A_1 \wedge \left( \bigwedge^{2n} \Sigma^{-\frac{1}{2}\mathbb{H}} A_1 \right).
    \]
\end{remark}

\end{document}